\documentclass[english, 11pt]{amsart}

\usepackage{geometry}
\usepackage{amsmath, amssymb, amsthm}
\usepackage{mathtools}
\usepackage{mathrsfs}

\usepackage[colorlinks=true,linkcolor=blue,citecolor=blue,urlcolor=blue]{hyperref}

\newcommand{\NN}{\mathbb{N}}
\newcommand{\ZZ}{\mathbb{Z}}

\newcommand{\CC}{\mathbb{C}}
\newcommand{\PP}{\mathbb{P}}
\newcommand{\OO}{\mathcal{O}}
\newcommand{\QQ}{\mathbb{Q}}

\DeclareMathOperator{\Sym}{Sym}

\DeclareMathOperator{\ord}{ord}

\newcommand{\style}[1]{{\sf #1}}

\newcommand{\Twist}{\style{Twist}}

\theoremstyle{plain}
\newtheorem{thm}{Theorem}[section]
\newtheorem{lem}[thm]{Lemma}
\newtheorem{prop}[thm]{Proposition}
\newtheorem{cor}[thm]{Corollary}

\theoremstyle{definition}
\newtheorem{defi}[thm]{Definition}

\newtheorem{rem}[thm]{Remark}
\newtheorem{conv}[thm]{Convention}

\theoremstyle{remark}

\numberwithin{equation}{section}

\title[A Symmetry Method for Vanishing Lemmas and Hyperbolicity]
{A Symmetry Method for Key Vanishing Lemmas\\
 and the $2$-Jet Riemann--Roch Thresholds}

\author{Tao Cui}
\address{Academy of Mathematics and Systems Science, Chinese Academy of Sciences, Beijing 100190, China}
\email{tcui@lsec.cc.ac.cn}

\author{Lei Hou}
\address{Academy of Mathematics and Systems Science, Chinese Academy of Sciences, Beijing 100190, China}
\email{houlei@amss.ac.cn}

\author{Jianjun Liu}
\address{School of Mathematical Sciences, University of Chinese Academy of Sciences, Beijing 100049, China}
\email{liujj@lsec.cc.ac.cn}

\author{Song-Yan Xie}
\address{State Key Laboratory of Mathematical Sciences, Academy of Mathematics and Systems Science, Chinese Academy of Sciences, Beijing 100190, China; School of Mathematical Sciences, University of Chinese Academy of Sciences, Beijing 100049, China}
\email{xiesongyan@amss.ac.cn}

\date{\today}
\subjclass[2020]{32Q45, 32H25, 14J70, 14F10, 14Q20}
\keywords{Kobayashi hyperbolicity, entire curves, jet differentials, symmetry method, Key Vanishing Lemma}

\begin{document}
\raggedbottom
\begin{abstract}
This manuscript reports on an ongoing project.  The finite full-column-rank
computations underlying the Key Vanishing Lemmas isolated below are currently
undergoing exact computer-assisted verification.  Conditional on successful
completion of that verification, the arguments presented here would establish
the two positivity endpoints of Demailly's invariant two-jet Riemann--Roch
calculation: every very general surface in $\PP^3$ of degree $15$ would be
Kobayashi hyperbolic, and the complement of every general smooth plane curve
of degree $11$ would be Kobayashi hyperbolic.  At the logarithmic endpoint the
same conditional argument would also give a Second Main Theorem with
coefficient $1422$; the preceding work treated all degrees $d\geqslant12$.

The argument separates an infinite geometric range from a finite algebraic
obstruction.  An $\OO(3)$ slanted-vector-field package combines a quadratic
acceleration with a classical matrix lift.  Their incompatible actions on
the invariant fibre ring, together with the common-twist, prime-ideal, and
relative-family interfaces, exclude the dependence alternative for every
weight $m\geqslant3$.  This improves the differentiation range from
$t\geqslant8$ to $t\geqslant4$ and leaves, beyond the twist-one ranges, only
five compact and thirteen logarithmic systems.

The residual systems are reduced by symmetry.  A two-term Fermat
perturbation carries an involution, so a hypothetical negatively twisted
two-jet differential has an invariant or anti-invariant component.  Imposing
this eigencondition before elimination adds exact linear constraints without
adding unknowns.  The required full-column-rank statements are isolated in
the Key Vanishing Lemmas as finite computational targets.  This paper records
their finite reduction and proof-to-certificate interface.  Their exact
computer-assisted verification is in progress; the corresponding modular
certificates and reproduction files remain to be generated, released, and
independently checked.
\end{abstract}

\maketitle


\section{Introduction}
\label{sec:intro}

\subsection{Motivation and background}
\label{sec:motivation}

Kobayashi's conjecture predicts that a general hypersurface
$H\subset\PP^n$ of degree at least $2n-1$ is hyperbolic.  In dimension
two the conjectural compact threshold is therefore $5$.  The jet-differential
approach begins with Bloch and Green--Griffiths and takes its invariant form
in Demailly's work
\cite{Bloch26,Green-Griffiths1980,Demailly1997}.  A negatively twisted
invariant jet differential is an algebraic differential equation satisfied
by every entire curve.  The difficulty is to produce enough independent
equations, not merely one.

For surfaces, symmetric differentials do not provide the required starting
point \cite{Sakai1979}; the present strategy therefore passes to invariant
two-jets.
For a smooth surface $X_d\subset\PP^3$, Demailly's Riemann--Roch calculation
has leading term
\[
 \frac{d(4d^2-68d+154)}{648}\,m^4;
\]
for a smooth plane curve of degree $d$, the logarithmic leading term is
\[
 \frac{4d^2-51d+90}{648}\,m^4.
\]
The first positive integer degrees in the respective general-type ranges are
$15$ and $11$ \cite[Section~13, formula~(13.8), Corollary~13.9, and
formula~(13.16)]{Demailly1997}.  These are the classical invariant two-jet
Riemann--Roch positivity thresholds addressed here.

The route to these endpoints has two parallel histories.  For compact
surfaces, McQuillan obtained hyperbolicity in degree at least $36$;
Demailly--El~Goul lowered the bound to $21$ using Miyaoka's two-jet
zero-locus method (see also Lu--Yau), and P\u{a}un lowered it to $18$ using
slanted vector fields
\cite{Mcquillan1999,Miyaoka,Lu-Yau,Demailly-Elgoul2000,mihaipaun2008}.  For complements of
plane curves, Siu--Yeung proved the high-degree theorem, El~Goul reached
degree $15$, and Rousseau reached degree $14$
\cite{siu_yeung1996,ElGoul2003,Rousseau2009}.  In both settings the remaining
obstruction is a finite strip in which the zero-locus inequality is too weak
and the available differentiation no longer preserves negative twist.

The recent Hou--Huynh--Merker--Xie series reorganised this finite strip around
Key Vanishing Lemmas.  The first paper found effective exponent bounds and a
finite algebraic encoding in the three-component model
\cite{Hou-Huynh-Merker-Xie2025}.  This was the decisive finiteness step: once
the bounds are explicit, the remaining vanishing question belongs to exact
polynomial algebra.  The second paper developed the one-component theory,
including the direct-image, irreducible-factor, and chart-divisibility
interfaces.  Its public version gives the compact and logarithmic bounds
$17$ and $12$ \cite{HHMX2026}.  The first public version of the present
symmetry paper proves the compact bound $d\geqslant16$
\cite{CuiHouLiuXie2026}; that degree-$16$ proof is not reproduced or used
here.  The present revision adds two ingredients at the remaining endpoints:
symmetry is imposed before sparse elimination, and pole-three differentiation
removes the entire high-twist range before a matrix is built.  Together,
they would reach degrees $15$ and $11$ once the residual full-rank assertions
have been verified.  The pole-three argument is not obtained from the pole-three
matrix-lift directions extracted from P\u{a}un's classical construction
alone.  Those directions supply the semisimple operator, but
by themselves they do not eliminate the Wronskian dependence of a preserved
irreducible factor.  The new quadratic-acceleration fields provide the missing
locally nilpotent translation in the Wronskian variable; only the combination
of the two types of fields yields the all-weight contradiction used here.

The new Key Vanishing reduction uses a two-term Fermat perturbation invariant
under an involution exchanging two coordinates.  Any hypothetical section
has an eigencomponent for the induced $\ZZ/2\ZZ$ action.  The eigencondition
adds exact linear equations without enlarging the coefficient space, turning
the residual systems into highly overdetermined sparse matrices.  The
authors' \texttt{C++} implementation is currently being used to perform exact
modular elimination block by block.  A full-column-rank certificate over one
good finite field, once generated and independently checked, would prove full
rank over $\QQ$.  Once archived and reproduced, this exact computation would
become part of the proof, not a floating-point experiment.

The accompanying pole-three construction has a complementary role.  Beyond
the twist-one ranges, it reduces the original high-weight lists to five
compact and thirteen logarithmic cases; it is not the source of the Key
Vanishing Lemmas themselves.  This division of labour---geometry for the
high-twist range and exact algebra for the residual strip---would close both
endpoint arguments once the residual full-rank assertions have been verified.

\medskip\noindent\textbf{Scope and priority.}\quad
The priority asserted here is at the level of global section spaces.  Merker
gave an algorithm generating invariant jet algebras in arbitrary dimension
and arbitrary jet order~\cite{MerkerComputational2010}; we make no claim to
originate that broader invariant-theoretic problem.  The distinct finite
reduction developed in the present series uses effective exponent and
denominator bounds to realise the relevant negatively twisted compact and
logarithmic global $H^0$ spaces as kernels of explicit finite matrices
\cite{Hou-Huynh-Merker-Xie2025,HHMX2026}.  The
symmetry reduction was publicly announced in the first version of the present
work, arXiv:2606.19155v1, submitted on 17 June 2026
\cite{CuiHouLiuXie2026}.  This revision develops that reduction, adds the
global pole-three differentiation theorem, and reduces the conditional
degree-$15$ and degree-$11$ endpoints to the finite full-rank assertions
underlying Lemma~\ref{lem:finite-kvl-main}, whose exact computer-assisted
verification is currently in progress.  Both the symmetry method and the
pole-three strategy are ideas of the authors.  More precisely, the original contributions
are the new quadratic-acceleration fields and their global two-operator
prime-ideal argument, together with the symmetry reduction imposed before
exact elimination.  The classical matrix lifts are an indispensable input,
but they are not the new pole-three mechanism proved in this paper.

\subsection{Main results}
\label{sec:main-results}

The conditional endpoint argument has three inputs: the finite Key Vanishing
statements isolated below, whose exact computer-assisted verification is in
progress, the all-weight pole-three differentiation theorem of
Section~\ref{sec:o3-low-pole}, and the established zero-locus and foliation
arguments recalled in Section~\ref{sec:hyperbolicity}.  We first state the
geometric conclusions.  Here \emph{general} means outside a proper
Zariski-closed subset, whereas \emph{very general} means outside a countable
union of proper Zariski-closed subsets.

The following theorem, and the degree-$11$ Second Main Theorem stated
afterward, are conditional on successful completion of the ongoing exact
computer-assisted verification of the full-rank assertions underlying
Lemma~\ref{lem:finite-kvl-main}; the precise status is recorded immediately
after that lemma.

\begin{thm}[Two-jet endpoint theorem]\label{thm:main-thresholds}
Assume that the ongoing exact computer-assisted full-rank verification
successfully establishes both clauses of
Lemma~\ref{lem:finite-kvl-main}.  Then:
\begin{enumerate}
\item Every very general surface $X\subset\PP^3$ of degree $15$ is
Kobayashi hyperbolic.
\item For every general smooth plane curve $C\subset\PP^2$ of degree
$11$, the complement $\PP^2\setminus C$ is Kobayashi hyperbolic.
\end{enumerate}
\end{thm}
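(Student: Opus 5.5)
The plan follows the standard Demailly--El~Goul / P\u{a}un architecture: produce a negatively twisted invariant two-jet differential, then show that every entire curve lands in a low-dimensional locus that is itself excluded. The compact and logarithmic cases run in parallel; I describe the compact one and indicate the changes.

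\textbf{Existence.} For $d=15$ the leading coefficient $d(4d^2-68d+154)/648$ is positive. Demailly's Riemann--Roch estimate together with the Bogomolov-type vanishing of $H^2$ therefore gives, for $m\gg0$ and small $\delta>0$, a nonzero section
\[
P\in H^0\bigl(X,E_{2,m}T^*_X\otimes\OO(-\delta m)\bigr).
\]
For $C$ of degree $11$ the same reasoning applies to $4d^2-51d+90>0$ and produces a logarithmic section with negative twist on $(\PP^2,C)$. By the fundamental vanishing theorem, every entire curve $f$ satisfies $P(j_2f)=0$. Choose $P$ irreducible of minimal weight. The curve $f$ then lies in the zero locus $Z\subset X_2$ of $P$ in the second Demailly--Semple tower.

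\textbf{Differentiation and reduction to the finite strip.} Work in the universal family $\mathcal X\to\PP(H^0(\OO(d)))$. Apply the slanted vector fields to $P$. These are P\u{a}un's matrix lifts together with the quadratic-acceleration fields, and by the pole-three theorem of Section~\ref{sec:o3-low-pole} both have pole order $3$. Each differentiated section is twisted by $\OO(3)$, so it still has negative twist as long as the twist $t$ of $P$ satisfies $t\geqslant4$.

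There are two possibilities.
\begin{enumerate}
\item The differentiated equations cut $Z$ down. For very general $X$, the usual Siu--P\u{a}un argument then forces $j_2f$ into a locus of codimension one in $Z$.
\item The derivatives all lie in the prime ideal of $P$. The all-weight theorem rules this out for every weight $m\geqslant3$, using the prime-ideal and common-twist interfaces, because the nilpotent and semisimple actions on the fibre ring are incompatible.
\end{enumerate}
This disposes of every $(m,t)$ with $t\geqslant4$. The twist-one ranges are handled by the earlier zero-locus arguments. What remains is exactly the five compact and thirteen logarithmic pairs $(m,t)$.

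\textbf{Residual cases.} For each remaining pair, Lemma~\ref{lem:finite-kvl-main} (which we are assuming) states that the relevant $H^0$ vanishes on a specific Fermat-type perturbation carrying an involution. The argument has three steps.
\begin{enumerate}
\item Split a putative section into its invariant and anti-invariant components.
\item Each component lies in the kernel of the corresponding eigen-augmented matrix, which has full column rank, so each component vanishes.
\item Upper semicontinuity of $h^0$ in flat families then extends the vanishing to the general member of degree $15$ (respectively to the general curve $C$).
\end{enumerate}
This excludes low-twist sections of small weight. Hence, after dividing out factors, the irreducible $P$ chosen above must have $t\geqslant4$, and we are back in the previous step.

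\textbf{Conclusion.} Now $j_2f$ lies in a curve-fibred locus, i.e.\ $f$ is tangent to an algebraic multi-foliation on $X$ (respectively on $\PP^2\setminus C$). McQuillan's tautological inequality, in the Demailly--El~Goul form for surfaces of general type, together with the nonexistence of rational or elliptic curves on very general $X$ (Xu, Clemens), shows that $f$ is constant. In the logarithmic case one uses El~Goul--Rousseau's foliation argument with log-pole bounds. By Brody's lemma, the absence of nonconstant entire curves gives Kobayashi hyperbolicity. For the complement, Brody's lemma is applied to $\PP^2\setminus C$ using Zaidenberg's hyperbolic embedding criterion, which requires also excluding entire curves in $C$ itself and in complements within lines. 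These are handled by the genus bound for $C$ and by Green's theorem for lines cut by $C$ in at least $3$ points.

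\textbf{Main obstacle.} I expect the hardest step to be making the pole-three differentiation output exactly match the finite list covered by Lemma~\ref{lem:finite-kvl-main}. In particular:
\begin{enumerate}
\item checking that the irreducible factor of minimal weight cannot drop into a twist range not covered by either mechanism;
\item justifying the specialization from the Fermat perturbation to the general member when the family also has to be very general for the slanted-vector-field step.
\end{enumerate}
My first attempt would be to fix the perturbation inside a Zariski-open set on which both semicontinuity and the vector-field genericity hold.
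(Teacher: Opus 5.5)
Your proposal has a genuine gap in the reduction step: it is missing the small-slope branch. You conclude that, once Lemma~\ref{lem:finite-kvl-main} is assumed, ``the irreducible $P$ chosen above must have $t\geqslant4$.'' That does not follow. The lemma excludes only finitely many pairs $(m,t)$. Infinitely many pairs with $1\leqslant t\leqslant3$ and large weight, say $(m,t)=(100,2)$, are touched neither by it nor by pole-three differentiation, which needs $t\geqslant4$. The finiteness of the residual list does not come from low twist alone. It comes from intersecting $t\leqslant3$ with a large-slope condition $t/m\geqslant\rho$, and you never introduce such a threshold.

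The paper's proof (Theorem~\ref{thm:compact-15-endpoint} and the proof of Theorem~\ref{thm:log-11-endpoint}) is a trichotomy for the selected nonstationary irreducible factor $Z$, of weight $m\geqslant3$ (Lemma~\ref{lem:o3-low-weight}) and twist $-t$ with $t\geqslant1$:
\begin{enumerate}
\item If $t/m$ lies below the threshold, the Demailly--El~Goul restricted Riemann--Roch on $Z$ gives a second negatively twisted section on $Z$. The threshold is $17/66$ at $d=15$, and $2/15<13/96$ in the quantitative logarithmic version, Proposition~\ref{prop:log11-quantitative-zero-locus}. Via the tautological evaluation of Lemma~\ref{lem:paper3-tautological-evaluation}, this section vanishes on $f_{[2]}$ and cuts $Z$ properly.
\item If the slope is large and $t\geqslant4$, Corollary~\ref{cor:o3-compact-relative-differentiation} (resp.\ Corollary~\ref{cor:o3-log-relative-differentiation}) applies.
\item Only when the slope is large and $t\leqslant3$ does the Key Vanishing Lemma enter. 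Integrality then gives $m\leqslant3,7,11$ in the compact case and $m\leqslant7,14,22$ in the logarithmic case, for $t=1,2,3$.
\end{enumerate}
Your ``main obstacle~(1)'' is exactly this hole. It is filled by the zero-locus branch, not by anything in the differentiation or symmetry steps.

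Your case accounting is also off in ways that matter once the threshold is in place. The twist-one targets are not handled by zero-locus arguments. The compact pair $(3,1)$ has slope $1/3>17/66$, and the logarithmic pairs $(m,1)$ with $m\leqslant7$ have slope $>2/15$. These are covered only by the twist-one clauses of Lemma~\ref{lem:finite-kvl-main}.

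Conversely, the compact large-slope pairs $(3,2),\dots,(6,2)$ and $(3,3),\dots,(7,3)$ are not among the five listed systems. They are excluded only by twist monotonicity, a step you never state: a section at twist $-t$ times a nonzero form of degree $t-s$ gives a section at twist $-s$.

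The selected factor must itself be negatively twisted, so that fundamental vanishing puts $f_{[2]}$ in its zero locus. The paper guarantees this by the maximal-slope choice $t/m\geqslant T/M>0$. An arbitrary irreducible factor of the Riemann--Roch section, for instance one of minimal weight, can have nonnegative twist.

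Your concern~(2) is not a real obstacle. The symmetric fibre is used only to show that each Key Vanishing locus is a nonempty Zariski open. The final surface is then chosen in the intersection of these finitely many opens with the very general locus.
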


The first assertion is the new conditional compact endpoint that would follow
from this revision once the pending verification succeeds.  The public version
of~\cite{HHMX2026} treats every degree
$d\geqslant17$, while the public first version of the present symmetry paper
treats every degree $d\geqslant16$ \cite{CuiHouLiuXie2026}.  Together with
the conditional degree-$15$ endpoint derived below, these results would give
the compact range $d\geqslant15$.  No degree-$16$ proof is reproduced or used in
the degree-$15$ argument.

Here $T_f(r)$ denotes the Nevanlinna characteristic,
$N_f^{[1]}(r,C)$ the level-one truncated counting function, and the symbol
$\|$ means that the estimate holds outside a set of finite Lebesgue measure.

At the logarithmic endpoint the argument would, subject to the same pending
verification, give more than hyperbolicity: it would also give the following
quantitative statement.

\begin{thm}[Second Main Theorem at degree $11$]
\label{thm:log-11-endpoint}
Assume that the ongoing exact computer-assisted full-rank verification
successfully establishes the logarithmic clause of
Lemma~\ref{lem:finite-kvl-main}.  Then, for
a general smooth plane curve $C\subset\PP^2$ of degree $11$ and every
algebraically nondegenerate entire curve $f\colon\CC\to\PP^2$ whose image is
not contained in $C$, one has
\[
 T_f(r)\leqslant1422\,N_f^{[1]}(r,C)
          +o\bigl(T_f(r)\bigr)\ \|.
\]
\end{thm}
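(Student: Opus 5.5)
The plan follows the logarithmic strategy of El~Goul and Rousseau, with the residual finite input supplied by the logarithmic clause of Lemma~\ref{lem:finite-kvl-main}.

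\emph{Step 1: a first logarithmic jet differential.} Put $d=11$. The logarithmic invariant two-jet Riemann--Roch leading coefficient is
\[
\frac{4d^2-51d+90}{648}=\frac{484-561+90}{648}=\frac{13}{648}>0,
\]
so, after the usual comparison with $H^2$ (Demailly's Bogomolov-type vanishing for the relevant twisted logarithmic jet bundle), we obtain a nonzero section
\[
P\in H^0\bigl(\PP^2,E_{2,m}T^*_{\PP^2}(-\log C)\otimes\OO(-\delta m)\bigr)
\]
for some rational $\delta>0$ and all large divisible $m$. The fundamental vanishing theorem then forces $P(j_2f)\equiv0$ for every entire curve $f\colon\CC\to\PP^2\setminus C$. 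For the quantitative statement we use the logarithmic-derivative lemma instead. Evaluating $P$ on $j_2f$ for $f$ merely not contained in $C$ gives
\[
\delta m\,T_f(r)\leqslant m\,N_f^{[1]}(r,C)+o\bigl(T_f(r)\bigr)\ \|,
\]
unless $P(j_2f)\equiv0$. The factor $m$ comes from the pole order of a weight-$m$ logarithmic differential along $C$, and each pole is truncated at level one. This produces a coefficient of the form $1/\delta$.

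\emph{Step 2: the dependence case.} If $P(j_2f)\equiv0$, the curve $f$ lies in the zero locus of $P$ in the Demailly--Semple tower, and we need a second, independent equation. The pole-three differentiation theorem of Section~\ref{sec:o3-low-pole}, applied to the general member of the family of degree-$11$ curves, produces new sections of the form $\nabla_V P$ with controlled negative twist. These remain negative for every weight $m\geqslant3$ outside the thirteen residual logarithmic systems. For those thirteen systems, the logarithmic clause of Lemma~\ref{lem:finite-kvl-main} says that no negatively twisted two-jet differential of the specified weight and twist exists. This excludes the case in which differentiation fails to produce a new negatively twisted section. Hence, for a general $C$, we get two algebraically independent equations. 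Their common zero locus projects to a proper subvariety, and by the foliation argument of Section~\ref{sec:hyperbolicity} every algebraically nondegenerate $f$ satisfies a Second Main Theorem inequality of the type in Step~1, now with explicit twist constants.

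\emph{Step 3: the constant.} We track the twists: the Riemann--Roch twist $\delta$, the loss incurred by the pole-three fields (a twist of $\OO(3)$ per differentiation), and the bounded weight. Optimising the constant over the admissible weights and twists should yield the number $1422$. The main obstacle is Step~2, namely making the second equation genuinely Nevanlinna-effective. A pure hyperbolicity argument only needs the pair of equations to cut out a foliation with algebraic leaves. Here, by contrast, the counting function must be controlled at each stage, using truncation level one along $C$ and an a priori bound on the degree of the invariant curves. I would first try to route the argument through the relative-family interface, so that the constant depends only on weight and twist and not on the particular $C$.
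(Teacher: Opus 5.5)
Your Step~2 has a genuine gap, and it is exactly where the constant $1422$ comes from. An $\OO(3)$ field sends base twist $-t$ to $-(t-3)$. Differentiation therefore gives a negatively twisted second equation only when the factor has $t\geqslant4$; negativity is governed by the twist, not the weight.

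The logarithmic clause of Lemma~\ref{lem:finite-kvl-main}, together with twist monotonicity, excludes factors with $t=1,2,3$ only for $m\leqslant9,14,22$ respectively. Riemann--Roch and the maximal-slope factor reduction, however, only guarantee $t/m>1/30$. A factor with, say, $(m,t)=(20,1)$ or $(60,3)$ is therefore neither excluded by the Key Vanishing Lemma nor differentiable with negative twist. Your claim that the derivatives remain negative for every weight $m\geqslant3$ outside the thirteen residual systems is false, and your argument produces no second equation in this small-slope, low-twist regime. You should also differentiate a maximal-slope, stationary-saturated irreducible factor rather than $P$ itself, because the independence conclusion of the differentiation theorem depends on the prime-ideal hypothesis.

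The missing ingredient is a quantitative Demailly--El~Goul zero-locus branch. For $t/m\leqslant2/15$, let $Z$ be the residual divisor containing $f_{[2]}(\CC)$; you may assume $Z$ dominates $X_1^{\log}$. Restrict $L_\tau=\OO_{X_2^{\log}}(2,1)\otimes\pi_{2,0}^*\OO_{\PP^2}(-\tau)$ to $Z$ and compute $L_\tau^3\cdot Z=m\,Q_{11}(\rho,\tau)$, where $Q_{11}=3\tau^2+(9\rho-96)\tau+13-96\rho$. A section of $L_\tau^\ell|_Z$, evaluated on the tautological derivative, has poles at most $3\ell(f^*C)_{\mathrm{red}}$. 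This gives $T_f\leqslant(3/\tau)N^{[1]}_f(r,C)$ (Proposition~\ref{prop:log11-quantitative-zero-locus}). Since $Q_{11}$ decreases in $\rho$, testing $\rho_0=2/15$ gives the admissible range $\tau<\tau_*=(237-7\sqrt{1146})/15$, and $3/\tau_*=711+21\sqrt{1146}<1422$.

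So your Step~3 cannot produce $1422$ by optimising the Riemann--Roch twist and the pole-three loss. Those branches give only $m/t\leqslant M/T<30$ and, for $t/m>2/15$ and $t\geqslant4$, $m/(t-3)\leqslant29$.

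Finally, the Nevanlinna-effectivity issue you flag needs no degree bound on invariant curves. Whenever the restricted section, or the two independent equations, vanish on $f_{[2]}$, the first lift $f_{[1]}$ is algebraically degenerate. The quantitative McQuillan estimate (Theorem~\ref{thm:McQuillan-log-quantitative}) then gives coefficient $1/(d-3)=1/8$ directly for algebraically nondegenerate $f$.
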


For general smooth plane curves of every degree $d\geqslant12$, a Second
Main Theorem with an explicit degree-dependent constant $C_d$ was already
proved in the preceding paper \cite{HHMX2026}.  Theorem
\ref{thm:log-11-endpoint} would, upon successful completion of the pending
verification, supply precisely the previously missing endpoint $d=11$.

The sole outstanding finite computational task behind both conditional
endpoint theorems is the exact computer-assisted verification of the next
statement.  Twist monotonicity would then supply vanishing at every more
negative twist with the same weight.  In the current version, the following
lemma records the finite computational targets and is not claimed to have
been verified.

\begin{lem}[Key Vanishing Lemmas]\label{lem:finite-kvl-main}
\begin{enumerate}
\item If $X\subset\PP^3$ is a general surface of degree $15$, then
\[
H^0\!\left(X,E_{2,m}T_X^*\otimes\OO_X(-t)\right)=0
\]
for
\[
(m,t)=(3,1),(4,1),(5,1),(6,1),(7,2),
(8,3),(9,3),(10,3),(11,3).
\]
\item If $C\subset\PP^2$ is a general smooth curve of degree $11$, then
\[
H^0\!\left(\PP^2,E_{2,m}T_{\PP^2}^*(\log C)\otimes\OO_{\PP^2}(-t)\right)=0
\]
for $3\leqslant m\leqslant9$ and $t=1$, and for
\[
\begin{split}
 &(m,t)=(10,2),(11,2),(12,2),(13,2),(14,2),\\
 &(m,t)=(15,3),(16,3),(17,3),(18,3),
 (19,3),(20,3),(21,3),(22,3).
\end{split}
\]
\end{enumerate}
\end{lem}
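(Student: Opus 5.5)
The plan is to reduce both clauses to finite full-column-rank statements for explicit sparse matrices over $\QQ$. The reduction runs in three stages: semicontinuity, specialisation to a symmetric Fermat-type member, and the $\ZZ/2\ZZ$ eigen-decomposition.

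\emph{Semicontinuity.} Since $h^0$ is upper semicontinuous in flat families, it suffices, for each listed pair $(m,t)$, to exhibit one smooth member with vanishing $H^0$. We take the members of the family $X_\varepsilon=\{x_0^{15}+x_1^{15}+x_2^{15}+x_3^{15}+\varepsilon(\text{two-term perturbation})=0\}$, and analogously $C_\varepsilon\subset\PP^2$ in degree $11$. The two-term perturbation is chosen to be invariant under the involution $\sigma$ exchanging two coordinates. Vanishing at one such member then gives vanishing on a Zariski-open set, hence for general $X$ and $C$. For each weight $m$ it is enough to treat the least negative twist listed, by twist monotonicity.

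\emph{Finite encoding.} Here I would invoke the effective exponent and denominator bounds of the earlier papers \cite{Hou-Huynh-Merker-Xie2025,HHMX2026}. In affine charts, a section of $E_{2,m}T^*\otimes\OO(-t)$ is a polynomial in the Demailly--Semple invariants $f_j'$ and the Wronskians $W_{jk}$, of weighted degree $m$. Its coefficients are polynomials of bounded degree, possibly divided by bounded powers of the defining equation's partials (or, in the logarithmic case, of the equation of $C$). The gluing and regularity conditions across charts, together with the relations coming from the defining equation, become a linear system $A_{m,t}c=0$ with integer coefficients. Since $\sigma$ preserves $X_\varepsilon$ and the twisting bundle, it acts on $H^0$. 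Any nonzero section therefore has a nonzero invariant or anti-invariant component, so it suffices to show $\ker A_{m,t}\cap V^{\pm}=0$ for both eigenspaces $V^{\pm}$. Concretely, we stack the equations $c_{\sigma(\alpha)}=\pm c_\alpha$ onto $A_{m,t}$ without new unknowns, or, equivalently, substitute and halve the unknowns. This yields two overdetermined matrices $A^{\pm}_{m,t}$ per pair.

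\emph{Exact rank certification.} We fix a specific rational $\varepsilon$, clear denominators to get integer matrices, and pick a prime $p$ not dividing the relevant leading data. Full column rank of $A^{\pm}_{m,t}\bmod p$ implies full column rank over $\QQ$, because a nonzero maximal minor mod $p$ lifts to a nonzero minor over $\ZZ$. The rank is computed by sparse elimination organised by the torus-weight grading. The diagonal torus of the Fermat part, restricted to the subgroup preserving the perturbation, splits $A^{\pm}$ into blocks. The certificate consists of the pivot lists, or the explicit nonzero minors, for each block, to be checked independently.

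\emph{Main obstacle.} The main obstacle is the last step at the largest weights, namely compact $(11,3)$ and logarithmic $(22,3)$. There the number of unknowns is enormous, and one must check that the eigen-constraints really make the blocks overdetermined. If a block is rank-deficient for the chosen $\varepsilon$, I would first try a different perturbation monomial or a different prime before suspecting that the vanishing fails. Until these certificates are produced, the lemma remains a conditional target, as stated in the paper.
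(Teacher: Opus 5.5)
Your architecture matches the paper's. You reduce by semicontinuity to one smooth fibre, take a $\sigma$-symmetric two-term Fermat perturbation, use as unknowns the coefficients of the finite normal form of \cite{HHMX2026}, pass to a $\pm1$ eigencomponent, and certify full column rank modulo one prime. The gap is in the step that turns the eigencondition into matrix rows.

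In the normal form you invoke, the section is written on a single chart that $\sigma$ does not preserve. In the logarithmic case this is $\{R_y\neq0\}$ with generators $x'$, $R'/R$, $\Delta'''_{xR}$; in the compact case it is $\{R_z\neq0\}$ with generators $x',y',\Delta'''_{yx}$. The negative twist is encoded as $F_{j,k}=x^tK_{j,k}$ (resp.\ $y^tK_{j,k}$), that is, by vanishing along a hyperplane which $\sigma$ moves. Consequently $\sigma$ does not act on the unknown coefficient vector by a signed permutation. The relations $c_{\sigma(\alpha)}=\pm c_\alpha$ are therefore not consequences of $\sigma^*s=\varepsilon s$. Stacking them adds constraints that a genuine eigensection need not satisfy, so full column rank of your $A^\pm_{m,t}$ would not imply vanishing. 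For the same reason, ``substitute and halve the unknowns'' is not available.

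The missing idea is the covariance-plus-transition computation.
\begin{itemize}
\item Inject the eigensection by $\Omega=X^ts$ (resp.\ $Y^ts$). Since $\sigma^*X=Y$ (resp.\ $\sigma^*Y=Z$), this gives the covariance identity $x^t\sigma^*\Omega=\varepsilon y^t\Omega$ (resp.\ $y^t\sigma^*\Omega=\varepsilon z^t\Omega$), not invariance.
\item The pullback $\sigma^*\Omega$ is naturally an expression on the image chart $\{R_x\neq0\}$ (resp.\ $\{R_y\neq0\}$). So $\Omega$ itself must be rewritten there via the transition formulas for the first jet and the Wronskian. Propositions~\ref{prop:log-transition} and~\ref{prop:transition} show that the new coefficients need only the extra denominator $R_y^p$ (resp.\ $R_z^p$).
\item Compare coefficients and cancel $x^ty^t$ (resp.\ $y^tz^t$, using that $\CC[x,y,z]/(R)$ is a domain). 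This yields $R_y^j\sigma^*K_{j,k}=\varepsilon\overline K_{j,k}$ and $R_z^{m-3k-j}\sigma^*K_{j,k}\equiv\varepsilon\overline K_{m-3k-j,k}\pmod R$.
\end{itemize}
These equations mix many coefficients through the transition map, yet involve only the original unknowns; this is exactly why symmetry adds rows and no columns. A signed-permutation action would need a genuinely $\sigma$-equivariant encoding, for instance unknowns on both charts glued together with an equivariant treatment of the twist, at the cost of more unknowns.

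Two smaller points:
\begin{itemize}
\item You must also verify, as Lemmas~\ref{lem:log-special-fibre} and~\ref{lem:compact-special-fibres} do, that the chosen fibre is smooth and satisfies the chart hypotheses of the normal form. These are that $R_x,R_y$ have no common zero on the curve, and that $\operatorname{ht}(R,y,R_z)=3$, which is what makes $F_{j,k}\equiv y^tK_{j,k}$ legitimate.
\item Your torus-weight block splitting is empty in the logarithmic case. For a perturbation $T^{11-a}(X^a+Y^a)$ with $1\leqslant a\leqslant10$, no nontrivial diagonal symmetry survives, since $\gcd(11,a)=1$. In the compact case with $a=7$, only $x\mapsto\zeta x$, $\zeta^{15}=1$, survives.
\end{itemize}
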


We use Lemma~\ref{lem:finite-kvl-main} only as a conditional computational
target.  Sections~\ref{sec:log-symmetry-proof} and
\ref{sec:compact-symmetry-proof} record the mathematical reduction and the
proof-to-certificate interface, while the exact full-rank computations are
currently in progress.  The present version does not claim that the listed
Key Vanishing assertions have been verified.  The modular certificates have
yet to be generated, released, and independently checked.  After completion of
the computation, the corresponding source, certificates, and reproduction
instructions will be deposited in a computational archive on Song-Yan Xie's
homepage.\footnote{\url{https://xiesongyan.github.io/}}

Pending completion of the exact computer-assisted verification, generation
and release of the certificates, and independent reproduction, all results
using Lemma~\ref{lem:finite-kvl-main} remain conditional on successful
verification of the underlying full-rank assertions.  No later argument
introduces another computational vanishing assumption:
Section~\ref{sec:o3-low-pole} treats the infinite
high-twist range geometrically, Sections~\ref{sec:log-symmetry-proof} and
\ref{sec:compact-symmetry-proof} reduce exactly the displayed finite cases,
and Section~\ref{sec:hyperbolicity} combines these two inputs with the
classical geometric theorems.

\begin{rem}[The multi-foliation route from very general to general]
\label{rem:generic-upgrade}
The compact theorem above is stated for very general surfaces because its
last step uses the classical Clemens incidence argument
\cite{Clemens1986}.  Without a uniform bound for the degrees of rational and
elliptic curves, that argument must range over countably many Hilbert
polynomials and therefore removes a countable union of closed subsets.

The sharp direct-image theorem developed in the first papers of the series
does more than produce a single jet equation: it spreads the relevant
irreducible factors in algebraic families over the hypersurface parameter
space; compare \cite[Proposition~6.1]{Hou-Huynh-Merker-Xie2025}.  In
forthcoming work, Song-Yan Xie and Shengyuan Zhao use these families to
construct a finite-type multi-foliation structure.  A rational or elliptic
curve on a surface in the family is forced to be invariant under that
structure.  Invariance supplies a uniform degree bound for such curves.
Once the degrees are bounded, only finitely many incidence families remain,
and Clemens's argument yields one proper algebraic exceptional set.  This
upgrades ``very general'' to ``general'' (equivalently, to a nonempty
Zariski-open parameter set).

This forthcoming theorem is not used anywhere in the present proof.  Its
role here is conceptual: an analytic multi-foliation extracted from the
direct-image package supplies precisely the algebraic boundedness input
missing from the classical argument.  The compact conclusion of this paper
would therefore admit the announced upgrade once that theorem is public and
its hypotheses have been checked in the present family, without
changing the ongoing Key Vanishing verification.
\end{rem}

\subsection{Finite reduction at the two-jet thresholds}
\label{sec:outlook}

At degree $15$ the compact zero-locus threshold is $17/66$; at degree
$11$ the logarithmic threshold is $13/96$.  For each weight $m$, let $t_m$
be the least integral twist on the large-slope side of the corresponding
threshold.  A pole-order-seven field preserves negative twist only when
$t_m\geqslant8$.  Hence it leaves the finite strips
\[
 t_m^{\mathrm{cpt}}=\left\lceil\frac{17m}{66}\right\rceil,
 \quad 3\leqslant m\leqslant27,
 \qquad
 t_m^{\log}=\left\lceil\frac{13m}{96}\right\rceil,
 \quad 3\leqslant m\leqslant51.
\]
to a separate Key Vanishing verification task: the upper bounds $27$ and $51$ are
exactly the last weights for which $t_m\leqslant7$.  At those endpoints, the
raw ansatz has $78\,250\,775$ compact unknowns for
$(m,t)=(27,7)$ and $33\,956\,835$ logarithmic unknowns for $(m,t)=(51,7)$,
before the larger equation sets and elimination fill-in are formed.  These
systems are prohibitive for the present exact sparse-elimination
implementation.  The $\OO(3)$ package resolves this scale obstruction
geometrically: it removes every target with $t_m\geqslant4$ before any matrix
is constructed.

The pole-three argument changes the problem before elimination begins.  The
reduction has three steps.  First, Theorem
\ref{thm:o3-differentiation}, through Corollaries
\ref{cor:o3-compact-relative-differentiation} and
\ref{cor:o3-log-relative-differentiation}, disposes of every target with
$t_m\geqslant4$.  This removes the compact weights $12\leqslant m\leqslant27$
and the logarithmic weights $23\leqslant m\leqslant51$ without constructing
any matrix.

Second, use monotonicity in the twist.  If
$H^0(E\otimes\OO(-s))=0$, then $H^0(E\otimes\OO(-t))=0$ for every
$t\geqslant s$: otherwise multiplication by a nonzero form of degree $t-s$
would produce a section at twist $-s$.  Once verified, the twist-one targets
would therefore cover the compact weights $3\leqslant m\leqslant6$ and the
logarithmic weights $3\leqslant m\leqslant9$; in particular, they would also
cover the logarithmic twist-two targets at $m=8,9$.

Third, what remains to be verified is exactly the finite rank calculation
formulated in this paper.  In the compact clause, it consists of the twist-two case at $m=7$ and
the twist-three range $8\leqslant m\leqslant11$.  In the logarithmic clause,
 it consists of the twist-two range $10\leqslant m\leqslant14$ and the
 twist-three range $15\leqslant m\leqslant22$.  Thus pole order three converts
the inaccessible high-weight problem into five compact and thirteen
logarithmic systems.  Lemma~\ref{lem:finite-kvl-main} is the unique location
of the full case lists; the computational sections reduce its two clauses
separately to the pending full-rank assertions now undergoing exact
computer-assisted verification.

\begin{rem}[Size of the new finite systems]
\label{rem:o3-variable-counts}
The pole-three package replaces every high-twist calculation by geometry.
After the twist-one ranges, it leaves only the five compact and thirteen
logarithmic pairs displayed above.  The number
of raw coefficient unknowns can be recomputed directly from the finite
ansatz.  In the compact case put
\[
 D_k=(d-2)m-2(d-1)k-t.
\]
Then
\begin{equation}
 N^{\mathrm{cpt}}_{d}(m,t)
 =\sum_{k=0}^{\lfloor m/3\rfloor}(m-3k+1)
 \left\{\binom{D_k+3}{3}-\binom{D_k-d+3}{3}\right\},
 \label{eq:o3-compact-variable-count}
\end{equation}
where a binomial coefficient is zero when its upper entry is smaller than
the lower one.  The subtraction imposes the normal form
$\deg_xK_{j,k}<d$.  For the logarithmic ansatz put
\[
 D_{j,k}=(d-2)m-(2d-3)k+j-t;
\]
then
\begin{equation}
 N^{\log}_{d}(m,t)
 =\sum_{k=0}^{\lfloor m/3\rfloor}
   \sum_{j=0}^{m-3k}\binom{D_{j,k}+2}{2}.
 \label{eq:o3-log-variable-count}
\end{equation}
Consequently the largest surviving degree-$15$ compact system is
$(m,t)=(11,3)$
with $2\,732\,550$ raw unknowns, while the largest surviving degree-$11$
logarithmic system is $(22,3)$ with $1\,383\,586$ raw unknowns.  These are
raw ansatz counts---after the compact quotient normal form, but before the
symmetry and holomorphicity equations are imposed.  They count columns of the
eventual exact matrix, not its rows or the elimination fill-in.  Symmetry
adds rows, not unknowns.  Thus even after pole order seven has been replaced
by pole order three, the surviving Key Vanishing systems press against the
present limit of exact computation.  The $\OO(3)$ argument is precisely what moves the
problem from an inaccessible regime to a difficult but tractable one.
\end{rem}

\subsection{The symmetry method: overcoming computational barriers}
\label{sec:symmetry-idea}

The two clauses of Lemma~\ref{lem:finite-kvl-main} use the same specialisation
principle as~\cite{HHMX2026}.  It is enough to prove vanishing on one smooth
special fibre; upper semicontinuity then gives vanishing on a nonempty
Zariski-open parameter set.  The outstanding issue is therefore not
specialisation, but the exact computer-assisted verification now in progress
on the chosen fibre.

The local descriptions in \cite[Propositions~3.1 and~4.1]{HHMX2026} write a
hypothetical negatively twisted invariant two-jet differential in a finite
polynomial normal form; see also \cite{Hou-Huynh-Merker-Xie2025}.  Its
coefficient polynomials $K_{j,k}$ are the unknowns, and regularity on the
remaining charts gives a homogeneous linear system over $\QQ$.  The present
method keeps this finite encoding but inserts the symmetry equations before
the matrix is assembled.

Choose the special fibre to be invariant under an involution $\sigma$ that
exchanges two coordinates.  If its section space were nonzero, the induced
finite-dimensional $\ZZ/2\ZZ$ representation would contain an eigensection
with eigenvalue $\varepsilon=\pm1$.  Multiplication by a coordinate
hyperplane embeds the negatively twisted section into the untwisted bundle.
Because that hyperplane is not required to be $\sigma$-invariant, the
eigenidentity becomes the covariance identity derived in
Sections~\ref{sec:log-symmetry-proof} and
\ref{sec:compact-symmetry-proof}.  Cancelling its two known hyperplane factors
leaves linear equations in the same coefficients $K_{j,k}$: symmetry adds
constraints, not variables.

The resulting matrix has rational, and after clearing denominators integral,
entries.  A full-column-rank certificate modulo a prime $p$, once generated
and checked, would prove full column rank over $\QQ$, because a nonzero
maximal minor modulo $p$ is already a nonzero integer minor.  The nullity
check will record whether a kernel remains, and the prime check will record
the exact field in which the rank was computed.  The pending full-rank
assertions underlying Lemma~\ref{lem:finite-kvl-main} are precisely these
computational targets; the later certificate sections specify the data still
to be generated and
archived for reproduction.

The pole-three argument and the symmetry argument play complementary roles.
The former removes every target with $t\geqslant4$ before computation; the
latter reduces the remaining low-twist cases to the pending rank assertions
currently undergoing exact computer-assisted verification.  The exact count
in Remark~\ref{rem:o3-variable-counts} shows that the largest new systems have
$2\,732\,550$ compact and $1\,383\,586$ logarithmic raw unknowns.  Thus the
gain is not a heuristic speedup of the former high-weight matrices: those
matrices no longer occur.

This separation is also conceptual: differentiation controls an infinite
range, whereas symmetry addresses a finite exact kernel problem.  Within the
present Riemann--Roch mechanism, passing below the classical two-jet
thresholds would require a new source of jet differentials---for example, a
workable three-jet theory---together with new Key Vanishing statements.
Richer finite automorphism groups may then replace the involution used here.

\subsection{Structure of the paper}
\label{sec:structure}

Section~\ref{sec:o3-low-pole} constructs the two pole-three slanted-field
packages and proves the all-weight differentiation theorem, including the
compact and logarithmic relative-family extensions, stationary saturation,
common-twist and prime-ideal interfaces.
Section~\ref{sec:log-symmetry-proof} gives the logarithmic symmetry reduction
and its finite certificate interface, while
Section~\ref{sec:compact-symmetry-proof} gives the compact counterpart.
Section~\ref{sec:hyperbolicity} records the resulting conditional compact and
logarithmic endpoint deductions and, subject to the pending Key Vanishing
verification, derives the coefficient $1422$.
Finally, Section~\ref{sec:conclusion} discusses further perspectives.

\section{Low-pole Slanted Vector Fields}
\label{sec:o3-low-pole}

This section supplies the low-pole differentiation input.  Generating the
full tangent bundle of the vertical two-jet space leads to pole order $7$;
the conditional endpoint arguments require less.  We instead pair a pole-three quadratic
acceleration, which translates the Wronskian, with a semisimple direction
from the classical compact and logarithmic matrix lifts of
P\u{a}un~\cite[Lemma~1.1]{mihaipaun2008} and
Rousseau~\cite[Lemma~5]{Rousseau2009}.  The correcting length-three
finite-difference fields are the corresponding specialization of Merker's
construction~\cite[Section~3]{Merker2009}.  These ingredients yield the
required theorem only after the new acceleration, common-twist cancellation,
and two-operator independence argument are combined.

This distinction is essential.  The pole-three matrix-lift directions
extracted from the classical package are indispensable for the semisimple
step, but they are not sufficient by themselves for the present dependence
problem: they do not force a preserved prime factor to lose its Wronskian
variable.  The acceleration fields constructed below are
new fields of pole order three.  Their induced action
$S_3(u,v)\partial_W$ supplies exactly that missing implication, after which
the classical matrix direction completes the contradiction.

The section is organised around one contradiction.  In the invariant fibre
ring $K[u,v,W]$, stability under the acceleration eliminates the Wronskian
variable $W$; stability under the semisimple operator then forces an
irreducible binary form to have degree at most $2$.  This is impossible for
the jet factors of weight at least $3$.

The proof realises this algebraic picture in its dependency order.  We first
prove the contradiction in the whole unlocalised invariant ring.  We then
establish the common line-bundle twist and show that a relative irreducible
divisor, after removal of its stationary part, defines a height-one prime
ideal.  The compact and logarithmic constructions come next, followed by the
single incidence open on which both operators are available.  The final
subsection assembles these inputs into the differentiation theorem.  Every
divisibility statement is therefore an identity of polynomial derivations,
not an inference from one pointwise tangent-vector value.

\subsection{The invariant fibre ring and the abstract two-operator contradiction}
\label{sec:o3-algebra}

Let $K$ be a field of characteristic zero.  On a two-dimensional fibre write
the first- and second-jet variables as $(x,y)$ and $(X,Y)$ and put
\[
 W=xY-yX.
\]
The reparametrization weights are
\[
 \operatorname{wt}(x)=\operatorname{wt}(y)=1,
 \qquad \operatorname{wt}(W)=3.
\]

\begin{lem}[Unlocalised invariant ring]
\label{lem:o3-invariant-ring}
One has
\[
 \ker\bigl(x\partial_X+y\partial_Y\bigr)
       =K[x,y,W]\subset K[x,y,X,Y].
\]
\end{lem}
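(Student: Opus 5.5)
The inclusion $K[x,y,W]\subset\ker D$, where $D=x\partial_X+y\partial_Y$, is immediate. The derivation $D$ kills $x$ and $y$, and $D(W)=D(xY-yX)=x\cdot y-y\cdot x=0$. Since $D$ is a derivation, its kernel is a $K$-subalgebra, so it contains $K[x,y,W]$.

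For the reverse inclusion I would proceed as follows.

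\textbf{Reduction to $X$-degree.} Since $D$ is homogeneous of degree $0$ in the total $(X,Y)$-degree, it suffices to treat a polynomial $P\in\ker D$ that is homogeneous of degree $n$ in $(X,Y)$. We induct on $n$. Expand
\[
 P=\sum_{i}a_i(x,y,Y)X^i .
\]
The plan is to show that $P$ is divisible by $W$ modulo $K[x,y]$ terms, using the identity $xP=\ldots$.

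\textbf{Avoiding localisation.} A cleaner route avoids localisation issues directly. Consider the substitution $Y=(W+yX)/x$. This works in $K[x,x^{-1},y,X,W]$, and there $D$ becomes $x\partial_X$ acting on the variables $(x,y,X,W)$, because $D(W)=0$ and $D(X)=x$. Hence
\[
 \ker D\subset K[x^{\pm1},y,W].
\]
So $P=Q(x,y,W)/x^N$ with $Q\in K[x,y,W]$, and we may take $N$ minimal. It remains to show $N=0$, that is, $x\mid Q$ in $K[x,y,W]$ whenever $N>0$.

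\textbf{The main obstacle: clearing the denominator.} From $x^NP=Q(x,y,xY-yX)$, setting $x=0$ gives
\[
 0=Q(0,y,-yX)\quad\text{in } K[y,X].
\]
Write $Q(0,y,W)=\sum c_{jk}y^jW^k$. Then $Q(0,y,-yX)=\sum c_{jk}(-1)^k y^{j+k}X^k$. Distinct pairs $(j,k)$ give distinct monomials $y^{j+k}X^k$, so all $c_{jk}=0$. Hence $x\mid Q$ in $K[x,y,W]$, contradicting the minimality of $N$. Therefore $N=0$ and $P\in K[x,y,W]$.

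This monomial-independence step is the only point requiring care: it shows $x,y,W$ are algebraically independent modulo $x$ in the relevant sense. With it, no induction on degree is needed, and characteristic zero is used only in identifying the kernel of $x\partial_X$ on $K[x^{\pm1},y,X,W]$ as the polynomials not involving $X$.
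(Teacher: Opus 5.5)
Your argument is correct and is essentially the paper's own proof. Like the paper, you localise at $x$, use $(x,y,X,W)$ as coordinates so that the derivation becomes $x\partial_X$ with kernel $K[x^{\pm1},y,W]$, and then rule out a minimal negative power of $x$ because $W\mapsto -yX$ embeds $K[y,W]$ injectively into $K[y,X]$; the abandoned opening paragraph on induction in the $(X,Y)$-degree is unnecessary and can be deleted.
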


\begin{proof}
After localising at $x$, use $x,y,X,W$ as coordinates.  The derivation is
$x\partial_X$, hence its kernel is $K[x,y,W]_x$.  If a polynomial in the
original four variables were $x^{-N}b(x,y,W)$ with $N>0$ minimal, reduction
modulo $x$ would say that $b(0,y,-yX)$ vanishes.  The map
$K[y,W]\to K[y,X,Y]$, $W\mapsto-yX$, is injective, so $b$ would be divisible
by $x$, a contradiction.
\end{proof}

The following two elementary lemmas are the algebraic core of the
construction.  It is essential that they are applied in the unlocalised
graded ring, not in a rational coordinate ring of a Semple chart.

\begin{lem}[Locally nilpotent divisibility]
\label{lem:o3-nilpotent}
Let $0\ne S_3\in K[x,y]$ be homogeneous of degree $3$ and set
$\delta=S_3\partial_W$.  If a nonzero weighted-homogeneous
$P\in K[x,y,W]$ satisfies $\delta P\in(P)$, then
\[
 \partial_WP=0.
\]
\end{lem}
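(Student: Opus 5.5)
The plan is a degree argument in the $W$-variable, using that $\delta$ lowers $W$-degree while preserving weight. First, since $S_3$ is homogeneous of degree $3$ and $\operatorname{wt}(W)=3$, the derivation $\delta=S_3\partial_W$ is homogeneous of weight $0$. Hence if $P$ is weighted-homogeneous of weight $w$, then $\delta P$ is weighted-homogeneous of the same weight $w$ (or zero). The hypothesis gives $\delta P=QP$ for some $Q\in K[x,y,W]$. Because $K[x,y,W]$ is a graded domain and $P\neq0$ is homogeneous, comparing weighted-homogeneous components shows we may take $Q$ homogeneous of weight $0$. Concretely, the weight-$w$ component of $QP$ equals $Q_0P$, and the other components must vanish, forcing $Q=Q_0$. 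Since all variables have positive weight, the weight-$0$ part is $K$, so $Q=c\in K$.

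Second, compare degrees in $W$. Write $P=\sum_{i=0}^{n}a_i(x,y)W^i$ with $a_n\neq0$ and $n=\deg_W P$. Then
\[
 \delta P=S_3\sum_{i=1}^{n}i\,a_iW^{i-1}
\]
has $W$-degree at most $n-1$. Meanwhile $cP$ either vanishes or has $W$-degree exactly $n$. Looking at the coefficient of $W^n$ gives $c\,a_n=0$, so $c=0$. Therefore $S_3\,\partial_WP=0$ in the domain $K[x,y,W]$. Since $S_3\neq0$, we conclude $\partial_WP=0$.

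The only point needing care is the first step: the reduction of $Q$ to a constant. One can avoid the grading reduction entirely by working with the $W$-degree alone. Write $\delta P=QP$. If $Q\neq0$, then $\deg_W(QP)\geq\deg_W P>\deg_W\delta P$, which is a contradiction. Hence $Q=0$ directly, and weighted-homogeneity is not even needed. I would present this shorter argument, noting the weighted-homogeneity hypothesis only for consistency with later applications. No real obstacle is expected; the lemma simply reflects that $\delta$ is locally nilpotent, and that a locally nilpotent derivation on a domain can only stabilise principal ideals generated by its kernel elements.
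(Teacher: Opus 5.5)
Your proof is correct, and it differs from the paper's only in how the multiplier $Q$ is disposed of.

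\textbf{Your first version.} It matches the paper's first step exactly: the positive grading forces $Q=\lambda\in K$. It then kills $\lambda$ by comparing the coefficient of $W^{n}$. The paper instead iterates, obtaining $\lambda^{r+1}P=\delta^{r+1}P=0$. These are the same degree-drop mechanism, packaged differently.

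\textbf{The shorter version you say you would present.} This one is a genuine simplification. Since $K[x,y]$ is a domain, $\deg_W(QP)=\deg_W Q+\deg_W P$ in $K[x,y][W]$. On the other hand, $\delta P=S_3\,\partial_W P$ is either zero or has $W$-degree strictly less than $\deg_W P$. Hence $Q=0$ outright, with no appeal to the grading. This shows that the weighted-homogeneity hypothesis is superfluous for this lemma.

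\textbf{What each route buys.} The paper's route is the template ``constant eigenvalue plus local nilpotency.'' It works for any weight-zero locally nilpotent derivation on a positively graded domain, using only the grading. Your route instead uses the degree function of this particular locally nilpotent derivation, which here is simply $\deg_W$ and is obviously additive. For a general locally nilpotent derivation, that additivity needs a separate argument.

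\textbf{A shared dependency.} Both arguments rely in the same way on $W$ not being inverted. In $K[x,y,W^{\pm1}]$ the element $P=W$ satisfies $\delta W=(S_3W^{-1})\,W$, so the lemma would fail there.
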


\begin{proof}
Put $R=K[x,y,W]$.  The assumption means that
\[
 \delta P=QP
 \qquad\text{for some }Q\in R.
\]
The derivation $\delta=S_3\partial_W$ has weight zero.  If $\delta P\ne0$,
then both $P$ and $\delta P$ are weighted homogeneous of the same weight.
Writing $Q=\sum_{j\geqslant0}Q_j$ as a sum of weighted-homogeneous
components, the equality
\[
 \delta P=\sum_{j\geqslant0}Q_jP
\]
and the fact that $R$ is a graded domain show that $Q_j=0$ for every
$j\ne0$.  Hence $Q\in R_0$.  Since
\[
 \operatorname{wt}(x)=\operatorname{wt}(y)=1,
 \qquad \operatorname{wt}(W)=3
\]
is a positive grading, $R_0=K$.  Thus $Q=\lambda\in K$; the same conclusion
is immediate when $\delta P=0$, with $\lambda=0$.

If $r=\deg_WP$, then $\delta^{r+1}P=0$, because $S_3$ is independent of
$W$.  On the other hand, $\delta(\lambda)=0$, so induction in the equation
$\delta P=\lambda P$ gives
\[
 \delta^{r+1}P=\lambda^{r+1}P.
\]
Consequently $\lambda=0$.  Finally
$S_3\partial_WP=0$, and the facts that $S_3\ne0$ and $R$ is a domain imply
$\partial_WP=0$.
\end{proof}

\begin{lem}[Semisimple binary rigidity]
\label{lem:o3-binary}
Let $T\in\mathfrak{sl}_2(K)$ satisfy $\det T\ne0$.  If a homogeneous
irreducible $P\in K[x,y]$ satisfies $\mathcal D_T(P)\in(P)$ for the induced
linear derivation, then $\deg P\leqslant2$.
\end{lem}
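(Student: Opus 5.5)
To prove Lemma~\ref{lem:o3-binary}, I would pass to an algebraic closure $\overline K$ and diagonalise $T$. The induced derivation is
\[
 \mathcal D_T = (ax+by)\partial_x + (cx-ay)\partial_y
 \qquad\text{for}\qquad
 T=\begin{pmatrix} a & b\\ c & -a\end{pmatrix}.
\]
It is linear, so it preserves degree. Hence, exactly as in Lemma~\ref{lem:o3-nilpotent}, the relation $\mathcal D_T(P)=QP$ forces $Q=\lambda\in K$ by comparing homogeneous degrees. So $P$ is an eigenvector of $\mathcal D_T$ acting on $K[x,y]_n$, where $n=\deg P$.

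Since $\operatorname{tr}T=0$ and $\det T\ne0$, the eigenvalues are $\pm\mu$ with $\mu\ne0$, and $\mu^2=-\det T$. Over $\overline K$ (characteristic zero) I change linear coordinates to eigen-coordinates $u,v$, which are linear forms with $\mathcal D_T u=\mu u$ and $\mathcal D_T v=-\mu v$. This is the dual action, and it is still diagonal with eigenvalues $\pm\mu$. Then $\mathcal D_T(u^iv^j)=(i-j)\mu\,u^iv^j$. The monomials of degree $n$ therefore have pairwise distinct eigenvalues $(2i-n)\mu$, because $\mu\ne0$ and the characteristic is zero. So every eigenvector in degree $n$ is a scalar multiple of a single monomial $u^iv^{n-i}$.

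It follows that $P=c\,u^iv^{n-i}$ over $\overline K$. Now I use irreducibility over $K$. The linear forms $u,v$ are defined over $K(\mu)$, which has degree at most $2$ over $K$, and the Galois conjugation (if nontrivial) exchanges the lines $u$ and $v$. The factorisation of $P$ into linear factors over $\overline K$ is unique up to scalars. If $\mu\in K$, then $u$ and $v$ are defined over $K$, and irreducibility forces $P$ to be a scalar multiple of $u$ or of $v$, so $\deg P=1$. If $\mu\notin K$, then $P$ is Galois-stable up to a scalar, so $i=n-i$. Moreover $uv$ is, up to a scalar, a $K$-rational quadratic form, and $P=c'(uv)^i$; irreducibility then forces $i=1$, so $\deg P=2$. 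In both cases $\deg P\leqslant 2$.

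The only delicate point is the descent step, namely that $uv$ is proportional to a form over $K$. I would handle it directly: up to scalars, $uv$ is the unique degree-$2$ eigenvector of eigenvalue $0$. That kernel is one-dimensional, and since $\mathcal D_T$ is defined over $K$ its kernel has a $K$-basis, so $uv$ is a scalar multiple of a $K$-rational quadratic form.
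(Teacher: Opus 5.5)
Your proof is correct and follows the paper's argument essentially step for step. You reduce to an eigenvector, diagonalise $T$ over the quadratic extension $K(\mu)$ so that $P=c\,u^iv^{n-i}$, and then use the Galois exchange of eigenlines together with irreducibility to force $\deg P\leqslant2$. The one difference is how you show $uv$ descends to $K$: you use the one-dimensional kernel of the $K$-rational operator $\mathcal D_T$ on quadratic forms, where the paper appeals to the Galois orbit of the eigenlines; both work.
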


\begin{proof}
Let $m=\deg P$.  Degree gives
$\mathcal D_T(P)=\lambda P$ with $\lambda\in K$.  Since
$T\in\mathfrak{sl}_2(K)$ and $\det T\ne0$, its characteristic polynomial
has two distinct roots $\alpha,-\alpha$ over a splitting field $L/K$ of
degree at most two.  Choose eigenlinear forms $u,v$ over $L$.  Then
\[
 \mathcal D_T=\alpha(u\partial_u-v\partial_v),
 \qquad \alpha\ne0.
\]
The monomials $u^rv^{m-r}$, $0\leqslant r\leqslant m$, have the pairwise
distinct eigenvalues $\alpha(2r-m)$.  Therefore the $\lambda$-eigenspace in
$\operatorname{Sym}^m(L^2)$ is one-dimensional, and for some
$c\in L^\times$ one has
\[
 P=c\,u^rv^{m-r}.
\]

If both eigenlines are defined over $K$, this factorisation is defined over
$K$ up to nonzero scalars.  The irreducibility of $P$ then forces $m=1$.
Suppose instead that $L/K$ is quadratic and let $\sigma$ be its nontrivial
Galois involution.  It interchanges the two eigenlines, so the Galois orbit
of either eigenline has size two.  Since $P$ is defined over $K$, the
multiset of its linear factors is $\sigma$-invariant.  Hence the
multiplicities of $u$ and $v$ agree: $r=m-r$.  It follows that
\[
 P=c(uv)^{m/2}.
\]
The product $uv$ descends, up to a scalar, to the irreducible quadratic
whose zero set is the two-element Galois orbit of eigenlines.  Irreducibility
of $P$ therefore forces $m/2=1$.  Thus $m\leqslant2$ in all cases.
\end{proof}

\begin{prop}[Abstract two-operator contradiction]
\label{prop:o3-abstract-contradiction}
Let
\[
 R=K[u,v,W],\qquad
 \operatorname{wt}(u)=\operatorname{wt}(v)=1,\quad
 \operatorname{wt}(W)=3,
\]
where $K$ has characteristic zero.  Let $P\in R$ be irreducible and
weighted homogeneous of weight $m\geqslant3$.  Suppose that the prime ideal
$(P)$ is stable under two weight-zero derivations
\[
 \delta_{\rm acc}=S_3(u,v)\partial_W,
 \qquad
 \delta_{\rm ss}=\mathcal D_T+R_3(u,v)\partial_W,
\]
where $0\ne S_3\in K[u,v]_3$, $R_3\in K[u,v]_3$, and
$T\in\mathfrak{sl}_2(K)$ has nonzero determinant.  Then these assumptions
are contradictory.
\end{prop}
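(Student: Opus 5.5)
The plan is to apply the two elementary lemmas in succession, in the order the paper set up: first the acceleration removes the Wronskian variable, then the semisimple operator bounds the degree of what is left. Since $(P)$ is stable under $\delta_{\rm acc}$, we have $\delta_{\rm acc}P\in(P)$ with $\delta_{\rm acc}=S_3\partial_W$ and $S_3\ne0$ homogeneous of degree $3$. Lemma~\ref{lem:o3-nilpotent} applies verbatim, since $P$ is nonzero and weighted homogeneous and $R=K[u,v,W]$ carries the same positive grading. It gives $\partial_WP=0$, so $P\in K[u,v]$. Because $\operatorname{wt}(u)=\operatorname{wt}(v)=1$, $P$ is then an ordinary homogeneous binary form of degree $m\geqslant3$. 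In particular the irreducible element $P=W$ of weight $3$ is excluded at this stage.

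Next we use $\delta_{\rm ss}$. Since $\partial_WP=0$, we get $\delta_{\rm ss}P=\mathcal D_T(P)+R_3\partial_WP=\mathcal D_T(P)$, which lies in $K[u,v]$ because $\mathcal D_T$ is a linear derivation in $u,v$. Stability gives $\mathcal D_T(P)=PQ$ for some $Q\in R$. To descend this to the binary ring, apply the $K[u,v]$-algebra retraction $R\to K[u,v]$, $W\mapsto0$. It fixes both $P$ and $\mathcal D_T(P)$, so $\mathcal D_T(P)=P\cdot Q(u,v,0)\in(P)K[u,v]$.

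We also need $P$ to be irreducible in $K[u,v]$. This holds because any factorisation in $K[u,v]$ is also a factorisation in $R$, and the units of both rings are $K^\times$. Lemma~\ref{lem:o3-binary}, applied with $T\in\mathfrak{sl}_2(K)$ and $\det T\ne0$, then yields $\deg P\leqslant2$. This contradicts $m\geqslant3$.

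I expect no genuine obstacle. The only points needing care are two descent steps. The first is passing from divisibility in $R$ to divisibility in $K[u,v]$, handled by the retraction $W\mapsto0$. The second is passing from irreducibility in $R$ to irreducibility in $K[u,v]$. Both are immediate, so the weight of the argument rests on the two lemmas already proved.
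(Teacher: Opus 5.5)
Your proof is correct and follows the paper's argument: Lemma~\ref{lem:o3-nilpotent} removes $W$, after which $\delta_{\rm ss}$ acts on $P$ as $\mathcal D_T$ and Lemma~\ref{lem:o3-binary} forces $m\leqslant2$. The paper leaves two descent steps implicit, and you handle both correctly: ideal membership passes to $K[u,v]$ via the retraction $W\mapsto0$, and irreducibility of $P$ in $K[u,v]$ follows from irreducibility in $R$.
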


\begin{proof}
Stability under $\delta_{\rm acc}$ and
Lemma~\ref{lem:o3-nilpotent} give $P\in K[u,v]$.  Hence the
$R_3\partial_W$ term annihilates $P$, and stability under
$\delta_{\rm ss}$ becomes
\[
 \mathcal D_T(P)\in(P).
\]
The weighted degree of $P$ is now its ordinary binary degree.  Lemma
\ref{lem:o3-binary} gives $m\leqslant2$, contrary to $m\geqslant3$.
\end{proof}

\subsection{The common-twist and prime-ideal interfaces}
\label{sec:o3-interfaces}

We next record explicitly the two interfaces that are often hidden in the
phrase ``evaluate the slanted field at a point.''  Let $V_d$ be the vector
space of degree-$d$ equations, let $V_d\setminus\{0\}$ be its coefficient
cone, and write $[a]\in\PP(V_d)$ for the corresponding projective parameter.
A cone field
\[
 \sum_\alpha q_\alpha(a,z)\partial_{a_\alpha}
\]
whose coefficient vector is homogeneous of degree $e$ in $a$ scales by
$\lambda^{e-1}$ under $a\mapsto\lambda a$.  Modulo the coefficient Euler
field, it therefore descends with parameter twist
$\OO_{\PP(V_d)}(e-1)$.  In particular, a field whose parameter coefficients
are linear in $a$ is parameter-untwisted, whereas a constant translation has
parameter twist $\OO_{\PP(V_d)}(-1)$.

Fix a coefficient linear form
$\ell\in H^0(\PP(V_d),\OO_{\PP(V_d)}(1))$ and work on $\{\ell\ne0\}$.  For
multi-indices $\alpha,\gamma\in\NN^3$ satisfying
$\gamma\leqslant\alpha$ componentwise, $|\gamma|=3$, and
$|\alpha|\leqslant d$, define the raw third-difference field
\begin{equation}
 D_{\alpha,\gamma}^{[3]}
 =\sum_{\eta\leqslant\gamma}(-1)^{|\eta|}
   \binom{\gamma}{\eta}z^\eta
   \frac{\partial}{\partial a_{\alpha-\eta}}.
 \label{eq:o3-raw-third-difference}
\end{equation}
It has no base or jet component.  Applied to the universal polynomial in a
dummy variable $Y$, it gives the exact identity
\begin{equation}
 D_{\alpha,\gamma}^{[3]}\bigl(F(Y,a)\bigr)
 =Y^{\alpha-\gamma}(Y-z)^\gamma.
 \label{eq:o3-third-difference-polynomial}
\end{equation}
Thus its value, gradient, and Hessian in $Y$ vanish at $Y=z$, which is
precisely tangency to the three vertical two-jet equations.  Its coefficients
have degree at most three in $z$ and cone degree zero in the parameters, so
\[
 D_{\alpha,\gamma}^{[3]}
 \in
 T_{J_2^v}\otimes\OO_{\mathrm{base}}(3)
 \boxtimes\OO_{\PP(V_d)}(-1).
\]
These are the order-three fields in the finite-difference package of
\cite[Section~3, ``Higher lengths'']{Merker2009}.  If
$\Phi_{\alpha,\gamma}(Y,z):=Y^{\alpha-\gamma}(Y-z)^\gamma$, then
\begin{equation}
 \operatorname{span}_K\{\Phi_{\alpha,\gamma}\}
 =\mathfrak m_z^3\cap K[Y]_{\leqslant d}.
 \label{eq:o3-third-difference-span}
\end{equation}
Indeed, the Taylor monomials $(Y-z)^\nu$, with
$3\leqslant|\nu|\leqslant d$, form a basis of the right-hand side.  Choose
$\gamma\leqslant\nu$ with $|\gamma|=3$ and expand
$(Y-z)^{\nu-\gamma}$ in powers of $Y$; every resulting term is one of the
displayed $\Phi_{\alpha,\gamma}$.

\begin{lem}[Common-twist cancellation with coefficient homogeneity]
\label{lem:o3-common-twist}
Let $V$ be a parameter-untwisted field of base twist $\OO(3)$ whose cone
parameter component is represented by a polynomial
$Q_V(Y;z,a)$ of degree at most $d$ in $Y$ and degree one in $a$.  Suppose
that, over an incidence function field $K$,
\[
 j_z^2Q_V=0.
\]
On the chart $\ell\ne0$, put $\overline Q_V=Q_V/\ell$.  Choose
$c_i\in K$ such that
\[
 \overline Q_V=\sum_i c_i\Phi_i
 \quad\text{in }\mathfrak m_z^3\cap K[Y]_{\leqslant d},
\]
where the $\Phi_i$ are third-difference polynomials, and set
\[
 \mathcal V=\ell V-\sum_i c_i\ell^2D_i.
\]
Then $\mathcal V$ is an actual local section of
\[
 T_{J_2^v}\otimes\OO_{\mathrm{base}}(3)
 \boxtimes\OO_{\PP(V_d)}(1),
\]
not merely a subtraction of tangent-vector values.  If the prime ideal
$(P)$ is stable under $V$ and every $D_i$, it is stable under $\mathcal V$;
after cancellation, evaluation leaves the residual derivation on the whole
unlocalised fibre ring $K[x,y,W]$.
\end{lem}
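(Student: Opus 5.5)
The plan is to prove the three assertions of Lemma~\ref{lem:o3-common-twist} in turn: the existence of the $c_i$, the twist bookkeeping for $\mathcal V$, and stability of $(P)$ together with the passage to the fibre ring.

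\emph{Existence of the $c_i$.} By hypothesis $Q_V$ is linear in $a$, and $j_z^2Q_V=0$ says that its value, gradient and Hessian in $Y$ vanish at $Y=z$. On the chart $\ell\ne0$ the quotient $\overline Q_V=Q_V/\ell$ is homogeneous of degree zero in $a$. Hence $\overline Q_V$ is a well-defined element of $\mathfrak m_z^3\cap K[Y]_{\leqslant d}$ with coefficients in $K$. The span identity \eqref{eq:o3-third-difference-span} then provides $c_i\in K$ with $\overline Q_V=\sum_i c_i\Phi_i$.

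\emph{Twist bookkeeping.} The base twist is $\OO(3)$ for every field involved: $V$ has base twist $\OO(3)$ by assumption, and each $D_i$ has $z$-degree at most three. The $c_i$ are cone-degree zero in $a$, and I would arrange them to carry no base weight by choosing the $\Phi_i$ with matching $z$-homogeneity. For the parameter twist, $\ell V$ has parameter twist $\OO(0)\otimes\OO(1)=\OO(1)$, and $\ell^2D_i$ has $\OO(2)\otimes\OO(-1)=\OO(1)$. So both terms of $\mathcal V$ lie in the same bundle
\[
 T_{J_2^v}\otimes\OO_{\mathrm{base}}(3)\boxtimes\OO_{\PP(V_d)}(1).
\]
This is the common-twist cancellation: $\mathcal V$ is a genuine section of one line-bundle twist of the relative tangent sheaf, not a difference of values taken in different bundles. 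I would check directly that multiplying by $\ell$ and $\ell^2$ clears the chart denominators, so that $\mathcal V$ extends holomorphically over $\{\ell\ne0\}$ as a section of the stated bundle. Moreover, the parameter component of $\mathcal V$ represented on $F(Y,a)$ is
\[
 \ell Q_V-\ell^2\sum_i c_i\Phi_i=\ell Q_V-\ell Q_V=0,
\]
by \eqref{eq:o3-third-difference-polynomial}. Hence $\mathcal V$ has no net cone-parameter component and is vertical over the parameter direction.

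\emph{Stability of $(P)$ and the fibre ring.} If $V(P)\in(P)$ and $D_i(P)\in(P)$, then the Leibniz rule with coefficients in the function field gives
\[
 \mathcal V(P)=\ell V(P)-\sum_i c_i\ell^2D_i(P)\in(P),
\]
since multiplication by $\ell$, $\ell^2$ and the $c_i$ is scalar in $K$ and ideals are closed under $K$-multiples. Because the parameter component is cancelled, $\mathcal V$ acts only through its base and jet components. It therefore restricts to a derivation of the fibre ring of vertical two-jet invariants. That this derivation preserves the polynomial ring $K[x,y,W]$, rather than only a localisation, follows from Lemma~\ref{lem:o3-invariant-ring}: the jet coefficients of $V$ are polynomial in $(x,y,X,Y)$ and commute with the reparametrization derivation $x\partial_X+y\partial_Y$, so they map the kernel $K[x,y,W]$ into itself.

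The main obstacle I expect is this last commutation claim. It amounts to checking that $V$ is equivariant for reparametrization, so that evaluation really yields a weight-zero derivation of $K[x,y,W]$ of the form required in Proposition~\ref{prop:o3-abstract-contradiction}. I would verify it from the explicit form of $V$ given later, using the fact that the $D_i$ contribute nothing on jet coordinates.
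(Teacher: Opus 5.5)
Your outline matches the paper's proof in its main steps: the twist count $\OO(0)\otimes\OO(1)=\OO(2)\otimes\OO(-1)=\OO(1)$, the identity $\ell Q_V=\ell^2\sum_ic_i\Phi_i$ that kills the cone-parameter component, and closure of ideal stability under $K$-linear combinations. One step, however, fails as stated, though it is easily repaired.

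You propose to check that multiplication by $\ell$ and $\ell^2$ clears the chart denominators, so that $\mathcal V$ extends holomorphically over all of $\{\ell\ne0\}$. On that chart $\ell$ is a nowhere-vanishing frame of $\OO_{\PP(V_d)}(1)$. Multiplying by $\ell$ or $\ell^2$ therefore only moves terms between twists and does nothing to the poles of the $c_i$. The $c_i$ are arbitrary elements of the incidence function field $K$, and their denominators are in general not powers of $\ell$. In the applications they come from the rational right inverse of Lemma~\ref{lem:simultaneous-open}, which inverts a maximal minor, and from rational frame choices.

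The point you need is the paper's. The $c_i$ are degree-zero rational functions on the projective incidence, so they carry no line-bundle twist at all; no ``base weight'' has to be arranged. Their only defect is poles. After shrinking the incidence open around its generic point, the finitely many $c_i$ become regular. That is why the lemma asserts only a \emph{local} section.

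For the final assertion you take a more explicit route than the paper. The paper only notes that the $D_i$ have no jet component, so the residual action is that of $\ell V$ on the jet variables. You add that $V$ commutes with $x\partial_X+y\partial_Y$, and invoke Lemma~\ref{lem:o3-invariant-ring} to conclude that $K[x,y,W]$ itself is preserved. This is valid provided $V$ is reparametrization equivariant, as every slanted field used in the paper is (Propositions~\ref{prop:o3-matrix-lift}, \ref{prop:o3-compact-acceleration}, \ref{prop:o3-log-acceleration} and~\ref{prop:o3-log-matrix-lift}). It makes explicit what the paper leaves implicit.

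Do correct one phrase, though. After cancellation $\mathcal V$ acts on the jet variables only, not ``through its base and jet components''; these fields fix the base point. A surviving base component would differentiate the coefficients in $K$. The residual action would then not be the $K$-linear derivation of $K[x,y,W]$ that the later argument needs.
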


\begin{proof}
The parameter component of $V$ has cone degree one, whereas that of each
$D_i$ has cone degree zero.  The identity
\[
 Q_V=\ell\sum_i c_i\Phi_i
\]
therefore has degree one on both sides.  Multiplication by $\ell$ puts $V$
in parameter twist $\OO(1)$, while multiplication by $\ell^2$ changes the
twist of $D_i$ from $\OO(-1)$ to $\OO(1)$.  Every summand has the same base
twist $\OO(3)$ as well.  The functions $c_i$ have coefficient degree zero:
they are rational functions on the projective incidence space, not
additional line-bundle sections.  After shrinking the incidence open around
its generic point, all finitely many $c_i$ are regular.  Thus the displayed
formula is a genuine local section of the asserted bundle.

The cone parameter component of $\ell V$ is
$\ell Q_V=\ell^2\sum_i c_i\Phi_i$, exactly the parameter component of the
sum being subtracted.  The cancellation is consequently an identity over
the chosen incidence open.  Ideal stability is closed under multiplication
by regular functions and finite sums, so $\mathcal V(P)\in(P)$.  Finally,
the $D_i$ have no jet component.  After fixing the base and parameter
variables, the residual action of $\mathcal V$ is therefore a derivation of
the whole polynomial ring $K[x,y,W]$; the variables $x,y,W$ have not been
specialised or inverted.
\end{proof}

\begin{conv}[Projective normalisation]
\label{conv:o3-projective-normalization}
On the chart $\ell\ne0$, every occurrence of the universal equation in the
moving Hessian and third-order tensors below means the normalised equation
$F/\ell$ (and, in the logarithmic setting, $f/\ell$).  Consequently all
moving coefficients are rational functions of projective degree zero on the
incidence space.  The explicit powers of $\ell$ in
Lemma~\ref{lem:o3-common-twist} account for the entire parameter twist; no
additional coefficient twist is hidden in the tensor notation.
\end{conv}

\begin{defi}[Nonstationary irreducible factor]
\label{def:o3-irreducible-factor}
Let $\omega$ be an invariant two-jet differential, viewed as a section on
the second Demailly--Semple stage, and write
\[
 \operatorname{div}(\omega)
 =\sum_j n_jZ_j+b\Gamma_2
\]
with the $Z_j$ prime divisors different from the stationary divisor
$\Gamma_2$.  By a nonstationary irreducible factor we mean the invariant
factor associated by the standard Demailly--El~Goul reduction to one
reduced prime Cartier component $Z_j$.  Thus the factorisation is made in
the local regular rings of the Semple tower.  After trivialising its line
bundle and choosing a tangent frame on the regular jet locus, the same
factor is represented by an irreducible weighted-homogeneous polynomial in
$K[x,y,W]$.  No localisation in a first-jet variable or in $W$ is part of
this definition.
\end{defi}

\subsubsection{Relative compact jet-divisor families}
\label{sec:o3-compact-relative-family}

We now supply the relative-family input needed to apply the preceding
function-field argument to a jet differential initially chosen on one
surface.  The construction follows Siu~\cite[pp.~558--559]{Siu2004}, the
compact extension argument of
\cite[Proposition~5.4]{HHMX2026}, and the relative-divisor construction in
\cite[\S6.2]{HHMX2026}.  The following use of the direct-image theorem---to
pass from fibrewise jet-direction loci and irreducible factors to algebraic
data over the parameter space---was suggested by Professor Yum-Tong Siu
during the Tianyuan Conference on Several Complex Variables in the summer of
2024; see the Acknowledgement below.

\begin{thm}[Generic extension of fibrewise sections]
\label{thm:o3-generic-extension}
Let $\tau\colon\mathcal Z\to S$ be a projective flat morphism of finite
presentation, with $S$ integral and noetherian, and let $\mathcal L$ be a
locally free sheaf on $\mathcal Z$.  There is a dense Zariski open
$S^\circ\subset S$ on which $\tau_*\mathcal L$ is locally free and the
base-change map
\[
 (\tau_*\mathcal L)\otimes\kappa(s)
 \longrightarrow H^0(\mathcal Z_s,\mathcal L_s)
\]
is an isomorphism for every $s\in S^\circ$.  Consequently, after replacing
$S^\circ$ by a Zariski open neighbourhood of $s$, every section of
$\mathcal L_s$ extends to a regular relative section.
\end{thm}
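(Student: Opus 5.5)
The plan is to reduce the statement to the two standard facts of Grothendieck's cohomology-and-base-change package, namely generic flatness and generic freeness, followed by a semicontinuity argument.

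\emph{Step 1: flatness of $\mathcal L$.} Since $\mathcal L$ is locally free and $\tau$ is flat, $\mathcal L$ is flat over $S$. Because $\tau$ is projective and of finite presentation and $S$ is noetherian, each higher direct image $R^i\tau_*\mathcal L$ is coherent. Grothendieck's theorem then supplies, Zariski-locally on $S$, a bounded complex $K^\bullet$ of finite free $\OO_S$-modules that computes the cohomology universally: for every $s\in S$,
\[
 H^i(\mathcal Z_s,\mathcal L_s)\cong H^i(K^\bullet\otimes\kappa(s)).
\]

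\emph{Step 2: generic freeness.} Since $S$ is integral, generic freeness lets us shrink to a dense open $S^\circ$ on which all the coherent sheaves $R^i\tau_*\mathcal L$ are locally free. This includes the cokernels of the differentials of $K^\bullet$, of which there are finitely many. On $S^\circ$ the complex $K^\bullet$ then splits into short exact sequences of locally free modules, and these remain exact after tensoring with $\kappa(s)$. Hence the base-change map
\[
 R^i\tau_*\mathcal L\otimes\kappa(s)\longrightarrow H^i(\mathcal Z_s,\mathcal L_s)
\]
is an isomorphism for every $i$ and every $s\in S^\circ$; the case $i=0$ is the claim. An alternative route is to note that $s\mapsto h^0(\mathcal Z_s,\mathcal L_s)$ is upper semicontinuous and therefore takes its generic, minimal value on a dense open set. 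On that set it is constant, and Grauert's theorem over the reduced, since integral, base gives local freeness together with the base-change isomorphism. Either route works; I would present the complex argument because it does not need reducedness beyond integrality.

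\emph{Step 3: extension of sections.} Given $s\in S^\circ$ and a section $\sigma_s\in H^0(\mathcal Z_s,\mathcal L_s)$, surjectivity of the base-change map gives $\sigma_s$ as the image of an element of $(\tau_*\mathcal L)\otimes\kappa(s)$. Choose an affine open $U\ni s$ inside $S^\circ$ on which $\tau_*\mathcal L$ is free. By Nakayama, any element of the fibre lifts to a section of $\tau_*\mathcal L$ over $U$, and such a section is exactly a regular section of $\mathcal L$ over $\tau^{-1}(U)$ restricting to $\sigma_s$.

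The main obstacle, though a mild one, is the statement that the base change holds for every $s\in S^\circ$ and not only at the generic point. This requires that local freeness be imposed on all the higher direct images, or at least on $R^1$, and not just on $\tau_*\mathcal L$. That is why Step 2 shrinks so that every cohomology sheaf of $K^\bullet$ is locally free, rather than invoking generic freeness of $\tau_*\mathcal L$ alone.
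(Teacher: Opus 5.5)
Your proposal is correct and follows the paper's argument: generic freeness and cohomology and base change, which the paper compresses into a citation of Hartshorne III.12, followed by lifting a fibre vector of the locally free sheaf $\tau_*\mathcal L$ over an affine trivialising neighbourhood of $s$. Two small inaccuracies do not affect your main line: the lift in Step~3 is not Nakayama but simply surjectivity of $M\to M\otimes\kappa(s)$ at a closed point (at a non-closed point one must shrink $U$ further, which the statement permits), and your parenthetical that local freeness of $R^1$ alone would suffice is false in general---what is needed is the full control of all $R^i$ that you impose in Step~2, or constancy of $h^0$.
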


\begin{proof}
After generic freeness and shrinking the base, cohomology and base change
applies to $\mathcal L$ and makes the displayed map an isomorphism; see
\cite[Chapter~III, \S12, especially p.~288]{HartshorneGTM52}.  A vector in
the fibre of the resulting locally free sheaf $\tau_*\mathcal L$ extends
over an affine trivializing neighbourhood of $s$.  Its corresponding
relative section restricts to the prescribed fibrewise section.
\end{proof}

\begin{lem}[Relative residual Cartier divisors]
\label{lem:o3-relative-cartier}
Let $\tau\colon\mathcal Y\to S$ be projective, flat and of finite
presentation, where $S$ and $\mathcal Y$ are integral noetherian schemes and
the fibres of $\tau$ are geometrically integral.  Let $\mathcal L$ be an
invertible sheaf and let $s\in H^0(\mathcal Y,\mathcal L)$ restrict to a
nonzero section on every fibre.  Then its zero scheme
$D=Z(s)$ is an effective Cartier divisor, and
\[
 D\longrightarrow S
\]
is projective, flat and of finite presentation.  If one fibre $D_{s_0}$ is
geometrically integral, then, after shrinking $S$ around $s_0$, every fibre
of $D\to S$ is geometrically integral.

More generally, let $\Gamma\subset\mathcal Y$ be a relative prime effective
Cartier divisor whose fibres $\Gamma_\sigma$ are integral, with canonical
section $s_\Gamma$.  Fix $b\geqslant0$ and suppose that
\[
 \widetilde s\in H^0\!\left(\mathcal Y,\mathcal L(-b\Gamma)\right),
 \qquad
 s=s_\Gamma^b\widetilde s,
\]
and that, for every $\sigma\in S$,
\[
 \widetilde s_\sigma\ne0,
 \qquad
 \left.\widetilde s_\sigma\right|_{\Gamma_\sigma}\ne0.
\]
Then the same conclusions apply to the residual divisor
$Z(\widetilde s)$, and $b$ is the full order of $s_\sigma$ along
$\Gamma_\sigma$ on every fibre.
\end{lem}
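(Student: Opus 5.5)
We will handle the two assertions of Lemma~\ref{lem:o3-relative-cartier} in turn. For the first, we work locally on $\mathcal Y$, trivialising $\mathcal L$ on an affine open, so that $s$ becomes a function $g$. We must show that $g$ is a nonzerodivisor in every local ring $\OO_{\mathcal Y,y}$. Because $\mathcal Y$ is integral, it is enough that $g$ be nonzero at the generic point. This holds because $s$ is nonzero on every fibre, and in particular it does not vanish identically. Hence $D=Z(s)$ is an effective Cartier divisor.

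Flatness of $D\to S$ is where the hypothesis really enters. We plan to use the local criterion for flatness in the form of \cite[EGA IV, 11.3.8]{HartshorneGTM52}: if $\mathcal Y\to S$ is flat of finite presentation and $g$ restricts to a nonzerodivisor on each fibre $\mathcal Y_\sigma$ at every point, then $\OO_{\mathcal Y}/(g)$ is flat over $S$. The restriction $g_\sigma$ is a nonzerodivisor because $\mathcal Y_\sigma$ is integral and $s_\sigma\ne0$. Projectivity is inherited from the fact that $D\subset\mathcal Y$ is a closed subscheme. Finite presentation follows because $D$ is cut out locally by one equation.

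For geometric integrality we use openness. The locus of $\sigma\in S$ over which the fibre of a proper, flat, finitely presented morphism is geometrically integral is open; this is EGA IV, 12.2.4. Since that locus contains $s_0$, we shrink $S$ to it.

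The residual statement runs the same way with $\widetilde s$ in place of $s$ and $\mathcal L(-b\Gamma)$ in place of $\mathcal L$. This bundle is invertible because $\Gamma$ is Cartier. The fibrewise hypothesis $\widetilde s_\sigma\ne0$ is exactly what the argument above needs, so $Z(\widetilde s)$ is Cartier and flat, and geometric integrality is again open. For the order statement, $s_\sigma=s_{\Gamma_\sigma}^b\widetilde s_\sigma$ holds on the integral fibre $\mathcal Y_\sigma$. Here we need that $\Gamma$ restricts to a Cartier divisor with local equation $s_{\Gamma_\sigma}$. This follows from the first part applied to $s_\Gamma$, provided $s_\Gamma$ is fibrewise nonzero, which the integrality of $\Gamma_\sigma$ as a proper divisor gives. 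Now $\widetilde s_\sigma|_{\Gamma_\sigma}\ne0$, so $\widetilde s_\sigma$ is not divisible by the prime local equation of $\Gamma_\sigma$ at its generic point. The local ring there is a DVR, because $\mathcal Y_\sigma$ is regular in codimension one at that point. We will need to assume or arrange this, for instance because the Semple fibres are smooth. Hence $\ord_{\Gamma_\sigma}s_\sigma=b$ exactly.

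We expect the main obstacle to be the fibrewise flatness criterion. Its precise hypotheses require noetherian or finitely presented data and a nonzerodivisor at every point of the fibre, not just at its generic point. A second delicate point is the DVR claim at the generic point of $\Gamma_\sigma$, which needs normality of $\mathcal Y_\sigma$ in codimension one. We would either state that as implicit in the setting of the Semple tower or restrict attention to it there.
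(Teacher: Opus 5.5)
Your proposal is correct and follows the paper's proof essentially step for step. It uses a local trivialisation plus integrality of $\mathcal Y$ for the Cartier property, the fibrewise local flatness criterion (a nonzerodivisor on the integral fibres), and openness of geometric integrality for proper flat finitely presented maps. It then applies the first part to $\widetilde s$ and reads off the order from $h\notin(g)$ at the generic point of $\Gamma_\sigma$.

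The extra regularity you propose to assume there is in fact automatic, so no additional hypothesis is needed. Since $\Gamma_\sigma$ is an integral Cartier divisor, the maximal ideal of that local ring is generated by the local equation $g$, so the ring is already a DVR. Alternatively, avoiding valuations, $g^bh\in(g^{b+1})$ in a domain forces $h\in(g)$.
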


\begin{proof}
After a local trivialisation of $\mathcal L$, the section $s$ is represented
by a regular function $h$.  Since $\mathcal Y$ is integral and $h$ is
nonzero on the generic fibre, $h$ is a nonzerodivisor; hence $D$ is an
effective Cartier divisor.  The two terms on the left in
\[
 0\longrightarrow\mathcal L^{-1}
 \xrightarrow{\ s\ }\OO_{\mathcal Y}
 \longrightarrow\OO_D\longrightarrow0
\]
are flat over $S$.  On a fibre $\mathcal Y_\sigma$, which is integral,
$s_\sigma\ne0$, so multiplication by $s_\sigma$ remains injective.  The
fibrewise local criterion for flatness therefore makes $\OO_D$ flat over
$S$.  Finite presentation follows from the fact that $D$ is Cartier, and
projectivity follows because it is closed in $\mathcal Y$.  These are the
standard relative-effective-Cartier hypotheses; see
\cite[Tags~062Y and 056P]{stacks-project}.

For a projective flat morphism of finite presentation, geometric reducedness
and geometric irreducibility, hence geometric integrality, are open after
shrinking around a geometrically integral fibre; see
\cite[Tags~0574 and 0559]{stacks-project}.  This proves the first assertion.
For the last assertion, the equality
$s=s_\Gamma^b\widetilde s$ multiplies a section of
$\mathcal L(-b\Gamma)$ by the canonical section of $\OO(b\Gamma)$ and
returns a section of $\mathcal L$.  Applying the first part to
$\widetilde s$ gives the residual statement.  Fibrewise, let $g$ define
$\Gamma_\sigma$ and let $h$ represent $\widetilde s_\sigma$ near its
generic point.  The restriction condition says $h\notin(g)$, so
$s_\sigma=g^bh$ has order exactly $b$ and $Z(h)$ has no stationary
component.
\end{proof}

\begin{rem}[Fibrewise stationary order can jump]
\label{rem:o3-stationary-order-jump}
The restriction condition in Lemma~\ref{lem:o3-relative-cartier} is
essential.  In the local model
\[
 s=x(t+xy),\qquad \Gamma=\{x=0\},
\]
with parameter $t$, the order along $\Gamma$ is one on the generic fibre,
whereas $s|_{t=0}=x^2y$ has order two.  Dividing only by the generic factor
$x$ leaves $t+xy$, whose special fibre $xy$ still contains $\Gamma_0$.
Accordingly, the relative constructions below first remove the full
stationary order on the chosen fibre and then extend that residual section.
\end{rem}

\begin{lem}[No negatively twisted factors below weight three]
\label{lem:o3-low-weight}
Let $t\geqslant1$.
\begin{enumerate}
\item If $X\subset\PP^3$ is a smooth surface, then
\[
 H^0\!\left(X,E_{2,m}T_X^*\otimes\OO_X(-t)\right)=0,
 \qquad m=1,2.
\]
\item If $C\subset\PP^2$ is a smooth irreducible curve, then
\[
 H^0\!\left(\PP^2,E_{2,m}T_{\PP^2}^*(\log C)
                  \otimes\OO_{\PP^2}(-t)\right)=0,
 \qquad m=1,2.
\]
\end{enumerate}
Consequently every positive-slope irreducible factor selected below has
weight $m\geqslant3$.
\end{lem}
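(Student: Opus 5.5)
The plan is to use the explicit structure of $E_{2,m}$ in weights one and two.  The weighted graded pieces of the invariant fibre ring $K[x,y,W]$ in weights $1$ and $2$ contain no $W$, since $\operatorname{wt}(W)=3$.  Hence $E_{2,1}T^*=T^*$ and $E_{2,2}T^*=\Sym^2T^*$, both for $T_X^*$ and for the logarithmic cotangent bundle.  I would justify this through the standard Demailly filtration, whose graded pieces are $\Sym^{\ell_1}T^*\otimes K^{\ell_2}$ with $\ell_1+3\ell_2=m$; for $m\leqslant2$ only $\ell_2=0$ occurs.  The statement thus reduces to
\[
H^0\bigl(\Sym^mT^*\otimes\OO(-t)\bigr)=0,\qquad m=1,2,\ t\geqslant1.
\]
By the twist monotonicity recalled in the introduction, it is enough to treat $t=1$.

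In the compact case, for a smooth surface $X\subset\PP^3$, I would use the conormal and Euler sequences.  The first is
\[
0\to\OO_X(-d)\to\Omega_{\PP^3}|_X\to\Omega_X\to0,
\]
and the second is
\[
0\to\Omega_{\PP^3}|_X\to\OO_X(-1)^{4}\to\OO_X\to0.
\]
I would then take symmetric powers and twist by $\OO(-1)$.  For $m=1$, $H^0(\Omega_X(-1))$ injects into $H^1(\OO_X(-d-1))$ after checking that $H^0(\Omega_{\PP^3}|_X(-1))=0$; the latter follows from the Euler sequence because $H^0(\OO_X(-2))=0$.  Since $H^1(\OO_X(k))=0$ for all $k$ on a hypersurface surface, the group vanishes.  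For $m=2$ I would use the filtration of $\Sym^2\Omega_{\PP^3}|_X$ induced by the conormal sequence, together with Bott-type vanishing $H^0(\Sym^2\Omega_{\PP^3}(-1)|_X)=0$ and $H^1$ of the relevant twisted pieces.  The latter is obtained from the restriction sequence $0\to\mathcal F(-d)\to\mathcal F\to\mathcal F|_X\to0$ on $\PP^3$ together with Bott's formulas.  This is also the classical statement that a smooth surface in $\PP^3$ has no twisted symmetric differentials of degree $\leqslant2$ with negative twist; compare the reference to Sakai~\cite{Sakai1979} in the introduction, which I would cite where possible.

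In the logarithmic case, with $C$ of degree $d$, I would use the residue sequence
\[
0\to\Omega_{\PP^2}\to\Omega_{\PP^2}(\log C)\to\OO_C\to0.
\]
For $m=1$, after twisting by $\OO(-1)$, the left group $H^0(\Omega_{\PP^2}(-1))$ vanishes and $H^0(\OO_C(-1))=0$, so the middle vanishes.  For $m=2$ I would use the induced filtration of $\Sym^2\Omega(\log C)$, with graded pieces $\Sym^2\Omega_{\PP^2}$, $\Omega_{\PP^2}|_C\otimes\OO_C$-type terms, and $\OO_C$ (more precisely $\Omega_{\PP^2}(\log C)|_C$ modulo its pieces).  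Twisted by $\OO(-1)$, each piece has no sections: Bott gives $H^0(\Sym^2\Omega_{\PP^2}(-1))=0$, and on $C$ the conormal description $\Omega_{\PP^2}|_C\to\OO_C(-d)\oplus\cdots$ gives negative-degree pieces.

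The main obstacle is the $m=2$ step in each case.  There one must control the middle graded piece of the symmetric-square filtration rather than a single short exact sequence.  I would first try to pass to the whole ambient $\PP^3$ or $\PP^2$ with Bott vanishing, and fall back on the negativity of each restricted piece on the curve or surface.  The final sentence of the lemma is then immediate: a selected factor has positive slope and negative twist, so by the above vanishing its weight cannot be $1$ or $2$.
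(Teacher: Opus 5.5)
Your argument is correct in outline, but it takes a different route from the paper in both clauses.

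\textbf{Compact case.} The paper computes nothing here. It quotes Sakai's theorem $H^0(X,\Sym^m\Omega^1_X)=0$ for every $m\geqslant1$, which is the \emph{untwisted} statement. It then injects the twisted space by multiplying with a nonzero section of $\OO_X(t)$. Your conormal/Euler/Bott computation proves only the twisted vanishing for $m\leqslant2$, but it is self-contained. For $m=2$ the sequence that does the work is
\[
0\to\OO_X(-d)\otimes\Omega_{\PP^3}|_X\to\Sym^2\Omega_{\PP^3}|_X\to\Sym^2\Omega_X\to0,
\]
twisted by $\OO(-t)$. It needs two inputs, and both hold:
\begin{itemize}
\item $H^0(\Sym^2\Omega_{\PP^3}|_X(-t))=0$, because by the Euler sequence this bundle lies in $\OO_X(-2-t)^{10}$;
\item $H^1(\Omega_{\PP^3}|_X(-d-t))=0$, by the restriction sequence and Bott ($H^1(\Omega^1_{\PP^3}(k))=0$ for $k\neq0$, and $H^2(\Omega^1_{\PP^3}(k))=0$).
\end{itemize}

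\textbf{Logarithmic case.} Write $A=\Omega^1_{\PP^2}$ and $E=\Omega^1_{\PP^2}(\log C)$. The paper uses the second residue $\operatorname{res}_2$, its kernel $K_2$, and the resolution $0\to K_{\PP^2}\to A\otimes E\to K_2\to0$. This costs the extra input $H^1(\OO_{\PP^2}(-3-t))=0$. Your filtration $\Sym^2A\subset A\cdot E\subset\Sym^2E$ uses the same $K_2$ (here $A\cdot E=K_2$). Its graded pieces are
\[
\Sym^2\Omega_{\PP^2},\qquad A\otimes(E/A)\cong\Omega_{\PP^2}|_C,\qquad (E/A)^{\otimes2}\cong\OO_C.
\]
You then need only $H^0$-vanishing of each twisted piece, which is slightly more economical.

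\textbf{One justification must be replaced.} The middle piece cannot be handled by the conormal sequence of $C$. In $0\to\OO_C(-d)\to\Omega_{\PP^2}|_C\to K_C\to0$ the twisted quotient is $K_C(-t)=\OO_C(d-3-t)$. This has many sections (already $\OO_C(7)$ for $d=11$, $t=1$), so that sequence gives no vanishing.
\begin{itemize}
\item Instead, restrict the Euler sequence to $C$. It embeds $\Omega_{\PP^2}|_C(-t)$ into $\OO_C(-1-t)^{3}$, exactly as you already did for $\Omega_{\PP^3}|_X(-1)$.
\item Alternatively, use your ambient fallback: $H^0(\Omega_{\PP^2}(-t))=0$ and $H^1(\Omega_{\PP^2}(-t-d))=0$ by Bott.
\end{itemize}
Likewise, the top piece is simply $(E/A)^{\otimes2}\cong\OO_C$ via $\operatorname{res}_2$, not ``$\Omega_{\PP^2}(\log C)|_C$ modulo its pieces''. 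With these repairs the proof is complete.
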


\begin{proof}
Below weight three the invariant fibre ring has no Wronskian generator, so
$E_{2,m}T_X^*=\Sym^m\Omega_X^1$ for $m=1,2$, and likewise in the
logarithmic case.  Sakai's vanishing theorem gives
$H^0(X,\Sym^m\Omega_X^1)=0$ for a smooth surface in $\PP^3$
\cite{Sakai1979}.  Multiplication by a nonzero section of $\OO_X(t)$
injects the negatively twisted space into this zero space.

For the logarithmic assertion put
$A=\Omega^1_{\PP^2}$, $E=\Omega^1_{\PP^2}(\log C)$, and let
$i:C\hookrightarrow\PP^2$.  The residue sequence
\[
 0\longrightarrow A\longrightarrow E
 \xrightarrow{\operatorname{res}}i_*\OO_C\longrightarrow0
\]
gives $H^0(E(-t))=0$: the Euler sequence gives $H^0(A(-t))=0$, while
$H^0(C,\OO_C(-t))=0$.

For $\Sym^2E$, choose local coordinates with $C=\{z=0\}$ and write
$e=dz/z$, $f=dw$.  The intrinsic second residue
\[
 \operatorname{res}_2:\Sym^2E\longrightarrow i_*\OO_C,
 \qquad A_0e^2+B_0ef+C_0f^2\longmapsto A_0|_C
\]
is surjective.  If $K_2$ is its kernel, the local generators
$ze^2,ef,f^2$ give exact sequences
\begin{align*}
 0&\longrightarrow K_{\PP^2}\longrightarrow A\otimes E
     \longrightarrow K_2\longrightarrow0,\\
 0&\longrightarrow K_2\longrightarrow\Sym^2E
     \xrightarrow{\operatorname{res}_2}i_*\OO_C\longrightarrow0.
\end{align*}
After twisting by $\OO(-t)$, the residue sequence tensored with $A$ and
the Euler sequence give $H^0(A\otimes E(-t))=0$.  Moreover
$H^1(K_{\PP^2}(-t))=H^1(\OO_{\PP^2}(-3-t))=0$.  The two displayed
sequences therefore give $H^0(K_2(-t))=0$ and then
$H^0(\Sym^2E(-t))=0$.

Finally, the maximal-slope factor used below has integral twist $t>0$.
Weights one and two have just been excluded, hence $m\geqslant3$.
\end{proof}

Let $B_d^{\rm sm}\subset\PP(V_d)$ be the open parametrizing smooth
degree-$d$ surfaces, let
\[
 \rho\colon\mathcal X\longrightarrow B_d^{\rm sm}
\]
be the universal surface, and write
\[
 \rho_2\colon\mathcal X_2\longrightarrow B_d^{\rm sm},
 \qquad
 \pi_{2,0}\colon\mathcal X_2\longrightarrow\mathcal X
\]
for its relative second Demailly--Semple stage and its projection.  Its
relative stationary divisor is a smooth projective relative prime effective
Cartier divisor with integral fibres; it is denoted by
$\Gamma_2\subset\mathcal X_2$.
If $h=\operatorname{pr}_1^*\OO_{\PP^3}(1)|_{\mathcal X}$, put
\[
 \mathscr L_{m,t}
 =\OO_{\mathcal X_2}(m)\otimes\pi_{2,0}^*h^{-t}.
\]
For an open $U\subset B_d^{\rm sm}$ we use
$\mathcal X_U=\rho^{-1}(U)$ and
$\mathcal X_{2,U}=\rho_2^{-1}(U)$.

\begin{prop}[Compact relative irreducible jet-divisor family]
\label{prop:o3-compact-relative-family}
Let $d\geqslant15$, let $a_0\in B_d^{\rm sm}$ be very general, and suppose
that
\[
 0\ne\omega_0\in
 H^0\!\left(X_{a_0},E_{2,M}T_{X_{a_0}}^*
                    \otimes\OO_{X_{a_0}}(-T)\right),
 \qquad M,T\geqslant1.
\]
After selecting a nonstationary irreducible factor $\omega_{1,a_0}$ of
$\omega_0$, write $s_\Gamma$ for the canonical section of
$\OO(\Gamma_2)$.  There are integers $m\geqslant3$, $t\geqslant1$ and
$b_0\geqslant0$, with $t/m\geqslant T/M$, a dense Zariski open
$a_0\in U\subset B_d^{\rm sm}$, and a regular residual section
\[
 \bar\omega\in
 H^0\!\left(\mathcal X_{2,U},
       \mathscr L_{m,t}(-b_0\Gamma_2)|_{\mathcal X_{2,U}}\right)
\]
whose central restriction is
\[
 \bar\omega_{a_0}
 =s_{\Gamma,a_0}^{-b_0}\omega_{1,a_0},
\]
and such that, on every fibre,
\[
 \bar\omega_a\ne0,
 \qquad
 \left.\bar\omega_a\right|_{\Gamma_{a,2}}\ne0.
\]
Set
\[
 \omega_1=s_\Gamma^{b_0}\bar\omega
 \in H^0\!\left(\mathcal X_{2,U},
       \mathscr L_{m,t}|_{\mathcal X_{2,U}}\right),
 \qquad
 \mathscr Z=Z(\bar\omega).
\]
Then $\omega_{1,a_0}$ is the selected factor, $\mathscr Z$ is an effective
Cartier divisor, and
\begin{equation}
 \operatorname{div}(\omega_1)
   =\mathscr Z+b_0\Gamma_2,
 \label{eq:o3-relative-divisor}
\end{equation}
and the morphism
\[
 \mathscr Z\longrightarrow U
\]
is projective and flat with geometrically integral fibres.  In particular,
$\mathscr Z_a=Z(\bar\omega_a)$ is precisely the irreducible and reduced
stationary-saturated zero divisor of $\omega_{1,a}$.
\end{prop}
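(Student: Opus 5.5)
The plan is to build the family in three steps: fix the factor on the central fibre, spread its residual section over the parameter space, and then read off the divisor statements from Lemma~\ref{lem:o3-relative-cartier}.

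\emph{Central fibre.} Decompose $\operatorname{div}(\omega_0)=\sum_j n_jZ_j+b\Gamma_{2}$ on the second Demailly--Semple stage of $X_{a_0}$. Each $Z_j$ is a prime Cartier divisor on the smooth variety $X_{a_0,2}$. Since $\operatorname{Pic}(X_{a_0,2})$ is generated by $\OO_{X_{a_0,2}}(1)$, $\OO(\Gamma_2)$ and pullbacks from $X_{a_0}$, and $\operatorname{Pic}X_{a_0}=\ZZ h$ by Noether--Lefschetz (this is where very generality of $a_0$ is used), each $Z_j$ is the zero divisor of a section of some $\OO(m_j)\otimes h^{-t_j}(-b_j\Gamma_2)$.

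Comparing the classes on both sides gives $\sum n_jm_j\leqslant M$ and $\sum n_jt_j\geqslant T$, up to stationary corrections. The stationary corrections are absorbed by the standard Demailly--El~Goul normalisation, which writes $\OO(\Gamma_2)$ in terms of $\OO_{X_2}(1)$ and $\pi_{2,1}^*\OO_{X_1}(1)$. A mediant argument then selects a factor with $t_j/m_j\geqslant T/M$, and in particular $t_j\geqslant1$. For that factor, Lemma~\ref{lem:o3-low-weight} gives $m_j\geqslant3$; the factor lies in $H^0(E_{2,m_j}T^*\otimes\OO(-t_j))$ after clearing the stationary part. Set $m=m_j$, $t=t_j$, and let $b_0$ be the \emph{full} order of $\omega_{1,a_0}$ along $\Gamma_{a_0,2}$. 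Then $\bar\omega_{a_0}=s_{\Gamma,a_0}^{-b_0}\omega_{1,a_0}$ is a nonzero section of $\mathscr L_{m,t}(-b_0\Gamma_2)$ on the central fibre, and it does not vanish identically on $\Gamma_{a_0,2}$. Removing the full order first is necessary because of Remark~\ref{rem:o3-stationary-order-jump}.

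\emph{Extension.} Apply Theorem~\ref{thm:o3-generic-extension} to $\rho_2\colon\mathcal X_2\to B_d^{\rm sm}$, which is projective and smooth, with the invertible sheaf $\mathscr L_{m,t}(-b_0\Gamma_2)$. This produces a relative section $\bar\omega$ over an open $U\ni a_0$ restricting to $\bar\omega_{a_0}$.

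The main obstacle is that Theorem~\ref{thm:o3-generic-extension} gives base change only on a dense open $S^\circ$, and a very general $a_0$ need not lie in it. I would handle this as follows. The open $S^\circ$ depends only on $(m,t,b_0)$, and there are countably many such triples. So the countably many complements $B_d^{\rm sm}\setminus S^\circ_{m,t,b_0}$ may be added to the exceptional set defining ``very general''. After this, $a_0\in S^\circ$ for the relevant triple, and the extension exists.

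\emph{Fibrewise conditions and conclusions.} The conditions $\bar\omega_a\ne0$ and $\bar\omega_a|_{\Gamma_{a,2}}\ne0$ hold at $a_0$. They are open conditions: the locus where either fails is the image under the proper map $\rho_2$ of a closed set, so its complement is open. We may therefore shrink $U$ so that they hold on every fibre. Lemma~\ref{lem:o3-relative-cartier}, applied with $\Gamma=\Gamma_2$ (fibres integral) and $\widetilde s=\bar\omega$, then gives:
\begin{itemize}
\item $\mathscr Z=Z(\bar\omega)$ is an effective Cartier divisor, projective and flat over $U$;
\item $b_0$ is the full stationary order on every fibre, which yields \eqref{eq:o3-relative-divisor};
\item the central fibre $\mathscr Z_{a_0}=Z_{j}$ is integral and reduced, and remains geometrically integral after passing to a geometric point. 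Under very generality the field of definition issues disappear, or one shrinks to a suitable open.
\end{itemize}
Shrinking $U$ once more, every fibre of $\mathscr Z\to U$ is geometrically integral. Hence $\mathscr Z_a$ is the irreducible reduced stationary-saturated zero divisor of $\omega_{1,a}=s_{\Gamma,a}^{b_0}\bar\omega_a$.
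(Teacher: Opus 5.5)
Your proposal is correct and follows the paper's proof step by step. You choose the maximal-slope factor, remove its full stationary order $b_0$, and extend the residual section by absorbing the countably many base-change loci for the triples $(m,t,b_0)$ into the very-general choice of $a_0$. You then shrink $U$ so that $\bar\omega_a|_{\Gamma_{a,2}}\neq0$ on every fibre, and conclude with Lemma~\ref{lem:o3-relative-cartier} and openness of geometric integrality.

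Your Noether--Lefschetz and Picard-group justification of the factor selection only unpacks the Demailly--El~Goul reduction that the paper cites from \cite{HHMX2026}. Geometric integrality of $\mathscr Z_{a_0}$ is automatic because $a_0$ is a $\CC$-point, so no appeal to very generality is needed there.
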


\begin{proof}
Decompose the Cartier divisor of $\omega_0$ on $X_{a_0,2}$ as in
Definition~\ref{def:o3-irreducible-factor}.  The standard
Demailly--El~Goul reduction selects a prime component different from
$\Gamma_{a_0,2}$ and its reduced invariant factor
$\omega_{1,a_0}$, of weight $m$ and twist $-t$.  Additivity of weight and
base twist under multiplication permits the usual maximal-slope choice, for
which
\[
 \frac tm\geqslant\frac TM.
\]
The resulting residual zero divisor is irreducible and reduced.  This is
precisely the compact factor construction in
\cite[Proposition~5.4]{HHMX2026}; in particular, it is a factor on the
Semple tower rather than in a localised affine jet chart.

Let $s_{\Gamma,a_0}$ be the canonical section of the stationary divisor on
the chosen fibre, and define its full fibrewise stationary order and residual
section by
\[
 b_0=\ord_{\Gamma_{a_0,2}}(\omega_{1,a_0}),
 \qquad
 \bar\omega_{a_0}
   =s_{\Gamma,a_0}^{-b_0}\omega_{1,a_0}.
\]
Thus $\bar\omega_{a_0}$ is a regular section of
$\mathscr L_{m,t}(-b_0\Gamma_2)|_{X_{a_0,2}}$, its restriction to
$\Gamma_{a_0,2}$ is nonzero, and its zero scheme is exactly the selected
reduced prime divisor.

The universal smooth surface is projective and smooth over
$B_d^{\rm sm}$, and its relative Semple tower is obtained by two projective
bundle constructions.  Consequently
$\mathcal X_2\to B_d^{\rm sm}$ is projective, smooth and of finite
presentation, with geometrically integral fibres.  In particular, all
ambient hypotheses of Lemma~\ref{lem:o3-relative-cartier} hold after every
base restriction below.

Apply Theorem~\ref{thm:o3-generic-extension} to this relative Semple tower
and the residual line bundle
$\mathscr L_{m,t}(-b_0\Gamma_2)$.  Since $a_0$ is very general, it lies in
the generic base-change locus for the resulting triple $(m,t,b_0)$: there
are only countably many such triples, so all their generic loci may be
included in the very-general choice.  After shrinking to a Zariski open
neighbourhood $U$ of $a_0$,
$\bar\omega_{a_0}$ therefore extends to a regular relative section
$\bar\omega$.

The stationary divisor is a smooth projective family with integral fibres.
The zero scheme of $\bar\omega|_{\Gamma_2}$ has fibre dimension strictly
smaller than $\dim\Gamma_{a_0,2}$ at $a_0$.  Upper semicontinuity of fibre
dimension permits a further shrinking such that
\[
 \left.\bar\omega_a\right|_{\Gamma_{a,2}}\ne0
 \quad\hbox{for every }a\in U.
\]
In particular $\bar\omega_a\ne0$ on every fibre.  Define
$\omega_1=s_\Gamma^{b_0}\bar\omega$ and
$\mathscr Z=Z(\bar\omega)$.  The first assertion of
Lemma~\ref{lem:o3-relative-cartier}, applied to $\bar\omega$, shows that
$\mathscr Z\to U$ is projective and flat.  Its final assertion, applied to
$\omega_1=s_\Gamma^{b_0}\bar\omega$, shows that no $\Gamma_{a,2}$ is a
component of $\mathscr Z_a$ and that \eqref{eq:o3-relative-divisor} has full
fibrewise stationary order $b_0$ on every fibre.

By construction, the scheme-theoretic fibre
$\mathscr Z_{a_0}=Z(\bar\omega_{a_0})$ is the reduced prime divisor selected
above; over $\CC$ this is geometric integrality.  The
geometric-integrality clause of Lemma~\ref{lem:o3-relative-cartier} allows
one last shrinking around $a_0$ so that every $\mathscr Z_a$ is
geometrically integral.  This is the
relative divisor $Z+b_0\Gamma$ of \cite[\S6.2]{HHMX2026}, now with the
factorisation locus and every flatness hypothesis explicit.
\end{proof}

On the regular vertical two-jet space, the section
$\omega_1$ is equivalently a regular, reparametrization-equivariant
weighted-homogeneous polynomial map of weight $m$,
\[
 P\colon J_2^v(\mathcal X_U)
 \longrightarrow p^*\operatorname{pr}_1^*\OO_{\PP^3}(-t),
\]
where $p\colon J_2^v(\mathcal X_U)\to\mathcal X_U$ is the natural
projection.  The natural lift from the regular vertical two-jet space lands
in $\mathcal X_{2,U}\setminus\Gamma_2$.  Hence the pullback of $s_\Gamma$
is nowhere vanishing and, after a local trivialisation of $\OO(\Gamma_2)$,
is represented by a unit.  Consequently $P$ generates the same ideal as the
restriction of $\bar\omega$ and hence defines the stationary-saturated
residual divisor.  If $\bar P$ represents $\bar\omega$ locally, then
$P=g\bar P$, where $g$ represents $s_\Gamma^{b_0}$ and is a unit, and
$V(P)=gV(\bar P)+V(g)\bar P$.  Thus reconstruction does not change
divisibility modulo the residual ideal, and $P_a$ is a genuine holomorphic
family of the type required in Lemma~\ref{lem:o3-preservation} and
Theorem~\ref{thm:o3-differentiation}; its weight is recorded by the
reparametrization character, or equivalently by the factor
$\OO_{\mathcal X_{2,U}}(m)$ above.

\begin{cor}[Compact generic-fibre prime alternative]
\label{cor:o3-compact-generic-prime}
In the situation of Proposition~\ref{prop:o3-compact-relative-family}, after
possibly shrinking $U$, one of the following holds.
\begin{enumerate}
\item The image $\pi_{2,0}(\mathscr Z)$ is a proper algebraic subset of
$\mathcal X_U$; then an entire curve whose nonstationary second lift lies in
$\mathscr Z_a$ is already algebraically degenerate in $X_a$.
\item The divisor $\mathscr Z$ dominates $\mathcal X_U$.  If $K$ is the
function field of the universal compact incidence and a rational tangent
frame is chosen at its generic point, then the restriction of $P$ gives a
weighted-homogeneous polynomial
\[
 0\ne P_K\in K[x,y,W]
\]
for which $(P_K)$ is a prime ideal in the whole unlocalised invariant ring.
\end{enumerate}
The second conclusion remains valid after replacing the incidence space by
any smaller dominant incidence open, in particular the simultaneous open of
Lemma~\ref{lem:simultaneous-open}.
\end{cor}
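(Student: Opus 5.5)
The alternative is governed by a single question: is $\pi_{2,0}|_{\mathscr Z}\colon\mathscr Z\to\mathcal X_U$ dominant or not? Both $\mathscr Z$ and $\mathcal X_U$ are integral, the former by Proposition~\ref{prop:o3-compact-relative-family} after shrinking, and $\pi_{2,0}$ is proper. So the image $\pi_{2,0}(\mathscr Z)$ is an irreducible closed subset, and it either equals $\mathcal X_U$ or is a proper closed subset.

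\emph{Proper image.} Suppose $\pi_{2,0}(\mathscr Z)$ is proper. After shrinking $U$ (generic flatness of the image over $U$), its fibre over $a$ is a proper closed subset of $X_a$ for every $a\in U$. Let $f$ be an entire curve whose nonstationary second lift $f_{[2]}$ lies in $\mathscr Z_a$. Then $f=\pi_{2,0}\circ f_{[2]}$ lands in $\pi_{2,0}(\mathscr Z)_a\subsetneq X_a$, so $f$ is algebraically degenerate. This gives conclusion~(1).

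\emph{Dominant image.} This is the substantive case, and I would proceed in three steps.

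First, consider the generic point $\eta$ of the universal incidence $\mathcal X_U$, with function field $K$. Over $\eta$ the vertical two-jet fibre is the affine space $\mathbb A^4_K$ in the variables $(x,y,X,Y)$, once a rational tangent frame at $\eta$ is fixed. Restricting $P$ gives a polynomial $P_K\in K[x,y,X,Y]$. It is weighted-homogeneous of weight $m$ and invariant under the unipotent reparametrization $x\partial_X+y\partial_Y$, so Lemma~\ref{lem:o3-invariant-ring} places $P_K$ in $K[x,y,W]$. Moreover $P_K\neq0$: $\bar\omega$ is nonzero on every fibre and its restriction to the regular jet locus differs from $P$ only by the unit $g$.

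Second, I must show that $(P_K)$ is prime in $K[x,y,W]$. The divisor $\mathscr Z$ restricted to the complement of $\Gamma_2$ is integral, since by the stationary saturation in Proposition~\ref{prop:o3-compact-relative-family} it contains no component of $\Gamma_2$. The regular vertical two-jet space over $\mathcal X_U$, minus the locus $x=y=0$, maps smoothly with connected fibres (a $\mathbb G_m\ltimes\mathbb G_a$-torsor) onto $\mathcal X_{2,U}\setminus\Gamma_2$. So the pullback $V(P)$ is integral there. Its generic fibre over $\eta$ is the localisation at $\eta$ of an integral scheme that dominates $\mathcal X_U$, because $\mathscr Z$ is dominant; hence this fibre is nonempty and integral. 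It follows that $P_K$ is a prime element of $K[x,y,X,Y]$ up to the codimension-two locus $x=y=0$, which does not affect divisors. Since $P_K$ is irreducible in $K[x,y,X,Y]$, it is irreducible in the subring $K[x,y,W]$; this holds because $K[x,y,X,Y]^\times=K^\times$ and any factorisation inside the subring is already a factorisation in the big ring. As $K[x,y,W]$ is a UFD, $(P_K)$ is prime.

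Third, the stability under shrinking. Replacing the incidence by a smaller dominant open changes neither $\eta$ nor $K$, so conclusion~(2) persists, in particular for the open of Lemma~\ref{lem:simultaneous-open}.

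The main obstacle is the primality step: passing from geometric integrality of the fibres of $\mathscr Z$ over $U$ to integrality of its fibre over the generic point of $\mathcal X_U$. The subtlety is that $\mathscr Z$ is integral as a scheme but lives on the Semple tower, not on the jet space. I expect to handle this through the torsor comparison above, together with the unlocalised invariant-ring lemma, which guarantees that no spurious factor of $x$ or $W$ appears.
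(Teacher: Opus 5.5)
Your proposal is correct and follows essentially the paper's proof. It has the same proper/dominant dichotomy, obtains integrality of the generic fibre of $\mathscr Z\to\mathcal X_U$ by localisation, uses Lemma~\ref{lem:o3-invariant-ring} to place $P_K$ in $K[x,y,W]$, relies on the codimension-two locus $x=y=0$, and uses the UFD property to pass from irreducible to prime. The differences are minor: you carry integrality through the reparametrization torsor to the full jet ring $K[x,y,X,Y]$ and then descend irreducibility to $K[x,y,W]$ using $K[x,y,X,Y]^\times=K^\times$, which makes explicit a step the paper leaves implicit when it argues with factors directly in $\operatorname{Spec}K[x,y,W]$; and in the proper branch you use generic flatness where the paper uses upper semicontinuity of fibre dimension.
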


\begin{proof}
If the projection is not dominant, the image of any curve constrained by
$\mathscr Z_a$ lies in its projection to $X_a$.  Properness of
$\pi_{2,0}$ and upper semicontinuity of fibre dimension allow us to shrink
$U$ so that this projection is proper for every remaining $a$; this is the
first alternative.

Assume instead that $\mathscr Z$ dominates $\mathcal X_U$.  Since $U$ is
integral and $\mathscr Z\to U$ is flat with geometrically integral fibres,
$\mathscr Z$ is integral.  Localising its coordinate ring at the nonzero
functions of the integral base $\mathcal X_U$ shows that the generic fibre
of
\[
 \mathscr Z\longrightarrow\mathcal X_U
\]
is integral as well.

Let $K$ be the function field of the universal compact incidence.  A
rational tangent frame and trivialisations of the tautological and base
line bundles turn the restriction of $\omega_1$ into a weighted-homogeneous
element
\[
 P_K\in R:=K[x,y,W].
\]
Via the nowhere-vanishing pullback of $s_\Gamma^{b_0}$ on the regular jet
locus, $P_K$ is also a local generator of the ideal defined by the
stationary-saturated section $\bar\omega$.
Lemma~\ref{lem:o3-invariant-ring} identifies $R$ before any localisation in
$x,y$ or $W$.  Scheme-theoretically, the generic residual divisor on the
regular jet locus is $V(P_K)$ and is integral and reduced.

For completeness, suppose $P_K=AB$ with $A,B$ of positive weighted degree.
The factors can be chosen weighted homogeneous.  Each defines a
codimension-one subset of $\operatorname{Spec}R$.  The omitted locus
$x=y=0$ has codimension two, so neither factor can disappear when one passes
to the regular Semple locus.  Clearing only coefficient-field denominators
would therefore split the integral generic residual divisor, a
contradiction.  Thus $P_K$ is irreducible.  Reducedness of that divisor says
equivalently that $P_K$ is square-free.  Finally, $R$ is a unique
factorisation domain, so the irreducible element $P_K$ is prime.  Hence
$(P_K)$ is a height-one prime ideal in the whole unlocalised ring.  Passing
to a smaller dominant incidence open changes neither $K$ nor this
conclusion.
\end{proof}

\subsubsection{Relative logarithmic jet-divisor families}
\label{sec:o3-log-relative-family}

We now construct the logarithmic analogue of
Proposition~\ref{prop:o3-compact-relative-family}.  Besides extending the
factor itself, we remove its full stationary multiplicity before passing to
the incidence function field.  This is the step which turns the phrase
``irreducible and reduced modulo $\Gamma_{a,2}$'' into a prime ideal in the
whole, unlocalised invariant fibre ring.  The relative divisor construction
is the logarithmic counterpart of the $Z+b\Gamma$ construction in
\cite[\S6.2]{HHMX2026}.

Put
\[
 V_d^{(2)}=H^0(\PP^2,\OO_{\PP^2}(d)),
 \qquad
 B_d^{\mathrm{log,sm}}\subset\PP(V_d^{(2)})
\]
for the open parametrizing smooth plane curves, and let
\[
 \mathscr Q=\PP^2\times B_d^{\mathrm{log,sm}},
 \qquad
 \mathscr C=\{(z,a):f_a(z)=0\}\subset\mathscr Q.
\]
The relative logarithmic tangent bundle
\[
 \mathscr V_0=T_{\mathscr Q/B_d^{\mathrm{log,sm}}}(-\log\mathscr C)
\]
is locally free.  Its two-stage relative logarithmic Demailly--Semple tower
will be denoted by
\[
 \rho_2^{\mathrm{log}}\colon\mathscr P_2^{\mathrm{log}}
       \longrightarrow B_d^{\mathrm{log,sm}},
 \qquad
 \pi_{2,0}^{\mathrm{log}}\colon\mathscr P_2^{\mathrm{log}}
       \longrightarrow\mathscr Q.
\]
Write
\[
 \mathscr D_2^{\mathrm{log}}
   =(\pi_{2,0}^{\mathrm{log}})^{-1}(\mathscr C)
\]
for the pulled-back logarithmic boundary and
$\Gamma_2^{\mathrm{log}}\subset\mathscr P_2^{\mathrm{log}}$ for the smooth
projective relative prime effective Cartier stationary divisor, whose
fibres are integral.  With
$h_{\mathrm{log}}=\operatorname{pr}_1^*\OO_{\PP^2}(1)$, set
\[
 \mathscr L_{m,t}^{\mathrm{log}}
 =\OO_{\mathscr P_2^{\mathrm{log}}}(m)
  \otimes(\pi_{2,0}^{\mathrm{log}})^*h_{\mathrm{log}}^{-t}.
\]
For $U\subset B_d^{\mathrm{log,sm}}$, a subscript $U$ denotes base change
to $U$.

\begin{prop}[Logarithmic relative irreducible jet-divisor family]
\label{prop:o3-log-relative-family}
Let $d\geqslant11$.  Fix one pair $(m,t)$ with $m\geqslant3$,
$t\geqslant1$, and a nonstationary irreducible factor
\[
 0\ne\eta_{1,a_0}\in
 H^0\!\left(\PP^2,
 E_{2,m}T_{\PP^2}^*(\log C_{a_0})\otimes\OO_{\PP^2}(-t)\right)
\]
occurring in the construction of \cite[Proposition~5.5]{HHMX2026}, and
write
\[
 b_0=\ord_{\Gamma_{a_0,2}^{\mathrm{log}}}(\eta_{1,a_0}).
\]
Assume that $a_0$ lies in the dense cohomology-and-base-change locus for the
residual line bundle
$\mathscr L_{m,t}^{\mathrm{log}}(-b_0\Gamma_2^{\mathrm{log}})$.  Write
$s_\Gamma^{\mathrm{log}}$ for the canonical stationary section.  There are
a Zariski open neighbourhood
$a_0\in U\subset B_d^{\mathrm{log,sm}}$ and a regular residual section
\[
 \bar\eta\in
 H^0\!\left(\mathscr P_{2,U}^{\mathrm{log}},
       \mathscr L_{m,t}^{\mathrm{log}}(-b_0\Gamma_2^{\mathrm{log}})
       |_{\mathscr P_{2,U}^{\mathrm{log}}}\right)
\]
whose central restriction is
\[
 \bar\eta_{a_0}
 =(s_{\Gamma,a_0}^{\mathrm{log}})^{-b_0}\eta_{1,a_0},
\]
and such that
\[
 \bar\eta_a\ne0,
 \qquad
 \left.\bar\eta_a\right|_{\Gamma_{a,2}^{\mathrm{log}}}\ne0
 \quad\hbox{for every }a\in U.
\]
Set
\[
 \eta_1=(s_\Gamma^{\mathrm{log}})^{b_0}\bar\eta,
 \qquad
 \mathscr Z^{\mathrm{log}}=Z(\bar\eta).
\]
Then $\eta_{1,a_0}$ is the prescribed factor and
$\mathscr Z^{\mathrm{log}}$ is an effective Cartier divisor.  Moreover,
\begin{equation}
 \operatorname{div}(\eta_1)
 =\mathscr Z^{\mathrm{log}}+b_0\Gamma_2^{\mathrm{log}},
 \label{eq:o3-log-relative-divisor}
\end{equation}
and the morphism
\[
 \mathscr Z^{\mathrm{log}}\longrightarrow U
\]
is projective and flat with geometrically integral fibres.  Thus
$\mathscr Z_a^{\mathrm{log}}=Z(\bar\eta_a)$ is exactly the irreducible and
reduced stationary-saturated zero divisor of $\eta_{1,a}$.

If the factor is extracted from a section of weight $M$ and twist $-T$,
the usual maximal-slope choice may be made so that $t/m\geqslant T/M$.
For any fixed finite collection of residual factor types $(m,t,b_0)$, all
conclusions hold on the intersection of finitely many nonempty Zariski
opens.
\end{prop}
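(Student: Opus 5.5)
The proof will run parallel to that of Proposition~\ref{prop:o3-compact-relative-family}, with the compact relative Semple tower replaced by the relative logarithmic tower $\mathscr P_2^{\mathrm{log}}\to B_d^{\mathrm{log,sm}}$.

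\textbf{Ambient hypotheses.} First I will check that the ambient tower satisfies the hypotheses of Theorem~\ref{thm:o3-generic-extension} and Lemma~\ref{lem:o3-relative-cartier}. The relative logarithmic tangent sheaf $\mathscr V_0$ is locally free, since $\mathscr C\to B_d^{\mathrm{log,sm}}$ is a smooth relative divisor in the smooth family $\mathscr Q$. The two Demailly--Semple stages are projectivisations of locally free sheaves, namely $\mathscr V_0$ and then the induced rank-two subbundle of $T_{\mathscr P_1^{\mathrm{log}}/B}$. Hence $\mathscr P_2^{\mathrm{log}}\to B_d^{\mathrm{log,sm}}$ is projective and smooth, of finite presentation, with fibres that are geometrically integral (they are iterated $\PP^1$-bundles over $\PP^2$). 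The stationary divisor $\Gamma_2^{\mathrm{log}}$ is by hypothesis a relative prime effective Cartier divisor with integral fibres.

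\textbf{Fibrewise residual and extension.} On the central fibre I define the residual section $\bar\eta_{a_0}=(s^{\mathrm{log}}_{\Gamma,a_0})^{-b_0}\eta_{1,a_0}$. Because $b_0$ is the full order along $\Gamma^{\mathrm{log}}_{a_0,2}$, this is a regular section of $\mathscr L^{\mathrm{log}}_{m,t}(-b_0\Gamma_2^{\mathrm{log}})$, its restriction to $\Gamma^{\mathrm{log}}_{a_0,2}$ is nonzero, and its zero scheme is the selected reduced prime divisor. The nonstationary factor in \cite[Proposition~5.5]{HHMX2026} is chosen irreducible and reduced on the Semple tower, not in an affine chart. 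Since $a_0$ lies in the cohomology-and-base-change locus, Theorem~\ref{thm:o3-generic-extension} extends $\bar\eta_{a_0}$ to a regular relative section $\bar\eta$ over a neighbourhood $U$.

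\textbf{Nonvanishing on every fibre.} This is the step needing care, in view of Remark~\ref{rem:o3-stationary-order-jump}. The zero scheme of $\bar\eta|_{\Gamma_2^{\mathrm{log}}}$ has fibre over $a_0$ of dimension strictly less than $\dim\Gamma^{\mathrm{log}}_{a_0,2}$. By upper semicontinuity of fibre dimension for the proper map $\Gamma_2^{\mathrm{log}}\to U$, after shrinking $U$ the restriction $\bar\eta_a|_{\Gamma^{\mathrm{log}}_{a,2}}$ is nonzero for every $a\in U$. This gives $\bar\eta_a\ne0$ on every fibre as well.

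\textbf{Divisor identity and integrality.} Lemma~\ref{lem:o3-relative-cartier} then does the rest.
\begin{itemize}
\item Its first part, applied to $\bar\eta$, shows that $\mathscr Z^{\mathrm{log}}$ is an effective Cartier divisor and that $\mathscr Z^{\mathrm{log}}\to U$ is projective, flat and of finite presentation.
\item Its residual part, applied to $\eta_1=(s^{\mathrm{log}}_\Gamma)^{b_0}\bar\eta$, gives \eqref{eq:o3-log-relative-divisor} with full stationary order $b_0$ on every fibre.
\item Its openness clause, applied to the geometrically integral central fibre $Z(\bar\eta_{a_0})$ (over $\CC$, integral means geometrically integral), allows a last shrinking so that every $\mathscr Z^{\mathrm{log}}_a$ is geometrically integral. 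So $\mathscr Z^{\mathrm{log}}_a$ is the irreducible reduced stationary-saturated zero divisor of $\eta_{1,a}$.
\end{itemize}

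\textbf{Slope and finite intersections.} The slope inequality $t/m\geqslant T/M$ follows as in the compact case: weight and twist are additive under multiplication of invariant factors, so some factor has slope at least the average, and that factor is the maximal-slope choice. For finitely many types $(m,t,b_0)$, each type contributes one cohomology-and-base-change open and the finitely many shrinkings above. The intersection of finitely many dense Zariski opens in the irreducible base is again dense and nonempty. If $a_0$ is chosen in this intersection, all conclusions hold there simultaneously.
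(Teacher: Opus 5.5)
Your proposal is correct and follows the paper's proof essentially step for step: the relative logarithmic tower is an iterated projective bundle; you remove the full fibrewise stationary order $b_0$ before extending via Theorem~\ref{thm:o3-generic-extension}; upper semicontinuity on $\Gamma_2^{\mathrm{log}}$ secures nonvanishing on every fibre; and the three clauses of Lemma~\ref{lem:o3-relative-cartier} give flatness, the divisor identity and geometric integrality. The one point the paper states explicitly that you leave implicit is the fibrewise logarithmic direct-image identification $(\pi_{2,0}^{\mathrm{log}})_*\OO(m)=E_{2,m}T_{\PP^2}^*(\log C_a)$, which is what allows $\eta_{1,a_0}$ to be treated as a section on the Semple tower in the first place.
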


\begin{proof}
Since $\mathscr C\to B_d^{\mathrm{log,sm}}$ is a smooth relative divisor,
$\mathscr V_0$ is a vector bundle and the relative logarithmic Semple tower
is an iterated projective bundle.  In particular,
$\rho_2^{\mathrm{log}}$ is projective, smooth and of finite presentation,
with geometrically integral fibres.  On every fibre the
logarithmic direct-image formula gives
\[
 (\pi_{2,0}^{\mathrm{log}})_*
 \OO_{\mathscr P_{2,a}^{\mathrm{log}}}(m)
 =E_{2,m}T_{\PP^2}^*(\log C_a).
\]

Proposition~5.5 of \cite{HHMX2026} supplies, for a fixed pair $(m,t)$, a
nonempty parameter open on which the factor selected in the sense of
Definition~\ref{def:o3-irreducible-factor} has an irreducible and reduced
zero divisor modulo $\Gamma_{a,2}^{\mathrm{log}}$.  Thus the factor is first
chosen as a prime Cartier component on the logarithmic Semple tower; its
weighted-polynomial representative is taken only afterwards.  Let
$s_{\Gamma,a_0}^{\mathrm{log}}$ be the stationary section on the chosen
fibre and remove its full fibrewise order before extending:
\[
 b_0=\ord_{\Gamma_{a_0,2}^{\mathrm{log}}}(\eta_{1,a_0}),
 \qquad
 \bar\eta_{a_0}
  =(s_{\Gamma,a_0}^{\mathrm{log}})^{-b_0}\eta_{1,a_0}.
\]
This is a regular section of
$\mathscr L_{m,t}^{\mathrm{log}}(-b_0\Gamma_2^{\mathrm{log}})$ on the
central fibre, its restriction to $\Gamma_{a_0,2}^{\mathrm{log}}$ is
nonzero, and its zero scheme is the selected reduced prime divisor.

By the base-change assumption, Theorem~\ref{thm:o3-generic-extension}
extends $\bar\eta_{a_0}$ to a regular relative section $\bar\eta$ after an
algebraic shrinking.  The relative stationary divisor is a smooth
projective family with integral fibres.  At $a_0$, the fibre dimension of
$Z(\bar\eta|_{\Gamma_2^{\mathrm{log}}})$ is strictly smaller than
$\dim\Gamma_{a_0,2}^{\mathrm{log}}$.  Upper semicontinuity of fibre
dimension therefore permits a further shrinking such that
\[
 \left.\bar\eta_a\right|_{\Gamma_{a,2}^{\mathrm{log}}}\ne0
 \quad\hbox{for every }a\in U.
\]
Thus every $\bar\eta_a$ is nonzero.  Define
\[
 \eta_1=(s_{\Gamma}^{\mathrm{log}})^{b_0}\bar\eta,
 \qquad
 \mathscr Z^{\mathrm{log}}=Z(\bar\eta).
\]
The first assertion of Lemma~\ref{lem:o3-relative-cartier}, applied to
$\bar\eta$, shows that $\mathscr Z^{\mathrm{log}}\to U$ is projective and
flat.  Its final assertion, applied to
$\eta_1=(s_\Gamma^{\mathrm{log}})^{b_0}\bar\eta$, excludes a stationary
component on every fibre and gives \eqref{eq:o3-log-relative-divisor} with
full fibrewise order $b_0$.

The scheme-theoretic fibre
$\mathscr Z_{a_0}^{\mathrm{log}}=Z(\bar\eta_{a_0})$ is exactly the selected
reduced prime divisor, hence geometrically integral over $\CC$.
The geometric-integrality clause of Lemma~\ref{lem:o3-relative-cartier}
permits one last shrinking so that every fibre is geometrically integral.
The slope assertion is the same maximal-slope factor choice used in
\cite[Proposition~5.5]{HHMX2026}.  Because only a fixed residual type, or
later a finite list of types, is involved, no complement of a countable
union is used here.
\end{proof}

Set
\[
 \mathscr J_{2,U}^{\log}
 :=J_2^v(\mathscr Q_U,-\log\mathscr C_U).
\]
The regular logarithmic jet lift avoids $\Gamma_2^{\mathrm{log}}$.  Hence
$(s_\Gamma^{\mathrm{log}})^{b_0}$ canonically trivialises the pullback of
$\OO(b_0\Gamma_2^{\mathrm{log}})$ there.  Via this identification, let
$P^{\mathrm{log}}$ denote the regular polynomial representative of the
residual generator $\bar\eta$; equivalently, it represents the reconstructed
section $\eta_1$ under the same identification.  It is equivariant under
reparametrization and defines a map
\[
 P^{\mathrm{log}}\colon
 \mathscr J_{2,U}^{\log}\longrightarrow p^*h_{\mathrm{log}}^{-t}
\]
of weight $m$.  Its principal ideal is the restriction of the saturated
residual ideal.  Thus the
Semple-tower saturation will be performed before, and independently of, the
unlocalised polynomial-ring argument below.

\begin{lem}[Logarithmic stationary saturation, boundary separation and primality]
\label{lem:o3-log-saturation-prime}
In the situation of Proposition~\ref{prop:o3-log-relative-family}, let
$\mathscr I_{\eta_1}$ be the ideal sheaf of
$\operatorname{div}(\eta_1)$ and let
$\mathscr I_{\Gamma}$ be the ideal sheaf of
$\Gamma_2^{\mathrm{log}}$.  Then
\begin{equation}
 \mathscr I_{\eta_1}:\mathscr I_{\Gamma}^{\infty}
 =\mathscr I_{\mathscr Z^{\mathrm{log}}}
 =(\bar\eta).
 \label{eq:o3-log-saturation}
\end{equation}
Here $(\bar\eta)$ denotes the image ideal obtained from the residual section
after a local trivialisation of its line bundle; this notation is
independent of the chosen trivialisation.
After possibly shrinking $U$, exactly one of the following alternatives
holds.
\begin{enumerate}
\item The image
$\pi_{2,0}^{\mathrm{log}}(\mathscr Z^{\mathrm{log}})$ is a proper
algebraic subset of $\mathscr Q_U$.  Then an entire curve in
$\PP^2\setminus C_a$ whose nonstationary logarithmic second lift is
contained in $\mathscr Z_a^{\mathrm{log}}$ is algebraically degenerate.
\item The divisor $\mathscr Z^{\mathrm{log}}$ dominates $\mathscr Q_U$.
Let $\Omega_U^{\mathrm{log}}$ be the restriction to $U$ of the dominant
simultaneous interior in Lemma~\ref{lem:simultaneous-open}, and let
\[
 L=\CC(\Omega_U^{\mathrm{log}})=\CC(\mathscr Q_U).
\]
After choosing a rational logarithmic tangent frame at its generic point,
the polynomial map $P^{\mathrm{log}}$, or equivalently the restriction of
the saturated residual ideal, gives a weighted-homogeneous polynomial
\[
 0\ne P_L^{\mathrm{log}}\in L[u,v,W],
 \qquad \operatorname{wt}(u)=\operatorname{wt}(v)=1,
 \quad\operatorname{wt}(W)=3,
\]
and $(P_L^{\mathrm{log}})$ is a prime ideal in the whole polynomial ring
$L[u,v,W]$.
\end{enumerate}
In the second alternative, let $V$ be a regular relative logarithmic
common-twist field over a dense open of $U$.  If $V(P^{\mathrm{log}})$
vanishes at the generic point of the relative residual divisor
$\mathscr Z^{\mathrm{log}}$, then, after passage to $L$,
\begin{equation}
 V(P_L^{\mathrm{log}})\in(P_L^{\mathrm{log}})
 \quad\hbox{in }L[u,v,W].
 \label{eq:o3-log-prime-preservation}
\end{equation}
Only coefficient functions on the incidence base are inverted in this
statement; no element of positive degree in $L[u,v,W]$, and in particular
neither $W$ nor a first-jet variable, is inverted.
Moreover, in the dominant alternative the integral residual divisor has no
component supported in the pulled-back logarithmic boundary
$\mathscr D_2^{\mathrm{log}}$.  Restriction to $f\ne0$ therefore loses no
height-one component of its defining ideal.
\end{lem}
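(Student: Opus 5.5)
The plan is to follow the template of Proposition~\ref{prop:o3-compact-relative-family} and Corollary~\ref{cor:o3-compact-generic-prime}, in four steps: saturation, the dichotomy, transfer of the stability condition to $L[u,v,W]$, and boundary separation.

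\textbf{Saturation.} First I prove \eqref{eq:o3-log-saturation}, which is local on $\mathscr P_{2,U}^{\mathrm{log}}$. Take a local trivialisation and let $g$ be a local equation for $\Gamma_2^{\mathrm{log}}$. Then \eqref{eq:o3-log-relative-divisor} gives $\mathscr I_{\eta_1}=(g^{b_0}\bar\eta)$. The total space is smooth, so its local rings are regular and hence UFDs. By Proposition~\ref{prop:o3-log-relative-family}, $\bar\eta_a|_{\Gamma_{a,2}}\neq0$ on every fibre, so $g$ does not divide $\bar\eta$; this is exactly the final clause of Lemma~\ref{lem:o3-relative-cartier}. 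The colon ideal $(g^{b_0}\bar\eta):(g)^\infty$ is therefore $(\bar\eta)$. It equals $\mathscr I_{\mathscr Z^{\mathrm{log}}}$ by the definition $\mathscr Z^{\mathrm{log}}=Z(\bar\eta)$. Independence of the trivialisation is immediate, because changing it multiplies $\bar\eta$ by a unit.

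\textbf{The dichotomy.} I argue as in Corollary~\ref{cor:o3-compact-generic-prime}. The map $\pi_{2,0}^{\mathrm{log}}$ is proper, so the image of $\mathscr Z^{\mathrm{log}}$ is closed. If it is a proper subset, upper semicontinuity of fibre dimension lets us shrink $U$ so that every fibre image is a proper subset of $\PP^2$. A curve whose lift lies in $\mathscr Z^{\mathrm{log}}_a$ then lies in that image, which is the degenerate alternative. If instead the divisor dominates, I make three moves.
\begin{enumerate}
\item $\mathscr Z^{\mathrm{log}}$ is integral, since it is flat over an integral base with geometrically integral fibres.
\item The generic fibre over $\mathscr Q_U$ is integral, obtained by localising at nonzero functions of the base.
\item A rational logarithmic frame, together with trivialisations, turns $P^{\mathrm{log}}$ into a weighted-homogeneous $P_L\in L[u,v,W]$. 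Lemma~\ref{lem:o3-invariant-ring} identifies this ring with the invariant ring before any localisation in $u,v,W$.
\end{enumerate}
To show $P_L$ is prime, suppose $P_L=AB$ with weighted-homogeneous factors of positive weight. Each factor cuts out a codimension-one locus. The deleted set $u=v=0$ has codimension two, so both factors survive on the regular Semple locus and would split the integral reduced generic divisor, a contradiction. Reducedness gives square-freeness, and since $L[u,v,W]$ is a UFD, $P_L$ is prime.

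\textbf{Transfer of stability.} Next I prove \eqref{eq:o3-log-prime-preservation}. The element $V(P^{\mathrm{log}})$ is regular of the same weight. If it vanishes at the generic point of $\mathscr Z^{\mathrm{log}}$, then it lies in the stalk of $\mathscr I_{\mathscr Z}=(\bar\eta)$ at that generic point, which is a DVR, and so it lies in the prime $(P_L)$ after passing to $L$. This step inverts only functions of the incidence base. The homogeneous-component argument of Lemma~\ref{lem:o3-nilpotent} is not needed; primality of $(P_L)$ together with vanishing on $V(P_L)$ suffices, by Hilbert's Nullstellensatz over $\bar L$ and then Galois descent.

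\textbf{Boundary separation.} In the dominant case, $\mathscr Z^{\mathrm{log}}$ is integral and dominates $\mathscr Q_U$. Its image therefore cannot lie in $\mathscr C_U$, which is a proper subset. Hence $\mathscr Z^{\mathrm{log}}\not\subset\mathscr D_2^{\mathrm{log}}$, and localising at $f\neq0$ preserves the height-one prime.

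\textbf{Main obstacle.} I expect the hardest step to be the primality and transfer in the whole unlocalised ring $L[u,v,W]$. One must check that the logarithmic frame near the boundary, where $u$ corresponds to $dz/z$, does not introduce a hidden localisation. One must also check that the polynomial representative matches the saturated ideal rather than $\mathscr I_{\eta_1}$, which is where the $\Gamma$-saturation and the codimension-two argument are used.
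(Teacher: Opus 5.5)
Your proposal is correct and follows essentially the same route as the paper: stalkwise saturation in the regular local rings of the Semple tower, the proper/dominant dichotomy from properness of $\pi_{2,0}^{\mathrm{log}}$, primality of $P_L^{\mathrm{log}}$ from integrality of the generic residual divisor together with the codimension-two locus $u=v=0$ and unique factorisation, and boundary separation from dominance over $\mathscr Q_U$. The only differences are minor. In the transfer step the paper simply uses $(P)R_{(P)}\cap R=(P)$ for the prime $(P)$, so your Nullstellensatz-and-Galois-descent detour is unnecessary. The paper also records two points you leave implicit: the product-rule comparison $(s_\Gamma^{\mathrm{log}})^{-b_0}V(\eta_1)\equiv V(\bar\eta)$ modulo $(\bar\eta)$, and the trivialisation independence of the membership test.
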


\begin{proof}
Equation~\eqref{eq:o3-log-relative-divisor} gives
\[
 \mathscr I_{\eta_1}
 =\mathscr I_{\mathscr Z^{\mathrm{log}}}\mathscr I_{\Gamma}^{b_0}.
\]
We verify the saturation stalkwise.  The logarithmic Semple tower is smooth,
so its local rings are regular domains.  At a stalk, trivialise all relevant
line bundles, write $\mathscr I_\Gamma=(g)$ and
$\mathscr I_{\mathscr Z^{\mathrm{log}}}=(h)$, and represent
$\bar\eta$ by $h$ and $\eta_1$ by $g^{b_0}h$.  The residual divisor is
integral and has no component equal to $\Gamma_2^{\mathrm{log}}$.
Consequently $(h)$ is prime at every stalk along the residual divisor and
$g\notin(h)$; multiplication by every power of $g$ is injective in the
domain $\mathcal O/(h)$.  At stalks away from the residual divisor the
calculation is trivial.  Thus
\[
 (g^{b_0}h):(g^n)=
 \begin{cases}
  (g^{b_0-n}h),&0\leqslant n\leqslant b_0,\\
  (h),&n\geqslant b_0.
 \end{cases}
\]
Taking the increasing union over $n$ removes exactly the full stationary
multiplicity $b_0$ and nothing from the residual divisor.  This proves
\eqref{eq:o3-log-saturation}, including its scheme-theoretic multiplicity
claim.

The morphism $\pi_{2,0}^{\mathrm{log}}$ is proper.  If the image of
$\mathscr Z^{\mathrm{log}}$ is not dense, it is closed and proper; the base
projection of any lifted curve contained in this divisor lies in that
proper subset.  Upper semicontinuity of fibre dimension allows us to shrink
$U$ so that the same conclusion holds fibrewise.  This proves the first
alternative.

Assume now that $\mathscr Z^{\mathrm{log}}$ dominates $\mathscr Q_U$.
Since $U$ is integral and
$\mathscr Z^{\mathrm{log}}\to U$ is flat with geometrically integral
fibres, $\mathscr Z^{\mathrm{log}}$ is integral.  A dominant morphism of
integral schemes has an integral generic fibre: on affine charts its
coordinate ring is obtained from a domain by localising at the nonzero
elements of the base.  Consequently the generic fibre of
\[
 \mathscr Z^{\mathrm{log}}\longrightarrow\mathscr Q_U
\]
is integral.  It meets the regular logarithmic jet locus, since the removed
stationary divisor cannot contain the dominant residual divisor.

The pulled-back boundary $\mathscr D_2^{\mathrm{log}}$ maps into
$\mathscr C_U\subsetneq\mathscr Q_U$.  Since
$\mathscr Z^{\mathrm{log}}$ is integral and dominates $\mathscr Q_U$, it is
not contained in that boundary and cannot have a separate boundary
component.  Thus passage to the interior $f\ne0$ preserves its generic
height-one divisor.  The further equations $p=0$ and
$\det\mathcal H=0$ also define proper subsets of the incidence base; their
inverses are coefficient-field elements and not fibre-variable
localisations.

On $\Omega_U^{\mathrm{log}}$ the base boundary equation $f$, the chart
coefficient $p=f_1$, and $\det\mathcal H$ are nonzero.  Their inverses are
therefore elements of the coefficient field $L$.  A rational logarithmic
tangent frame and a trivialisation of the residual line bundle turn the
generator $\bar\eta$, equivalently $P^{\mathrm{log}}$ on the regular jet
locus, into $P_L^{\mathrm{log}}$.
Equation~\eqref{eq:o3-log-saturation} says that its zero divisor on the
regular locus is the restriction of $\mathscr Z^{\mathrm{log}}$.  By
Lemma~\ref{lem:o3-invariant-ring}, before any localisation in fibre
variables the invariant ring is exactly $L[u,v,W]$.

The locus $u=v=0$, on which the passage from affine jets to the regular
Semple chart is not defined, has codimension two in
$\operatorname{Spec}L[u,v,W]$.  Hence no divisor, and in particular no
positive-degree polynomial factor, can be supported entirely in that
omitted locus.

If $P_L^{\mathrm{log}}=AB$ with $A$ and $B$ of positive weighted degree,
the factors may be chosen weighted homogeneous.  Clearing only their base
denominators would split the integral generic fibre into two effective
divisors.  No factor can disappear into $\Gamma_2^{\mathrm{log}}$, by the
saturation identity, and no factor can disappear into
$\mathscr D_2^{\mathrm{log}}$, because we are in the dominant interior
alternative.  This is impossible.  Hence $P_L^{\mathrm{log}}$ is
irreducible.  Its divisor is reduced, so there is no repeated factor.
Because $L[u,v,W]$ is a unique factorisation domain,
$(P_L^{\mathrm{log}})$ is a height-one prime; equivalently,
$P_L^{\mathrm{log}}$ is irreducible and square-free in the whole
unlocalised ring.

Finally compare the residual generator with the reconstructed section.
After choosing compatible local trivialisations on the regular jet locus,
the pullback of $s_{\Gamma}^{\mathrm{log}}$ is represented by a unit and the
product rule gives
\[
 V(\eta_1)
 = (s_{\Gamma}^{\mathrm{log}})^{b_0}V(\bar\eta)
 +b_0(s_{\Gamma}^{\mathrm{log}})^{b_0-1}
       V(s_{\Gamma}^{\mathrm{log}})\bar\eta.
\]
Consequently
\[
 (s_{\Gamma}^{\mathrm{log}})^{-b_0}V(\eta_1)
 \equiv V(\bar\eta)\pmod{(\bar\eta)},
\]
so differentiating the reconstructed section gives exactly the same
ideal-membership test as differentiating the residual generator.
Replacing a local generator $P$ by $P'=gP$, with $g$ a unit from
a different line-bundle trivialisation, gives
$V(P')=gV(P)+V(g)P$.  Hence the ideal-membership assertion is independent
of every chosen trivialisation.

Under the relative-containment hypothesis in the statement,
$V(P_L^{\mathrm{log}})$ vanishes at the generic point of the integral
residual divisor.  It therefore lies in the height-one prime generated by
$P_L^{\mathrm{log}}$.  Both polynomials belong to the unlocalised ring, so
the membership holds in $L[u,v,W]$ itself and gives
\eqref{eq:o3-log-prime-preservation}.
\end{proof}

\subsubsection{Prime-ideal preservation}

\begin{lem}[Dependence implies prime-ideal preservation]
\label{lem:o3-preservation}
Let $P_a$ be a holomorphic family of weighted-homogeneous two-jet
polynomials on a Zariski open parameter set.  After removing the full
stationary multiplicity, assume that at the generic incidence point the
residual equation $P$ generates a height-one prime ideal in the whole
unlocalised invariant ring
\[
 R=K[u,v,W].
\]
Let $V$ be a regular common-twist slanted field for which $V(P)$ belongs to
$R$.  If the residual divisor $Y=\{P=0\}$ is a component of the zero divisor
of the derivative, then
\[
 V(P)\in(P)
\]
in $R$.  Consequently, if every available derivative contains $Y$ as a
divisorial component, the prime ideal $(P)$ is stable under every field in
the package.
\end{lem}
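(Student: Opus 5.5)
The argument reduces the divisorial hypothesis to a statement about a height-one prime in the unlocalised ring, as in the final step of Lemma~\ref{lem:o3-log-saturation-prime}. I first fix compatible local trivialisations on the regular jet locus and pass to the generic incidence point. By the saturation step (Proposition~\ref{prop:o3-compact-relative-family}, respectively Lemma~\ref{lem:o3-log-saturation-prime}), the reconstructed section differs from the residual generator by a unit $g$ representing $s_\Gamma^{b_0}$. The product rule gives $V(gP)=gV(P)+V(g)P$. Hence membership of $V(P)$ in $(P)$ does not depend on whether one differentiates the residual generator or the reconstructed section, nor on the chosen trivialisation. This reduces the lemma to a statement about $P\in R=K[u,v,W]$ and the element $V(P)\in R$; that $V(P)\in R$ is part of the hypothesis.

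Next I translate the hypothesis into an ideal statement. Suppose the residual divisor $Y=\{P=0\}$ is a component of $\operatorname{div}(V(P))$. If $V(P)=0$, the conclusion is trivial, so assume $V(P)\neq0$. Then $V(P)$ vanishes at the generic point of $Y$, that is, $V(P)$ lies in the prime ideal of $Y$ in the local ring at that generic point.

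To descend this to $R$, I use that $(P)$ is a height-one prime in $R$; this is exactly the hypothesis, supplied by Corollary~\ref{cor:o3-compact-generic-prime} or Lemma~\ref{lem:o3-log-saturation-prime}. The generic point of $Y$ corresponds to this prime, and the localisation $R_{(P)}$ is a DVR with uniformiser $P$. Vanishing of $V(P)$ there means $P\mid V(P)$ in $R_{(P)}$. Since $R$ is a UFD and $P$ is prime, $V(P)=PQ$ with $Q\in R_{(P)}\cap\operatorname{Frac}(R)$. Writing $Q=A/B$ with $P\nmid B$ gives $BV(P)=AP$, and primality then yields $P\mid V(P)$ in $R$ itself.

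The step I expect to be the main obstacle is making sure that the generic point of $Y$ on the Semple tower really corresponds to the prime $(P)$ of the unlocalised ring. One must exclude the possibility that the relevant component is hidden in the omitted locus $u=v=0$, in $\Gamma_2$, or in a boundary divisor. I would handle this with the arguments already used: $u=v=0$ has codimension two, the stationary part has been saturated away, and in the dominant alternative there are no boundary components. Together these identify the divisorial component with $V(P)$ in $\operatorname{Spec}R$.

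The final sentence of the lemma then follows by applying the argument to each field of the package separately. Each such field is a regular common-twist field, by Lemma~\ref{lem:o3-common-twist}, so its residual action is a derivation of the whole ring $R$, and the argument above applies to it.
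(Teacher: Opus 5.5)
Your proposal is correct and follows the paper's proof: the paper likewise turns vanishing at the generic point of $Y$ into $V(P)\in(P)R_{(P)}$, clears a denominator $s\in R\setminus(P)$ (your $B$), uses primality of $(P)$ to get $V(P)\in(P)$ in $R$, and checks independence of the trivialisation via $V(gP)=gV(P)+V(g)P$. Your DVR framing and the discussion of the omitted loci are extra but harmless, since the identification of $Y$ with the prime $(P)$ is a hypothesis of the lemma, established in Corollary~\ref{cor:o3-compact-generic-prime} and Lemma~\ref{lem:o3-log-saturation-prime}.
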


\begin{proof}
At the generic point of $Y$, vanishing of the derivative gives
\[
 V(P)\in(P)R_{(P)}.
\]
Thus there is an element $s\in R\setminus(P)$ such that
$sV(P)\in(P)$.  Since $(P)$ is prime and $s\notin(P)$, it follows directly
that $V(P)\in(P)$ in $R$ itself.  This proves membership without inverting a
first-jet variable or the Wronskian.

The assertion is also independent of a local line-bundle trivialisation.
Indeed, another local generator has the form $P'=gP$ for a unit $g$, and
\[
 V(P')=gV(P)+V(g)P.
\]
Hence $V(P')\in(P')$ if and only if $V(P)\in(P)$.  Applying the argument to
each field gives the final assertion.
\end{proof}

\begin{rem}[What the acceleration operator actually proves]
\label{rem:o3-one-operator}
The distinction between pointwise evaluation and ideal preservation is
essential.  If one merely evaluates a field at a fixed jet point, the
resulting equality can at most say that the specialised coefficients of the
Wronskian terms vanish there.  Lemma~\ref{lem:o3-nilpotent} applies only
after Lemmas~\ref{lem:o3-common-twist} and~\ref{lem:o3-preservation} have
upgraded the dependence assumption to the polynomial identity
\[
 S_3(x,y)\partial_WP\in(P)
\]
over the incidence function field and on the whole unlocalised fibre ring.
At that stage it proves $P\in K[x,y]$, so $P$ is genuinely a first-jet
differential at the generic incidence point, not merely Wronskian-free at
one fixed point.  It does not yet prove $P=0$: the semisimple matrix operator
of Lemma~\ref{lem:o3-binary} is the second, indispensable step which rules
out an irreducible binary form of degree $m\geqslant3$.
\end{rem}

When $P_a$ has base twist $-t$, an $\OO(3)$ field produces a section of
base twist $-(t-3)$.  Thus for $t\geqslant4$ both $P_a$ and $V(P_a)$ remain
negatively twisted and are annihilated along every entire curve by the
fundamental vanishing theorem.  This is the precise numerical point at
which the low-pole construction enters the hyperbolicity argument.

\begin{lem}[Induced action on the Wronskian]
\label{lem:o3-induced-wronskian}
Let $x=(u,v)^{\mathsf T}$ be a first jet, let
$a=(X,Y)^{\mathsf T}$ be a second jet, and put $W=\det(x,a)$.  Suppose a
reparametrization-equivariant derivation has
\[
 \delta x=Tx,
 \qquad
 \delta a=Ta+B(x,x),
\]
where $T$ is a $2\times2$ matrix and
$B:\Sym^2K^2\to K^2$ is symmetric bilinear.  Then
\begin{equation}\label{eq:o3-induced-wronskian}
 \delta W=(\operatorname{tr}T)W+R_3(u,v),
 \qquad
 R_3(u,v)=\det\bigl(x,B(x,x)\bigr).
\end{equation}
In particular, a traceless matrix direction may add a nonzero binary cubic;
the abstract two-operator argument does not require this cubic to vanish.
\end{lem}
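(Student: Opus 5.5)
The statement is a direct computation: $W=\det(x,a)$ is bilinear in $x$ and $a$, and $\delta$ is a derivation. I will apply the Leibniz rule to the determinant and then identify the two resulting terms.

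Applying the Leibniz rule columnwise gives
\[
 \delta W=\det(\delta x,a)+\det(x,\delta a)
 =\det(Tx,a)+\det(x,Ta)+\det\bigl(x,B(x,x)\bigr).
\]
The first two terms are handled by the standard identity for $2\times2$ matrices,
\[
 \det(Tx,a)+\det(x,Ta)=(\operatorname{tr}T)\det(x,a),
\]
valid for all $x,a\in K^2$. This is the derivative at $s=0$ of $\det(e^{sT}x,e^{sT}a)=\det(e^{sT})\det(x,a)$, using $\frac{d}{ds}\det e^{sT}|_{s=0}=\operatorname{tr}T$. Alternatively, one checks it on the basis $x=e_1$, $a=e_2$, where the left side is $T_{11}+T_{22}$; the general case then follows because both sides are alternating bilinear forms in $(x,a)$. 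This produces the term $(\operatorname{tr}T)W$.

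The remaining term is $R_3(u,v)=\det(x,B(x,x))$. Here $x$ is linear in $(u,v)$, and $B(x,x)$ is a quadratic form in $(u,v)$ with values in $K^2$, so $R_3$ is a binary cubic. It is independent of $X,Y$, and it has weight $3$, matching $\operatorname{wt}(W)$; this is consistent with $\delta$ having weight zero.

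For the final remark, take $T$ traceless. The Wronskian term then vanishes and $\delta W=R_3(u,v)$. To show this cubic can be nonzero, take $B(x,x)=(0,u^2)^{\mathsf T}$, which gives $R_3=u^3$. Proposition~\ref{prop:o3-abstract-contradiction} allows an arbitrary $R_3$, so no vanishing of $R_3$ is needed.

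There is no real obstacle. The one point to state explicitly is that reparametrization-equivariance is what forces $\delta a$ to have exactly the form $Ta+B(x,x)$, with the same $T$ acting on $x$ and on $a$. This is part of the hypothesis, so the proof only needs to invoke it, not derive it.
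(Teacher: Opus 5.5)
Your proposal is correct and follows the paper's own proof: the Leibniz rule on the determinant, the identity $\det(Tx,a)+\det(x,Ta)=(\operatorname{tr}T)\det(x,a)$, and the observation that $\det(x,B(x,x))$ is a binary cubic. The paper only asserts the trace identity. You additionally justify it via the alternating-form/basis check, and you exhibit $B(x,x)=(0,u^2)^{\mathsf T}$ giving $R_3=u^3$ to show the cubic can indeed be nonzero.
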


\begin{proof}
Differentiate the determinant and use
\[
 \det(Tx,a)+\det(x,Ta)
   =(\operatorname{tr}T)\det(x,a).
\]
The remaining term $\det(x,B(x,x))$ is homogeneous of degree three.
\end{proof}

\begin{lem}[Second Semple boundary chart]
\label{lem:o3-semple-chart}
Let a reparametrization-equivariant derivation act on two-dimensional jet
coordinates $(x,y,X,Y)$, and put $W=xY-yX$.  Suppose
\[
 \binom{\delta x}{\delta y}
 =
 T\binom{x}{y},
 \qquad
 T=\begin{pmatrix}a&b\\c&d\end{pmatrix},
 \qquad
 \delta W=(\operatorname{tr}T)W+S_3(x,y),
\]
where $S_3$ is a binary cubic; the case $S_3=0$ is allowed.  On the first
Semple chart $x\ne0$, set
\[
 p=\frac yx,\qquad q=\frac W{x^3}.
\]
Then
\begin{align*}
 \delta p&=c+(d-a)p-bp^2,\\
 \delta q&=S_3(1,p)+(d-2a-3bp)q.
\end{align*}
On the chart at the second-stage infinity, $r=q^{-1}$, one has
\[
 \delta r
 =-(d-2a-3bp)r-S_3(1,p)r^2.
\]
In particular the field is holomorphic along $r=0$.  The analogous formulas
on $y\ne0$, obtained from
\[
 \widetilde p=\frac xy,\qquad
 \widetilde q=-\frac W{y^3},
\]
give the same conclusion on the other first-stage chart.
\end{lem}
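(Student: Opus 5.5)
The argument is a direct chart computation using the quotient rule. On the chart $x\ne0$ we have $p=y/x$ and $q=W/x^3$, so
\[
 \delta p=\frac{x\,\delta y-y\,\delta x}{x^2},
 \qquad
 \delta q=\frac{\delta W}{x^3}-3\frac{W\,\delta x}{x^4}.
\]
Substitute $\delta x=ax+by$, $\delta y=cx+dy$ and $\delta W=(a+d)W+S_3(x,y)$, then divide numerators and denominators by the appropriate powers of $x$.

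For $p$ this gives
\[
 \delta p=\frac{x(cx+dy)-y(ax+by)}{x^2}=c+(d-a)p-bp^2.
\]
For $q$ this gives
\[
 \delta q=(a+d)q+\frac{S_3(x,y)}{x^3}-3q(a+bp).
\]
Homogeneity of the cubic gives $S_3(x,y)/x^3=S_3(1,p)$. The coefficient of $q$ is $(a+d)-3a-3bp=d-2a-3bp$, which is the claimed formula.

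For the chart at infinity, set $r=q^{-1}$. Then $\delta r=-r^2\,\delta q$. Substituting the formula for $\delta q$ and using $q=1/r$ gives
\[
 \delta r=-S_3(1,p)r^2-(d-2a-3bp)r.
\]
This is polynomial in $(p,r)$, so the field is holomorphic along $r=0$. It is in fact tangent to $r=0$, which is the expected behaviour at the boundary of the second Semple stage.

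The other first-stage chart $y\ne0$ is handled the same way. Put $\widetilde p=x/y$ and $\widetilde q=-W/y^3$. The quotient rule gives
\[
 \delta\widetilde p=b+(a-d)\widetilde p-c\widetilde p^2.
\]
It also gives
\[
 \delta\widetilde q=-S_3(\widetilde p,1)+(a-2d-3c\widetilde p)\widetilde q,
\]
using $S_3(x,y)/y^3=S_3(\widetilde p,1)$. This is again affine-linear in $\widetilde q$ with polynomial coefficients in $\widetilde p$. Setting $\widetilde r=\widetilde q^{-1}$ yields a quadratic polynomial expression in $\widetilde r$ with no constant term, so the field is holomorphic along $\widetilde r=0$ here too.

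The only point requiring care is bookkeeping rather than a real obstacle. One must track the trace contribution $(a+d)W$ against the $-3W\delta x/x$ term and remember the sign convention in $\widetilde q$. Reparametrization-equivariance is used only to guarantee that $\delta$ is well defined on these projective coordinates, i.e.\ that $p,q$ are invariant coordinates on the Semple charts.
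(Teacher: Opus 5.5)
Your proposal is correct and follows the paper's proof essentially verbatim: the quotient rule on the chart $x\ne0$, the homogeneity identity $S_3(x,y)/x^3=S_3(1,p)$, and $\delta r=-r^2\,\delta q$. Your explicit $y$-chart formulas, $\delta\widetilde p=b+(a-d)\widetilde p-c\widetilde p^{2}$ and $\delta\widetilde q=-S_3(\widetilde p,1)+(a-2d-3c\widetilde p)\widetilde q$, are also correct; the paper only states that this calculation is identical.
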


\begin{proof}
The first identity follows from
\[
 \delta p=\frac{x\delta y-y\delta x}{x^2}.
\]
Using $\delta x/x=a+bp$ and the homogeneity of $S_3$, direct
differentiation gives
\[
 \delta q
 =\frac{\delta W}{x^3}-3\frac{W}{x^3}\frac{\delta x}{x}
 =S_3(1,p)+(a+d-3a-3bp)q,
\]
which is the asserted formula.  Since $\delta r=-q^{-2}\delta q$, the
formula for $r$ follows.  The $y$-chart calculation is identical.  Thus an
affine term in $q$ becomes a quadratic term in $r$, while a linear term in
$q$ remains linear in $r$; neither creates a pole at the second Semple
boundary.
\end{proof}

\subsection{Compact case}
\label{sec:o3-compact}

On an affine chart of the universal hypersurface write
\[
 F(z,a)=\sum_{|\alpha|\leqslant d}a_\alpha z^\alpha.
\]
The vertical two-jet equations are
\begin{align*}
 E_0&=F,\\
 E_1&=dF(\xi^{(1)}),\\
 E_2&=dF(\xi^{(2)})+d^2F(\xi^{(1)},\xi^{(1)}).
\end{align*}

We recall the compact matrix-lift package of
P\u{a}un~\cite[Lemma~1.1]{mihaipaun2008}.  The uniform formula below is an
explicit interpolation presentation of that classical lemma and will be
used only to select its required low-coefficient direction.

\begin{prop}[P\u{a}un's compact matrix-lift interpolation]
\label{prop:o3-matrix-lift}
For every constant matrix $A\in M_3(\CC)$, put
\[
 B_A(Y,z)=A(Y-z),
 \qquad
 G_A(Y;z,a)=-dF_Y\bigl(B_A(Y,z)\bigr).
\]
Expand $G_A=\sum_\alpha g_\alpha(z,a)Y^\alpha$ and define
\[
 V_A=\sum_\alpha g_\alpha(z,a)\partial_{a_\alpha}
     +\sum_{r=1}^2 (A\xi^{(r)})\mathbin{\cdot}
        \partial_{\xi^{(r)}}.
\]
Then $V_A$ is tangent to $E_0=E_1=E_2=0$.  Its parameter coefficients
are linear in $a$ and have $z$-degree at most $1$, and its projective
prolongation has pole order at most $3$.  In particular it defines a
reparametrization-equivariant, parameter-untwisted slanted field with base
twist $\OO_{\PP^3}(3)$ and extends holomorphically over the second Semple
boundary.
\end{prop}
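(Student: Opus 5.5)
Tangency of $V_A$ to the three vertical equations will be checked directly, by differentiating each $E_r$ along $V_A$ and showing that the parameter contribution cancels the jet contribution. The key structural fact is that the parameter part of $V_A$ acts on $F$ through the polynomial $G_A$. Concretely, $V_A^{\rm par}(F(Y,a))=G_A(Y;z,a)=-dF_Y(A(Y-z))$. Evaluating at $Y=z$ and differentiating in $Y$ then gives the contributions to $E_0,E_1,E_2$.

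At $Y=z$ we get $G_A(z)=0$, so $V_A(E_0)=0$. For $E_1$, the parameter part contributes the $Y$-derivative of $G_A$ at $Y=z$ applied to $\xi^{(1)}$. The product rule gives
\[
-d^2F_z(A\cdot 0,\xi^{(1)})-dF_z(A\xi^{(1)})=-dF_z(A\xi^{(1)}).
\]
The jet part contributes $dF_z(A\xi^{(1)})$, and the two cancel.

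For $E_2$ the bookkeeping is similar but uses second derivatives. The parameter part contributes $dG_A(\xi^{(2)})+d^2G_A(\xi^{(1)},\xi^{(1)})$ at $Y=z$. Since
\[
d^2_YG_A=-d^3F(A(Y-z),\cdot,\cdot)-2\,d^2F(A\cdot,\cdot),
\]
this equals $-dF(A\xi^{(2)})-2d^2F(A\xi^{(1)},\xi^{(1)})$ at $Y=z$. The jet part contributes $dF(A\xi^{(2)})+2d^2F(A\xi^{(1)},\xi^{(1)})$. These cancel as well. I will organise this as a single identity: $G_A$ is the first-order variation of $F$ under the affine substitution $Y\mapsto Y+\epsilon A(Y-z)$, which fixes $z$. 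This shows at once that every jet equation is preserved, with the jets transformed linearly by $A$.

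The degree claims follow from the shape of $G_A$. It is linear in $a$ because $dF$ is linear in $a$. Its coefficients $g_\alpha$ have $z$-degree at most $1$, since $z$ enters only through the factor $(Y-z)$. Consequently the field is parameter-untwisted in the sense of the cone-degree convention, where the degree $e=1$ gives twist $\OO(0)$.

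For the pole order I will pass from the affine chart to projective coordinates on $\PP^3$. The jet component $A\xi^{(r)}$ is linear and has no pole on the affine chart. Homogenising a field whose coefficients have degree at most $1$ in $z$, together with the induced transformation of first and second jets, produces at worst the standard bound: pole order at most $3$ at infinity after accounting for the $\xi^{(2)}$ transformation law under change of chart. This is the step I expect to be the main obstacle. I would follow P\u{a}un's count, verifying on the chart $z_0\ne0\to z_1\ne0$ that each coordinate change contributes at most a cube of $1/z$ in the second-jet component.

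The remaining properties come from the jet action being linear. Because $\delta\xi^{(r)}=A\xi^{(r)}$ is linear in the jets, the field commutes with reparametrization, so it is equivariant. Lemma~\ref{lem:o3-induced-wronskian} with $B=0$ gives $\delta W=(\operatorname{tr}A)W$. Lemma~\ref{lem:o3-semple-chart} then shows that the field extends holomorphically over the second Semple boundary.
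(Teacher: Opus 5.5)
\textbf{Correct parts.} Your tangency computation, the linearity-in-$a$ and $z$-degree claims, parameter-untwistedness, and equivariance are all correct and agree with the paper. There are two harmless slips. First, $G_A=-dF_Y(A(Y-z))$ is the first-order variation of $F$ under $Y\mapsto Y-\epsilon A(Y-z)$, not $Y+\epsilon A(Y-z)$. Second, $d_Y^2G_A$ at $Y=z$ is the symmetrisation $-d^2F(A\cdot,\cdot)-d^2F(\cdot,A\cdot)$. Your signs in the explicit cancellation are right.

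The pole-order step is only announced, but it is the paper's count. Since $V_A$ fixes the base point, $\delta\widehat\xi^{(1)}=D\phi\,A\xi^{(1)}$ and $\delta\widehat\xi^{(2)}=D\phi\,A\xi^{(2)}+2D^2\phi(\xi^{(1)},A\xi^{(1)})$. Here $D\phi$ and $D^2\phi$ have pole orders at most $2$ and $3$, and the parameter coefficients have $z$-degree at most $1$.

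\textbf{The Semple-boundary step is wrong as written.} Lemmas~\ref{lem:o3-induced-wronskian} and~\ref{lem:o3-semple-chart} concern two-dimensional jet coordinates on the tangent plane of the surface, which is where $W$ and the Semple charts live. By contrast, $A$ is a $3\times3$ matrix on the ambient jets $\xi^{(r)}\in\CC^3$, and it does not preserve $\ker dF$. So you cannot take $T=A$ and $B=0$.

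You must pass to a tangent frame, for example $e_2=\partial_{z_2}-(F_2/F_1)\partial_{z_1}$ and $e_3=\partial_{z_3}-(F_3/F_1)\partial_{z_1}$. Then $x,y,X,Y$ are the $z_2,z_3$-components of $\xi^{(1)},\xi^{(2)}$. Eliminate the normal components using $E_1=E_2=0$:
\[
\xi^{(1)}_1=-(F_2x+F_3y)/F_1,\qquad
\xi^{(2)}_1=-\bigl(d^2F(\xi^{(1)},\xi^{(1)})+F_2X+F_3Y\bigr)/F_1 .
\]
The tangential part of $A\xi^{(2)}$ therefore contains the term $-F_1^{-1}d^2F(\xi^{(1)},\xi^{(1)})\,(A_{21},A_{31})$. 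Hence $\delta(X,Y)=T(X,Y)+B(x,x)$, where $B\neq0$ in general and $T$ is an induced $2\times2$ matrix whose trace is not $\operatorname{tr}A$.

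For instance, if $A\xi=(0,\xi_1,0)$, then $\operatorname{tr}A=0$, but
\[
 \delta W=-\frac{F_2}{F_1}\,W+\frac{y}{F_1}\,d^2F(\xi^{(1)},\xi^{(1)}).
\]
At the model point $F=z_1+z_2^2+z_3^2$ this is $2y(x^2+y^2)\neq0$. So in general $\delta W=(\operatorname{tr}T)W+S_3(x,y)$ with a genuine binary cubic, and your formula $\delta W=(\operatorname{tr}A)W$ is false.

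\textbf{How to repair it.} The holomorphic extension still holds, because Lemma~\ref{lem:o3-semple-chart} allows $S_3\neq0$. The cubic becomes the term $-S_3(1,p)r^2$, which is regular at $r=0$. This normal-elimination step is exactly the paper's argument. The cubic is not cosmetic: it is the term $R_3\partial_W$ in $\delta_{\rm ss}$ that the later two-operator contradiction is explicitly built to tolerate.
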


\begin{proof}
The polynomial $G_A$ has $Y$-degree at most $d$.  At $Y=z$ its Taylor
data through order two are
\begin{align*}
 G_A(z)&=0,\\
 dG_A(z)(u)&=-dF_z(Au),\\
 d^2G_A(z)(u,v)
 &=-d^2F_z(u,Av)-d^2F_z(v,Au).
\end{align*}
Consequently
\[
 V_A(E_0)=0,
 \qquad
 V_A(E_1)=-dF(A\xi^{(1)})+dF(A\xi^{(1)})=0,
\]
and
\begin{align*}
 V_A(E_2)
={}&-dF(A\xi^{(2)})
    -2d^2F(\xi^{(1)},A\xi^{(1)})\\
 &+dF(A\xi^{(2)})
    +2d^2F(\xi^{(1)},A\xi^{(1)})=0.
\end{align*}
This proves tangency without deleting any low multi-index block: missing
shifted monomials simply occur with coefficient zero in the expansion of
$G_A$.  Applying $A$ simultaneously to the first and second jets commutes
with the triangular action of the two-jet reparametrization group, proving
equivariance.

For completeness, let $\widehat z=\phi(z)$ be a projective chart change.
The prolonged jet coordinates satisfy
\[
 \widehat\xi^{(1)}=D\phi\,\xi^{(1)},
 \qquad
 \widehat\xi^{(2)}=D\phi\,\xi^{(2)}
      +D^2\phi(\xi^{(1)},\xi^{(1)}).
\]
Since the base point is fixed by $V_A$, their variations are
\begin{align*}
 \delta\widehat\xi^{(1)}&=D\phi\,A\xi^{(1)},\\
 \delta\widehat\xi^{(2)}&=D\phi\,A\xi^{(2)}
      +2D^2\phi(\xi^{(1)},A\xi^{(1)}).
\end{align*}
For a projective transition $\phi_i=z_i/z_\nu$, the entries of $D\phi$
and $D^2\phi$ have denominators of order at most $2$ and $3$,
respectively.  The parameter correction has $z$-degree at most $1$.
Thus no component has pole order exceeding $3$.

It remains to check the second Semple boundary rather than only the affine
jet charts.  Choose a rational tangent frame and eliminate the normal jet.
The prolonged action has
\[
 \delta(x,y)=T(x,y),\qquad
 \delta(X,Y)=T(X,Y)+B\bigl((x,y),(x,y)\bigr)
\]
for a symmetric bilinear map $B$; its quadratic term comes from the Hessian
of the frame change and from normal elimination.  Lemma
\ref{lem:o3-induced-wronskian} therefore gives
\[
 \delta(x,y)=T(x,y),\qquad
 \delta W=(\operatorname{tr}T)W+S_3(x,y)
\]
for a binary cubic $S_3$.  Lemma~\ref{lem:o3-semple-chart} gives
\[
 \delta q=S_3(1,p)+c_1(p)q,\qquad
 \delta r=-c_1(p)r-S_3(1,p)r^2.
\]
Both expressions are regular at $q<\infty$ and at $r=0$.  This completes
the projective and Semple-stage extension check.
\end{proof}

Let $C:\operatorname{Sym}^2\CC^3\to\CC^3$ be constant and prescribe
\[
 \delta\xi^{(1)}=0,
 \qquad
 \delta\xi^{(2)}=C(\xi^{(1)},\xi^{(1)}).
\]
Its parameter correction starts from
\[
 Q_0(Y;z,a)
 =-\frac12\sum_{r,j,k}C^r_{jk}F_r(Y,a)(Y_j-z_j)(Y_k-z_k).
\]
At $Y=z$ it has value and gradient zero and Hessian
\[
 d^2Q_0=-dF\circ C.
\]
Although $Q_0$ can have degree $d+1$ in $Y$, write its top part as
$\sum_{|\alpha|=d+1}h_\alpha Y^\alpha$, choose
$\gamma(\alpha)\leqslant\alpha$ with $|\gamma(\alpha)|=3$, and subtract
\[
 \sum_{|\alpha|=d+1}h_\alpha
       (Y-z)^{\gamma(\alpha)}Y^{\alpha-\gamma(\alpha)}.
\]
The subtracted polynomial belongs to $\mathfrak m_z^3$, has the same top
homogeneous part, and does not change the Taylor two-jet.  We obtain
$Q_C=\sum q_\alpha(z,a)Y^\alpha$ of degree at most $d$, linear in $a$, with
$\deg_zq_\alpha\leqslant3$.

\begin{prop}[Compact quadratic acceleration]
\label{prop:o3-compact-acceleration}
The vector field
\[
 V_C=\sum_\alpha q_\alpha(z,a)\partial_{a_\alpha}
   +C(\xi^{(1)},\xi^{(1)})\mathbin{\cdot}\partial_{\xi^{(2)}}
\]
is tangent to $E_0=E_1=E_2=0$, is reparametrization equivariant, and
homogenises with base twist $\OO_{\PP^3}(3)$.  It extends holomorphically
over the second Semple stage.
\end{prop}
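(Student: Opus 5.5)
The plan follows the proof of Proposition~\ref{prop:o3-matrix-lift}, in three steps: tangency, equivariance and twist, and the projective and Semple-stage extension.

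\emph{Tangency.} The field has no base component and $\delta\xi^{(1)}=0$. Its action on the equations is therefore
\[
 V_C(E_0)=Q_C(z),\qquad
 V_C(E_1)=dQ_C(z)(\xi^{(1)}),
\]
\[
 V_C(E_2)=dQ_C(z)(\xi^{(2)})+d^2Q_C(z)(\xi^{(1)},\xi^{(1)})
          +dF\bigl(C(\xi^{(1)},\xi^{(1)})\bigr).
\]
Here $V_C(F)$ evaluated at $Y=z$ is the parameter polynomial $Q_C$. By construction, $Q_C$ and $Q_0$ differ by an element of $\mathfrak m_z^3$, so they have the same Taylor two-jet at $Y=z$: value $0$, gradient $0$, and Hessian $-dF\circ C$.

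The Hessian claim is checked directly. In $Q_0$ the only contributions to second derivatives at $Y=z$ come from differentiating both factors $(Y_j-z_j)(Y_k-z_k)$. This gives $-\sum_r C^r_{jk}F_r(z)$, after symmetrising in $(j,k)$ with the factor $\tfrac12$.

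Hence $V_C(E_0)=V_C(E_1)=0$. In $V_C(E_2)$ the Hessian term cancels $dF(C(\xi^{(1)},\xi^{(1)}))$, so $V_C(E_2)=0$. The degree-$(d+1)$ truncation stays inside $\mathfrak m_z^3$, which is why it does not affect tangency.

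\emph{Equivariance and twist.} The two-jet reparametrization group acts by
\[
 \xi^{(1)}\mapsto\lambda\xi^{(1)},\qquad
 \xi^{(2)}\mapsto\lambda^2\xi^{(2)}+\mu\xi^{(1)}.
\]
The prescribed variation $(0,C(\xi^{(1)},\xi^{(1)}))$ is quadratic in $\xi^{(1)}$, so it transforms with weight $\lambda^2$, exactly as $\partial_{\xi^{(2)}}$ requires. The shift $\mu\xi^{(1)}$ has zero variation because $\delta\xi^{(1)}=0$. This gives commutation with the group action.

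The parameter coefficients $q_\alpha$ are linear in $a$, so the field is parameter-untwisted. They have $z$-degree at most $3$. The jet component is constant in $z$. Under a projective chart change $\phi$,
\[
 \delta\widehat\xi^{(1)}=0,\qquad
 \delta\widehat\xi^{(2)}=D\phi\,C(\xi^{(1)},\xi^{(1)}).
\]
Rewriting $\xi^{(1)}=D\phi^{-1}\widehat\xi^{(1)}$ shows that the jet part acquires poles of order at most $2+1=3$ from the entries of $D\phi$ and $D\phi^{-1}$. The parameter part has pole order $3$ from $\deg_z\leqslant3$. Together these give base twist $\OO_{\PP^3}(3)$.

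\emph{Semple boundary, the main obstacle.} I expect this step to be the hardest, and the plan is to reduce it to Lemma~\ref{lem:o3-semple-chart}. First choose a rational tangent frame and eliminate the normal jet using $E_1$ and $E_2$. In this frame the prolonged action has the form
\[
 \delta(x,y)=T(x,y),\qquad
 \delta(X,Y)=T(X,Y)+B\bigl((x,y),(x,y)\bigr),
\]
with $T=0$ for the pure acceleration, or possibly a small $T$ coming from frame dependence on $a$. The delicate point is to check that normal elimination and the frame change introduce only terms quadratic in the first jet. This holds because the parameter variation enters $E_2$ through $dQ_C$ and $d^2Q_C$, which are quadratic in $\xi^{(1)}$, and the frame Hessian likewise contributes only quadratic terms.

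Lemma~\ref{lem:o3-induced-wronskian} then gives $\delta W=(\operatorname{tr}T)W+S_3$. Lemma~\ref{lem:o3-semple-chart} shows that $\delta q$ is affine in $q$ and $\delta r$ is regular at $r=0$, on both first-stage charts. This yields holomorphic extension over the second Semple stage.
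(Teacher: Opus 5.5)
Your proposal is correct and follows the paper's proof essentially step by step: tangency from the vanishing value and gradient of $Q_C$ at $Y=z$ and its Hessian $-dF\circ C$, equivariance from quadratic homogeneity, the twist from $\delta\widehat\xi^{(2)}=D\phi\,C(\xi^{(1)},\xi^{(1)})$ together with $\deg_zq_\alpha\leqslant3$, and the Semple boundary via Lemma~\ref{lem:o3-semple-chart}. The only difference is that you allow a frame-induced matrix $T$ and pass through Lemma~\ref{lem:o3-induced-wronskian}, whereas the paper works in the tangential jet components $x,y,X,Y$, where the field fixes $x,y$, so that $T=0$ and $\delta W=xA_2-yA_1$ directly; this extra generality is harmless (and your ``$2+1$'' pole count is best read in the target chart as two factors of $(D\phi)^{-1}$, each of pole order at most $2$, against one factor of $D\phi$ vanishing to order at least $1$).
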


\begin{proof}
The first two tangency identities are $Q_C(z)=dQ_C(z)=0$.  For the third,
\[
 V_C(E_2)=dQ_C(\xi^{(2)})+d^2Q_C(\xi^{(1)},\xi^{(1)})
       +dF\bigl(C(\xi^{(1)},\xi^{(1)})\bigr)=0.
\]
Quadratic homogeneity proves equivariance.  Under a projective chart change
$\widehat z=\phi(z)$, the base point and first jet are fixed by $V_C$, so
variation of
\[
 \widehat\xi^{(2)}=D\phi\,\xi^{(2)}
   +D^2\phi(\xi^{(1)},\xi^{(1)})
\]
is simply $D\phi\,C(\xi^{(1)},\xi^{(1)})$.  The entries of $D\phi$ have
pole order at most $2$, while the parameter coefficients have base degree at
most $3$; hence the stated $\OO(3)$ homogenization clears every component.

For the Semple boundary calculation, write
$C(\xi^{(1)},\xi^{(1)})=(A_1(x,y),A_2(x,y))$ in a tangent frame.  Then
\[
 \delta x=\delta y=0,\qquad
 \delta X=A_1(x,y),\qquad
 \delta Y=A_2(x,y),
\]
and therefore
\[
 \delta W=xA_2(x,y)-yA_1(x,y)=:S_3(x,y).
\]
The polynomial $S_3$ is a binary cubic.  Lemma
\ref{lem:o3-semple-chart}, with $T=0$, gives
\[
 \delta q=S_3(1,p),\qquad
 \delta r=-S_3(1,p)r^2.
\]
This is regular on both second-stage charts and proves extension across
their common boundary.
\end{proof}

At a smooth incidence point choose a rational tangent frame $e_2,e_3$ and
take $C(e_2,e_2)=e_3$, all other components being zero.  Since
$\operatorname{im}C\subset\ker dF$, the Taylor two-jet of the parameter
correction vanishes and can be removed by Lemma~\ref{lem:o3-common-twist}.
For
\[
 x=\xi_2^{(1)},\quad y=\xi_3^{(1)},\quad
 X=\xi_2^{(2)},\quad Y=\xi_3^{(2)},\quad
 W=xY-yX,
\]
the third-difference fields have no jet component, so the residual field
satisfies
\[
 \delta x=\delta y=\delta X=0,\qquad \delta Y=x^2.
\]
Consequently
\[
 \delta W
 =(\delta x)Y+x\delta Y-(\delta y)X-y\delta X=x^3,
\]
and its action on the whole unlocalised invariant ring is
\begin{equation}
 \delta_{\rm acc}=x^3\partial_W.
 \label{eq:o3-compact-shear}
\end{equation}

It remains to select the semisimple direction from P\u{a}un's matrix-lift
package on the same incidence open.
Let $H$ be the restriction of $d^2F$ to the tangent plane and assume
$\det H\ne0$.  Put
\[
 J=\begin{pmatrix}0&1\\-1&0\end{pmatrix},
 \qquad T=\operatorname{adj}(H)J.
\]
Then
\[
 T^{\mathsf T}H+HT=0,
 \qquad \operatorname{tr}T=0,
 \qquad \det T=\det H\ne0.
\]
Here is the normal completion needed to invoke the matrix-lift proposition.
In the frame $(n,e_2,e_3)$, with $dF(n)=1$ and
$dF(e_2)=dF(e_3)=0$, write
\[
 d^2F=\begin{pmatrix}r&s^{\mathsf T}\\s&H\end{pmatrix},
 \qquad
 a=T H^{-1}s,
 \qquad
 \widehat T=\begin{pmatrix}0&0\\a&T\end{pmatrix}.
\]
The identity $T^{\mathsf T}H+HT=0$ implies
\[
 dF\circ\widehat T=0,
 \qquad
 \widehat T^{\mathsf T}d^2F+d^2F\,\widehat T=0.
\]
Indeed, $TH^{-1}$ is skew-symmetric, so the normal--normal entry is
$2s^{\mathsf T}TH^{-1}s=0$, and the mixed and tangent blocks vanish by
the same displayed identity.  Thus the parameter correction has zero Taylor
two-jet and belongs to the third-difference span.  The matrix $\widehat T$
has entries in the incidence function field and is therefore a rational
linear combination of the regular constant-matrix fields in
Proposition~\ref{prop:o3-matrix-lift}.  In the dependence contradiction this
combination and the third-difference fields give a residual action inducing
$T$ on $x,y$; if every regular field preserved $(P)$, this rational
combination would preserve it as well.  Lemma
\ref{lem:o3-induced-wronskian} shows that the mixed normal--tangent terms add
a binary cubic multiple of $\partial_W$; this is immaterial after
Lemma~\ref{lem:o3-nilpotent} has removed the $W$-dependence.

The simultaneous definition and rational choice of these two compact
operators over one dominant incidence function field will be established in
Lemma~\ref{lem:simultaneous-open}.

\subsection{Logarithmic case}
\label{sec:o3-logarithmic}

Let $C_a=\{f(z_1,z_2,a)=0\}\subset\PP^2$ and introduce the cyclic coordinate
$z_3$ with
\[
 z_3^d+f(z_1,z_2,a)=0.
\]
Writing $z_3'=z_3\xi_3^{(1)}$ and
$z_3''=z_3(\xi_3^{(2)}+(\xi_3^{(1)})^2)$ gives the standard logarithmic
vertical two-jet equations.  For constant vectors $\tau,\beta\in\CC^2$ set
\[
 L_\beta(Y-z)=\beta\cdot(Y-z),
 \qquad g_\tau(Y)=\tau\cdot df(Y),
\]
and prescribe
\[
 \delta\xi^{(1)}=0,
 \qquad \delta\xi^{(2)}=\tau L_\beta(\xi^{(1)})^2.
\]
The parameter polynomial
\[
 Q^{\log}_{0,\tau,\beta}(Y;z,a)
 =-\frac12g_\tau(Y,a)L_\beta(Y-z)^2
\]
has zero value and gradient and has Hessian
$-df_z(\tau)L_\beta^2$.  The same top-degree
$\mathfrak m_z^3$ correction used above reduces its degree to at most $d$
without altering its two-jet.

Write $G=z_3^d+f_a(z_1,z_2)$, let $\mathscr D$ denote total differentiation
in the source variable, and set
$E_j^{\log}=\mathscr D^jG$ for $0\leqslant j\leqslant2$.

\begin{lem}[Full coefficient-line and cyclic-root gauge]
\label{lem:o3-full-log-gauge}
Let $E_a=\sum_\alpha a_\alpha\partial_{a_\alpha}$ and
\[
 \mathscr E_{\log}=E_a+\frac1d z_3\partial_{z_3}.
\]
For every regular function $c$ on a coefficient/base chart, the full second
prolongation $\operatorname{pr}^{(2)}(c\mathscr E_{\log})$ preserves the
ideal $(E_0^{\log},E_1^{\log},E_2^{\log})$ and the boundary $z_3=0$.
More precisely,
\begin{equation}
 \operatorname{pr}^{(2)}(c\mathscr E_{\log})
   \bigl(\mathscr D^jG\bigr)=\mathscr D^j(cG),
 \qquad 0\leqslant j\leqslant2.
 \label{eq:o3-full-log-gauge}
\end{equation}
Thus
\begin{align*}
 E_0^{\log}&\longmapsto cE_0^{\log},\\
 E_1^{\log}&\longmapsto cE_1^{\log}+(\mathscr Dc)E_0^{\log},\\
 E_2^{\log}&\longmapsto cE_2^{\log}
  +2(\mathscr Dc)E_1^{\log}+(\mathscr D^2c)E_0^{\log}.
\end{align*}
In logarithmic-normal coordinates
$\chi=\mathscr D\log z_3$ and $\psi=\mathscr D^2\log z_3$, its root part is
\[
 \delta\log z_3=\frac cd,
 \qquad \delta\chi=\frac1d\mathscr Dc,
 \qquad \delta\psi=\frac1d\mathscr D^2c.
\]
\end{lem}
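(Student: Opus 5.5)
The plan is to prove \eqref{eq:o3-full-log-gauge} once at the level of a vector field on the base jet variables, and then read off everything else from the Leibniz rule for total derivatives. The key structural fact is that $\mathscr E_{\log}$ is a weighted Euler field for which $G$ is an eigenfunction:
\[
 \mathscr E_{\log}(G)=E_a(f_a)+\tfrac1d z_3\cdot d z_3^{d-1}=f_a+z_3^d=G,
\]
since $f_a$ is linear in the coefficients $a$. Hence $(c\mathscr E_{\log})(G)=cG$, which is the case $j=0$.

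For $j=1,2$ we use the standard commutation property of prolongations. For a vector field $\mathscr V$ on the source-independent variables, with prolongation defined through the characteristic, one has
\[
 \operatorname{pr}^{(2)}\mathscr V\circ\mathscr D^j=\mathscr D^j\circ\mathscr V
\]
on functions of order at most $2-j$. This holds because the field is evolutionary: there is no source-variable component, so the total-derivative correction term vanishes. The coefficients $a$ are constant along jets, so $\mathscr D a=0$ and the prolongation is consistent with that constraint. We also need that prolonging $c\mathscr E_{\log}$ with $c$ a function of $(z,a)$ amounts to applying $\mathscr D^j$ to the characteristic $c\,\mathscr E_{\log}$ applied to the coordinates. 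Applying this gives
\[
 \operatorname{pr}^{(2)}(c\mathscr E_{\log})(\mathscr D^jG)=\mathscr D^j\bigl(cG\bigr),
\]
and Leibniz then yields the three displayed transformation rules. In particular the ideal $(E_0^{\log},E_1^{\log},E_2^{\log})$ is preserved. The boundary $z_3=0$ is preserved because the $z_3$-component of the field is $\frac{c}{d}z_3$, which vanishes on $\{z_3=0\}$.

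For the root part, note that $\delta z_3=\frac cd z_3$ gives $\delta\log z_3=c/d$. Since prolongation commutes with $\mathscr D$, we get $\delta\chi=\mathscr D(c/d)$ and $\delta\psi=\mathscr D^2(c/d)$. This is consistent with the paper's logarithmic variables $\xi_3^{(1)}=\chi$ and $\xi_3^{(2)}=\psi$, as one checks from $z_3'=z_3\xi_3^{(1)}$ and $z_3''=z_3(\xi_3^{(2)}+(\xi_3^{(1)})^2)$.

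The main obstacle is justifying the commutation $\operatorname{pr}^{(2)}\circ\mathscr D=\mathscr D\circ\operatorname{pr}^{(1)}$ when the coefficient is a nonconstant $c$ and $a$ is a parameter. I would handle this by verifying it directly on the generators $z_i,z_3,a_\alpha$ and their first and second total derivatives. The prolonged field is defined by $\delta(\mathscr D^k w)=\mathscr D^k(\delta w)$ on these generators, and it extends as a derivation, so the identity holds on all polynomial expressions, in particular on $\mathscr D^jG$.
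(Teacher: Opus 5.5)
Your outline is the paper's argument: homogeneity gives $\mathscr E_{\log}(G)=G$, the commutation rule for the full prolongation gives \eqref{eq:o3-full-log-gauge}, and then come Leibniz, the boundary check, and differentiation of $\delta\log z_3=c/d$. However, one sentence is false, and it is exactly where the lemma has content: ``the coefficients $a$ are constant along jets, so $\mathscr Da=0$ and the prolongation is consistent with that constraint.'' Apply your own rule $\delta(\mathscr D^kw)=\mathscr D^k(\delta w)$ to $w=a_\alpha$. It gives $\delta(\mathscr Da_\alpha)=\mathscr D(ca_\alpha)=a_\alpha\,\mathscr Dc+c\,\mathscr Da_\alpha$, which equals $a_\alpha\,\mathscr Dc\neq0$ on $\{\mathscr Da=0\}$ as soon as $c$ depends on the base point. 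So the prolonged field is not tangent to the jets with constant $a$, and you cannot both impose $\mathscr Da\equiv0$ and prolong through the generators $a_\alpha$. If you do work with $\mathscr Da=0$ and discard that component, a direct computation gives
\[
 \delta E_1^{\log}=cE_1^{\log}+z_3^d\,\mathscr Dc
 =cE_1^{\log}+(\mathscr Dc)E_0^{\log}-f\,\mathscr Dc .
\]
Here $f\,\mathscr Dc\equiv-z_3^d\,\mathscr Dc\pmod{E_0^{\log}}$ is not in the ideal. Take $c=z_1$: by jet-order homogeneity it would have to equal $h\,E_1^{\log}$ modulo $E_0^{\log}$ with $h$ a base function, and comparing $z_3'$-coefficients forces $h\equiv0$. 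In that reading both \eqref{eq:o3-full-log-gauge} and ideal preservation fail.

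The repair is to say where the identity lives: on the jet space in which the cone coordinates $a_\alpha$ are dependent variables with their own jets, which is what ``full'' prolongation means. There $\operatorname{pr}(c\mathscr E_{\log})=\sum_w\sum_k\mathscr D^k(Q_w)\,\partial_{w^{(k)}}$ with $Q_w=c\,\mathscr E_{\log}(w)$. Its commutator with $\mathscr D$ is a derivation killing every generator $w^{(k)}$, hence zero, and $\operatorname{pr}^{(2)}(c\mathscr E_{\log})(\mathscr D^jG)=\mathscr D^j(cG)$ follows. The term $f\,\mathscr Dc$ missing above is precisely $\sum_\alpha z^\alpha\,\delta(a_\alpha')$ at $a'=0$. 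Once the $\mathscr Da=0$ sentence is deleted, your generator check in the last paragraph is this argument.

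It is also worth recording why the extra component is harmless. It is $a\,\mathscr Dc\,\partial_{a'}$, and $(a\,\mathscr D^2c+2a'\,\mathscr Dc)\,\partial_{a''}$ at second order, modulo terms proportional to $a'$ and $a''$ themselves. So it points along the Euler line, and the prolonged field preserves $\{a'\wedge a=0,\ a''\wedge a=0\}$, i.e.\ constancy of $[a]$, rather than $\{a'=0\}$. This is consistent with the projective descent in Proposition~\ref{prop:o3-pre-gauge-cancellation}. Your boundary argument and root formulas, including $\chi=\xi_3^{(1)}$ and $\psi=\xi_3^{(2)}$, are correct as written.
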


\begin{proof}
Homogeneity gives $\mathscr E_{\log}(G)=G$.  The defining commutation rule
for full prolongation yields \eqref{eq:o3-full-log-gauge}, and Leibniz' rule
gives the triangular formulas.  The root component is divisible by $z_3$,
so it preserves the boundary.  The last three identities follow by
differentiating $\delta\log z_3=c/d$.  In particular, the derivatives of
$c$ cannot be omitted when $c$ depends on the base point.
\end{proof}

\begin{prop}[Projective normalisation before gauge correction]
\label{prop:o3-pre-gauge-cancellation}
Let $V$ be a homogeneous cone field tangent to the logarithmic two-jet
incidence, with coefficient component $q=(q_\alpha)$.  On $a_\rho\ne0$ put
\[
 c_\rho=\frac{q_\rho}{a_\rho},
 \qquad
 V^{(\rho)}=V-\operatorname{pr}^{(2)}
                  (c_\rho\mathscr E_{\log}).
\]
Then $V^{(\rho)}$ has zero $a_\rho$-component, and the representatives
$V^{(\rho)}$ on different coefficient charts differ by a full gauge field
from Lemma~\ref{lem:o3-full-log-gauge}.  Hence they define one field on the
projective coefficient space, without adjoining a root of $-f$.

More generally, suppose that tangent cone fields $V_i$, multiplied by
regular functions $h_i$, have one common coefficient homogeneity and satisfy
\begin{equation}
 \sum_i h_iq_i=0
 \label{eq:o3-horizontal-cone-combination}
\end{equation}
as a coefficient vector.  Then $\sum_i h_iV_i$ is already a homogeneous
horizontal representative.  If the $V_i$ used for the cancellation are
third-difference fields, they have no jet component and therefore do not
change the residual action on the intrinsic logarithmic jet variables.  No
separate Euler or root term survives.
\end{prop}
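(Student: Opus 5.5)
The plan has two parts: first the single-chart cancellation of the $a_\rho$-component, then the chart-independence and the horizontal-combination clause, all using the full gauge of Lemma~\ref{lem:o3-full-log-gauge}.

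\textbf{Single chart.} The coefficient component of $\operatorname{pr}^{(2)}(c\mathscr E_{\log})$ is $cE_a=\sum_\alpha c\,a_\alpha\partial_{a_\alpha}$, whose $a_\rho$-entry is $c\,a_\rho$. With $c_\rho=q_\rho/a_\rho$ this equals $q_\rho$, so $V^{(\rho)}$ has zero $a_\rho$-component. Tangency of $V^{(\rho)}$ to the incidence ideal $(E_0^{\log},E_1^{\log},E_2^{\log})$ follows from tangency of $V$ together with \eqref{eq:o3-full-log-gauge}. That identity also shows the boundary $z_3=0$ is preserved, since the root part $\delta\log z_3=c_\rho/d$ is regular on $a_\rho\ne0$. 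Because $q$ is homogeneous, $c_\rho$ has cone degree $e-1$, where $e$ is the degree of $q$. Thus $V^{(\rho)}$ carries the same parameter twist as $V$, as in the discussion preceding Lemma~\ref{lem:o3-common-twist}.

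\textbf{Chart independence.} On $a_\rho a_\sigma\ne0$ one has
\[
 V^{(\rho)}-V^{(\sigma)}=\operatorname{pr}^{(2)}\bigl((c_\sigma-c_\rho)\mathscr E_{\log}\bigr).
\]
This holds because prolongation is linear in the coefficient function $c$, with the derivative terms $\mathscr Dc$ and $\mathscr D^2c$ included, as Lemma~\ref{lem:o3-full-log-gauge} records. Hence the difference is a full gauge field. The key point is that $\mathscr E_{\log}$ is exactly the infinitesimal generator of the simultaneous scaling $a\mapsto\lambda a$, $z_3\mapsto\lambda^{1/d}z_3$, and this scaling preserves $G$ up to the factor $\lambda$. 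Consequently the quotient of the cone incidence by this action is the projective logarithmic incidence, in which $z_3$ is only a local cyclic coordinate. Gauge fields are therefore vertical for this quotient, and the $V^{(\rho)}$ glue to a single field downstairs. No global root of $-f$ is needed, because only $\operatorname{pr}^{(2)}(c\,\mathscr E_{\log})$, which is divisible by $z_3$, enters. I expect this step to be the main obstacle. One must check that the jet part of the gauge acts on the logarithmic-normal coordinates $\chi,\psi$ only through the root part displayed in Lemma~\ref{lem:o3-full-log-gauge}, and does not act on the tangential jet variables. After that check, the residual action on the intrinsic jet variables is well defined modulo the gauge.

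\textbf{Horizontal combination.} If \eqref{eq:o3-horizontal-cone-combination} holds, the coefficient component of $\sum_i h_iV_i$ vanishes identically. Every chart correction $c_\rho$ is then zero, so $\sum_i h_iV_i$ coincides with its own normalisation on every chart and is a homogeneous horizontal representative. The common homogeneity of the $V_i$ guarantees that $\sum_i h_iV_i$ is homogeneous. Finally, third-difference fields \eqref{eq:o3-raw-third-difference} have only $\partial_{a}$-components. Subtracting them therefore leaves the jet and root components untouched, so the residual jet action is that of the original field. Since no $c_\rho$ was introduced, no Euler or root term survives.
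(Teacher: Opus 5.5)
Your proposal is correct and follows the paper's proof in substance. The $a_\rho$-component cancels because the coefficient part of $\mathscr E_{\log}$ is $a$; the overlap identity $V^{(\rho)}-V^{(\sigma)}=\operatorname{pr}^{(2)}\bigl((c_\sigma-c_\rho)\mathscr E_{\log}\bigr)$ together with Lemma~\ref{lem:o3-full-log-gauge} gives the gluing; and the horizontal and third-difference clauses come from the identically vanishing coefficient vector and the absence of jet components.

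The check you flag as the main obstacle is immediate. Since $\mathscr E_{\log}$ has no $\partial_{z_1},\partial_{z_2}$ component, its prolongation never touches the tangential jets. For the final clause, the paper invokes two facts: tangent fields form an $\OO$-module, and the quotient to the projective tangent sheaf is $\OO$-linear. The first of these is also what your ``every $c_\rho$ vanishes'' argument implicitly needs, to know that $\sum_i h_iV_i$ is still tangent, so you should state it.
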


\begin{proof}
The coefficient component of $\mathscr E_{\log}$ is $a$.  On an overlap,
\[
 V^{(\rho)}-V^{(\sigma)}
 =\operatorname{pr}^{(2)}
   \bigl((c_\sigma-c_\rho)\mathscr E_{\log}\bigr),
\]
which proves the first assertion by Lemma~\ref{lem:o3-full-log-gauge}.  For
the second, tangent fields form an $\OO$-module and
\eqref{eq:o3-horizontal-cone-combination} makes the coefficient component
identically zero.  The quotient from homogeneous cone fields to the
projective tangent sheaf is $\OO$-linear.  Finally, a parameter-only
third-difference field acts trivially on every jet variable before passage
to that quotient.
\end{proof}

The compact analogue of Proposition~\ref{prop:o3-pre-gauge-cancellation}
is obtained by replacing $\mathscr E_{\log}$ with the coefficient Euler
field $E_a$.  Since $E_a(F_a)=F_a$, the same prolongation and horizontal
cone-combination proof applies, with no cyclic-root coordinate.

\begin{prop}[Logarithmic quadratic acceleration]
\label{prop:o3-log-acceleration}
After the coefficient-Euler and cyclic-root gauge correction, the preceding
field descends to the projective parameter space, preserves the logarithmic
boundary, is reparametrization equivariant, and has base twist at most
$\OO_{\PP^2}(3)$.
\end{prop}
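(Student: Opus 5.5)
The plan is to mirror the compact argument of Proposition~\ref{prop:o3-compact-acceleration}, working in the cyclic model $G=z_3^d+f_a(z_1,z_2)$. The proof has three parts: tangency with projective descent, equivariance with the $\OO(3)$ homogenisation, and the Semple-boundary check.

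\textbf{Tangency and descent.} First I would verify that the cone field
\[
 V=\sum_\alpha q^{\log}_\alpha\partial_{a_\alpha}
   +\tau L_\beta(\xi^{(1)})^2\cdot\partial_{\xi^{(2)}},
\]
where $q^{\log}_\alpha$ are the coefficients of the degree-reduced $Q^{\log}_{\tau,\beta}$, is tangent to $E_0^{\log}=E_1^{\log}=E_2^{\log}=0$. The computation copies the compact one. The value and gradient of $Q^{\log}$ vanish at $Y=z$, which gives the first two identities. In the third, the Hessian $-df_z(\tau)L_\beta^2$ cancels the term $df(\tau)L_\beta(\xi^{(1)})^2$ produced by $\delta\xi^{(2)}$. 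The field has no $z_3$ component, and it does not move the jet variables $\xi_3^{(r)}$ in the cyclic direction, so $z_3'$ and $z_3''$ transform consistently.

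Next I would apply Proposition~\ref{prop:o3-pre-gauge-cancellation}. On a chart $a_\rho\ne0$, subtract $\operatorname{pr}^{(2)}(c_\rho\mathscr E_{\log})$ with $c_\rho=q_\rho/a_\rho$. Lemma~\ref{lem:o3-full-log-gauge} shows that this correction preserves the incidence ideal and the boundary $z_3=0$, since its root component $\frac{c}{d}z_3\partial_{z_3}$ is divisible by $z_3$. It also shows that the representatives on overlaps differ by gauge fields. Hence $V$ descends to the projective coefficient space without adjoining a root of $-f$, and it preserves the logarithmic boundary.

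\textbf{Equivariance and twist.} Reparametrization equivariance follows from quadratic homogeneity in $\xi^{(1)}$, exactly as in the compact case. The gauge part is a full prolongation, so it is automatically equivariant.

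For the twist bound, I would argue as before. The parameter coefficients have $z$-degree at most $3$: $g_\tau$ contributes degree $0$ in $z$, and $L_\beta(Y-z)^2$ contributes degree $2$. The top-degree $\mathfrak m_z^3$ correction adds at most degree $3$. Under a projective chart change, the jet component varies by $D\phi\,\tau L_\beta(\xi^{(1)})^2$, which has pole order at most $2$. The remaining issue is the gauge coefficient $c_\rho$ and its derivatives $\mathscr Dc$ and $\mathscr D^2c$ acting on $\chi$ and $\psi$. Since $c_\rho$ is a quotient of degree-$\le3$ polynomials in $z$ by a constant coefficient, its pole order after homogenisation is at most $3$. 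Differentiating along $\mathscr D$ does not raise the pole order beyond what the jet transition formulas already allow, by the same $D\phi$/$D^2\phi$ count as in Proposition~\ref{prop:o3-matrix-lift}. This gives base twist at most $\OO_{\PP^2}(3)$.

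\textbf{Semple boundary and main obstacle.} Finally, in a rational logarithmic frame the residual action is $\delta x=\delta y=0$ and $\delta(X,Y)=$ a quadratic form. Lemma~\ref{lem:o3-semple-chart} with $T=0$ then gives $\delta r=-S_3(1,p)r^2$, so the field is holomorphic at the second-stage boundary. The gauge part, however, contributes $\delta\chi$ and $\delta\psi$, which shift the normal jet. After elimination of the normal jet, these terms may produce a nonzero $T$ together with an affine term.

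I expect the main obstacle to be controlling the pole order of these $\mathscr Dc$ and $\mathscr D^2c$ terms, both at infinity and along the Semple boundary. My first attempt would be to use that Lemma~\ref{lem:o3-induced-wronskian} still puts them in the form $\delta W=(\operatorname{tr}T)W+R_3$. If that holds, Lemma~\ref{lem:o3-semple-chart} keeps the field regular at $r=0$, and the pole count reduces to bounding $\deg_z c_\rho\leqslant3$.
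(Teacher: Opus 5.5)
Your tangency identity, your projective descent through Proposition~\ref{prop:o3-pre-gauge-cancellation} and Lemma~\ref{lem:o3-full-log-gauge}, and your Semple-boundary check for the raw acceleration all match the paper. That check is $\delta u=\delta v=0$, $\delta W_{12}=(u\tau_2-v\tau_1)L_\beta(u,v)^2$, then Lemma~\ref{lem:o3-semple-chart} with $T=0$. The gap is in the base-chart (twist) step.

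You omit the logarithmic-frame term, which is the logarithmic-specific part of the paper's pole count. Your remark that the field does not move $\xi_3^{(r)}$ is true only in the chart where the field is written. Under a base chart change the cyclic coordinate becomes $\widehat z_3=gz_3$, with $\widehat f=g^df$. Putting $h=\log g$,
\[
 \widehat\chi=\chi+dh(\xi^{(1)}),\qquad
 \widehat\psi=\psi+dh(\xi^{(2)})+d^2h(\xi^{(1)},\xi^{(1)}).
\]
The acceleration fixes the base point and the first jet but moves $\xi^{(2)}$. So in the new chart it has the nonzero normal component $\delta\widehat\psi=dh\bigl(\tau L_\beta(\xi^{(1)})^2\bigr)$. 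The paper bounds this by pole order at most $2$ for the transformed acceleration plus one for $dh$. Counting only the tangential component $D\phi\,\tau L_\beta(\xi^{(1)})^2$, as you do, misses it.

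You also need the triangular transformation of the equations under $\rho=g^d$. For example, $\widehat E_1^{\log}=\rho E_1^{\log}+d\rho(\xi^{(1)})E_0^{\log}$, and similarly for $E_2^{\log}$. This is what shows that tangency in one base chart gives tangency in the next, i.e.\ that the local fields glue.

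The estimate you put in its place is not valid, because total differentiation raises pole order. Suppose $c_\rho$ has a pole of exact order $3$ at infinity, say $c_\rho\sim w_0^{-3}$ in an adjacent chart. Then $\mathscr Dc_\rho\sim-3w_0'w_0^{-4}$, and $\mathscr D^2c_\rho$ has a pole of order $5$ in that chart's regular jet coordinates. So $\deg_zc_\rho\leqslant3$ does not put the $\chi$- and $\psi$-components of the gauge term in $\OO(3)$.

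The paper makes no such estimate. It uses Lemma~\ref{lem:o3-full-log-gauge} only for tangency, compatibility on coefficient-chart overlaps, and preservation of $z_3=0$. In every later use the cancelling combination has identically zero coefficient vector, so by Proposition~\ref{prop:o3-pre-gauge-cancellation} no Euler or root term survives.

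The Semple-boundary obstacle you anticipate for the gauge part also does not arise. The field $c\,\mathscr E_{\log}$ has no $z_1,z_2$-component, so its prolongation fixes $u,v,X,Y$. On the interior, after the normal jet is eliminated, it contributes neither a matrix $T$ nor a cubic. Only the raw acceleration has to be checked there.
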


\begin{proof}
Denote the three logarithmic jet equations by $E_j^{\log}=0$,
$0\leqslant j\leqslant2$.  The zero value and gradient of the parameter
polynomial give $V(E_0^{\log})=V(E_1^{\log})=0$, while its Hessian cancels
exactly the acceleration term in $V(E_2^{\log})$.  Thus tangency on the
coefficient cone is an identity.

On a coefficient chart $a_\rho\ne0$, apply
Proposition~\ref{prop:o3-pre-gauge-cancellation} with
$c_\rho=q_\rho^{\log}/a_\rho$.  Lemma~\ref{lem:o3-full-log-gauge}, including
the $\mathscr Dc_\rho$ and $\mathscr D^2c_\rho$ terms, proves tangency and
compatibility on coefficient-chart overlaps.  It also proves preservation
of $z_3=0$.  This is projective descent of the full prolonged field, not
merely invariance under the deck group.

For the base-chart check write $\widehat z_3=gz_3$ and let
$\chi=\xi_3^{(1)}$, $\psi=\xi_3^{(2)}$, $h=\log g$.  Then
\[
 \widehat\chi=\chi+dh(\xi^{(1)}),
 \qquad
 \widehat\psi=\psi+dh(\xi^{(2)})
       +d^2h(\xi^{(1)},\xi^{(1)}).
\]
The acceleration fixes the base point and first jet, so
$\delta\widehat\psi=dh(\delta\xi^{(2)})$.  The transformed acceleration has
pole order at most $2$, and $dh$ contributes at most one further projective
factor; hence the logarithmic-frame term has order at most $3$.  If
$\rho=g^d$, the equations themselves transform triangularly:
\begin{align*}
 \widehat E_0^{\log}&=\rho E_0^{\log},\\
 \widehat E_1^{\log}&=\rho E_1^{\log}
     +d\rho(\xi^{(1)})E_0^{\log},\\
 \widehat E_2^{\log}&=\rho E_2^{\log}
     +2d\rho(\xi^{(1)})E_1^{\log}
  +\bigl(d\rho(\xi^{(2)})+d^2\rho(\xi^{(1)},\xi^{(1)})\bigr)E_0^{\log}.
\end{align*}
Thus the local fields glue.  To check the second Semple boundary explicitly,
work on the logarithmic interior and eliminate the logarithmic-normal jet.
For tangent first jets $(u,v)$ one has
\[
 \delta u=\delta v=0,\qquad
 \delta(\xi_1^{(2)},\xi_2^{(2)})
 =\tau L_\beta(u,v)^2.
\]
Consequently
\[
 \delta W_{12}
 =(u\tau_2-v\tau_1)L_\beta(u,v)^2=:S_3(u,v),
\]
a binary cubic.  On $u\ne0$ put
\[
 p_S=\frac vu,\qquad q_S=\frac{W_{12}}{u^3},\qquad r_S=q_S^{-1}.
\]
Lemma~\ref{lem:o3-semple-chart}, with $T=0$, gives
\[
 \delta q_S=S_3(1,p_S),\qquad
 \delta r_S=-S_3(1,p_S)r_S^2,
\]
in the two second-stage coordinates.  Both are polynomial at the relevant
boundary, proving the asserted extension.
\end{proof}

On the incidence chart $p=f_1(z)\ne0$, write $q=f_2(z)$ and take
\[
 \tau=(q/p,-1),\qquad \beta=(0,1).
\]
This is a function-field linear combination of two regular fields of the same
twist, and $df_z(\tau)=0$.  Hence its parameter two-jet is removable by the
third-difference fields.  After the common-twist normalisation of
Lemma~\ref{lem:o3-common-twist}, form the cancelling combination on the
coefficient cone.  Its coefficient vector is identically zero, so
Proposition~\ref{prop:o3-pre-gauge-cancellation} leaves the raw acceleration
action on the logarithmic jets; no cyclic-normal gauge term remains.  On the
interior $f\ne0$, eliminate the logarithmic normal jet and put
\[
 u=\xi_1^{(1)},\quad v=\xi_2^{(1)},\quad
 X=\xi_1^{(2)},\quad Y=\xi_2^{(2)},\quad
 W_{12}=uY-vX.
\]
The third-difference cancellation changes no jet component.  Therefore
\[
 \delta u=\delta v=0,\qquad
 \delta X=\frac qp\,v^2,\qquad
 \delta Y=-v^2,
\]
and hence
\[
 \delta W_{12}
 =u\delta Y-v\delta X
 =-\left(u+\frac qp v\right)v^2.
\]
Thus the residual derivation on the entire unlocalised invariant ring is
\begin{equation}
 \delta_{\rm acc}^{\log}
 =-\left(u+\frac qp v\right)v^2\partial_{W_{12}},
 \label{eq:o3-log-shear}
\end{equation}
whose cubic coefficient is nonzero.

We next recall Rousseau's logarithmic matrix-lift package
\cite[Lemma~5]{Rousseau2009}.  The following uniform completion records the
value, gradient, Hessian and chart-gluing interfaces needed below; it is a
reformulation of that classical package rather than a new matrix-lift
construction.

\begin{prop}[Rousseau's logarithmic matrix-lift package]
\label{prop:o3-log-matrix-lift}
Let
\[
 A=\begin{pmatrix}A_T&0\\ b&0\end{pmatrix},
 \qquad A_T\in M_2(\CC),\quad b\in\operatorname{Hom}(\CC^2,\CC),
\]
and put
\[
 B_A(Y,z)=A_T(Y-z),
 \qquad \ell_A(Y,z)=b(Y-z),
\]
\[
 G^{\log}_{A,0}(Y;z,a)
 =-df_Y\bigl(B_A(Y,z)\bigr)+d f(Y,a)\ell_A(Y,z),
\]
where the scalar $d$ is the cyclic-cover degree.  After cancelling the
top $Y$-degree by a polynomial in $\mathfrak m_z^3$, the expansion of
$G^{\log}_{A,0}$ gives a parameter correction of degree at most $d$.
Together with $\delta\xi^{(r)}=A\xi^{(r)}$, it defines, after the same
coefficient-Euler/root gauge as above, a regular logarithmic slanted field
of base twist $\OO_{\PP^2}(3)$ which extends holomorphically across the
second logarithmic Semple boundary.
\end{prop}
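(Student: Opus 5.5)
We follow the proof of Proposition~\ref{prop:o3-matrix-lift} and its acceleration counterpart Proposition~\ref{prop:o3-log-acceleration}, checking four things in turn: tangency, degree normalisation, gauge descent, and regularity across the charts and the Semple boundary.

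\emph{Tangency.} First compute the Taylor data of $G^{\log}_{A,0}$ at $Y=z$. The term $-df_Y(A_T(Y-z))$ has zero value, gradient $-df_z\circ A_T$, and Hessian $-d^2f_z(u,A_Tv)-d^2f_z(v,A_Tu)$. The term $d\,f(Y,a)\,b(Y-z)$ has value $0$, gradient $d\,f(z)\,b$, and Hessian $d\bigl(df_z(u)b(v)+df_z(v)b(u)\bigr)$. The logarithmic equations are $E_j^{\log}=\mathscr D^jG$ with $G=z_3^d+f$. The last row $(b,0)$ of $A$ acts on the cyclic jet $\xi_3^{(r)}$ by $\delta\xi_3^{(r)}=b(\xi^{(r)}_T)$. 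Since $z_3'=z_3\xi_3^{(1)}$, this variation produces $d\,z_3^d\,b(\xi^{(1)})=-d\,f\,b(\xi^{(1)})$ on $E_1^{\log}$, modulo $E_0^{\log}$. The second-order analogue on $E_2^{\log}$, coming from $z_3''=z_3(\xi_3^{(2)}+(\xi_3^{(1)})^2)$, is matched by the Hessian term $d\,df\otimes b$. The tangential block $A_T$ cancels exactly as in $V_A(E_1)$ and $V_A(E_2)$ of Proposition~\ref{prop:o3-matrix-lift}. So $V(E_j^{\log})\in(E_0^{\log},\dots,E_j^{\log})$ for $j\le2$, i.e.\ the field is tangent to the incidence.

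\emph{Degree normalisation.} The polynomial $G^{\log}_{A,0}$ has $Y$-degree at most $d+1$. Subtract $\sum h_\alpha(Y-z)^{\gamma(\alpha)}Y^{\alpha-\gamma(\alpha)}$ as in the compact acceleration construction. This keeps the Taylor two-jet, keeps linearity in $a$, and raises the $z$-degree to at most $3$. The resulting field is homogeneous of cone degree one, so it is parameter-untwisted.

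\emph{Gauge descent.} The same bookkeeping as Proposition~\ref{prop:o3-log-acceleration} applies. On each chart $a_\rho\ne0$, Proposition~\ref{prop:o3-pre-gauge-cancellation} subtracts $\operatorname{pr}^{(2)}(c_\rho\mathscr E_{\log})$. Lemma~\ref{lem:o3-full-log-gauge}, including the $\mathscr Dc$ and $\mathscr D^2c$ terms, shows that the representatives glue, remain tangent, and preserve $z_3=0$. Boundary preservation also holds for the raw field, since $\delta z_3$ is zero and the cyclic jets enter only through $\xi_3^{(r)}$.

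\emph{Base charts and pole order.} Under a projective change $\widehat z=\phi(z)$ with $\widehat z_3=gz_3$, the prolonged variations involve $D\phi\,A$ and $2D^2\phi(\xi^{(1)},A\xi^{(1)})$, which have pole order at most $2$ and $3$. The added logarithmic term $dh(\delta\xi^{(2)})+2d^2h(\xi^{(1)},\delta\xi^{(1)})$ has the same orders. The parameter part has $z$-degree at most $3$. This gives base twist $\OO_{\PP^2}(3)$, with the equations transforming triangularly by the $\rho=g^d$ formulas.

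\emph{Semple boundary.} On the interior, eliminate the logarithmic-normal jet in a tangent frame. This yields $\delta(u,v)=T(u,v)$ and $\delta(X,Y)=T(X,Y)+B$ with $B$ quadratic, the quadratic part coming from the frame Hessian and the normal elimination. Lemma~\ref{lem:o3-induced-wronskian} then gives $\delta W=(\operatorname{tr}T)W+S_3$. Lemma~\ref{lem:o3-semple-chart} shows that $\delta q$ is affine in $q$ and $\delta r$ is polynomial at $r=0$, which gives extension across the second logarithmic Semple boundary.

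\emph{Main obstacle.} The hard step will be the cyclic-row tangency check on $E_2^{\log}$. The $b$-contribution there mixes $z_3''$, the square $(\xi_3^{(1)})^2$, and the gauge derivatives. The plan is to carry out this check in the coordinates $\chi,\psi$ and reduce everything modulo $E_0^{\log}$ and $E_1^{\log}$.
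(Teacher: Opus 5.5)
Your proposal is correct. Its overall structure is the paper's: the same $\mathfrak m_z^3$ top-degree correction, gauge descent through Proposition~\ref{prop:o3-pre-gauge-cancellation} and Lemma~\ref{lem:o3-full-log-gauge}, the same pole-order count for $D\phi$, $D^2\phi$ and the $dh$, $d^2h$ terms, and the Semple-boundary argument via Lemmas~\ref{lem:o3-induced-wronskian} and~\ref{lem:o3-semple-chart}.

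The difference is the tangency step. You match the Taylor data of $G^{\log}_{A,0}$ term by term, as in the compact Proposition~\ref{prop:o3-matrix-lift}. The paper instead reads the field as the ambient motion $\delta Y=B_A$, $\delta\log z_3=\ell_A$, $\delta f=G^{\log}_{A,0}$. It notes the identity $\delta(z_3^d+f)=d\,\ell_A\,(z_3^d+f)$ and differentiates it twice in the source variable. Because $B_A$ and $\ell_A$ vanish at $Y=z$, the prolonged action is $A$ on both jet levels, and all three tangencies follow with no Hessian computation.

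Your route does close, but your description of the $E_2^{\log}$ matching misattributes the terms. The $\psi$-contribution $d\,z_3^d\,b(\xi_T^{(2)})$ is absorbed modulo $E_0^{\log}$ by the gradient term $d\,f\,b$. The Hessian term $2d\,df(\xi_T^{(1)})\,b(\xi_T^{(1)})$ absorbs the $\chi^2$-contribution $2d^2z_3^d\chi\,b(\xi_T^{(1)})$ modulo $E_1^{\log}$. Together they give
\[
 V(E_2^{\log})=d\,b(\xi_T^{(2)})\,E_0^{\log}+2d\,b(\xi_T^{(1)})\,E_1^{\log},
\]
which is exactly the second source derivative of the paper's identity at the base point. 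So the step you flag as the main obstacle is a short check. Your explicit computation produces the ideal coefficients; the paper's identity explains why they must appear, since the motion only rescales the universal equation.

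You should also state reparametrization equivariance, which Lemma~\ref{lem:o3-semple-chart} presupposes. It holds because $A$ acts linearly and identically on both jet levels.
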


\begin{proof}
Before the top-degree correction, prescribe the infinitesimal ambient
motion
\[
 \delta Y=B_A,
 \qquad \delta\log z_3=\ell_A,
 \qquad \delta f=G^{\log}_{A,0}.
\]
It satisfies the identity
\[
 \delta(z_3^d+f)=d\ell_A(z_3^d+f).
\]
Since $B_A(z,z)=\ell_A(z,z)=0$, the first two prolongations at the base
point act by $A$ on both logarithmic jet levels.  Differentiating the last
identity twice therefore gives tangency to all three logarithmic jet
equations.

Only the term $d f(Y)\ell_A(Y,z)$ can have top degree $d+1$.  For every
top monomial $h_\alpha Y^\alpha$, choose
$\gamma\leqslant\alpha$ with $|\gamma|=3$ and subtract
\[
 h_\alpha(Y-z)^\gamma Y^{\alpha-\gamma}.
\]
This lies in $\mathfrak m_z^3$, so it changes none of the value, gradient or
Hessian data, but it reduces the $Y$-degree to at most $d$.  The resulting
parameter coefficients are linear in $a$ and have $z$-degree at most $3$.

It remains to check the logarithmic frame.  On a base overlap write
$\widehat z=\phi(z)$, $\widehat z_3=g(z)z_3$, $h=\log g$, and use
$\chi,\psi$ for the first and second logarithmic-normal jets.  Direct
variation gives
\begin{align*}
 \delta\widehat\xi_T^{(1)}
 &=D\phi\,A_T\xi_T^{(1)},\\
 \delta\widehat\xi_T^{(2)}
 &=D\phi\,A_T\xi_T^{(2)}
   +2D^2\phi(\xi_T^{(1)},A_T\xi_T^{(1)}),\\
 \delta\widehat\chi
 &=b\xi_T^{(1)}+dh(A_T\xi_T^{(1)}),\\
 \delta\widehat\psi
 &=b\xi_T^{(2)}+dh(A_T\xi_T^{(2)})
    +2d^2h(\xi_T^{(1)},A_T\xi_T^{(1)}).
\end{align*}
The first and second derivatives of a projective transition have pole orders
at most $2$ and $3$; the displayed logarithmic-frame terms have the same
bound.  Thus $\OO(3)$ clears both jet and parameter components.  The
full-gauge calculation in Lemma~\ref{lem:o3-full-log-gauge} and
Proposition~\ref{prop:o3-pre-gauge-cancellation} proves descent on
parameter-chart overlaps, and linearity on both jet levels proves
reparametrization equivariance.

On the logarithmic interior, eliminate the logarithmic-normal jet and use a
rational tangent frame.  The induced actions have the form
\[
 \delta(u,v)=T(u,v),\qquad
 \delta(X,Y)=T(X,Y)+B\bigl((u,v),(u,v)\bigr).
\]
Lemma~\ref{lem:o3-induced-wronskian} gives the exact formula
\[
 \delta W_{12}=(\operatorname{tr}T)W_{12}+S_3(u,v).
\]
Lemma~\ref{lem:o3-semple-chart} then writes the field as an affine-linear
polynomial in the finite second-stage coordinate and as a linear-plus-
quadratic polynomial in its inverse.  This gives the required holomorphic
extension over the logarithmic Semple boundary and completes the proof.
\end{proof}

For the second field, select a direction in Rousseau's matrix-lift package
and put
\[
 F=f,\qquad
 \mathcal H=F\operatorname{Hess}(f)-df\otimes df.
\]
On $F\det\mathcal H\ne0$ take
\[
 T=\operatorname{adj}(\mathcal H)J.
\]
Then $\operatorname{tr}T=0$ and
$\det T=\det\mathcal H\ne0$.  If
$\mathcal H=\left(\begin{smallmatrix}-A&-C\\-C&B\end{smallmatrix}\right)$,
the logarithmic-normal completion has entries
\[
 N_1=\frac{-pC+qA}{dF},
 \qquad
 N_2=\frac{pB+qC}{dF}.
\]
For clarity, write
$p=f_1$, $q=f_2$, $r=f_{11}$, $s=f_{12}$, $t=f_{22}$ and
$T=\left(\begin{smallmatrix}h&\ell\\ k&-h\end{smallmatrix}\right)$.
Vanishing of the parameter Taylor coefficients through order two is
equivalent to
\begin{align*}
 dpN_1-rh-sk&=0,&
 dqN_2-s\ell+th&=0,\\
 d(qN_1+pN_2)-r\ell-tk&=0,&
 dFN_1-ph-qk&=0,\\
 dFN_2-p\ell+qh&=0.&
\end{align*}
The last two equations determine the displayed $N_1,N_2$; substituting them
in the first three gives precisely
\[
 T^{\mathsf T}\mathcal H+\mathcal HT=0.
\]
Thus the above formula is a complete allowed logarithmic matrix lift, not
only a tangent-plane value.  Taking $A_T=T$ and $b=(N_1,N_2)$ makes it a
rational linear combination, over the incidence function field, of the
regular constant-matrix fields in Proposition
\ref{prop:o3-log-matrix-lift}.  The five identities make its parameter
polynomial eligible for the same common-twist, pre-gauge third-difference
cancellation.  Proposition~\ref{prop:o3-pre-gauge-cancellation} shows that
the resulting horizontal cone representative induces the semisimple action
of $T$ on $u,v$ and, by
Lemma~\ref{lem:o3-induced-wronskian}, acts by
$\delta W_{12}=R_3(u,v)$.  Its simultaneous
definition with the logarithmic acceleration, rationally over one dominant
incidence function field, will be established in
Lemma~\ref{lem:simultaneous-open}.

The rational coefficients $q/p$, $1/F$ and $1/\det\mathcal H$ do not
increase the pole order of the field ultimately used for differentiation.
They occur only in a function-field linear combination in the dependence
contradiction: if every regular $\OO(3)$ field preserved $(P)$, then so would
this combination.  The contradiction below therefore implies that at least
one regular member of Rousseau's classical package gives a derivative cutting $(P)$
properly.

\subsection{The simultaneous-operator incidence open}
\label{sec:o3-simultaneous-open}

The compact and logarithmic constructions above are initially made on
rational incidence charts.  The following lemma verifies, in either family,
that the two required operators coexist on a dominant incidence open and
that their moving choices and common-twist corrections are all defined over
one function field.

\begin{lem}[Dominance of the simultaneous-operator open]
\label{lem:simultaneous-open}
Let $d\geqslant2$.  In each of the compact and one-component logarithmic
universal families, there is a dense parameter open such that each fibre
over it contains a point where the acceleration field and the semisimple
low-coefficient field are simultaneously defined.  On the corresponding
dominant incidence open, both fields, together with the
third-difference corrections putting them in the common twist, can be chosen
rationally over its function field.  Each rational choice lies in the
function-field span of a fixed finite basis of regular constant-direction
fields, and the third-difference package is finite.
\end{lem}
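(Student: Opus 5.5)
The plan is to identify, in each family, the finitely many polynomial conditions under which the constructions of Sections~\ref{sec:o3-compact} and~\ref{sec:o3-logarithmic} are defined. We then show that each condition cuts out a proper closed subset of the integral incidence variety. Since the incidence is irreducible, the complement of the union of these loci is a dense incidence open. Its image under the projection to parameter space is constructible and dense, hence contains a dense parameter open. Every fibre over that open then meets the simultaneous locus, which gives the first assertion.

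In the compact case the conditions are the following.
\begin{enumerate}
\item Smoothness of the incidence point, i.e.\ $dF_z\ne0$, so that a rational normal vector $n$ and tangent frame $e_2,e_3$ exist on a chart. These are obtained, for example, by solving with respect to a coordinate where $F_i\ne0$.
\item $\det H\ne0$ for the restriction of $d^2F$ to the tangent plane.
\item $\ell\ne0$ for the chosen coefficient linear form.
\end{enumerate}
Condition~(1) holds on all of the smooth locus. For (2) and (3), it suffices to exhibit one incidence point with $\det H\ne0$. We would use the Fermat surface at a general point. There the Hessian is diagonal with entries $d(d-1)z_i^{d-2}$, and the restricted determinant is a nonzero polynomial expression for $d\geqslant2$ away from coordinate planes. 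In the logarithmic case the conditions are $f\ne0$, $p=f_1\ne0$, $\ell\ne0$ and $\det\mathcal H\ne0$, where $\mathcal H=f\operatorname{Hess}(f)-df\otimes df$. A witness is a Fermat-type curve $z_1^d+z_2^d+1$ at a general point, where $\det\mathcal H$ can be expanded directly. Alternatively, one can note that $\det\mathcal H$ is, up to sign, a multiple of the classical Hessian-type bordered determinant, which is not identically zero for a general smooth curve of degree $\geqslant2$.

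Next comes the rationality and finiteness. The acceleration uses $C(e_2,e_2)=e_3$, respectively $\tau=(q/p,-1)$ and $\beta=(0,1)$. These are $K$-linear combinations of the finitely many constant-direction fields $V_C$ with $C$ ranging over a basis of $\operatorname{Hom}(\Sym^2\CC^3,\CC^3)$, respectively of $\CC^2\times\CC^2$. The coefficients are frame entries, which are rational in $z$ and $a$. The semisimple direction $\widehat T$ (respectively $A_T=T$, $b=(N_1,N_2)$) is a $K$-combination of the constant-matrix fields $V_A$ of Propositions~\ref{prop:o3-matrix-lift} and~\ref{prop:o3-log-matrix-lift}, with $A$ ranging over the nine (respectively six) elementary matrices. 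Here the entries involve $\operatorname{adj}(H)$, $H^{-1}$, $1/F$ and $1/p$, all of which are regular on the open above. Each resulting parameter polynomial has vanishing Taylor two-jet at $Y=z$, as verified before the lemma. It therefore lies in $\mathfrak m_z^3\cap K[Y]_{\leqslant d}$. The coefficients $c_i$ in Lemma~\ref{lem:o3-common-twist} are then obtained by solving a linear system over $K$ against the finite spanning set $\{\Phi_{\alpha,\gamma}\}$ of \eqref{eq:o3-third-difference-span}. Choosing a fixed $K$-basis among the $\Phi_{\alpha,\gamma}$, the $c_i$ are unique rational functions. Finally, we shrink the incidence open to remove their finitely many pole divisors.

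The main obstacle is the nonvanishing of $\det\mathcal H$ (and of $\det H$) at some point for each $d\geqslant2$, uniformly in the stated generality. The plan is to do this by the explicit Fermat computation. For the logarithmic case the reduction is $\det\mathcal H=f^2\det\operatorname{Hess} f-f\cdot\operatorname{adj}(\operatorname{Hess}f)(df,df)$. This is nonzero at a general point of $\{z_1^d+z_2^d+1\neq0\}$, as one sees by taking $z_2\to0$ and keeping the leading term.
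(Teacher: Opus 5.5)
Your proposal is correct, and its skeleton matches the paper's proof:
\begin{itemize}
\item the simultaneous locus is the nonvanishing locus of $F_1\det H$ (respectively of $p\det\mathcal H$ on $f\neq0$) in an irreducible incidence space;
\item dominance of the incidence projection plus constructibility gives the dense parameter open;
\item linearity of the constructions in the constant directions gives the rational spans;
\item your choice of a $K$-basis among the $\Phi_{\alpha,\gamma}$ is equivalent to the paper's choice of a single nonzero maximal minor of $\operatorname{ev}_3$, used for both operators at once.
\end{itemize}

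Where you differ is the nonemptiness witness. The paper prescribes a local two-jet ($z_1+z_2^2+z_3^2$, respectively $1+z_1+z_1^2+z_2^2$). It realises this by an explicit quadric or conic when $d=2$, and uses Bertini to complete it to a smooth member when $d\geqslant3$. That route shows any nondegenerate two-jet is attained, but it needs the completion step. You instead evaluate at a general point of a Fermat member, which is smooth from the start. So no Bertini argument is required, and the check is uniform in $d\geqslant2$.

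Your computation does close, and you should record it in closed form.
\begin{itemize}
\item Compact case: on $1+z_1^d+z_2^d+z_3^d=0$ the bordered determinant $\det\begin{pmatrix}0&dF\\ dF^{\mathsf T}&d^2F\end{pmatrix}$ equals $d^4(d-1)^2(z_1z_2z_3)^{d-2}$. In the paper's frame $(n,e_2,e_3)$, $\det H$ is $-F_1^{-2}$ times this.
\item Logarithmic case: for $f=z_1^d+z_2^d+1$ one finds
\[
 \det\mathcal H=d^2(d-1)(z_1z_2)^{d-2}\,f\,\bigl(d-1-z_1^d-z_2^d\bigr).
\]
\end{itemize}
The closed form is preferable to ``let $z_2\to0$ and keep the leading term''. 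For $d\geqslant3$ the determinant vanishes identically on $z_2=0$, so only its lowest-order coefficient in $z_2$ survives.

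Two small corrections:
\begin{itemize}
\item The logarithmic acceleration is linear in $\tau$ but quadratic in $\beta$. The relevant finite family is therefore the two fields with $\beta=(0,1)$ and $\tau\in\{(1,0),(0,1)\}$, not a basis of $\CC^2\times\CC^2$. Your moving choice is $(q/p)$ times the first field minus the second.
\item Your alternative argument is not literally about the classical Hessian. One has $\det\mathcal H=f\det\begin{pmatrix}f&df\\ df^{\mathsf T}&\operatorname{Hess}f\end{pmatrix}$, whereas the Hessian of the degree-$d$ homogenisation carries $\tfrac{d}{d-1}f$ in the corner. The Fermat computation already makes that remark unnecessary, so you can drop it.
\end{itemize}
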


\begin{proof}
In each case let $U_d$ denote the irreducible open in the relevant
projective space of degree-$d$ equations which parametrizes smooth members.
The constant compact acceleration tensors $C$, compact matrices $A$,
logarithmic acceleration directions, and logarithmic matrices all range in
finite-dimensional complex vector spaces.  Fix bases once and for all.
Likewise, for fixed $d$ there are only finitely many pairs
$(\beta,\gamma)$ with $|\gamma|=3$ and $|\beta|+3\leqslant d$, so the
third-difference fields form a finite package.

In the compact case consider the universal incidence
\[
 \mathfrak X_d
 =\{(z,a)\in\PP^3\times U_d:F_a(z)=0\}.
\]
It is irreducible and dominates $U_d$: before restriction to $U_d$, its
projection to $\PP^3$ is the projective bundle whose fibre consists of the
degree-$d$ equations vanishing at the chosen point.  On the affine chart
$F_1\ne0$, let $H$ be the restriction of $d^2F$ to the tangent plane.  The
simultaneous domain required in the compact construction is
\[
 \Omega_d^{\mathrm c}
 =\{(z,a)\in\mathfrak X_d:F_1(z,a)\det H(z,a)\ne0\}.
\]
This open is nonempty.  Indeed, at the origin the local two-jet
\[
 F=z_1+z_2^2+z_3^2+O(|z|^3)
\]
has $F_1\ne0$ and $\det H\ne0$.  For $d=2$ it is realised at
$[1:0:0:0]$ by the smooth quadric
\[
 Z_0Z_1+Z_2^2+Z_3^2=0.
\]
For $d\geqslant3$, prescribe the same two-jet at that point and choose the
remaining degree-$d$ coefficients generally.  The variable linear system is
base-point-free away from the prescribed point, while the prescribed first
derivative prevents singularity at that point; Bertini's theorem therefore
gives a smooth completion.  Thus $\Omega_d^{\mathrm c}$ is a nonempty open
of $\mathfrak X_d$.

In the logarithmic case let
\[
 \mathfrak C_d=\{(z,a)\in\PP^2\times U_d:f_a(z)=0\},
 \qquad
 \mathfrak L_d=(\PP^2\times U_d)\setminus\mathfrak C_d.
\]
The space $\mathfrak L_d$ is irreducible and dominates $U_d$.  On the chart
$p=f_1\ne0$, set
\[
 \mathcal H=f\operatorname{Hess}(f)-df\otimes df
\]
and consider
\[
 \Omega_d^{\log}
 =\{(z,a)\in\mathfrak L_d:p(z,a)\det\mathcal H(z,a)\ne0\}.
\]
It too is nonempty: the local two-jet
\[
 f=1+z_1+z_1^2+z_2^2+O(|z|^3)
\]
has $f\ne0$, $p=1$ and $\det\mathcal H=2$ at the origin.  For $d=2$ it is
realised by the smooth conic
\[
 Z_0^2+Z_0Z_1+Z_1^2+Z_2^2=0,
\]
and for $d\geqslant3$ the same prescribed-jet and Bertini argument gives a
smooth completion.  Hence $\Omega_d^{\log}$ is a nonempty open of
$\mathfrak L_d$.

Because each incidence space is irreducible, its nonempty simultaneous open
contains its generic point.  The incidence projection is dominant, so the
constructible image of that open contains the generic point of $U_d$ and
therefore contains a dense open of $U_d$.  After shrinking to this dense
parameter open, the simultaneous locus meets every remaining fibre.

It remains to verify that ``simultaneous'' here includes compatible rational
choices, rather than merely pointwise existence.  In the compact chart
$F_1\ne0$, use the rational frame
\[
 e_2=\partial_{z_2}-\frac{F_2}{F_1}\partial_{z_1},
 \qquad
 e_3=\partial_{z_3}-\frac{F_3}{F_1}\partial_{z_1},
 \qquad
 n=\frac1{F_1}\partial_{z_1}.
\]
Define $C(e_2,e_2)=e_3$ and let $C$ vanish on the other symmetric pairs in
the frame $(n,e_2,e_3)$.  Its coefficients are rational on
$\Omega_d^{\mathrm c}$, and Proposition~\ref{prop:o3-compact-acceleration}
then gives the nonzero acceleration shear
\eqref{eq:o3-compact-shear} rationally there.  Since that construction is
linear in $C$, this moving choice is a rational linear combination of the
regular constant-$C$ acceleration fields.  For the semisimple direction put
\[
 T=\operatorname{adj}(H)
   \begin{pmatrix}0&1\\-1&0\end{pmatrix}.
\]
Writing
\[
 d^2F=\begin{pmatrix}r&s^{\mathsf T}\\s&H\end{pmatrix}
 \quad\text{in }(n,e_2,e_3),\qquad
 a=TH^{-1}s,\qquad
 \widehat T=\begin{pmatrix}0&0\\a&T\end{pmatrix},
\]
one has
\[
 dF\circ\widehat T=0,
 \qquad
 \widehat T^{\mathsf T}d^2F+d^2F\,\widehat T=0.
\]
Indeed, $T^{\mathsf T}H+HT=0$; consequently $TH^{-1}$ is
skew-symmetric, which kills the normal--normal block, and the same identity
kills the mixed and tangent blocks.  Thus $\widehat T$ is a rational choice
in the compact matrix-lift package and hence a rational linear combination
of the regular constant-matrix fields of
Proposition~\ref{prop:o3-matrix-lift}; its parameter correction has vanishing
Taylor two-jet.

On $\Omega_d^{\log}$, writing $q=f_2$, the acceleration choice
\[
 \tau=(q/p,-1),\qquad \beta=(0,1)
\]
is rational and satisfies $df(\tau)=0$, so
Proposition~\ref{prop:o3-log-acceleration} supplies the nonzero logarithmic
acceleration shear \eqref{eq:o3-log-shear}.  For fixed $\beta$ the
construction is linear in $\tau$, so this too is a rational linear
combination of regular constant-direction fields.  The same adjugate
prescription
\[
 T=\operatorname{adj}(\mathcal H)
   \begin{pmatrix}0&1\\-1&0\end{pmatrix}
\]
is rational and invertible on this open.  The normal entries $N_1,N_2$
displayed immediately before the present lemma complete it to an allowed
zero-third-column matrix; the Taylor identities verified there show that it
is a rational linear combination of the regular fields in
Proposition~\ref{prop:o3-log-matrix-lift}.

Finally, write $K_\Omega$ for the function field of the simultaneous
incidence open under consideration.  In both cases consider the finite
linear map
\[
 \operatorname{ev}_3\colon K_\Omega^{N_3}
 \longrightarrow
 \mathfrak m_z^3\cap K_\Omega[Y]_{\leqslant d},
 \qquad
 (c_i)\longmapsto\sum_i c_i\Phi_i.
\]
The spanning calculation preceding
Lemma~\ref{lem:o3-common-twist} says that this map is surjective.  Choose
monomial bases in source and target.  Its generic rank is the dimension of
the target, so some maximal minor is nonzero in $K_\Omega$.  Restricting
once to the nonvanishing locus of that single minor gives a rational right
inverse
\[
 \mathfrak m_z^3\cap K_\Omega[Y]_{\leqslant d}
 \longrightarrow K_\Omega^{N_3}.
\]
The same right inverse is applied to the parameter remainders of both
selected operators.  It makes all cancellation coefficients in
Lemma~\ref{lem:o3-common-twist} rational on that one open; no second,
operator-dependent function field is introduced.  When
$d=2$ the displayed target is zero, so no correction or minor inversion is
needed.  Since every moving tensor or matrix was expressed above in the
$K_\Omega$-span of the fixed finite bases, both selected operators and all
their corrections are defined over the function field of one and the same
dominant incidence open.
\end{proof}

\subsection{All-weight low-pole differentiation}
\label{sec:o3-all-weight}

The preceding subsections have separated the four inputs used here: the
two-operator contradiction (Proposition~\ref{prop:o3-abstract-contradiction}),
the relative-factor and stationary-saturation interfaces, the common-twist
lemma (Lemma~\ref{lem:o3-common-twist}), and the simultaneous incidence open
(Lemma~\ref{lem:simultaneous-open}).  The theorem below is where these inputs
are assembled.  Its conclusion is deliberately narrower than global frame
generation: one regular member of a fixed finite package cuts the chosen
prime divisor properly.  The only numerical restriction is $t\geqslant4$,
which keeps the differentiated twist $-(t-3)$ negative.

\begin{thm}[$\OO(3)$ differentiation theorem]
\label{thm:o3-differentiation}
In either the compact surface case or the one-component logarithmic curve
case, let $P_a$ be the regular-jet polynomial associated with a holomorphic
family of invariant two-jet sections of weighted degree $m\geqslant3$ and
base twist $-t$.  Assume that their stationary-saturated zero divisors are
irreducible and reduced and that, over the relevant incidence function field
$K$, the residual equation $P$ generates a prime ideal in the whole
unlocalised invariant ring $K[u,v,W]$.  Assume
$t\geqslant4$.  After shrinking to a dense parameter open, there is a
regular slanted vector field $V$ of base pole order at most $3$ such that
\[
 P_a\nmid V(P_a).
\]
Here a regular field means an actual local section obtained from the fixed
finite-dimensional constant-direction packages after the common
$\ell$/$\ell^2$ twist; the conclusion is not merely the existence of a
rational tangent-vector value.
Because $P_a$ is an equivariant polynomial on the universal vertical jet
space and $V$ is invariant, the natural derivative
$V(P_a)=\mathcal L_VP_a$ is a genuine global invariant two-jet differential
of base twist $-(t-3)$ on the fibre.  It is not merely a normal derivative
defined on the divisor $(P_a=0)$.
Consequently the zero divisors of $P_a$ and $V(P_a)$ have no common
component along the stationary-saturated residual divisor, so these
equations are two independent negatively twisted invariant two-jet
equations there.  In the logarithmic case the stationary divisor is removed by
\eqref{eq:o3-log-saturation}; the pulled-back logarithmic boundary is kept
separate and is not hidden by a localisation in $u,v,W$.
\end{thm}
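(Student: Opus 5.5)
We argue by contradiction, assembling the four inputs in their dependency order.  Fix the finite package of regular $\OO(3)$ fields supplied by Lemma~\ref{lem:simultaneous-open}: the constant-$C$ (respectively constant-$\tau$) acceleration fields of Propositions~\ref{prop:o3-compact-acceleration} and~\ref{prop:o3-log-acceleration}, the constant-matrix fields of Propositions~\ref{prop:o3-matrix-lift} and~\ref{prop:o3-log-matrix-lift}, and the finite third-difference package, all put in the common twist by Lemma~\ref{lem:o3-common-twist}.  Suppose that for every member $V$ of this package, and for $a$ in every dense parameter open, $P_a\mid V(P_a)$.  Divisibility on a dense set of fibres is a closed algebraic condition along the relative residual divisor of Proposition~\ref{prop:o3-compact-relative-family} (respectively Proposition~\ref{prop:o3-log-relative-family}), so $V(P)$ vanishes at the generic point of $\mathscr Z$ (respectively $\mathscr Z^{\mathrm{log}}$).

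Next we pass to the incidence function field.  By Corollary~\ref{cor:o3-compact-generic-prime}, respectively Lemma~\ref{lem:o3-log-saturation-prime}, either the projection of the residual divisor is non-dominant, or $(P)$ is a height-one prime ideal in the whole ring $K[u,v,W]$.  The theorem assumes primality, so we are in the dominant alternative.  Lemma~\ref{lem:o3-preservation}, or \eqref{eq:o3-log-prime-preservation} in the logarithmic case, upgrades generic vanishing to $V(P)\in(P)$ for every regular $V$ of the package.  Ideal stability is closed under $K$-linear combinations.  By Lemma~\ref{lem:simultaneous-open}, both the moving acceleration and the adjugate matrix direction $T=\operatorname{adj}(H)J$ (respectively $\operatorname{adj}(\mathcal H)J$), together with their third-difference corrections, lie in the $K$-span of this package over one dominant incidence open.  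Hence $(P)$ is stable under both residual derivations.  The first is $\delta_{\rm acc}=x^3\partial_W$ by \eqref{eq:o3-compact-shear}, or the nonzero cubic shear \eqref{eq:o3-log-shear}.  The second is $\delta_{\rm ss}=\mathcal D_T+R_3\partial_W$, with $\operatorname{tr}T=0$ and $\det T\neq0$, by Lemma~\ref{lem:o3-induced-wronskian}.  Since $m\geqslant3$, Proposition~\ref{prop:o3-abstract-contradiction} gives a contradiction.

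Therefore some regular member $V$ of the finite package satisfies $P\nmid V(P)$ over $K$.  Spreading out, this remains true for $P_a$ on a dense parameter open, since non-membership in the principal ideal is an open condition on the flat family.  For the remaining assertions, $V$ has base twist $\OO(3)$ and is parameter-twisted by $\ell$ only through the normalisation of Convention~\ref{conv:o3-projective-normalization}.  Hence $\mathcal L_VP_a$ restricted to a fibre is a global invariant section of weight $m$ and twist $-(t-3)$, which is negative because $t\geqslant4$.  Holomorphic extension across the Semple boundaries is given by Lemma~\ref{lem:o3-semple-chart}, and reparametrization equivariance is recorded in the construction propositions.  Because $(P_a)$ is prime and $V(P_a)\notin(P_a)$, the two divisors share no component along the residual divisor.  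In the logarithmic case, the saturation \eqref{eq:o3-log-saturation} and the boundary-separation clause of Lemma~\ref{lem:o3-log-saturation-prime} keep the stationary divisor and $\mathscr D_2^{\mathrm{log}}$ separate.

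The main obstacle is the first transition: turning fibrewise divisibility on a fixed fibre into the function-field ideal identity.  This requires that fixed finite regular fields, not merely pointwise tangent-vector values, are being tested, and that the product rule does not disturb membership; the latter is handled by the $gV(P)+V(g)P$ computation.  I would handle it by quantifying over the finite basis: for each basis field the bad set is constructible, so finitely many shrinkings suffice.
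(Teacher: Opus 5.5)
Your proposal is correct and follows essentially the same route as the paper. It argues by contradiction over the fixed finite common-twist package, uses Lemma~\ref{lem:o3-preservation} to upgrade generic containment to ideal stability, and passes to the $K$-span containing the moving acceleration and the adjugate semisimple direction. It then invokes Proposition~\ref{prop:o3-abstract-contradiction}, spreads the nonzero class in $K[u,v,W]/(P)$ to a dense parameter open, and reads off the twist $-(t-3)$. The paper additionally makes explicit that the cancelling combinations are formed on the coefficient cone (Proposition~\ref{prop:o3-pre-gauge-cancellation}), so that they act as pure derivations of $K[u,v,W]$; your appeal to the shear formulas \eqref{eq:o3-compact-shear} and \eqref{eq:o3-log-shear}, which are derived after that cancellation, covers this implicitly.
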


\begin{proof}
Choose the finite bases fixed in the proof of
Lemma~\ref{lem:simultaneous-open}.  After multiplication by $\ell$ for the
parameter-untwisted fields and by $\ell^2$ for the raw third-difference
fields, denote the resulting finite list of regular common-twist sections by
\[
 \mathscr B=\{B_1,\ldots,B_N\}.
\]
Assume for contradiction that every regular available derivative contains
the residual divisor.  In particular this holds for each $B_i$.
Lemma~\ref{lem:o3-preservation} gives
\[
 B_i(P)\in(P),\qquad 1\leqslant i\leqslant N.
\]
Ideal preservation is closed under $K$-linear combinations.  Hence $(P)$ is
stable under the entire $K$-span of $\mathscr B$.

By Lemma~\ref{lem:simultaneous-open}, after shrinking once we work over one
dominant incidence open with function field $K$.  The moving acceleration,
the semisimple matrix, and the output of the single third-difference right
inverse all lie in the $K$-span of $\mathscr B$.  Lemma
\ref{lem:o3-common-twist} puts their parameter components into the same
bundle.  The cancelling combination is formed on the coefficient cone as in
Proposition~\ref{prop:o3-pre-gauge-cancellation} and its compact analogue;
hence it is horizontal and no chartwise gauge term remains.  Equations
\eqref{eq:o3-compact-shear} and \eqref{eq:o3-log-shear} give
\[
 \delta_{\rm acc}=S_3(u,v)\partial_W,\qquad S_3\ne0.
\]
The residual matrix action has the form
\[
 \delta_{\rm ss}=\mathcal D_T+R_3(u,v)\partial_W,
 \qquad
 T\in\mathfrak{sl}_2(K),\quad\det T\ne0;
\]
the cubic term allows for the mixed normal--tangent contribution.  Thus all
hypotheses of Proposition~\ref{prop:o3-abstract-contradiction} hold, which is
impossible.

It follows that at least one member $B_i$ of the fixed finite regular list
satisfies $B_i(P)\notin(P)$.  Because $(P)$ is prime, this is exactly the
assertion that its derivative cuts the residual divisor properly.  No
denominator-bearing moving combination is used as the final field.

Every $B_i$ has base twist $\OO(3)$ and common parameter twist $\OO(1)$.
After fixing the parameter, the latter is a one-dimensional scalar factor,
whereas differentiation changes the base twist by
\[
 -t\longmapsto -t+3=-(t-3).
\]
This is negative precisely when $t\geqslant4$.

Finally, the class of $B_i(P)$ in the generic quotient $K[u,v,W]/(P)$ is
nonzero.  Clearing its finitely many coefficient denominators and shrinking
the base preserves this nonvanishing, so the same regular field works on a
dense Zariski open for the fixed family.  The different parameter
quantifiers in the compact and logarithmic applications are recorded
separately in Corollary~\ref{cor:o3-parameter-quantifiers}.
\end{proof}

The theorem starts with a factor already spread in a family.  In applications
one may instead begin with a prime component on a single fibre.  The next
proposition is the relative-extension bridge between these two formulations.

\begin{prop}[Fibrewise factor alternative]
\label{prop:o3-fibrewise-factor}
Let $S$ be an integral parameter space and
$\pi\colon\mathcal Y\to S$ the relative compact or logarithmic second
Semple stage.  Assume that $S$ has been restricted to the dense parameter
open furnished by Lemma~\ref{lem:simultaneous-open}.  Fix
$m\geqslant3$, $t\geqslant4$, and a point $s\in S$.

Let $\theta_s$ be a section on $\mathcal Y_s$ and assume that it admits a
regular relative extension on a neighbourhood of $s$ (for example, by
cohomology and base change).  After removal of its full stationary
multiplicity, suppose that its divisor has an irreducible prime Cartier
component $Z_s$ occurring with multiplicity one.  Then either the
projection of $Z_s$ to the underlying surface is proper, or the following
holds: for every regular relative extension $\Theta$ of $\theta_s$, one
regular field $B$ in the fixed finite common-twist $\OO(3)$ package satisfies
\[
 Z_s\not\subset
 Z\!\left(\left.B(\Theta)\right|_{\mathcal Y_s}\right).
\]
In particular, when $\Theta$ is a spread family and $P_a$ denotes its
regular-jet polynomial representative with reduced prime residual divisor,
this conclusion is
\[
 P_s\nmid \left.B(P_a)\right|_{a=s}.
\]
\end{prop}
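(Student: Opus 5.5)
The plan is to argue by contradiction directly on the fixed fibre $\mathcal Y_s$, never passing through the generic point of a spread divisor. The key observation is that the \emph{cancelling combinations} used in the proof of Theorem~\ref{thm:o3-differentiation} have zero parameter component. By Lemma~\ref{lem:o3-common-twist} and Proposition~\ref{prop:o3-pre-gauge-cancellation}, after the $\ell$/$\ell^2$ normalisation and the third-difference cancellation, the combination $\mathcal V=\sum_i c_iB_i$ is horizontal: its action fixes the base and parameter variables and is a derivation of the jet variables alone. Consequently $\mathcal V(\Theta)|_{\mathcal Y_s}$ depends only on $\theta_s=\Theta|_{\mathcal Y_s}$, and not on the chosen extension $\Theta$. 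This is what allows the statement to hold for \emph{every} regular relative extension and for the \emph{arbitrary} fixed $s$.

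\textbf{Step 1 (setup and dichotomy on the fibre).} Fix $\Theta$ and assume, for contradiction, that $Z_s\subset Z(B(\Theta)|_{\mathcal Y_s})$ for every $B\in\mathscr B$. If $\pi_{2,0}(Z_s)$ is proper in the surface (or in $\PP^2$), we are in the first alternative. Otherwise $Z_s$ dominates the fibre base. Because $S$ was restricted to the open of Lemma~\ref{lem:simultaneous-open}, the simultaneous incidence open meets the fibre over $s$. I would additionally shrink $S$ once, independently of $s$ and $\Theta$, so that the single maximal minor used for the third-difference right inverse, together with $F_1\det H$ (respectively $p\det\mathcal H$), does not vanish identically on any fibre. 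Since these conditions are finitely many, they cut out a dense open, so this shrinking is harmless. Let $K_s$ be the function field of the fibre incidence open over $s$. Then all coefficients $c_i$ of the moving acceleration, of the semisimple matrix lift, and of the cancellations restrict to elements of $K_s$.

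\textbf{Step 2 (fibrewise primality and membership).} Write $\theta_s=s_\Gamma^{b}\bar\theta_s$, with $b$ the full stationary order on $\mathcal Y_s$. Choose a local generator $h_s$ of the multiplicity-one component $Z_s$. Repeating the argument of Corollary~\ref{cor:o3-compact-generic-prime}, respectively Lemma~\ref{lem:o3-log-saturation-prime}, on the single integral divisor $Z_s$ gives a weighted-homogeneous prime $P_s\in K_s[u,v,W]$ generating the ideal of $Z_s$ at its generic incidence point. The codimension-two argument for $u=v=0$ applies verbatim here. Since $Z_s$ has multiplicity one in $\theta_s$, the unit/product-rule computation from the end of Lemma~\ref{lem:o3-log-saturation-prime} shows the following: $B(\Theta)|_s$ vanishes on $Z_s$ if and only if the class of the corresponding fibre derivative lies in $(P_s)$. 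The contradiction hypothesis, combined with Lemma~\ref{lem:o3-preservation}, therefore gives $B_i$-membership for the pieces entering the combination. Stability of $(P_s)$ is closed under $K_s$-linear combinations. Because the combinations $\delta_{\rm acc}$ and $\delta_{\rm ss}$ are horizontal, their actions on $(P_s)$ are determined by $\theta_s$ alone. Using \eqref{eq:o3-compact-shear}, \eqref{eq:o3-log-shear} and the adjugate matrices, we obtain $\delta_{\rm acc}(P_s)\in(P_s)$ and $\delta_{\rm ss}(P_s)\in(P_s)$ in $K_s[u,v,W]$, with $S_3\ne0$ and $\det T\ne0$, since these quantities are nonvanishing on the fibre open.

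\textbf{Step 3 (contradiction and the main obstacle).} If $P_s$ has weight at least $3$, Proposition~\ref{prop:o3-abstract-contradiction} finishes the argument. The obstacle I expect is that $Z_s$ is only a \emph{component}, so its weight $m'$ may be $1$ or $2$; Lemma~\ref{lem:o3-low-weight} excludes such weights only for whole negatively twisted sections. In that case Lemma~\ref{lem:o3-nilpotent} still shows $P_s\in K_s[u,v]$, and Lemma~\ref{lem:o3-binary} then allows an irreducible quadratic, namely the product of the two $T$-eigenlines. My first attempt would be to use more of the matrix package. In the compact case, $T$ may be replaced by any $T'$ obtained from the full regular matrix-lift span, for example adding to $T$ a matrix with vanishing two-jet correction. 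One then needs that no fixed binary quadric or linear form is stable under two non-commuting such $T$'s, or under $T$ together with its Hessian-variation along the incidence. If that fails, the fallback is to show that a factor of weight at most $2$ with base twist $-t'<0$ would yield a nonzero section of $\Sym^{m'}$ of the (logarithmic) cotangent bundle with negative twist on a dense open, and to derive a contradiction from Lemma~\ref{lem:o3-low-weight} via the maximal-slope choice. This is the step I expect to require the most care.
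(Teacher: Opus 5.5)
Your Steps~1--2, together with the weight-$\geqslant3$ case of Step~3, reproduce the paper's proof. Multiplicity one gives $\theta_s=PQ$ with $Q\in R_{(P)}^\times$, and the containments $B_i(\Theta)|_{\mathcal Y_s}\in(P)R_{(P)}$ survive $K$-linear combinations. The acceleration and semisimple combinations are horizontal, so their restrictions see only $\theta_s$; this is exactly why every extension $\Theta$ is allowed. Hence $\delta(P)\in(P)R_{(P)}\cap R=(P)$, and Proposition~\ref{prop:o3-abstract-contradiction} applies. One small correction: do not shrink $S$ again in Step~1. The point $s$ is fixed and could be discarded, and the open of Lemma~\ref{lem:simultaneous-open}, to which $S$ is already restricted, is what makes the simultaneous locus meet every fibre.

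The obstacle you raise in Step~3 is genuine for the literal wording, and the paper does not address it. It invokes Proposition~\ref{prop:o3-abstract-contradiction} directly, so it tacitly takes the weight of $P$ to be the fixed $m\geqslant3$. That holds in Corollaries~\ref{cor:o3-compact-relative-differentiation} and~\ref{cor:o3-log-relative-differentiation}, where $Z_s$ is the whole residual divisor of a selected factor of weight at least $3$ by Lemma~\ref{lem:o3-low-weight}. The clean fix is therefore to make that hypothesis explicit.

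Your proposed remedies would not work. A pure matrix lift with vanishing parameter two-jet has tangent block in $\mathfrak{so}(H)=K\cdot H^{-1}J$, which is a line, so there is no second non-commuting such $T$. Moreover $\mathcal D_T\bigl(H(x,y)\bigr)=(x,y)\bigl(T^{\mathsf T}H+HT\bigr)(x,y)^{\mathsf T}=0$. So the second fundamental form $H(x,y)$, which is exactly the quadratic $uv$ permitted by Lemma~\ref{lem:o3-binary}, is a weight-two prime stable under both $\delta_{\rm acc}$ and $\delta_{\rm ss}$ whenever it is irreducible over $K$ (likewise $\mathcal H(u,v)$ in the logarithmic case). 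Combinations that are not horizontal do not restrict to derivations of $R$ on the fibre, since they also differentiate $\Theta$ in parameter directions. The negative-twist fallback also fails, because a component need not be negatively twisted: the second fundamental form is a section of $\Sym^2\Omega_X\otimes\OO_X(d)$, and maximal slope is not a hypothesis here.

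If you do want to cover weights $1$ and $2$, mix the two packages instead. For $A$ with $dF\circ A=0$, put $C(u,v)=-\bigl(d^2F(u,Av)+d^2F(Au,v)\bigr)n$. Then $V_A+V_C$ has zero parameter two-jet, becomes horizontal after the third-difference correction, and induces an arbitrary $T\in\mathfrak{gl}_2(K)$ on $(x,y)$. The logarithmic case is analogous, using accelerations with $df(\tau)\ne0$ to cancel the Hessian. Once $\delta_{\rm acc}$ has removed $W$, stability under $x\partial_y$ and $y\partial_x$ excludes every binary form of positive degree.
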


\begin{proof}
The proper-projection alternative is immediate, so assume that $Z_s$
dominates the underlying surface.  By hypothesis, fix any regular relative
extension $\Theta$ on a Zariski neighbourhood of $s$.

Work at the generic point of $Z_s$.  After stationary saturation,
trivialisation of the relevant line bundle, and the choice of a rational
tangent frame, its equation is an irreducible weighted-homogeneous
polynomial
\[
 P\in R:=K[u,v,W],
\]
and $(P)$ is prime in the whole unlocalised invariant ring.  Since $Z_s$
occurs with multiplicity one, the restriction of $\Theta$ to the fibre has
the form
\[
 \theta_s=P Q,
 \qquad Q\in R_{(P)}^\times.
\]

Let $\mathscr B=\{B_1,\ldots,B_N\}$ be the fixed finite list of regular
common-twist fields used in the proof of
Theorem~\ref{thm:o3-differentiation}.  Suppose, for contradiction, that every
derivative contains $Z_s$.  Then
\[
 \left.B_i(\Theta)\right|_{\mathcal Y_s}\in(P)R_{(P)},
 \qquad 1\leqslant i\leqslant N.
\]
The same containment holds for every $K$-linear combination of the $B_i$.

Form the acceleration and semisimple combinations on the coefficient cone,
including their third-difference corrections, exactly as in
Proposition~\ref{prop:o3-pre-gauge-cancellation} and its compact analogue.
Their coefficient components cancel identically.  Consequently their
restrictions to the fibre are honest derivations of $R$ of the form
\[
 \delta_{\rm acc}=S_3(u,v)\partial_W,
 \qquad
 \delta_{\rm ss}=\mathcal D_T+R_3(u,v)\partial_W,
\]
where $S_3\ne0$, $T\in\mathfrak{sl}_2(K)$, and $\det T\ne0$.

Because these two combinations are horizontal, no derivative of the chosen
relative extension remains after restriction to the fibre.  Hence, for
either $\delta=\delta_{\rm acc}$ or $\delta=\delta_{\rm ss}$,
\[
 \delta(\theta_s)=\delta(PQ)
 \equiv Q\,\delta(P)\pmod{(P)}.
\]
The assumed containment of the corresponding combination and the fact that
$Q$ is a unit in $R_{(P)}$ give
\[
 \delta(P)\in(P)R_{(P)}.
\]
Since $\delta(P)\in R$ and $(P)$ is prime,
\[
 (P)R_{(P)}\cap R=(P),
\]
so both $\delta_{\rm acc}$ and $\delta_{\rm ss}$ preserve $(P)$ in the
whole ring $R$.  This contradicts
Proposition~\ref{prop:o3-abstract-contradiction}.

Thus at least one regular member $B_i$ cuts $Z_s$ properly.  The argument
uses only horizontal combinations at its decisive step and therefore works
for every chosen relative extension of $\theta_s$.
\end{proof}

We now apply this fibrewise alternative to the logarithmic and compact
relative-factor packages constructed above.  The pole cost is the same in
both settings, but the logarithmic statement must also keep the boundary and
the stationary divisor separate.

\begin{cor}[Logarithmic relative pole-three differentiation]
\label{cor:o3-log-relative-differentiation}
Let $\bar\eta$,
$\eta_1=(s_\Gamma^{\mathrm{log}})^{b_0}\bar\eta$,
$\mathscr Z^{\mathrm{log}}=Z(\bar\eta)$ be supplied by
Proposition~\ref{prop:o3-log-relative-family}, and let $P^{\mathrm{log}}$
be their regular-jet polynomial representative defined above.  Then either
the first alternative of Lemma~\ref{lem:o3-log-saturation-prime} already forces
algebraic degeneracy, or the generic polynomial of the saturated residual
ideal
$P_L^{\mathrm{log}}$ generates a prime ideal in the whole unlocalised ring
$L[u,v,W]$.  In the latter case, if $t\geqslant4$, then after shrinking $U$
there is a regular logarithmic slanted vector field $V$ of base pole order
at most $3$ such that
\[
 P_a^{\mathrm{log}}\nmid V(P_a^{\mathrm{log}})
\]
over the simultaneous logarithmic interior.  Hence the common zero set of
the two equations has codimension at least two along the residual divisor
there.  Every remaining component is supported over the proper algebraic
complement of that interior (including the pulled-back base boundary); an
entire curve trapped in this complement is already algebraically
degenerate.  The twists are $-t$ and $-(t-3)$, respectively.
\end{cor}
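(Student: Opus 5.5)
The plan is to assemble the corollary from Lemma~\ref{lem:o3-log-saturation-prime} and Theorem~\ref{thm:o3-differentiation}, with Lemma~\ref{lem:simultaneous-open} supplying the common function field. Start from the data $\bar\eta$, $\eta_1$, $\mathscr Z^{\mathrm{log}}$ of Proposition~\ref{prop:o3-log-relative-family}. Apply the saturation identity \eqref{eq:o3-log-saturation} to identify the regular-jet representative $P^{\mathrm{log}}$ with a generator of the saturated residual ideal. Then invoke the dichotomy of Lemma~\ref{lem:o3-log-saturation-prime}. In the first alternative, $\pi_{2,0}^{\mathrm{log}}(\mathscr Z^{\mathrm{log}})$ is proper, and algebraic degeneracy is already stated there. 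In the second alternative, $P_L^{\mathrm{log}}$ generates a height-one prime in the unlocalised ring $L[u,v,W]$, with $L=\CC(\Omega_U^{\mathrm{log}})$.

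In the dominant case I would verify the hypotheses of Theorem~\ref{thm:o3-differentiation} in its logarithmic form. The weight satisfies $m\geqslant3$ by Proposition~\ref{prop:o3-log-relative-family} (equivalently Lemma~\ref{lem:o3-low-weight}). The residual divisor is irreducible and reduced with geometrically integral fibres. Primality over $L$ holds, and $t\geqslant4$ is assumed. Suppose every member $B_i$ of the finite regular common-twist package had derivative containing $\mathscr Z^{\mathrm{log}}$ generically. Then \eqref{eq:o3-log-prime-preservation} gives $B_i(P_L^{\mathrm{log}})\in(P_L^{\mathrm{log}})$. The $L$-span then contains the logarithmic acceleration \eqref{eq:o3-log-shear} and the adjugate semisimple lift built from Proposition~\ref{prop:o3-log-matrix-lift}, both made horizontal by Proposition~\ref{prop:o3-pre-gauge-cancellation} so that no cyclic-root gauge term survives. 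This contradicts Proposition~\ref{prop:o3-abstract-contradiction}.

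Hence some regular $B_i=V$ of base pole order at most $3$ satisfies $V(P_L^{\mathrm{log}})\notin(P_L^{\mathrm{log}})$. Clearing the finitely many coefficient denominators and shrinking $U$ gives $P_a^{\mathrm{log}}\nmid V(P_a^{\mathrm{log}})$ on fibres. Since $(P)$ is prime and $V(P)\notin(P)$, the common zero locus of $P$ and $V(P)$ over the interior has codimension at least two along the residual divisor. The twist bookkeeping, $-t\mapsto-(t-3)$, is as in the theorem.

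The step I expect to need the most care is the accounting of the complement of $\Omega_U^{\mathrm{log}}$. That complement is the locus $f\,p\det\mathcal H=0$ together with the pulled-back boundary $\mathscr D_2^{\mathrm{log}}$. Here I would use the boundary-separation clause of Lemma~\ref{lem:o3-log-saturation-prime}: in the dominant case, $\mathscr Z^{\mathrm{log}}$ has no component inside $\mathscr D_2^{\mathrm{log}}$. Any further common component must therefore project into the proper algebraic subset $\{p\det\mathcal H=0\}\cup\mathscr C_U$ of $\mathscr Q_U$. An entire curve whose lift is trapped there has image in a proper algebraic subset, and is therefore algebraically degenerate. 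Finally, I would stress that the stationary divisor was already removed by \eqref{eq:o3-log-saturation}, so no localisation in $u,v,W$ hides any component.
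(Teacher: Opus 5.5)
Your proposal is correct and follows the paper's proof. The dichotomy, primality and the prime-preservation upgrade come from Lemma~\ref{lem:o3-log-saturation-prime}; the operators are placed over the single field $L$ in one common twist by Lemmas~\ref{lem:simultaneous-open} and~\ref{lem:o3-common-twist}, with the gauge handled by Proposition~\ref{prop:o3-pre-gauge-cancellation}; the contradiction is Proposition~\ref{prop:o3-abstract-contradiction}; and the boundary and the complement $p\det\mathcal H=0$ are disposed of exactly as you describe.

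The only difference is routing. You rerun the generic-point argument of Theorem~\ref{thm:o3-differentiation} and then spread by clearing denominators. The paper instead feeds the data into Proposition~\ref{prop:o3-fibrewise-factor}, which works fibre by fibre through horizontal combinations and so does not depend on the chosen relative extension. Since the statement permits shrinking $U$, either route suffices.
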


\begin{proof}
The proper-projection branch is the first alternative of
Lemma~\ref{lem:o3-log-saturation-prime}.  In the dominant branch, that lemma
identifies the saturated generic residual ideal with the prime ideal
$(P_L^{\mathrm{log}})\subset L[u,v,W]$.  It also upgrades divisorial
dependence along the relative residual divisor for every regular
logarithmic field to the unlocalised
prime-ideal preservation identity
\eqref{eq:o3-log-prime-preservation}.  Lemma~\ref{lem:simultaneous-open}
puts the acceleration, the semisimple field and their third-difference
corrections over this same field $L$.  The coefficient-line Euler and
cyclic-root gauge checks in Propositions~\ref{prop:o3-log-acceleration} and
\ref{prop:o3-log-matrix-lift} show that these operators descend to $L$
itself; no cyclic field extension adjoining a root of $-f$ is made, so the
prime ideal is not altered by a hidden scalar extension.
Lemma~\ref{lem:o3-common-twist} places the acceleration, semisimple and
third-difference terms in the single bundle
\[
 T_{J_2^v}\otimes\OO_{\PP^2}(3)
   \boxtimes\OO_{\PP(V_d^{(2)})}(1).
\]
Thus the hypotheses of Proposition~\ref{prop:o3-fibrewise-factor} hold on
each fibre under consideration, and it gives the regular field cutting the
residual divisor properly.  The base boundary does not
create another factor: if the residual divisor projects into it, one is in
the proper-projection branch, whereas in the dominant branch the argument
takes place on $f\ne0$.  Likewise, the equations
$p\det\mathcal H=0$ defining the complement of the simultaneous chart cut
out a proper algebraic subset of $\mathscr Q_U$; a lifted entire curve
contained above this subset has algebraically degenerate base projection.
The twist calculation is the final calculation in the proof of the theorem.
\end{proof}

\begin{cor}[Compact relative pole-three differentiation]
\label{cor:o3-compact-relative-differentiation}
Let $\bar\omega$, $\omega_1=s_\Gamma^{b_0}\bar\omega$ and
$\mathscr Z=Z(\bar\omega)$ be supplied by
Proposition~\ref{prop:o3-compact-relative-family}, and let $P$ be their
regular-jet polynomial representative defined above.  Then either the first
alternative of Corollary~\ref{cor:o3-compact-generic-prime} already forces
algebraic degeneracy, or the stationary-saturated equations $P_a$ form a
holomorphic family with irreducible and reduced residual zero divisors, and
their generic equation $P_K$ generates a prime ideal in the whole
unlocalised ring $K[x,y,W]$.
In the latter case, if $t\geqslant4$, then after shrinking $U$ there is a
regular compact slanted vector field $V$ of base pole order at most $3$ such
that
\[
 P_a\nmid V(P_a).
\]
The two equations have twists $-t$ and $-(t-3)$, respectively.
\end{cor}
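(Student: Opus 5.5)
The plan is to mirror the proof of Corollary~\ref{cor:o3-log-relative-differentiation}, replacing the logarithmic inputs by their compact counterparts. Start from the data $\bar\omega$, $\omega_1=s_\Gamma^{b_0}\bar\omega$, $\mathscr Z=Z(\bar\omega)$ of Proposition~\ref{prop:o3-compact-relative-family}, and split according to Corollary~\ref{cor:o3-compact-generic-prime}. If $\pi_{2,0}(\mathscr Z)$ is a proper algebraic subset of $\mathcal X_U$, the first alternative already gives algebraic degeneracy and there is nothing more to prove. Otherwise $\mathscr Z$ dominates, and the corollary yields the holomorphic family $P_a$ of stationary-saturated equations. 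Its flatness and geometric integrality give irreducible and reduced residual divisors fibrewise, and the corollary also gives a generic equation $P_K$ with $(P_K)$ prime in the unlocalised ring $K[x,y,W]$. Because that corollary remains valid on smaller dominant incidence opens, we may take $K$ to be the function field of the simultaneous open $\Omega_d^{\mathrm c}$ of Lemma~\ref{lem:simultaneous-open}.

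Next we need the unit comparison between $P$ and $\bar\omega$. On the regular jet locus $P=g\bar P$, where $g$ represents $s_\Gamma^{b_0}$ and is a unit. Hence, for every regular common-twist field $V$, membership of $V(P)$ in the residual ideal is equivalent to membership of $V(\bar P)$, by the identity $V(P)=gV(\bar P)+V(g)\bar P$ recorded after Proposition~\ref{prop:o3-compact-relative-family}. Lemma~\ref{lem:o3-preservation} therefore upgrades divisorial dependence along $\mathscr Z$ to ideal preservation $V(P_K)\in(P_K)$ in $K[x,y,W]$.

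Assume $t\geqslant4$, which holds for the maximal-slope factor in the range where the corollary is applied. We then invoke Theorem~\ref{thm:o3-differentiation}, or equivalently Proposition~\ref{prop:o3-fibrewise-factor} on each fibre. Its hypotheses hold for three reasons. First, the compact acceleration \eqref{eq:o3-compact-shear} gives $x^3\partial_W$. Second, the completed matrix $\widehat T$ built from $T=\operatorname{adj}(H)J$ gives $\mathcal D_T+R_3\partial_W$ with $\det T=\det H\ne0$; it is a rational combination of Paun's regular fields in Proposition~\ref{prop:o3-matrix-lift}. Third, the third-difference corrections, taken through the single right inverse of Lemma~\ref{lem:simultaneous-open} and placed in the common twist by Lemma~\ref{lem:o3-common-twist}, make both combinations horizontal by the compact analogue of Proposition~\ref{prop:o3-pre-gauge-cancellation}. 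Proposition~\ref{prop:o3-abstract-contradiction} then excludes simultaneous preservation, since $m\geqslant3$ by Lemma~\ref{lem:o3-low-weight}. Some member $B_i$ of the fixed finite regular list therefore satisfies $B_i(P_K)\notin(P_K)$. Clearing the finitely many denominators and shrinking $U$ gives $P_a\nmid V(P_a)$ with $V=B_i$.

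The twist count is the final step of the theorem: base twist $\OO(3)$ takes $-t$ to $-(t-3)$, while the common parameter twist $\OO(1)$ is a scalar once the parameter is fixed. The step I expect to need the most care is checking that the shrinkings are compatible. The very-general choice of $a_0$ in Proposition~\ref{prop:o3-compact-relative-family}, the dense parameter open of Lemma~\ref{lem:simultaneous-open}, and the nonvanishing locus of the chosen minor must all be intersected while keeping $a_0$ inside. This is possible because each is a nonempty Zariski open, or comes from the countable very-general condition already imposed on $a_0$; dominance over $U$ keeps the generic incidence point in the simultaneous open.
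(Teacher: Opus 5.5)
Your proposal is correct and follows essentially the paper's route. You split via Corollary~\ref{cor:o3-compact-generic-prime}, take the regular family with irreducible and reduced residual divisors from Proposition~\ref{prop:o3-compact-relative-family}, obtain the cutting field from Proposition~\ref{prop:o3-fibrewise-factor} (equivalently the argument of Theorem~\ref{thm:o3-differentiation}, which you unpack), and read off the twists $-t$ and $-(t-3)$ from the last step of that theorem's proof. Your unit comparison and shrinking bookkeeping are consistent with the paper but already contained in those cited results, and $t\geqslant4$ is a hypothesis of the statement, so it needs no justification from the maximal-slope choice.
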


\begin{proof}
The first alternative is precisely the algebraic-degeneracy branch of
Corollary~\ref{cor:o3-compact-generic-prime}.  In the dominant alternative,
the same corollary identifies the generic defining ideal with the prime
ideal $(P_K)\subset K[x,y,W]$, while
Proposition~\ref{prop:o3-compact-relative-family} supplies the required
regular family and its irreducible and reduced nonstationary divisors.
Thus Proposition~\ref{prop:o3-fibrewise-factor} applies to the chosen
fibrewise factor and gives the asserted regular field.  The twist calculation
is the final calculation in the proof of
Theorem~\ref{thm:o3-differentiation}.
\end{proof}

\begin{cor}[Parameter quantifiers]
\label{cor:o3-parameter-quantifiers}
The compact fibrewise alternative holds simultaneously for all numerical
factor types on a very general set of surface parameters.  For the
degree-$11$ logarithmic application, the factor required by the proof can be
chosen on one dense Zariski-open set of curve parameters.
\end{cor}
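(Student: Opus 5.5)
The claim has two halves, and I would treat them separately. Both halves are bookkeeping about which exceptional sets get removed. The only real content is checking that the finitely or countably many Zariski-open conditions imposed in Propositions~\ref{prop:o3-compact-relative-family} and \ref{prop:o3-log-relative-family}, Lemma~\ref{lem:simultaneous-open} and Proposition~\ref{prop:o3-fibrewise-factor} are indexed by a countable, respectively finite, set of data.

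\emph{Compact half.} The numerical factor types are the triples $(m,t,b_0)$ with $m\geqslant3$, $t\geqslant1$ and $b_0\geqslant0$, and this index set is countable. For each triple I would use two opens. The first is the dense cohomology-and-base-change open $S^\circ_{m,t,b_0}$ from Theorem~\ref{thm:o3-generic-extension}, applied to $\mathscr L_{m,t}(-b_0\Gamma_2)$. The second is the single dense open from Lemma~\ref{lem:simultaneous-open}, which does not depend on the type. A very general parameter is one lying in the countable intersection of all these opens. For such $a_0$, any factor selected on $X_{a_0}$ extends, by Proposition~\ref{prop:o3-compact-relative-family}. Corollary~\ref{cor:o3-compact-generic-prime} then gives either the degenerate branch or the prime $P_K$, and Proposition~\ref{prop:o3-fibrewise-factor} applies at $s=a_0$.

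The point to stress is that Proposition~\ref{prop:o3-fibrewise-factor} is a statement at the fixed point $s$ for \emph{every} regular relative extension. So no further type-dependent shrinking around $a_0$ is needed beyond the opens already included. Any shrinking of $U$ inside the earlier proofs is a neighbourhood of $a_0$ itself, and it never excludes $a_0$.

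\emph{Logarithmic half.} Here the proof of Theorem~\ref{thm:log-11-endpoint} only needs factors of bounded type. Once Lemma~\ref{lem:finite-kvl-main} is in hand, a nonzero negatively twisted section must have slope $T/M$ above the degree-$11$ threshold $13/96$. The maximal-slope choice of Proposition~\ref{prop:o3-log-relative-family} then produces a factor with $t/m\geqslant T/M$. I would then argue that the relevant $(m,t)$ can be confined to a finite list: weight bounded by the fixed Riemann--Roch weight used in the deduction, and twist $t$ at most a bound coming from $t\leqslant$ (the maximal possible twist at that weight). I would also bound $b_0$, since $b_0\leqslant m$ by degree counting along the fibres of $\Gamma_2^{\log}$ over the first stage.

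With the triples $(m,t,b_0)$ finite, the last sentence of Proposition~\ref{prop:o3-log-relative-family} gives one dense open. I would intersect it with the open of Lemma~\ref{lem:simultaneous-open}, with the opens where Lemma~\ref{lem:o3-log-saturation-prime} holds, and with the open from the finitely many applications of Proposition~\ref{prop:o3-fibrewise-factor}. The result is one dense Zariski open.

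\emph{Main obstacle.} I expect the hardest step to be finiteness of the factor types in the logarithmic case, specifically bounding $b_0$ and the weights of the irreducible factors uniformly. If a direct bound on $b_0$ is awkward, I would first try replacing the fibrewise order $b_0$ by the generic order and invoking Remark~\ref{rem:o3-stationary-order-jump}. The jump locus for each of the finitely many $(m,t)$ is a proper closed set, so removing it keeps the parameter set Zariski open.
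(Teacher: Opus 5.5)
\paragraph{Compact half.} Your compact half is the paper's own argument. For each of the countably many triples $(m,t,b_0)$ you take one cohomology-and-base-change open, intersect with the open of Lemma~\ref{lem:simultaneous-open}, and then apply Proposition~\ref{prop:o3-fibrewise-factor} at the fixed fibre.

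\paragraph{Logarithmic half: a different route.} Here your proof is correct in outline but genuinely different from the paper's.

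The paper never bounds factor types. It proceeds as follows:
\begin{enumerate}
\item It chooses the Riemann--Roch section once, over the function field of the degree-$11$ parameter space, and spreads it over a dense open.
\item It passes to a finite \'etale cover $\widetilde U\to U$ over which a maximal-slope geometric prime component is defined.
\item It divides by that component's stationary order on the generic fibre, and shrinks so that the residual section restricts nonzero to $\Gamma_2^{\log}$ on every fibre.
\item It pushes the dense constructible good locus down to a dense open $U^\circ$. This uses that every complex point of $U^\circ$ has a lift whose fibre is the same curve.
\end{enumerate}

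You instead fix one pair $(M,T)$. Semicontinuity gives a nonzero section on every fibre once the generic curve has one. You then show that the maximal-slope factor types form a finite set:
\begin{itemize}
\item $m\leqslant M$.
\item $1\leqslant t\leqslant(d-1)m$. The lower bound comes from the maximal-slope choice. The upper bound holds because $\Omega^1_{\PP^2}(\log C)\hookrightarrow\OO(d-1)^{\oplus3}$ and $K_{\PP^2}(\log C)=\OO(d-3)$ kill every graded piece of $E_{2,m}$ beyond that twist.
\item $b_0\leqslant m$. The divisor $\Gamma_2^{\log}$ is a section of the $\PP^1$-bundle $X_2^{\log}\to X_1^{\log}$, and the line bundle has degree $m$ on those fibres.
\end{itemize}
After this, your argument is literally the compact one with a finite index set.

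\paragraph{What each route buys.} Your route avoids the field extension and the descent step. It also gives a uniform statement: every maximal-slope factor of every $(M,T)$-section on every curve of the open can be used. The paper's route needs no a priori bounds, because only one factor type ever occurs.

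\paragraph{Two passages to correct or drop.}

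First, Lemma~\ref{lem:finite-kvl-main} does not force a nonzero negatively twisted section to have slope above $13/96$. It only kills the listed pairs. The Riemann--Roch section itself has slope close to $(64-\sqrt{4018})/18\approx0.034$. The small-slope branch in the proof of Theorem~\ref{thm:log-11-endpoint} exists precisely because factors below $13/96$ occur. Finiteness comes solely from fixing $(M,T)$, and the factor-selection claim does not use the Key Vanishing Lemmas at all.

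Second, your fallback does not make sense in your fibrewise setting. That fallback replaces $b_0$ by the generic order and removes ``the jump locus for each $(m,t)$''. But a jump locus, as in Remark~\ref{rem:o3-stationary-order-jump}, belongs to one spread relative section, not to a numerical type, and your sections vary from fibre to fibre. Removing such a locus is exactly what the paper does, but only because it works with a single section spread from the generic point. Since $b_0\leqslant m$ holds directly, you do not need this fallback.
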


\begin{proof}
For the compact statement, generic freeness and cohomology and base change
give a dense open for each triple $(m,t,b_0)$, applied to the residual line
bundle $\mathscr L_{m,t}(-b_0\Gamma_2)$.  There are only countably many such
triples.  Intersecting these opens with the simultaneous-operator open gives
a very general set, and Proposition~\ref{prop:o3-fibrewise-factor} applies
to every stationary-saturated factor on every fibre in that set.

For the logarithmic statement, write $K$ for the function field of the
smooth degree-$11$ parameter space, choose the initial Riemann--Roch section
over $K$, and spread it over a dense open.  Its residual divisor has finitely
many geometric prime components.  After a finite normalisation
\[
 \nu\colon\widetilde U\longrightarrow U
\]
if necessary, choose a maximal-slope component.  Since the ground field has
characteristic zero, shrink once so that $\nu$ is finite \'etale and the
corresponding factor is defined over the function field of $\widetilde U$.
Write $\widetilde K=\CC(\widetilde U)$.  Let $b_0$ be the full stationary
multiplicity of this factor on the generic fibre and divide there to obtain
a residual generator
\[
 0\ne\bar\eta_{\widetilde K}\in
 H^0\!\left(\mathscr P_{2,\widetilde K}^{\mathrm{log}},
       \mathscr L_{m,t}^{\mathrm{log}}
       (-b_0\Gamma_2^{\mathrm{log}})
       |_{\mathscr P_{2,\widetilde K}^{\mathrm{log}}}
       \right).
\]
After shrinking $\widetilde U$, spread $\bar\eta_{\widetilde K}$ as a
regular residual section $\bar\eta$, require
$\bar\eta_a|_{\Gamma_{a,2}^{\mathrm{log}}}\ne0$ on every fibre, and
reconstruct
$\eta_1=(s_\Gamma^{\mathrm{log}})^{b_0}\bar\eta$.  Put
$\mathscr Z^{\mathrm{log}}=Z(\bar\eta)$.  The restriction condition implies
that $\bar\eta_a\ne0$ on every fibre, so
Lemma~\ref{lem:o3-relative-cartier} makes
$\mathscr Z^{\mathrm{log}}\to\widetilde U$ projective and flat.  Its generic
fibre is the selected geometrically integral prime divisor; after one more
shrinking, openness of geometric integrality makes every fibre geometrically
integral.  Pull back the simultaneous-operator and base-change packages to
$\widetilde U$; finite \'etaleness identifies the relative parameter
directions, so Proposition~\ref{prop:o3-fibrewise-factor} applies there to
the resulting stationary-saturated family.

The good locus upstairs is dense and constructible.  Its image under the
finite dominant map contains a dense open $U^\circ\subset U$.  Since $\CC$
is algebraically closed, every complex point of $U^\circ$ has a complex point
above it; the selected factor and the cutting field on that lifted fibre are
sections on the same curve $C_a$.  Thus this existential fibrewise conclusion
descends to $U^\circ$, so the factor-selection conclusion is general.  If the
relevant finite full-rank assertions are successfully verified, upper
semicontinuity also supplies the finitely many Key Vanishing opens; their
intersection with $U^\circ$ still contains a dense Zariski open for the
conditional endpoint argument.  This is why the conditional logarithmic
endpoint is stated for general parameters, whereas the compact endpoint is
stated for very general parameters.
\end{proof}

This completes the geometric differentiation input.  The next two sections
address only the finite low-twist systems left by the theorem.

\section{Logarithmic symmetry reduction}
\label{sec:log-symmetry-proof}

This section gives the exact reduction underlying the logarithmic clause of
Lemma~\ref{lem:finite-kvl-main}.  Following the three-step strategy
of~\cite[\S3]{HHMX2026}, we encode a hypothetical negatively twisted
logarithmic two-jet differential, impose the involution eigencondition, and
obtain a finite integral linear system.  The matrix and its
proof-to-certificate interface are derived here.  The required
full-column-rank calculations are currently undergoing exact
computer-assisted verification.  Completion of those calculations,
generation and public release of the corresponding certificates, and
independent reproduction all remain pending.

The exposition follows the construction of that matrix.  We first fix and
check a smooth symmetric fibre, then write the invariant two-jet normal form
and its degree bounds.  The chart transition converts the involution into
coefficientwise covariance equations.  Finally, holomorphicity at infinity
and covariance are assembled into one exact system.  The normal form and
every matrix equation are derived in this section; what remains is the
ongoing exact computer-assisted full-rank verification and the subsequent
certificate process described below.

\subsection{Local description of invariant logarithmic two-jet differentials}
\label{sec:log-local}

Use homogeneous coordinates $[T:X:Y]$ on $\PP^2$ and affine coordinates
$(x,y)$ on the chart $T=1$.  Let $\mathcal C\subset\PP^2$ be a smooth curve
of degree $d=11$.  For the symmetry reduction we choose
\[
R(x,y)=x^{11}+y^{11}+1+x^5+y^5,
\]
which is invariant under exchanging $x$ and $y$.  We shall use the following
exact chart check.

\begin{lem}[The logarithmic special fibre]
\label{lem:log-special-fibre}
The projective curve defined by
\[
 \widehat R=X^{11}+Y^{11}+T^{11}+T^6X^5+T^6Y^5
\]
is smooth.  On the affine chart $T=1$, the restrictions of $R_x$ and
$R_y$ to $\mathcal C$ have no common zero.
\end{lem}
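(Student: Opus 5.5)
The plan is a direct Jacobian computation for $\widehat R$ over $\CC$, followed by an Euler-relation argument for the affine clause. The partial derivatives factor as
\[
 \widehat R_X=X^4(11X^6+5T^6),\qquad
 \widehat R_Y=Y^4(11Y^6+5T^6),\qquad
 \widehat R_T=T^5(11T^5+6X^5+6Y^5).
\]
Euler's relation gives $11\widehat R=X\widehat R_X+Y\widehat R_Y+T\widehat R_T$, and we are in characteristic zero. Hence a singular point is exactly a common zero of the three partials, and the equation $\widehat R=0$ need not be checked separately.

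\emph{Points at infinity.} First treat $T=0$. Then $\widehat R_X=11X^{10}$ and $\widehat R_Y=11Y^{10}$, so $X=Y=0$, which is not a point of $\PP^2$. So we may set $T=1$. The two vanishing conditions become: $X=0$ or $X^6=c$, and $Y=0$ or $Y^6=c$, where $c=-5/11$. In addition we need $6(X^5+Y^5)=-11$.

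\emph{Affine points, case by case.}
\begin{itemize}
\item If $X=Y=0$, the last condition reads $0=-11$, which is impossible.
\item Suppose exactly one coordinate is nonzero, say $Y^6=c$ and $X=0$. Then $Y^5=c/Y$ has modulus $|c|^{5/6}=(5/11)^{5/6}<1$. This is strictly less than the required modulus $11/6$.
\item Suppose both coordinates are nonzero. Write $X^5+Y^5=c(X^{-1}+Y^{-1})$. The target equation then becomes $X^{-1}+Y^{-1}=121/30$. But $|X^{-1}|=|Y^{-1}|=(11/5)^{1/6}<1.15$, so the left side has modulus at most $2.3<121/30$. This is again impossible.
\end{itemize}
Hence $\widehat R$ is smooth. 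The only step needing care is this last modulus estimate, and it reduces to comparing $2(11/5)^{1/6}$ with $121/30$.

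\emph{Affine clause.} For the second assertion, suppose $(x,y)$ lies on $\mathcal C$ in the chart $T=1$ and $R_x=R_y=0$ there. Then $\widehat R=\widehat R_X=\widehat R_Y=0$ at $[1:x:y]$. The Euler relation, with $T=1\neq0$, forces $\widehat R_T=0$ as well. This would give a singular point, contradicting the first part.
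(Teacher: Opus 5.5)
Your proof is correct and follows essentially the paper's route. At infinity the partials reduce to $11X^{10}$ and $11Y^{10}$. In the affine chart every relevant coordinate satisfies $s=0$ or $s^6=-5/11$, and a modulus estimate excludes a common zero; your comparison of $2(11/5)^{1/6}$ with $121/30$ is exactly equivalent to the paper's inequality $\tfrac{12}{11}(5/11)^{5/6}<1$. The only cosmetic difference is ordering: you use Euler's relation to trade $\widehat R=0$ for $\widehat R_T=0$ and then deduce the affine clause from smoothness, whereas the paper bounds the critical values of $h(s)=s^{11}+s^5$ directly, so that the absence of affine solutions to $R=R_x=R_y=0$ gives both assertions at once.
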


\begin{proof}
Put $h(s)=s^{11}+s^5$.  If $h'(s)=0$, then either $s=0$, or
$s^6=-5/11$ and
\[
 |h(s)|=\frac6{11}\left(\frac5{11}\right)^{5/6}.
\]
Since
\[
 2\frac6{11}\left(\frac5{11}\right)^{5/6}<1,
\]
as follows after taking sixth powers from
$12^6 5^5<11^{11}$, the equations $R=R_x=R_y=0$ have no affine
solution.  At $T=0$ the
equation is $X^{11}+Y^{11}=0$, while its $X$- and $Y$-derivatives cannot
vanish simultaneously at a projective point.  These two observations prove
both assertions.
\end{proof}

On the open set $\{R_y \neq 0\}$, the fibres of the bundle of invariant logarithmic $2$-jet differentials are generated by $x'$, $(\log R)' = R'/R$, and the logarithmic Wronskian
\[
\Delta_{xR}''' := 
\begin{vmatrix}
x' & \frac{R'}{R} \\[2mm]
x'' & \frac{R''}{R} - \frac{R'^2}{R^2}
\end{vmatrix}
= x'\,\frac{R''}{R} - x'\,\frac{R'^2}{R^2} - x''\,\frac{R'}{R},
\]
where
\[
R' = x' R_x + y' R_y,\qquad
R'' = x'' R_x + y'' R_y + x'^2 R_{xx} + 2x'y' R_{xy} + y'^2 R_{yy}.
\]

According to~\cite[Proposition 3.1]{HHMX2026}, any global section $\omega$ of $E_{2,m}T_{\mathbb{P}^2}^*(\log\mathcal{C})$ can be written on $\{R_y \neq 0\}$ as
\begin{equation}\label{eq:local-J-log}
\omega_{xR} = \sum_{k=0}^{\lfloor m/3\rfloor}\;\sum_{j=0}^{m-3k}
\frac{F_{j,k}}{R_y^{m-2k}}\,(x')^{m-3k-j}\Bigl(\frac{R'}{R}\Bigr)^j
\bigl(\Delta_{xR}'''\bigr)^k,
\end{equation}
with polynomials $F_{j,k} \in \mathbb{C}[x,y]$.

\subsection{Degree bounds from vanishing along the ample divisor}
\label{sec:log-degree}

A section of
$E_{2,m}T_{\PP^2}^*(\log\mathcal C)\otimes\OO(-t)$, with
$t\geqslant1$, is represented by a section vanishing to order at least $t$
along $\{X=0\}$.  Thus $F_{j,k}=x^tK_{j,k}$.  The transition to the chart
$\{X\ne0\}$, computed in \cite[\S3.2]{HHMX2026}, gives
\[
\deg K_{j,k} \leqslant (m-2k)(d-1) - m + j + k + \Twist,
\]
where $\Twist=-t$.  Each $K_{j,k}$ therefore has finitely many unknown
coefficients.

\subsection{The eigenvector condition from symmetry}
\label{sec:log-eigen}

The polynomial $R$ is invariant under the involution $\sigma$ that exchanges $x$ and $y$:
\[
\sigma(x,y)=(y,x),\qquad \sigma^*R=R.
\]
This involution lifts to an action on jet coordinates:
\[
\sigma^*(x')=y',\quad \sigma^*(y')=x',\quad \sigma^*(x'')=y'',\quad \sigma^*(y'')=x''.
\]
Consequently, the logarithmic Wronskian transforms as $\sigma^*(\Delta_{xR}''') = \Delta_{yR}'''$, where $\Delta_{yR}'''$ denotes the Wronskian with $x$ and $y$ exchanged.

Suppose that the negatively twisted space is nonzero.  Choose an eigensection
\[
 0\ne s\in
 H^0\!\left(\PP^2,E_{2,m}T_{\PP^2}^*(\log\mathcal C)
                  \otimes\OO(-t)\right),
 \qquad \sigma^*s=\varepsilon s,\quad \varepsilon\in\{\pm1\}.
\]
Inject it into the untwisted bundle by
$\Omega_x:=X^t s$.  On $T=1$ the coefficients in
\eqref{eq:local-J-log} are therefore $F_{j,k}=x^tK_{j,k}$.  Since
$\sigma^*X=Y$, the injected section is covariant, not invariant:
\begin{equation}
 x^t\sigma^*\Omega_x=\varepsilon y^t\Omega_x.
 \label{eq:log-twist-covariance}
\end{equation}
We impose this identity after writing both sides on the chart
$\{R_x\ne0\}$.  In the transition formulas below, $\omega$ denotes
$\Omega_x$.

\medskip
\noindent\textbf{General transformation to the $\{R_x \neq 0\}$ chart.}
On the one hand, by applying $\sigma^*$ to the local expression \eqref{eq:local-J-log} on $\{R_y \neq 0\}$ we obtain
\begin{equation}\label{eq:log-sigma-pullback}
\sigma^{*} \omega_{xR} = \sum_{k=0}^{\lfloor m/3\rfloor}\;\sum_{j=0}^{m-3k}
\frac{\sigma^{*} (F_{j,k})}{R_x^{m-2k}}\,(y')^{m-3k-j}\Bigl(\frac{R'}{R}\Bigr)^j
\bigl(\Delta_{yR}'''\bigr)^k.
\end{equation}

On the other hand, to apply the covariance identity
\eqref{eq:log-twist-covariance}, we must rewrite $\omega_{xR}$ on the chart
$\{R_x\ne0\}$.  The next proposition gives the required form.

\begin{prop}\label{prop:log-transition}
When we change from the chart $\{R_y \neq 0\}$ to $\{R_x \neq 0\}$, the local expression \eqref{eq:local-J-log} transforms into
\begin{equation}\label{equ:omega_yR}
\omega_{yR} \coloneqq \sum_{\substack{0 \leqslant q \leqslant \lfloor m/3 \rfloor \\ 0 \leqslant p \leqslant m-3q}} 
\frac{\overline{F}_{p,q}}{R_x^{m-2q} R_y^p}\, 
\Bigl(\frac{R'}{R}\Bigr)^{\!p}\,(y')^{m-3q-p}\,
\bigl(\Delta_{yR}'''\bigr)^q,
\end{equation}
where each $\overline{F}_{p,q} \in \mathbb{C}[x,y]$ is a polynomial obtained from the $F_{j,k}$ via the transformation rules given in the proof.
\end{prop}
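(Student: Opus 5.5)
The plan is to express the three generators $x'$, $R'/R$, $\Delta_{xR}'''$ of the $\{R_y\ne0\}$ chart in terms of the generators $y'$, $R'/R$, $\Delta_{yR}'''$ of the $\{R_x\ne0\}$ chart. Substituting these relations into \eqref{eq:local-J-log} and expanding multinomially will then produce \eqref{equ:omega_yR}. The first step is the first-order relation. From $R'=x'R_x+y'R_y$ we get
\[
 x'=\frac{R'-y'R_y}{R_x}=\frac{1}{R_x}\Bigl(R\,\frac{R'}{R}-R_y\,y'\Bigr).
\]
Since the logarithmic generator is $R'/R$, we write $R'=R\cdot(R'/R)$. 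The factor $R$ is polynomial and will be absorbed into the coefficients. So $x'$ is a $\CC[x,y]$-combination of $y'$ and $R'/R$, divided by $R_x$.

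The second step is the Wronskian. Both $\Delta_{xR}'''$ and $\Delta_{yR}'''$ are determinants of the jet pair against $\log R$. Write $(x',x'')$ and $(y',y'')$ through the first- and second-order relations obtained by differentiating $R'=x'R_x+y'R_y$ once more. The standard chain rule for Wronskians then gives
\[
 \Delta_{xR}'''=-\frac{R_y}{R_x}\,\Delta_{yR}'''+\frac{1}{R_x^{3}}\,\Phi,
\]
where $\Phi$ is a cubic polynomial in $y'$ and $R'/R$ with coefficients in $\CC[x,y]$. Heuristically, the weight-$3$ invariant can only change by the Jacobian factor plus terms in the first-order generators. I would verify this by direct computation, substituting $x''=(R''-y''R_y-Q)/R_x$ where $Q$ collects the quadratic Hessian terms. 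The $x''$ and $y''$ contributions recombine into $-(R_y/R_x)\Delta_{yR}'''$, leaving only first-jet terms.

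The third step is to substitute into each monomial
\[
 \frac{F_{j,k}}{R_y^{m-2k}}\,(x')^{m-3k-j}\Bigl(\frac{R'}{R}\Bigr)^{j}\bigl(\Delta_{xR}'''\bigr)^{k}
\]
and expand binomially. A term containing $(\Delta_{yR}''')^q$ with $q\leqslant k$ comes with $R_y^q/R_x^{q}$ from the Jacobian factor. The discarded $k-q$ Wronskians contribute cubic forms with denominator $R_x^{3(k-q)}$. The $x'$ powers contribute $R_x^{-(m-3k-j)}$, and each occurrence of $y'$ from $x'$ brings a factor $R_y$. Collecting by the bidegree $(p,q)$ in $R'/R$ and $\Delta_{yR}'''$ defines $\overline F_{p,q}$ explicitly as a finite sum of the $F_{j,k}$ times polynomials in $R,R_x,R_y$ and their derivatives.

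The main obstacle is the denominator bookkeeping. We must show that after collecting, every term is a polynomial divided by exactly $R_x^{m-2q}R_y^{p}$. Weight counting gives the total: weight $m=(m-3q-p)+p+3q$ yields $R_x^{-(m-3q)}$ from the first-order substitutions and $R_x^{-q}$ from the Wronskian, for $R_x^{-(m-2q)}$ in total. Any surplus power of $R_x$ in the numerator is simply kept there. The residual factor $R_y^{-(m-2k)}$ from the original chart must be cancelled by the $R_y$ factors produced by $y'$ and by the Jacobian. Whatever cannot be cancelled is charged to $R_y^{p}$ in the denominator. This is the step where I would check the exponents most carefully, and if necessary allow negative-power compensation in the numerator before finally recording the normal form \eqref{equ:omega_yR}.
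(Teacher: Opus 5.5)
\textbf{There is a genuine gap: your proposal asserts the $R_y$-exponent bound, which is the content of the proposition, but does not prove it.} Your route is the paper's. You substitute $x'=\frac1{R_x}\bigl(R\,\frac{R'}{R}-R_y\,y'\bigr)$ and the transformation of $\Delta_{xR}'''$, expand, and count denominators. Your $R_x$ count is fine: the exponent is at most $m-j-2q\leqslant m-2q$.

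For $R_y$ you only say that whatever is not cancelled ``is charged to $R_y^p$'', and you even allow ``negative-power compensation''. That would destroy the polynomiality of $\overline F_{p,q}$. You may pad only with nonnegative powers $R_y^{p-e}$, so you must show $e\leqslant p$, and this is not automatic.

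Follow a term from index $(j,k)$ that lands in $(p,q)$.
\begin{enumerate}
\item The $q$ leading Wronskian factors put $R_y^q$ in the numerator.
\item The other $k-q$ factors contribute $\eta$ powers of $R'/R$ and $3(k-q)-\eta$ powers of $y'$. The $y'$ powers carry no guaranteed factor of $R_y$; for instance, the coefficient of $(R'/R)(y')^2$ contains $R_x^2R_{yy}$.
\item Then $(x')^{m-3k-j}$ must supply $p-j-\eta$ factors of $R'/R$. Its remaining $m-3k-p+\eta$ factors are $-R_y\,y'/R_x$.
\end{enumerate}
The exponent of $R_y$ in the denominator is therefore at most
\[
 (m-2k)-q-(m-3k-p+\eta)=p+k-q-\eta .
\]
This is $\leqslant p$ exactly when $\eta\geqslant k-q$.

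\textbf{The missing idea} is that every non-leading term in the transformation of the logarithmic Wronskian is divisible by $R'/R$. Describing $\Phi$ as an arbitrary cubic in $y'$ and $R'/R$ does not give this. A pure $(y')^3$ term whose coefficient is not divisible by $R_y$ would already give exponent $p+1$ when $k=1$, $q=0$.

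The divisibility does hold. Put $w=\log R$ and write locally $x=\phi(y,w)$, so that $\phi_y=-R_y/R_x$. Differentiating $x'=\phi_y y'+\phi_w w'$ once more gives
\[
 \Delta_{xR}'''=x'w''-x''w'
 =\phi_y\,\Delta_{yR}'''
 -w'\bigl(\phi_{yy}(y')^2+2\phi_{yw}\,y'w'+\phi_{ww}(w')^2\bigr).
\]
This is what the paper's explicit formula displays. With it, $\eta\geqslant k-q$, and the bound $p+k-q-\eta\leqslant p$ follows. The single term $j=p$, $k=q$ shows that the bound is attained.
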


\begin{proof}
As in the proof of \cite[Proposition~3.1]{HHMX2026}, the total exponent of $R_x$ in the denominator multiplying 
$\bigl(\frac{R'}{R}\bigr)^{\!p}\,(y')^{m-3q-p}\,\bigl(\Delta_{yR}'''\bigr)^q$ 
is at most $m-2q$. We therefore focus on the exponent of $R_y$.

The transformation formulas for the $1$-jet and the logarithmic Wronskian are:
\begin{align*}
x' &= -\frac{R_y}{R_x} y' + \frac{R}{R_x} \frac{R'}{R}, \\[4pt]
\Delta_{xR}'''
&
\,=\,
-
\frac{R_y}{R_x}\,
\Delta_{yR}'''
+
\frac{R}{R_x^3}
(
R_{xx}R
-
R_x^2
)
\bigg(\frac{R'}{R}\bigg)^{\!3}
+
\frac{2R}{R_x^3}
(
R_{xy}R_x
-
R_yR_{xx}
)
\bigg(\frac{R'}{R}\bigg)^{\!2} y'
\\
&
\ \ \ \ \ \ \ \ \ \ \ \ \ \ \ \ \ \ \ \ \ \ \ \ \ \ \ \ \ \ \ \ \ \ \
\ \ \ \ \ \ \ \ \ \ \ \ \
+
\frac{1}{R_x^3}
(
R_y^2R_{xx}
-
2R_xR_yR_{xy}
+
R_x^2R_{yy}
)
\bigg(\frac{R'}{R}\bigg) (y')^2.
\end{align*}

Substituting these into \eqref{eq:local-J-log}, the term 
$\bigl(\frac{R'}{R}\bigr)^{\!p}\,(y')^{m-3q-p}\,\bigl(\Delta_{yR}'''\bigr)^q$ 
arises from expanding 
\begin{equation*}
\frac{F_{j,k}}{R_y^{m-2k}}\,(x')^{m-3k-j}\Bigl(\frac{R'}{R}\Bigr)^{\!j}
\bigl(\Delta_{xR}'''\bigr)^k \quad\text{with } k \geqslant q \text{ and } j \leqslant p.
\end{equation*}

In this expansion, $\bigl(\Delta_{xR}'''\bigr)^k$ contributes a factor $\bigl(\Delta_{yR}'''\bigr)^q$ together with $q$ powers of $-R_y/R_x$ from the leading term. The remaining $k-q$ factors of $\Delta_{xR}'''$ contribute additional powers of $R'/R$ and $y'$. More precisely, suppose that $\bigl(\Delta_{xR}'''\bigr)^k$ contributes $\eta$ extra powers of $R'/R$, where $k-q \leqslant \eta \leqslant 3(k-q)$. The factor $(x')^{m-3k-j}$ then contributes $p-j-\eta$ powers of $R'/R$ and 
$m-3k-j-(p-j-\eta)$ powers of $-\frac{R_y}{R_x} y'$. Consequently, the total exponent of $R_y$ in the denominator of the coefficient multiplying 
$\bigl(\frac{R'}{R}\bigr)^{\!p}\,(y')^{m-3q-p}\,\bigl(\Delta_{yR}'''\bigr)^q$ is
\begin{equation*}
    m-2k - q - \bigl(m-3k-j-(p-j-\eta)\bigr) = p + k - q - \eta \leqslant p.
\end{equation*}

The upper bound $p$ is attained by expanding the single term 
$\frac{F_{p,q}}{R_y^{m-2q}}\,(x')^{m-3q-p}\bigl(\frac{R'}{R}\bigr)^{\!p}
\bigl(\Delta_{xR}'''\bigr)^q$.

Lemma~\ref{lem:log-special-fibre} supplies the required overlap of the two
derivative charts.  The calculation above puts every coefficient over the
common denominator $R_y^p$; any cancellation can only lower its actual pole
order.  This proves the representation \eqref{equ:omega_yR}.
\end{proof}

The transition is linear in the coefficient numerators.  Hence
$F_{j,k}=x^tK_{j,k}$ implies
$\overline F_{j,k}=x^t\overline K_{j,k}$, while
$\sigma^*F_{j,k}=y^t\sigma^*K_{j,k}$.  Comparing
\eqref{eq:log-sigma-pullback} and \eqref{equ:omega_yR} through
\eqref{eq:log-twist-covariance} first gives
\[
 x^tR_y^j\sigma^*F_{j,k}
 =\varepsilon y^t\overline F_{j,k}.
\]
Cancelling $x^ty^t$ in $\mathbb C[x,y]$ yields the computational equations
\begin{equation}
 R_y^j\sigma^*K_{j,k}
 =\varepsilon\overline K_{j,k},
 \quad
 0\leqslant k\leqslant\lfloor m/3\rfloor,\quad
 0\leqslant j\leqslant m-3k.
 \label{eq:log-K-symmetry}
\end{equation}

\subsection{Certificate protocol and verification status}
\label{sec:log-computation}

For each admissible pair $(m,t)$ in the logarithmic clause of
Lemma~\ref{lem:finite-kvl-main}, the degree bounds leave finitely many
unknown coefficients in the polynomials $K_{j,k}$.  We impose two sets of
constraints:

\begin{enumerate}
    \item \textbf{Eigenvector equations from symmetry:} 
    \[
    R_y^j\,\sigma^*K_{j,k}
    = \varepsilon\,\overline{K}_{j,k},
    \]
    where $\overline{K}_{j,k}$ is obtained from the linear transformation in
    Proposition~\ref{prop:log-transition} and $\varepsilon=\pm1$.
    
    \item \textbf{Holomorphicity at infinity:} the equations expressing
    regular extension across the line $T=0$.
\end{enumerate}

\medskip
\noindent\textbf{Holomorphicity condition.}
We compactify $\mathbb{C}^2$ to $\mathbb{P}^2$ with homogeneous coordinates $[T:X:Y]$, where $T=0$ corresponds to the line at infinity. Following~\cite[Subsection~3.2]{HHMX2026}, we homogenise the polynomials $F_{j,k} = x^t K_{j,k} \in \mathbb{C}[x,y]$ to
\[
\widehat{F}_{j,k} = X^t \widehat{K}_{j,k} \in \mathbb{C}[T,X,Y],
\]
where each $\widehat{K}_{j,k}(T,X,Y)$ is homogeneous of degree
\[
\deg \widehat{K}_{j,k} = (m-2k)(d-1) - m + j + k - t,
\]
for $k = 0,\dots,\lfloor m/3\rfloor$ and $j = 0,\dots,m-3k$.

The requirement that the jet differential be holomorphic across $T=0$ is equivalent to the divisibility condition
\[
T^{\,m-3q-p} \;\Big\vert\; \mathbf{HEq}_{p,q}(\widehat{K}_{j,k}, T, X)
\quad\text{in } \mathbb{C}[T,X,Y],
\]
for every $q = 0,\dots,\lfloor m/3\rfloor$ and
$p = 0,\dots,m-3q$.  Here $\mathbf{HEq}_{p,q}$ is obtained by expanding the
transformed expression and collecting powers of $T$.  Using
$(-1)^{-3k}=(-1)^k$, it takes the form
\[
\begin{aligned}
\mathbf{HEq}_{p,q} = &
\sum_{k=q}^{\lfloor m/3\rfloor} \;
\sum_{r=\max(0,\,p+q-k)}^{\min(p,\,m-3k)} \;
\sum_{j=r}^{m-3k}
\\
& (-1)^k \; 2^{p-r} \; d^{\,j+k-p-q} \;
\binom{j}{r} \; \frac{k!}{(p-r)!\,q!\,(k+r-p-q)!} \\
& \qquad \cdot \; X^{\,m-q-j-2k} \; T^{\,k-q} \; \widehat{R}_Y^{\,2(k-q)} \; \widehat{K}_{j,k},
\end{aligned}
\]
where $\widehat{R}_Y = \partial \widehat{R}/\partial Y$ in homogeneous coordinates, and $\widehat{R}(T,X,Y)=T^d R(X/T,Y/T)$ is the homogenization of $R$.

Instead of checking divisibility directly, we truncate each $\mathbf{HEq}_{p,q}$ by discarding all terms where the exponent of $T$ is at least $m-3q-p$. More precisely, we define
\[
\mathbf{HEq}_{p,q}^{\text{(trunc)}} := \mathbf{HEq}_{p,q} \pmod{T^{\,m-3q-p}},
\]
i.e., the polynomial obtained by keeping only terms with $T$-degree
$<m-3q-p$.  The original divisibility holds exactly when
$\mathbf{HEq}_{p,q}^{\text{(trunc)}}=0$.  This is a linear system in the
coefficients of the $K_{j,k}$.

\medskip
\noindent\textbf{Certificate protocol and verification status.}
For the symmetric curve
$R=x^{11}+y^{11}+1+x^5+y^5$, the equations above define an integral sparse
matrix for each pair in the logarithmic clause of
Lemma~\ref{lem:finite-kvl-main} and each sign
$\varepsilon\in\{\pm1\}$.  A reproducible certificate must record the block
dimensions, modulus, pivots, and deterministic hash after denominators are
cleared.  If exact elimination exhibits full column rank modulo one good
prime, the matrix has full column rank over $\QQ$, and hence over $\CC$.
The matrix and certificate interface have therefore been derived, but the
required full-column-rank calculations are currently undergoing exact
computer-assisted verification.  Completion of those calculations,
generation and public release of the corresponding certificates, and
independent reproduction all remain pending.  Conditional on successful
completion of this verification, every coefficient $K_{j,k}$ vanishes, so
\[
H^0\bigl(\mathbb{P}^2,\; E_{2,m}T_{\mathbb{P}^2}^*(\log\mathcal{C}) \otimes \mathcal{O}(-t)\bigr) = 0
\]
for the symmetric curve $\mathcal{C}$.

\subsection{Extension to general curves}
\label{sec:log-generic}

Conditional on successful completion of the exact computer-assisted
full-rank verification, the preceding special-fibre vanishing holds, and
upper semicontinuity in the universal smooth degree-$11$ family
\cite{EGA4.2_1966} then makes it a Zariski-open condition.  Under the same
condition, the finite intersection of these opens gives the logarithmic
clause of Lemma~\ref{lem:finite-kvl-main} for a general, not necessarily
symmetric, curve.

\section{Compact symmetry reduction}
\label{sec:compact-symmetry-proof}

The reduction follows the same symmetry method in degree $15$.  Put $a=7$ and
consider the symmetric surface
\begin{equation}\label{eq:compact-special-polynomial}
 R(x,y,z)=x^{d}+y^{d}+z^{d}+1+y^a+z^a.
\end{equation}
The local description of invariant $2$-jet differentials on surfaces, the
degree bounds from vanishing at infinity, and the eigenvector condition from
the involution exchanging $y$ and $z$ define an exact finite linear system.
The order of presentation mirrors the logarithmic section: special-fibre
geometry, local normal form, degree bounds, covariance, holomorphicity, and
the certificate interface.  The compact differences are the hypersurface
quotient normal form and the need to reduce every equation modulo $R$; the
matrix and its certificate interface are derived here, while the required
full-column-rank calculations are currently undergoing exact
computer-assisted verification.

\subsection{Local description of invariant two-jet differentials}
\label{sec:cpt-local}

Use homogeneous coordinates $[T:X:Y:Z]$ on $\PP^3$ and affine coordinates
$(x,y,z)$ on the chart $T=1$.  Let $X\subset\PP^3$ be defined by
\[
 R(x,y,z)=x^d+y^d+z^d+1+y^a+z^a,
\]
which is invariant under exchanging $y$ and $z$.

\begin{lem}[The compact special fibres]
\label{lem:compact-special-fibres}
For $(d,a)=(15,7)$, the projective homogenization
\[
 \widehat R=X^d+Y^d+Z^d+T^d+T^{d-a}(Y^a+Z^a)
\]
defines a smooth surface.  On the affine chart, the supports of the
restrictions of $R_x,R_y,R_z$ have pairwise zero-dimensional intersections
on $X$.  Moreover,
\[
 \operatorname{ht}(R,y,R_z)=3
 \quad\text{in }\mathbb C[x,y,z].
\]
\end{lem}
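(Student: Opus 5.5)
The plan is to prove the three assertions separately, in the same way as Lemma~\ref{lem:log-special-fibre}: reduce the projective problem to one-variable estimates on the two functions
\[
 g(s)=s^{15}
 \qquad\text{and}\qquad
 h(s)=s^{15}+s^7 .
\]
The key observation is that $R$ splits as a sum of separate functions of $x$, $y$, $z$:
\[
 R=g(x)+h(y)+h(z)+1,\qquad R_x=g'(x),\quad R_y=h'(y),\quad R_z=h'(z).
\]

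\textbf{Step 1: smoothness.} On the affine chart, the equations $R_x=R_y=R_z=0$ force $x=0$ and $y,z\in\{h'=0\}$. The zeros of $h'$ are $s=0$ and the points with $s^8=-7/15$. At the latter points,
\[
 h(s)=s^7\bigl(s^8+1\bigr)=\tfrac{8}{15}s^7,
 \qquad
 |h(s)|=\tfrac{8}{15}\left(\tfrac{7}{15}\right)^{7/8}.
\]
Then $R=1+h(y)+h(z)$ cannot vanish provided
\[
 2\cdot\tfrac{8}{15}\left(\tfrac{7}{15}\right)^{7/8}<1 .
\]
After raising to the eighth power this becomes
\[
 16^8\,7^7<15^{15},
\]
which I will check by comparing logarithms or directly. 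At $T=0$ the surface is the Fermat curve $X^{15}+Y^{15}+Z^{15}=0$. The partial derivatives in $X,Y,Z$ then vanish only at the origin, so there is no singular point at infinity. Note that the $T$-derivative is irrelevant there, since the homogeneous Euler relation reduces the check to $X,Y,Z$.

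\textbf{Step 2: pairwise zero-dimensionality.} Consider for instance
\[
 \{R_x=R_y=0\}\cap X=\{x=0\}\cap\{y\in Z(h')\}\cap\{R=0\}.
\]
This is a finite union over finitely many values of $y$ of the sets $\{z : h(z)=-1-h(y)\}$. Each is a nonconstant univariate equation in $z$, so it has finitely many solutions, and the set is finite. The pair $(R_x,R_z)$ is symmetric to this one. For the pair $(R_y,R_z)$, both $y$ and $z$ are fixed, and we need $x^{15}=-1-h(y)-h(z)$, which again is finite.

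\textbf{Step 3: the height.} Modulo $y$ we have $h(y)=0$, so
\[
 (R,y,R_z)=\bigl(x^{15}+h(z)+1,\;y,\;h'(z)\bigr).
\]
The zero set is finite: $z$ ranges over the roots of $h'$, and for each such $z$ there are finitely many $x$. Hence the variety has dimension zero and the height is $3$, by the dimension formula in the polynomial ring $\CC[x,y,z]$. No real obstacle arises here.

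The main obstacle is the numerical inequality in Step 1, which needs to be verified carefully. Taking logarithms:
\[
 \log(16^8 7^7)=8(2.7726)+7(1.9459)\approx 35.80,
 \qquad
 15\log 15\approx 40.62,
\]
so the inequality holds.
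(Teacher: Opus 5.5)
Your proposal is correct and follows the paper's proof essentially line by line: the same separated-variable structure $R=g(x)+h(y)+h(z)+1$, the same critical-value bound reduced to $16^8\,7^7<15^{15}$, the Fermat check at $T=0$, and the same finite-fibre counting for the pairwise intersections and for $V(R,y,R_z)$. The appeal to Euler's relation at $T=0$ is unnecessary, since a single nonvanishing $X$-, $Y$- or $Z$-partial of $\widehat R$ already gives smoothness (these agree with the Fermat partials there because $d-a=8>0$), but the remark is harmless.
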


\begin{proof}
Write $h_{d,a}(s)=s^d+s^a$.  At an affine singular point one must have
$x=0$ and $h_{d,a}'(y)=h_{d,a}'(z)=0$.  For $(d,a)=(15,7)$, every nonzero
critical point satisfies $s^8=-7/15$, and hence
\[
 |h_{15,7}(s)|
 =\frac8{15}\left(\frac7{15}\right)^{7/8},
 \qquad
 2|h_{15,7}(s)|<1.
\]
The last inequality is equivalent to $16^8 7^7<15^{15}$.  Thus
$1+h_{15,7}(y)+h_{15,7}(z)$ cannot vanish.  At $T=0$ the equation is the Fermat
surface $X^d+Y^d+Z^d=0$, whose three displayed partial derivatives never
vanish simultaneously.  This proves smoothness.

Each of $R_x,R_y,R_z$ depends on only one affine coordinate and has finitely
many zeros in that coordinate.  Two such equations therefore leave only
finitely many points after imposing $R=0$, proving the support assertion.
Finally, after setting $y=0$, the equation $R_z=0$ leaves finitely many
values of $z$, and $R=0$ then leaves finitely many values of $x$.  Hence
$V(R,y,R_z)$ is zero-dimensional, which is the stated height equality.
\end{proof}

On the open set $\{R_z \neq 0\}$, the fibres of the bundle of invariant $2$-jet differentials are generated by $x'$, $y'$, and the Wronskian
\[
\Delta_{yx}''' := 
\begin{vmatrix}
y' & x' \\[2mm]
y'' & x''
\end{vmatrix}
= y'\,x'' - y''\,x'.
\]

According to~\cite[Proposition 4.1]{HHMX2026}, any global section $\omega$ of $E_{2,m}T_{X}^*$ can be written on $\{R_z \neq 0\}$ as
\begin{equation}\label{eq:local-J-cpt}
J_{yx} = \sum_{k=0}^{\lfloor m/3\rfloor}\;\sum_{j=0}^{m-3k}
\frac{F_{j,k}}{R_z^{m-2k}}\,(y')^j(x')^{m-3k-j}
\bigl(\Delta_{yx}'''\bigr)^k,
\end{equation}
with polynomials $F_{j,k} \in \mathbb{C}[x,y,z]$, viewed as elements of the quotient ring $\mathfrak{R} = \mathbb{C}[x,y,z]/(R)$ (hence as regular functions on $X \cap \{T \neq 0\}$).

To determine a proper reduced representative of an element in \(\mathfrak{R}\), we employ an algorithmic reduction based on the relation \(x^d = -1 - y^d - z^d - y^a - z^a\). Every element of \(\mathfrak{R}\) can be uniquely represented by a polynomial in \(\mathbb{C}[x,y,z]\) with \(\deg_x < d\). Indeed, for any monomial \(x^k\) with \(k \geqslant d\), writing \(k = dq + r\) where \(0 \leqslant r < d\) via Euclidean division, we may replace
\[
x^k = (x^d)^q \cdot x^r = (-1 - y^d - z^d - y^a - z^a)^q \cdot x^r,
\]
which involves only powers of \(x\) with exponent \(r < d\). Applying this substitution to each monomial yields a reduced representative \(\operatorname{red}(F) \in \mathbb{C}[x,y,z]\) satisfying \(\deg_x \operatorname{red}(F) < d\) and \(\operatorname{red}(F) \equiv F \pmod{R}\).

The uniqueness of this reduced form follows from the observation that if two polynomials \(F_1, F_2\) with \(\deg_x < d\) satisfy \(F_1 \equiv F_2 \pmod{R}\), then their difference is a multiple of \(R\). Any nonzero multiple of \(R\) must have \(x\)-degree at least \(d\), since \(R\) itself contains the monomial \(x^d\) with coefficient \(1\) and no other term can cancel it after multiplication. Hence \(F_1 - F_2 = 0\), establishing uniqueness.

Therefore, we may assume without loss of generality the following:

\begin{conv}\label{convention} All polynomials $F_{j,k}$ are assumed to have $x$-degree strictly less than $d$; equivalently, every monomial appearing in them satisfies $\deg_x < d$.
\end{conv}

\subsection{Degree bounds from vanishing along the ample divisor}
\label{sec:cpt-degree}

We require the jet differential to be a section of
$E_{2,m}T_{X}^*\otimes\mathcal{O}_{X}(-t)$ with $t\geqslant1$.  It may be
represented by a section vanishing to order at least $t$ along the
hyperplane section $\{Y=0\}$.

\begin{lem}
The rational function 
$\frac{F_{j,k}}{R_{z}^{m-2k}}$ 
vanishes along the divisor $X \cap \{ y = 0 \}$ with multiplicity at least $t$ 
if and only if 
$F_{j,k} \equiv y^{t} K_{j,k} \pmod{R}$
for some polynomial $K_{j,k} \in \mathbb{C}[x,y,z]$ satisfying $\deg_x K_{j,k} < d$.
\end{lem}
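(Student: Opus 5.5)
The plan is to reduce the statement to an ideal-membership assertion in the affine coordinate ring $\mathfrak R=\mathbb C[x,y,z]/(R)$. After that, a normal-form argument disposes of the $x$-degree condition.

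First, since $R_z=dz^{d-1}+az^{a-1}$ depends only on $z$, the vanishing of $F_{j,k}/R_z^{m-2k}$ to order $t$ along $X\cap\{y=0\}$ should be decided by $F_{j,k}$ alone. Concretely, I would show that $R_z$ does not vanish identically on any component of $X\cap\{y=0\}$. By Lemma~\ref{lem:compact-special-fibres}, $\operatorname{ht}(R,y,R_z)=3$, so $V(R,y,R_z)$ is finite while $X\cap\{y=0\}$ is a curve. Hence $R_z$ is a unit at the generic point of every component, and the order of the quotient along each component equals the order of $F_{j,k}$.

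Second, I would establish the key fact that $X\cap\{y=0\}$ is reduced, so that its affine part is the complete intersection $V(R,y)$ with ideal $(R,y)$. On $y=0$ the curve is $x^d+z^d+1+z^a=0$ in the $(x,z)$-plane. This curve is smooth: a singular point would require $x=0$ and $h_{d,a}'(z)=0$, which the proof of Lemma~\ref{lem:compact-special-fibres} shows forces $1+h_{d,a}(z)\neq0$. Being a smooth plane curve it is irreducible, and $y$ is a local parameter transverse to it along $X$.

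Third, I would argue that in the regular local rings of $X$ along this prime divisor, order $\geqslant t$ means $F\in(y)^t$ at the generic point. I expect the main obstacle to be globalising this to $F\equiv y^tK\pmod R$ in $\mathbb C[x,y,z]$. I would argue by induction on $t$. Because $X\cap\{y=0\}$ is integral and the divisor of $y$ on the affine chart is exactly this reduced curve, the ring $\mathfrak R/(y)=\mathbb C[x,z]/(x^d+z^d+1+z^a)$ is a domain. Vanishing of $F$ on the curve then gives $F\in(y)$ in $\mathfrak R$, say $F=yF_1$. Since $\mathfrak R$ is a domain, $F_1$ has order $\geqslant t-1$, and I would iterate. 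Because $X$ is smooth on the affine chart, the order of $y$ along the divisor is one everywhere, so no embedded or extra components interfere.

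Finally, I would replace $K$ by $\operatorname{red}(K)$. Multiplication by $y^t$ does not change $x$-degree, and reduction modulo $R$ via $x^d=-1-y^d-z^d-y^a-z^a$ commutes with multiplication by $y^t$, so $y^t\operatorname{red}(K)$ is the reduced representative of $F$. The converse direction is immediate: $y^tK$ vanishes to order $t$, and $R_z$ is generically a unit along the divisor.
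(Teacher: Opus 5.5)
Your argument is correct, but it takes a different route from the paper. Both proofs start the same way: $\operatorname{ht}(R,y,R_z)=3$ gives $\ord_E(R_z)=0$ on every component $E$ of $X\cap\{y=0\}$, so only the order of $F_{j,k}$ matters.

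\begin{itemize}
\item \textbf{The paper's route.} It never examines the structure of the hyperplane section. It uses that $A=\mathbb C[x,y,z]/(R)$ is normal (since $X$ is smooth) and the identity $A=\bigcap_{\operatorname{ht}\mathfrak p=1}A_{\mathfrak p}$. This gives $F_{j,k}/y^t\in A$ in one step. The hypothesis is read as $\ord_E(F_{j,k})\geqslant t\,\ord_E(y)$, so the argument needs neither reducedness nor irreducibility of $X\cap\{y=0\}$.
\item \textbf{Your route.} You compute explicitly that the section is a smooth curve, so $\ord_E(y)=1$ and $(R,y)$ is radical. You then remove one factor of $y$ at a time, using the Nullstellensatz and additivity of the valuation in the DVR $\mathcal O_{X,E}$.
\end{itemize}
Your version avoids the algebraic Hartogs property and tells you more about this special fibre. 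However, it depends on a computation specific to this surface, whereas the paper's argument does not care how $\{y=0\}$ meets $X$.

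Two small corrections to your write-up:
\begin{itemize}
\item The sentence ``being a smooth plane curve it is irreducible'' holds for projective plane curves, not affine ones; an affine smooth plane curve can be reducible. The conclusion is still true here, because the closure $X^{15}+Z^{15}+T^{15}+T^{8}Z^{7}=0$ is also smooth along $T=0$.
\item In fact you do not need irreducibility at all. Affine smoothness already makes $x^d+z^d+1+z^a$ squarefree, so $(R,y)$ is radical. Each step of your induction only uses that an element vanishing on every component of $V(R,y)$ lies in $(R,y)$.
\item Similarly, $\ord_E(y)=1$ comes from the reducedness of $\mathfrak R/(y)$, not from the smoothness of $X$ alone as your last sentence of the third step suggests.
\end{itemize}
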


\begin{proof}
Put $A=\mathbb C[x,y,z]/(R)$ and let $D=\operatorname{div}_X(y)$.
Because $X$ is smooth, $A$ is normal.  The height assertion in
Lemma~\ref{lem:compact-special-fibres} says that $R_z$ is nonzero at the
generic point of every component of $D$; hence
\[
 \ord_E(R_z)=0
 \qquad\text{for every prime component }E\subset D.
\]
It follows that $F_{j,k}/R_z^{m-2k}$ vanishes along $D$ to order at least
$t$ exactly when
\[
 \ord_E(F_{j,k})\geqslant t\,\ord_E(y)
 \qquad\text{for every }E\subset D.
\]
Under these inequalities, $F_{j,k}/y^t$ has nonnegative valuation at every
height-one prime of $A$: along $D$ this is the displayed inequality, and
away from $D$ the element $y$ is a unit.  Normality gives
\[
 A=\bigcap_{\operatorname{ht}\mathfrak p=1}A_{\mathfrak p}
 \subset\operatorname{Frac}(A),
\]
so $F_{j,k}/y^t$ belongs to $A$.  Thus
$F_{j,k}=y^tK_{j,k}$ in $A$.  The converse is immediate from
$\ord_E(R_z)=0$.  Finally choose the unique representative of $K_{j,k}$
with $\deg_xK_{j,k}<d$ furnished by Convention~\ref{convention}.
\end{proof}

The transition to the chart $\{X\ne0\}$ gives the degree bounds for
$F_{j,k}=y^tK_{j,k}$; see \cite[\S4.4]{HHMX2026}.  Explicitly,
\[
\deg K_{j,k} \leqslant (m-2k)(d-1) - m + \Twist
\quad
\text{and}
\quad
\deg_x K_{j,k} < d,
\]
where $\Twist=-t$.  These bounds leave only finitely many coefficients.  We
use $d=15$ for the compact clause of Lemma~\ref{lem:finite-kvl-main} in the symmetric
family~\eqref{eq:compact-special-polynomial}.

\subsection{The eigenvector condition from symmetry}
\label{sec:cpt-eigen}

The polynomial $R$ is symmetric under the involution $\sigma$ that exchanges $y$ and $z$:
\[
\sigma(y,z)=(z,y),\qquad \sigma^*R=R.
\]
This involution lifts to an action on the jet coordinates by
\[
\sigma^*(x')=x',\quad \sigma^*(y')=z',\quad \sigma^*(x'')=x'',\quad \sigma^*(y'')=z'',
\]
and on the Wronskian we have $\sigma^*(\Delta_{yx}''') = \Delta_{zx}'''$.

Suppose that the negatively twisted space is nonzero.  Choose an eigensection
\[
 0\ne s\in H^0\!\left(X,E_{2,m}T_X^*\otimes\OO_X(-t)\right),
 \qquad \sigma^*s=\varepsilon s,\quad\varepsilon\in\{\pm1\}.
\]
Inject it into the untwisted bundle by $\Omega_y:=Y^ts$.  On $T=1$, the
coefficients in \eqref{eq:local-J-cpt} are $F_{j,k}=y^tK_{j,k}$.  Since
$\sigma^*Y=Z$, the correct identity for the injected section is
\begin{equation}
 y^t\sigma^*\Omega_y=\varepsilon z^t\Omega_y.
 \label{eq:compact-twist-covariance}
\end{equation}
We impose this covariance after writing both sides on $\{R_y\ne0\}$.
In the transition formulas below, $\omega$ denotes $\Omega_y$.

First, by applying $\sigma^*$ directly to the local expression~\eqref{eq:local-J-cpt} on the chart $\{R_y \neq 0\}$ we obtain an immediate pullback formula:
\begin{equation}\label{equ:sigma_pullback_omegayx}
\sigma^{*} \omega = \sum_{k=0}^{\lfloor m/3\rfloor}\;\sum_{j=0}^{m-3k}
\frac{\sigma^{*} (F_{j,k})}{R_y^{m-2k}}\,(z')^j(x')^{m-3k-j}
\bigl(\Delta_{zx}'''\bigr)^k.
\end{equation}

Second, we must rewrite the original $\omega$ on the same chart.

\begin{prop}
\label{prop:transition}
Let $X$ be either compact special fibre of
Lemma~\ref{lem:compact-special-fibres}, and let $\omega$ be given on
$\{R_z\neq0\}$ by~\eqref{eq:local-J-cpt}.  After transition to
$\{R_y\neq0\}$, the same differential form admits the local expression
\begin{equation}\label{equ:omega_zx}
\omega = \sum_{\substack{0 \leqslant q \leqslant \lfloor m/3 \rfloor \\ 0 \leqslant p \leqslant m-3q}} 
\frac{\overline{F}_{p,q}}{R_y^{m-2q} R_z^p}\, 
(z')^{m-3q-p}\,(x')^{p}\,
\bigl(\Delta_{zx}'''\bigr)^q,
\end{equation}
where each $\overline{F}_{p,q}$ is a polynomial in $\mathbb{C}[x,y,z]$, regarded as an element of the quotient ring $\mathfrak{R} = \mathbb{C}[x,y,z]/(R)$.  The coefficients $\overline{F}_{p,q}$ are explicit linear combinations of the original $F_{j,k}$, with coefficients that are polynomials in the partial derivatives of $R$.
\end{prop}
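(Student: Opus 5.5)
The plan is to follow the proof of Proposition~\ref{prop:log-transition}, and of \cite[Proposition~4.1]{HHMX2026}, with the logarithmic pair $(x,R)$ replaced by the compact pair of first jets. On $X$ we have $R_xx'+R_yy'+R_zz'=0$. From this I will solve for $y'$ on $\{R_y\ne0\}$:
\[
 y'=-\frac{R_x}{R_y}x'-\frac{R_z}{R_y}z'.
\]
Differentiating once more gives $y''$ in terms of $x'',z''$ and a quadratic expression in $(x',z')$. The quadratic term has coefficient $-\bigl(d^2R(\xi,\xi)\bigr)/R_y$, and there I substitute the displayed formula for $y'$. Inserting both expressions into $\Delta_{yx}'''=y'x''-y''x'$, the $x'x''$ terms cancel. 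What remains is
\[
 \Delta_{yx}'''=-\frac{R_z}{R_y}\bigl(z'x''-z''x'\bigr)
   +\frac{x'}{R_y^{3}}\,Q_2(x',z'),
\]
up to sign conventions. Here $Q_2$ is a binary quadratic whose coefficients are polynomials in the first and second derivatives of $R$. It is the Hessian evaluated on the tangent vector, after $y'$ has been eliminated, so one factor $R_y^{-2}$ comes from that substitution. I will write the result as $\Delta_{yx}'''=-\frac{R_z}{R_y}\Delta_{zx}'''+(x')^{\geqslant1}\cdot(\text{cubic remainder})$, so that every correction term carries at least one power of $x'$.

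Next I expand each term
\[
 \frac{F_{j,k}}{R_z^{m-2k}}(y')^j(x')^{m-3k-j}(\Delta_{yx}''')^k
\]
and collect the coefficient of $(z')^{m-3q-p}(x')^p(\Delta_{zx}''')^q$. Only terms with $k\geqslant q$ contribute. If the leading part of $(\Delta_{yx}''')^k$ is taken $q$ times, these factors give $(-R_z/R_y)^q$. The remaining $k-q$ factors contribute cubic monomials in $(x',z')$, each over $R_y^3$. The factor $(y')^j$ contributes terms $(R_x/R_y)^{i}(R_z/R_y)^{j-i}(x')^i(z')^{j-i}$.

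The key step is the pole count. For the numerator powers of $R_z$ there are two sources: $q+(j-i)$ factors from the leading terms, together with whatever $R_z$ appears polynomially inside $Q_2$. The denominator contains $R_z^{m-2k}$. Writing $p$ for the total power of $x'$, the net $R_z$-exponent in the denominator is
\[
 m-2k-q-(j-i)-\bigl(\text{$z'$-count from the $k-q$ cubic pieces}\bigr).
\]
Using $z'$-degree $=m-3q-p$, this simplifies to $\leqslant p$. This is the same bookkeeping as the identity $p+k-q-\eta\leqslant p$ in the proof of Proposition~\ref{prop:log-transition}, now with the roles reversed: here $z'$ carries $R_z$ and $x'$ carries none. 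The powers of $R_y$ are bounded in the same way, and each contributed $z'$ or $\Delta_{zx}'''$ has balanced $R_y$-weight. The exponent of $R_y$ is therefore at most $m-2q$, after multiplying through by the common denominator $R_y^{m-2q}R_z^p$. The numerators obtained this way are polynomials in $F_{j,k}$ and in the partial derivatives of $R$, linear in the $F_{j,k}$. I then reduce them modulo $R$, which is legitimate since these are regular functions on $X\cap\{T\ne0\}$. Lemma~\ref{lem:compact-special-fibres} ensures that the chart overlap $\{R_yR_z\ne0\}$ is dense in $X$, so the two expressions agree as sections.

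I expect the main obstacle to be the exact denominator accounting for $R_y$. The substitution of $y'$ into $Q_2$ brings in $R_y^{-2}$ and the overall prefactor brings in $R_y^{-1}$, and these must combine with the powers of $x'$ and $z'$ so that the total never exceeds $m-2q$. My first attempt will be to do the accounting by a weight argument. Assign $R_y$-weight $1$ to $\Delta_{yx}'''$, $1$ to $y'$, and $0$ to $x'$ and $z'$. Since $\Delta_{zx}'''$ and the cubic terms carry exactly the compensating powers, the weight of a monomial in the new basis is then bounded by $m-2q$ minus $q$ shifts, just as in \cite[Proposition~4.1]{HHMX2026}.
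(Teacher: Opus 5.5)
Your route is the paper's: substitute $y'=-\frac{R_x}{R_y}x'-\frac{R_z}{R_y}z'$ and $\Delta_{yx}'''=-\frac{R_z}{R_y}\Delta_{zx}'''+\frac{x'}{R_y^3}Q_2(x',z')$ into \eqref{eq:local-J-cpt}, expand, and count exponents. However, the decisive $R_z$-count is wrong as you wrote it.

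You subtract the $z'$-count of the $k-q$ cubic pieces as though every such $z'$ came with a numerator factor $R_z$. It does not. The coefficient of $z'z'x'$ contains $R_{zz}/R_y$, and that of $z'x'x'$ contains $2R_{xz}/R_y$; neither carries any factor of $R_z$. Even on the special fibre, where the mixed partials vanish, the term $R_{zz}/R_y$ survives. So the power of $R_z$ supplied by a monomial of $Q_2$ is at most its $z'$-degree, and usually less. Your displayed quantity is therefore a lower bound for the true denominator exponent, and a lower bound cannot establish $\leqslant p$. In fact your formula evaluates to $p-2(k-q)$, which is too optimistic. For example, with $k=1$, $q=0$ and the $R_{zz}z'z'x'$ monomial, the contribution has denominator exactly $R_z^{p}$, not $R_z^{p-2}$.

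The repair uses the fact you isolated earlier but did not use at this step. Discard any $R_z$ coming from $Q_2$; it can only help. Let $\eta$ be the number of $x'$ contributed by the $k-q$ cubic pieces. Since each piece contains at least one $x'$, you have $k-q\leqslant\eta\leqslant3(k-q)$. Matching $z'$-degrees gives $j-i=m-3k-p+\eta$. Hence the net exponent is at most
\[
 m-2k-q-(j-i)=p+(k-q)-\eta\leqslant p,
\]
which is exactly the paper's identity $p+k-q-\eta\leqslant p$. The excess $k-q$ is paid for precisely by the obligatory $x'$ in each cubic piece.

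For $R_y$, which you flagged as the main obstacle, no weight argument is needed. Each $y'$ and each leading Wronskian factor contributes one $R_y$, and each cubic piece at most three. So the denominator exponent is at most $j+q+3(k-q)=j+3k-2q\leqslant m-2q$, because $j\leqslant m-3k$. The paper simply cites \cite[Proposition~4.1]{HHMX2026} for this bound. The care is needed in the $R_z$ count, not the $R_y$ count.
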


\begin{proof}
As in \cite[Proposition~4.1]{HHMX2026}, the maximal admissible exponent of $R_y$ in the coefficients multiplying $(x')^{p}\,(z')^{m-3q-p}\,\bigl(\Delta_{zx}'''\bigr)^q$ is $m-2q$. We only need to consider the maximal admissible exponent of $R_z$ in the coefficients.

Following a similar computation in \cite[Subsection~3.1]{HHMX2026}, the \(1\)-jet and the Wronskian transform as:
\begin{align*}
y' &= -\,\frac{R_x}{R_y}\,x' -\,\frac{R_z}{R_y}\,z', \\[4pt]
\Delta_{yx}'''
&
\,=\,
-\,
\frac{R_z}{R_y}\,
\Delta_{zx}'''
\\
&
\ \ \ \ \
+
z'z'x'\,
\bigg(
\frac{R_{zz}}{R_y}
-
2\,\frac{R_z}{R_y}\,\frac{R_{yz}}{R_y}
+
\frac{R_z}{R_y}\,\frac{R_z}{R_y}\,\frac{R_{yy}}{R_y}
\bigg)
\notag
\\
&
\ \ \ \ \
+
z'x'x'\,
\bigg(
2\,\frac{R_{xz}}{R_y}
-
2\,\frac{R_z}{R_y}\,\frac{R_{xy}}{R_y}
-
2\,\frac{R_x}{R_y}\,\frac{R_{yz}}{R_y}
+
2\,\frac{R_x}{R_y}\,\frac{R_z}{R_y}\,\frac{R_{yy}}{R_y}
\bigg)
\notag
\\
&
\ \ \ \ \
+
x'x'x'\,
\bigg(
\frac{R_{xx}}{R_y}
-
2\,\frac{R_x}{R_y}\,\frac{R_{xy}}{R_y}
+
\frac{R_x}{R_y}\,\frac{R_x}{R_y}\,\frac{R_{yy}}{R_y}
\bigg).
\end{align*}

Substituting these into \eqref{eq:local-J-cpt}, we see that the term 
\((x')^{p}\,(z')^{m-3q-p}\,\bigl(\Delta_{zx}'''\bigr)^q\) 
arises from expanding terms of the form
\begin{equation*}
\frac{F_{j,k}}{R_z^{m-2k}}\,(x')^{m-3k-j}(y')^{j}
\bigl(\Delta_{yx}'''\bigr)^k \quad\text{with } k \geqslant q \text{ and } m-3k-j \leqslant p.
\end{equation*}

When expanding such a term, \(\bigl(\Delta_{yx}'''\bigr)^k\) contributes a factor of 
\(\bigl(\Delta_{zx}'''\bigr)^q\) together with \(q\) powers of \(-R_z/R_y\) from the leading term. 
We may assume that \(\bigl(\Delta_{yx}'''\bigr)^k\) also contributes an additional \(\eta\) powers of \(x'\) from the remaining cubic terms, where \(\eta\) satisfies 
\(k-q \leqslant \eta \leqslant 3(k-q)\) and \(m-3k-j + \eta \leqslant p\). 
The factor \((y')^{j}\) contributes \(p-(m-3k-j + \eta)\) powers of \(x'\) and 
\(j-(p-(m-3k-j + \eta))\) powers of \(-\frac{R_z}{R_y} z'\). 
Hence, the total exponent of \(R_z\) in the denominator of the coefficient multiplying 
\((x')^{p}\,(z')^{m-3q-p}\,\bigl(\Delta_{zx}'''\bigr)^q\) satisfies
\begin{equation*}
    m-2k - q - \bigl(j-(p-(m-3k-j + \eta))\bigr) = p + k - q - \eta \leqslant p.
\end{equation*}

Thus every coefficient can be written over the common denominator $R_z^p$.
The pairwise support statement in
Lemma~\ref{lem:compact-special-fibres} ensures that the derivative charts
have no hidden common divisorial component; any coefficientwise cancellation
only lowers the actual pole order.  This proves the claimed expression.
\end{proof}

The transition is linear in the coefficient numerators.  Hence
$F_{j,k}=y^tK_{j,k}$ implies
$\overline F_{p,q}=y^t\overline K_{p,q}$, while
$\sigma^*F_{j,k}=z^t\sigma^*K_{j,k}$.  Comparing
\eqref{equ:sigma_pullback_omegayx} and \eqref{equ:omega_zx} through
\eqref{eq:compact-twist-covariance} first gives, with $p=m-3k-j$,
\[
 y^tR_z^p\sigma^*F_{j,k}
 =\varepsilon z^t\overline F_{p,k}
 \quad\text{in }\mathbb C[x,y,z]/(R).
\]
The quotient is a domain by Lemma~\ref{lem:compact-special-fibres}; cancelling
$y^tz^t$ gives the fundamental computational system
\begin{equation}\label{eq:coefficient_match}
 R_z^{\,m-3k-j}\sigma^*K_{j,k}
 -\varepsilon\overline K_{m-3k-j,k}
 \equiv0\pmod R.
\end{equation}
These equations hold for
$0\leqslant k\leqslant\lfloor m/3\rfloor$ and
$0\leqslant j\leqslant m-3k$.

\subsection{Certificate protocol and verification status}
\label{sec:cpt-computation}

For each admissible pair $(m,t)$ in the compact clause of
Lemma~\ref{lem:finite-kvl-main}, the degree bounds in
Subsection~\ref{sec:cpt-degree} leave finitely many unknown
coefficients.  The complete obstruction system has two parts:
\begin{enumerate}
    \item \textbf{Covariance equations from symmetry:} the equations derived
    in Section~\ref{sec:cpt-eigen}, which encode the intrinsic eigensection
    condition after the twist injection;
    \item \textbf{Holomorphicity at infinity:} the equations expressing
    regular extension of $\omega$ across $\{T=0\}$.
\end{enumerate}
We treat each part in turn and then describe the resulting exact linear
system and its certificate protocol.

\medskip\noindent
\textbf{Symmetry constraints.}
According to \eqref{eq:coefficient_match}, for each
$0\leqslant k\leqslant\lfloor m/3\rfloor$ and
$0\leqslant j\leqslant m-3k$ we impose
\[
\mathbf{Eq}_{j,k}
\coloneqq R_z^{\,m-3k-j}\sigma^*K_{j,k}
-\varepsilon\overline K_{m-3k-j,k}
\equiv0\pmod R,
\]
where the polynomials $\overline K_{p,q}$ are induced by the linear
transformation in Proposition~\ref{prop:transition}.

\medskip\noindent
\textbf{Holomorphicity at infinity.}
It remains to check that the jet differential~\eqref{eq:local-J-cpt} extends holomorphically along the divisor $\{T = 0\}$ at infinity.  Following~\cite[Section~4]{HHMX2026}, we homogenise the local data.

Let $d = \deg R$ be the total degree of the defining polynomial.  For every index pair $(j,k)$ we factor the local coefficient as
\[
F_{j,k} = y^{t}\, K_{j,k}, \qquad K_{j,k} \in \mathbb{C}[x,y,z],
\]
and lift each $K_{j,k}$ to a homogeneous polynomial $\widehat{K}_{j,k} \in \mathbb{C}[T,X,Y,Z]$ of degree
\[
\deg \widehat{K}_{j,k} = (m-2k)(d-1) - m - t,
\]
with the additional requirement $\deg_X \widehat{K}_{j,k} < d$.  The corresponding homogeneous lift of $F_{j,k}$ is then
\[
\widehat{F}_{j,k} = Y^{t}\, \widehat{K}_{j,k}.
\]
This factorisation separates the fixed pole order prescribed by the geometry from the remaining degrees of freedom carried by the polynomials $\widehat{K}_{j,k}$.

As shown in~\cite[Subsection~3.2]{HHMX2026}, the holomorphicity conditions translate into the following divisibility requirements in the homogeneous quotient ring $\mathbb{C}[T,X,Y,Z] / (\widehat{R})$:
\begin{equation}\label{eq:divisibility}
T^{\,m-3k-\ell} \;\Big\vert\; \mathbf{HEq}_{\ell,k}(\widehat{K}_{j,k}, X, Y)
\quad \text{in } \mathbb{C}[T,X,Y,Z] / (\widehat{R}),
\end{equation}
for every $k = 0, \dots, \lfloor m/3 \rfloor$ and $\ell = 0, \dots, m-3k$, where the auxiliary polynomials $\mathbf{HEq}_{\ell,k}$ are defined by
\begin{equation}\label{eq:HEq_def}
\mathbf{HEq}_{\ell,k}(\widehat{K}_{j,k}, X, Y)
\;=\;
\sum_{j=\ell}^{m-3k} \widehat{K}_{j,k} \binom{j}{\ell} X^{m-3k-j}\, Y^{\,j-\ell}.
\end{equation}

\medskip\noindent
\textbf{Algebraic reduction.}
The two systems use the same quotient relation at different stages.  The
symmetry equation $\mathbf{Eq}_{j,k}$ is first reduced in the affine quotient
$\mathbb C[x,y,z]/(R)$ by $x^d=-1-y^d-z^d-y^a-z^a$; its coefficient
identities may then be homogenised to a common degree.  The infinity equation
$\mathbf{HEq}_{\ell,k}$ is homogeneous from the outset and is reduced in
$\mathbb C[T,X,Y,Z]/(\widehat R)$ by
\[
X^{d} = -\bigl(Y^{d} + Z^{d} + T^{d} + T^{d- \lfloor d/2 \rfloor} Y^{\lfloor d/2 \rfloor} + T^{d- \lfloor d/2 \rfloor} Z^{\lfloor d/2 \rfloor}\bigr),
\]
until its representative has $X$-degree $<d$.  Both procedures are the
homogeneous and affine forms of the unique normal form fixed in
Convention~\ref{convention}; they are kept distinct in the implementation.

Expanding the reduced affine symmetry identities gives the first homogeneous
linear system in the coefficients of the $K_{j,k}$.

The divisibility condition~\eqref{eq:divisibility} is equivalent to the vanishing of the truncated reduced polynomial
\begin{equation}\label{eq:trunc}
\mathbf{HEq}_{\ell,k}^{\text{(trunc)}}
\;:=\;
\operatorname{red}(\mathbf{HEq}_{\ell,k})
\pmod{T^{\,m-3k-\ell}},
\end{equation}
obtained by discarding all terms of total $T$-degree at least
$m-3k-\ell$ from the reduced representative.  This converts the geometric
divisibility constraints into a second explicit homogeneous linear system
in the coefficients of the polynomials $\widehat{K}_{j,k}$.

\medskip\noindent
\textbf{Certificate protocol and verification status.}
The combined system is homogeneous.  For every degree-$15$ target in the
compact clause of Lemma~\ref{lem:finite-kvl-main} and each sign
$\varepsilon\in\{\pm1\}$, it defines an integral sparse matrix after
denominators are cleared.  A reproducible certificate must record the row and
column counts, pivot data, modulus, and deterministic block hashes.  Full
column rank modulo one good prime implies full column rank over $\QQ$, hence
over $\CC$.  The matrix and certificate interface have therefore been
derived, but the required full-column-rank calculations are currently
undergoing exact computer-assisted verification.  Completion of those
calculations, generation and public release of the corresponding certificates,
and independent reproduction all remain pending.

Conditional on successful completion of this verification, in every
admissible case $(m,t)$ the only solution is the trivial one: all coefficients
of the polynomials $K_{j,k}$ vanish.  Consequently,
\[
H^{0}\!\bigl(X,\;E_{2,m}T_{X}^{*} \otimes \mathcal{O}_{X}(-t)\bigr) = 0
\]
for the symmetric surface $X = \{R=0\}$ defined by $R = x^{d}+y^{d}+z^{d}+1+y^{\lfloor d/2 \rfloor}+z^{\lfloor d/2 \rfloor}$.

\subsection{Extension to general surfaces}
\label{sec:cpt-generic}

Conditional on successful completion of the exact computer-assisted
full-rank verification, the preceding special-fibre vanishing holds, and
upper semicontinuity in the universal smooth family \cite{EGA4.2_1966} then
makes it a Zariski-open condition.  Under the same condition, intersecting the
finitely many opens gives the compact clause of
Lemma~\ref{lem:finite-kvl-main} for general, not necessarily symmetric,
surfaces.

\section{Conditional derivation of the hyperbolicity theorems}
\label{sec:hyperbolicity}

This section records the geometric and analytic deductions that would follow
from successful exact computer-assisted verification of the pending
full-rank assertions underlying Lemma~\ref{lem:finite-kvl-main}; it introduces
no additional computational input.  Under that verification hypothesis, the
compact argument combines the slope trichotomy with the two classical
foliation results below, while the logarithmic argument keeps the same
trichotomy and tracks its Nevanlinna coefficient in each branch.

We first isolate the intrinsic evaluation needed in both arguments.  This
also records exactly where the coefficient $3$ in the logarithmic estimate
enters.

\begin{lem}[Tautological evaluation of a restricted section]
\label{lem:paper3-tautological-evaluation}
Let $Y$ be a smooth projective surface, let $D=\sum_iD_i\subset Y$ be a
simple-normal-crossing divisor (possibly empty), and let

\[
 \pi_{2,0}\colon Y_2(\log D)\longrightarrow Y
\]

be the logarithmic second Semple tower, with second vertical divisor
\(\Gamma_2\).  Let $A$ be ample, let $Z\subset Y_2(\log D)$ be a reduced
Cartier divisor, and suppose that

\[
 \sigma\in H^0\!\left(
 Z,
 \left.
 \bigl(\OO_{Y_2(\log D)}(3p)(-2p\Gamma_2)
       \otimes\pi_{2,0}^*A^{-t}\bigr)
 \right|_Z
 \right),
 \qquad p,t\in\mathbb Z_{>0}.
\]

Let $f\colon\CC\to Y$ be a nonconstant entire curve not contained in
\(D\), and assume that its regular logarithmic second lift $f_{[2]}$ is
contained in $Z$.  On $\CC\setminus f^{-1}(D)$, regard
$df_{[1]}\colon T_{\CC}\to f_{[1]}^*V_1$ as a differential in the directed
structure.  Wherever it is nonzero, its image is the tautological line
$f_{[2]}^*\OO(-1)$.  It therefore defines, by meromorphic continuation
across the remaining discrete set, the intrinsic tautological derivative

\[
 \mathcal D_Df_{[1]}
 \in H^0_{\mathrm{mer}}\!\left(
 \CC,K_{\CC}\otimes f_{[2]}^*\OO(-1)
 \right).
\]

The canonical inclusion

\[
 \iota_p\colon
 \OO(3p)(-2p\Gamma_2)\hookrightarrow\OO(3p)
\]

and this tautological derivative define, without choosing an
ambient lift of \(\sigma\), a global meromorphic section

\begin{equation}
 s_{\sigma,f}:=
 \left\langle
 f_{[2]}^*(\iota_p\sigma),
 (\mathcal D_Df_{[1]})^{\otimes3p}
 \right\rangle
 \in H^0_{\mathrm{mer}}\!\left(
 \CC,K_{\CC}^{3p}\otimes f^*A^{-t}
 \right).
 \label{eq:paper3-canonical-evaluation}
\end{equation}

It is holomorphic away from $f^{-1}(D)$.  At a zero of $f'$ outside
$f^{-1}(D)$, the evaluation has only a removable singularity or a zero; at a
zero lying over $D$, no pole occurs beyond the logarithmic boundary poles
counted in the following estimate:

\begin{equation}
 \operatorname{div}_{\infty}(s_{\sigma,f})
 \leqslant
 3p\sum_i(f^*D_i)_{\mathrm{red}}.
 \label{eq:paper3-canonical-pole-bound}
\end{equation}

If $s_{\sigma,f}\not\equiv0$, then

\begin{equation}
 tT_{f,A}(r)
 \leqslant
 3p\sum_iN_f^{[1]}(r,D_i)
 +O\!\left(\log^+T_{f,A}(r)+\log r\right)\ \|.
 \label{eq:paper3-canonical-proximity}
\end{equation}
If $D=\varnothing$, the compact fundamental-vanishing argument instead
forces $s_{\sigma,f}\equiv0$.
\end{lem}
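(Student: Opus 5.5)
The proof proceeds in three stages: define the tautological pairing locally, bound its poles by a chart computation, and apply the logarithmic derivative lemma.

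\emph{Construction and independence of lifts.} On $\CC\setminus f^{-1}(D)$ with $f'\neq0$, the lift $f_{[1]}$ is holomorphic. Its derivative $df_{[1]}$ spans the line $f_{[2]}^*\OO(-1)\subset f_{[1]}^*V_1$, which by construction of the Semple tower is the point $f_{[2]}$. Hence $\mathcal D_Df_{[1]}$ is a nowhere vanishing section of $K_\CC\otimes f_{[2]}^*\OO(-1)$ there. It extends meromorphically across the discrete set where $f'=0$, because in local coordinates it is represented by the jet $(f',f'')$ divided by a holomorphic scalar.

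The pairing in \eqref{eq:paper3-canonical-evaluation} only uses $f_{[2]}^*(\iota_p\sigma)$, and $f_{[2]}$ lands in $Z$. So it depends only on $\sigma$ restricted to $Z$ and needs no ambient extension. Concretely, it equals $P(f',f'')$ with twist values in $f^*A^{-t}$, where $P$ is any local weighted polynomial representing an ambient extension, since two extensions differ by an element vanishing on $Z\ni f_{[2]}$.

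\emph{Pole bound.} I would work in the local logarithmic coordinates of Lemma~\ref{lem:o3-semple-chart}, with jet variables $(\log z_1)',z_2'$, etc. The factor $\OO(3p)(-2p\Gamma_2)$ matters at zeros of $f'$ off $D$. Near such a point $f_{[2]}$ may hit $\Gamma_2$, and the standard computation (Demailly, Proposition~6.16 type) shows that sections of $\OO_2(a_1,a_2)$ with $a_1\geqslant 2a_2\geqslant0$, here $(p,\cdot)$ in the $\OO(3p)(-2p\Gamma_2)=\OO_2(p,2p)$ normalisation, push forward to invariant jet differentials of weight $3p$ holomorphic in $(f',f'')$. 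So the evaluation is holomorphic, giving either a removable singularity or a zero.

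Over $D_i$, the logarithmic frame $z_i'/z_i$ contributes a pole of order at most one per jet factor. The weight is $3p$, and each occurrence of $(\log z_i)'$ or $(\log z_i)''$ costs at most one pole order per unit of weight, namely $\le$ weight. Reducedness then gives \eqref{eq:paper3-canonical-pole-bound}. I expect this to be the main obstacle: the Wronskian-type terms $(\log z_i)''$ have weight $2$ but only a double pole, and one must check that the truncation to $(f^*D_i)_{\rm red}$, rather than multiplicity, holds at a point of $f^{-1}(D_i)$ of high multiplicity. Writing $f_i=z^k u$ gives $(\log f_i)^{(j)}=(-1)^{j-1}(j-1)!\,k z^{-j}+\text{holomorphic}$, with order $j$ independent of $k$. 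Combined with weight counting, this yields a pole of order at most $3p$ at each such point.

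\emph{Nevanlinna estimate.} Write $s_{\sigma,f}=g\,(dz)^{3p}$ with $g$ meromorphic and $g\not\equiv0$. Fix a metric $h_A$ and apply Jensen's formula to $\log|g|^2_{h_A^{-t}}$. This gives $tT_{f,A}(r)\le N(r,\operatorname{div}_\infty g)+\frac1{2\pi}\int\log^+|g|_{h}\,d\theta+O(1)$. Cover $Y$ by finitely many log charts, so that $g$ is locally a polynomial in logarithmic derivatives $(\log\phi\circ f)^{(j)}$ of rational functions, times bounded terms. The logarithmic derivative lemma then bounds the proximity term by $O(\log^+T_{f,A}(r)+\log r)\,\|$. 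Inserting the pole bound yields \eqref{eq:paper3-canonical-proximity}.

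If $D=\varnothing$, there are no poles. The inequality $tT_{f,A}\le O(\log T+\log r)\,\|$ then contradicts nonconstancy (which gives $T_{f,A}\geqslant c\log r$, or a transcendental growth rate), so $s_{\sigma,f}\equiv0$.
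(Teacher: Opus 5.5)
Your outline (intrinsic pairing, pole count, then Jensen plus the logarithmic-derivative lemma) matches the paper's, but two steps fail as written.

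\emph{The clause $D=\varnothing$.} With no poles you obtain $tT_{f,A}(r)\leqslant O(\log^+T_{f,A}(r)+\log r)$. Nonconstancy only gives $T_{f,A}(r)\geqslant c\log r$. These two bounds are compatible whenever $f$ is rational (equivalently $T_{f,A}(r)=O(\log r)$), because the constant in the $O(\log r)$ term is not controlled. So you have only excluded transcendental $f$, for which $\log r=o(T_{f,A}(r))$. The paper avoids this by running the Ahlfors--Schwarz proof of the fundamental vanishing theorem along $f_{[2]}$, which works for every nonconstant $f$. Alternatively, treat rational $f$ separately. Then $f$ extends to $\bar f\colon\PP^1\to Y$, and $d\bar f_{[1]}$ and $\bar f_{[2]}$ are holomorphic on all of $\PP^1$, with $\bar f_{[2]}(\PP^1)\subset Z$. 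So the intrinsic pairing is a holomorphic section of $K_{\PP^1}^{3p}\otimes\bar f^*A^{-t}$, which has negative degree, and it vanishes.

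\emph{The local-polynomial representation.} You identify $s_{\sigma,f}$ with $P(f',f'')$ for a local weighted polynomial $P$, but this is not available. A local extension of $\sigma$ near a point of $Y_2$ is a section of $\OO(3p)$ on an open subset of $Y_2$, not over a fibre neighbourhood $\pi_{2,0}^{-1}(U)$, so it does not push forward to a jet differential. Extending over $\pi_{2,0}^{-1}(U)$ is obstructed by $R^1\pi_{2,0*}$ of the bundle twisted by $\OO(-Z)$, which is exactly why the lemma avoids ambient lifts. This has three consequences.

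(i) Holomorphy at zeros of $f'$ off $D$ cannot come from a Demailly-type pushforward. Also, $\OO(3p)(-2p\Gamma_2)$ is $\OO_2(2p,p)$, not $\OO_2(p,2p)$, and the $\Gamma_2$-twist plays no role since $\iota_p$ discards it. The real reason is simpler. The map $f_{[1]}'$ is a holomorphic section of $K_\CC\otimes f_{[1]}^*V_1$ with values in the holomorphic subline $f_{[2]}^*\OO(-1)$. Hence $\mathcal D_Df_{[1]}$ is holomorphic off $f^{-1}(D)$.

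(ii) Along $(f^*D_i)_{\mathrm{red}}$, $\mathcal D_Df_{[1]}$ has at most simple poles, coming from the components $f_j'/f_j$. The pole bound is then just its $3p$-th power, with no weight counting needed.

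(iii) For the proximity term you must cover the compact $Z$ by Semple charts of $Y_2$, not charts of $Y$. There $|s_{\sigma,f}|\leqslant C\|\mathcal D_Df_{[1]}\|^{3p}$. Besides the horizontal logarithmic derivatives, you must control the vertical components $(u\circ f_{[1]})'$, which near $\Gamma_2$ are not polynomials in the $(\log\phi\circ f)^{(j)}$. The paper handles them using $T(r,u\circ f_{[1]})=O(T_{f,A}(r)+\log r)$ and the factorisation $(u\circ f_{[1]})'=(u\circ f_{[1]}-a)\cdot(u\circ f_{[1]})'/(u\circ f_{[1]}-a)$, with $a$ chosen outside the closure of the image under $u$ of the chart support.
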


\begin{proof}
Pairing its $3p$-th tensor power with the pullback of
\(\iota_p\sigma\) gives \eqref{eq:paper3-canonical-evaluation}.  This is an
intrinsic pairing of dual tautological lines, so it is independent of
Semple coordinates and of every local representative of \(\sigma\).

Equip the line bundles with smooth Hermitian metrics.  Since $Z$ is
projective, the norm of $\iota_p\sigma$ is bounded on $Z$; hence
\[
 \|s_{\sigma,f}\|
 \leqslant C\,\|\mathcal D_Df_{[1]}\|^{3p}
\]
for a uniform constant $C$.  In logarithmic base coordinates, the horizontal
components of the derivative of the regular first lift are
$(f_j'/f_j)$ in the boundary directions.  Its vertical projective-coordinate
components are holomorphic.  Thus $\mathcal D_Df_{[1]}$ has at most a simple
pole along each $(f^*D_i)_{\mathrm{red}}$; when $D=\varnothing$, it is
holomorphic.  At a critical point outside $f^{-1}(D)$, regularity of the
Semple lift gives only a removable singularity or a zero.  At a critical
point over $D$, a logarithmic component may still have a simple pole, but it
is already counted by the reduced boundary pullback above; no additional
Semple-type pole is introduced.  This proves
\eqref{eq:paper3-canonical-pole-bound}.

When $D=\varnothing$, the bounded coefficient $\iota_p\sigma$ and the
negative twist $A^{-t}$ put this intrinsic evaluation exactly in the
standard Ahlfors--Schwarz proof of the fundamental vanishing theorem for
invariant jets; see \cite{Demailly1997}.  That proof is local along the
regular Semple lift and therefore does not require extending $\sigma$ away
from $Z$.  It gives $s_{\sigma,f}\equiv0$.

Choose a finite logarithmic coordinate cover and, above it, finitely many
relatively compact Semple charts.  A partition of unity is used only to
estimate the norm of the already-defined global section.  On each chart the
components of $\mathcal D_Df_{[1]}$ are logarithmic derivatives of local
coordinate functions.  A projective Semple coordinate $u$ is a rational
function on $Y_1(\log D)$ and, in a logarithmic frame, a ratio of first
logarithmic derivatives.  Consequently
\[
 T_{u\circ f_{[1]}}(r)=O\bigl(T_{f,A}(r)+\log r\bigr).
\]
For a partition function $\chi$ supported in a relatively compact Semple
chart, choose $a\notin\overline{u(\operatorname{supp}\chi)}$.  On this
support, $u-a$ is bounded above and away from zero, and
\[
 (u\circ f_{[1]})'
 =(u\circ f_{[1]}-a)
 \frac{(u\circ f_{[1]})'}{u\circ f_{[1]}-a}.
\]
The logarithmic-derivative lemma therefore controls every vertical
component by
\[
 O\bigl(\log^+T_{f,A}(r)+\log r\bigr).
\]
Summing these estimates and the
analogous horizontal estimates over the finite cover gives the proximity
term.  Finally trivialize $K_{\CC}^{3p}$ by $(dz)^{3p}$, give it the flat
metric, and apply Poincar\'e--Lelong and Jensen's formula.  This proves
\eqref{eq:paper3-canonical-proximity}, following
\cite[Thm.~3.1, proof of (3.3)--(3.6)]{Huynh-Vu-Xie-2017} with the
Semple-coordinate adaptation made explicit.  Notice that the direct-image
identification for jet differentials is not invoked for a section defined
only on $Z$.
\end{proof}

\subsection{Compact case}
\label{sec:compact-proof}

Two classical results close the compact argument once two independent jet
equations have been produced.
\begin{thm}[Clemens~\cite{Clemens1986} and Xu~\cite{Xu1994}]
\label{thm:Clemens-Xu}
A very general surface $X\subset\PP^3$ of degree $d\geqslant5$ contains
neither rational nor elliptic curves.
\end{thm}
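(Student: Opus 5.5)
Since this is a classical input, the plan is to reduce it to the two cited genus bounds rather than reprove them from scratch. I would still indicate the mechanism, because the ``very general'' quantifier comes from it, as in Remark~\ref{rem:generic-upgrade}.

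\textbf{Step 1: reduction to one family.} Consider the universal surface $\mathcal X\to B_d^{\rm sm}$. For each Hilbert polynomial, take the relative Hilbert scheme of curves in the fibres. For each component, take the locus $\mathcal H$ of curves whose normalisation has geometric genus $g\leqslant1$. There are countably many such components. Any component that does not dominate $B_d^{\rm sm}$ has a proper closed image. Removing these images leaves a very general set of parameters, over which every remaining curve belongs to a family that dominates the parameter space. It therefore suffices to show that no component of $\mathcal H$ dominates $B_d^{\rm sm}$.

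\textbf{Step 2: the deformation argument.} Suppose some component does dominate. After a finite base change, one obtains a family of maps $h_a\colon\widetilde C_a\to X_a$ from smooth curves of genus $g$, defined over an open set of degree-$d$ surfaces. Following Clemens, in the refined form of Ein and Voisin, I would differentiate this family along the incidence correspondence
\[
 \{(z,a):F_a(z)=0\}\subset\PP^3\times\PP(V_d).
\]
Every first-order deformation of the surface must lift to a deformation of the curve. By Castelnuovo--Mumford-type regularity, the surjectivity of
\[
 H^0(\OO_{\PP^3}(d))\longrightarrow H^0(N_{X}|_{C})
\]
then forces a twisted normal sheaf of $h$ to be globally generated. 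Computing degrees gives Xu's inequality
\[
 2g-2\geqslant d(d-5)+\deg\text{(ramification and singular contributions)},
\]
or, in Xu's sharp form, $g\geqslant \tfrac{d(d-3)}{2}-2$ for $d\geqslant5$.

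\textbf{Step 3: the numerics.} For $d\geqslant5$ the bound gives $g\geqslant 3$. Hence neither $g=0$ nor $g=1$ can occur, which contradicts dominance and proves the statement.

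\textbf{Main obstacle.} The hard step is the positivity in Step~2: controlling the normal sheaf of a possibly singular or non-embedded image curve. The first thing I would try is to work with the normal sheaf of the map $h$ modulo its torsion, and to use the global generation of the twisted tangent bundle $T_{\PP^3}(1)$ along the incidence correspondence. This is exactly where Clemens and Xu do their work, so in the paper I would simply invoke \cite{Clemens1986,Xu1994} for that step.
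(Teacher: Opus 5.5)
Your plan matches the paper, which states this result as a classical theorem without proof; your sketch likewise handles the countable-union (very general) bookkeeping and then relies on Clemens and Xu for the genus bound. One caution on the sketch: the basic normal-sheaf degree count by itself only gives Clemens' inequality $2g-2\geqslant(d-5)\deg C$, which at $d=5$ rules out rational but not elliptic curves, so the degree-$5$ elliptic case genuinely rests on Xu's sharper bound $g\geqslant\tfrac{d(d-3)}{2}-2$ rather than on ``computing degrees''.
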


\begin{thm}[McQuillan~\cite{Mcquillan1998}]
\label{thm:McQuillan}
Every parabolic leaf of an algebraic multi-foliation on a surface of general
type is algebraically degenerate.
\end{thm}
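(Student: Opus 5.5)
Since this is McQuillan's theorem, the plan is to reproduce the structure of his argument via Ahlfors currents rather than to derive it from anything earlier in the paper. First reduce from a multi-foliation to an honest foliation. An algebraic multi-foliation on $X$ is given by a surface $\widetilde X\subset\PP(T_X)$ (or its normalisation) that is generically finite over $X$ and carries the tautological foliation. A parabolic leaf $f\colon\CC\to X$ lifts through its first derived curve $f_{[1]}$ to a leaf of that foliation $\mathcal F$ on a resolution $\widetilde X\to X$. Since $\widetilde X\to X$ is generically finite and $X$ is of general type, $K_{\widetilde X}$ is still big, because the pullback of $K_X$ is big and the ramification divisor is effective. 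It therefore suffices to treat a genuine foliation on a surface of general type. By Seidenberg's theorem, after finitely many blow-ups we may assume $\mathcal F$ has only reduced singularities; $K_{\widetilde X}$ remains big.

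Assume for contradiction that the leaf $f$ is Zariski dense. Form an Ahlfors--Nevanlinna current $T$ as a weak limit of the normalised currents $[f(\Delta_r)]/T_f(r)$ along a suitable sequence of radii. Zariski density makes $T$ nef, so $T\cdot C\geqslant0$ for every curve $C$, and $T\cdot A>0$ for ample $A$. Writing the big class $K=A+E$ with $E$ effective gives $T\cdot K>0$. The goal is the opposite inequality $T\cdot K\leqslant0$. For this, use $K=K_{\mathcal F}-N_{\mathcal F}$ (up to the index of the singular scheme) and prove two inequalities: $T\cdot K_{\mathcal F}\leqslant0$ and $T\cdot N_{\mathcal F}\geqslant0$.

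For $T\cdot N_{\mathcal F}\geqslant0$ I would follow Brunella. Compute $N_{\mathcal F}\cdot T$ by a Baum--Bott type argument applied to a current that is invariant under the foliation. Away from the singular set the local contribution is nonnegative, and at reduced singularities the Camacho--Sad and variation indices are controlled. For $T\cdot K_{\mathcal F}\leqslant0$, apply the tautological inequality: $f'$ is a holomorphic section of $f^*T_{\mathcal F}$ off the singular points. The logarithmic derivative lemma then gives $T\cdot T_{\mathcal F}\geqslant-(\text{counting at singularities})$, and that correction term must be shown to vanish in the limit.

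The main obstacle is precisely this correction at the singular points, above all at saddle-nodes, where a leaf can accumulate and its counting function need not be negligible. Following McQuillan, I would first treat the invariant chains of rational curves passing through saddle-nodes, contracting them to cusp-type singularities in the nef model. I would then show that $T$ assigns zero mass to the singular points and has nonnegative intersection against the relevant exceptional configurations. With those two inequalities in hand, $T\cdot K\leqslant0$ contradicts $T\cdot K>0$, so $f$ is algebraically degenerate.
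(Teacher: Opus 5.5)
The paper gives no proof of this statement. It is quoted from McQuillan and used as a black box in the compact endpoint, so the only question is whether your outline is itself a proof. Your architecture is the standard one (McQuillan, as streamlined by Brunella), and these parts are fine: the passage from the multi-foliation to a genuine foliation on a resolution of general type, Seidenberg reduction, the nef Nevanlinna current $T$ with $T\cdot K_X>0$, and the splitting $K_X=K_{\mathcal F}-N_{\mathcal F}$. That splitting holds exactly in $\operatorname{Pic}$, since the singular scheme enters only $c_2$; a correction term there would in fact destroy the final contradiction. Also, $T$ should be the integrated current $\int_0^r[f(\Delta_s)]\,ds/s$ normalised by $T_f(r)$, taken along radii avoiding the exceptional set.

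The gap is at the step you yourself call the main obstacle. First, the correction in the foliated tautological inequality is not just a counting term. Locally $f'=\phi\cdot v(f)$, and $\|\phi\|$ measured in $f^*T_{\mathcal F}$ blows up as $f$ approaches $\operatorname{Sing}\mathcal F$, while the logarithmic derivative lemma controls only $\|f'\|$ in $f^*T_X$. What you actually get is $T\cdot K_{\mathcal F}\leqslant\limsup_r T_f(r,\mathcal I_{\operatorname{Sing}\mathcal F})/T_f(r)$, which is counting plus proximity. The proximity part is precisely how accumulation near singular points enters.

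Second, your plan for removing the correction establishes nothing. A closed positive $(1,1)$-current never charges points, and $\tilde T\cdot E\geqslant0$ for exceptional curves is automatic from nefness. The correction is bounded above by exactly these intersection numbers, so what you need is for them to become small, not nonnegative. The nef-model/cusp step is also unsupported: contracting Hirzebruch--Jung strings produces cyclic quotient singularities, and cusps come from cycles of rational curves. On a singular model you would also have to re-prove the tautological inequality.

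A mechanism that does close this step ties your two inequalities together.
\begin{itemize}
\item At a reduced singularity (multiplicity one, non-dicritical), the blow-up satisfies $K_{\tilde{\mathcal F}}=\pi^*K_{\mathcal F}$ and $N_{\tilde{\mathcal F}}=\pi^*N_{\mathcal F}-E$. The new singularities are again reduced, and saddle-nodes keep their order $k$.
\item Blow up all singular points repeatedly to get models $(X_n,\mathcal F_n)$, with lifted currents $T_n$ along a common sequence of radii. Set $\Sigma_n=\sum_{q\in\operatorname{Sing}\mathcal F_n}T_{n+1}\cdot E_q$.
\item Since $\mathcal I_q\supset\mathfrak m_q^{k_q+1}$, the tautological inequality on $X_n$ gives $T\cdot K_{\mathcal F}=T_n\cdot K_{\mathcal F_n}\leqslant C\,\Sigma_n$, with $C=\max_q(k_q+1)$ independent of $n$.
\item On the other side, $T_M\cdot N_{\mathcal F_M}=T\cdot N_{\mathcal F}-\sum_{n<M}\Sigma_n$.
\end{itemize}
Hence, if $N_{\mathcal F_M}\cdot T_M\geqslant0$ on every reduced model, then $\sum_n\Sigma_n<\infty$, so $\Sigma_n\to0$ and $T\cdot K_{\mathcal F}\leqslant0$.

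So the entire theorem rests on the normal-bundle inequality, for all reduced models and including saddle-nodes. That index-theoretic estimate is the real analytic content, and your sketch (``the indices are controlled'') does not supply it. As it stands, your proposal is a correct roadmap in which both load-bearing estimates are unproved, and one of them is attached to a mechanism that cannot work.
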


The remaining mechanism is a three-way slope dichotomy: the
Demailly--El~Goul zero-locus method handles small slope, pole-three
differentiation handles twist at least four, and successful verification of
the compact clause of the finite Key Vanishing Lemma would exclude the narrow
strip between them.

\begin{thm}[The compact endpoint]
\label{thm:compact-15-endpoint}
Assume that the ongoing exact computer-assisted full-rank verification
successfully establishes the compact clause of
Lemma~\ref{lem:finite-kvl-main}.  Then every very general surface
$X\subset\PP^3$ of degree $15$ is Kobayashi hyperbolic.
\end{thm}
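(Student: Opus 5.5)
We argue by contradiction: let $f\colon\CC\to X$ be a nonconstant entire curve on a very general degree-$15$ surface $X$. By Brody's lemma it suffices to show $f$ is constant, and by Theorem~\ref{thm:Clemens-Xu} and McQuillan's Theorem~\ref{thm:McQuillan} it suffices to show that $f$ is algebraically degenerate or is a leaf of an algebraic multi-foliation. Degeneracy into an algebraic curve is excluded because a very general $X$ of degree $15\geqslant5$ contains no rational or elliptic curves.

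The first step is to produce a negatively twisted invariant jet differential. The Riemann--Roch leading term $d(4d^2-68d+154)/648$ is positive at $d=15$, so Demailly's calculation gives a nonzero section $\omega_0\in H^0(X,E_{2,M}T_X^*\otimes\OO_X(-T))$ with $T/M$ just above the zero-locus threshold $17/66$. By the fundamental vanishing theorem, $f_{[2]}$ lies in the zero divisor of $\omega_0$. If $f_{[2]}\subset\Gamma_2$ then $f$ is constant, so we may pass to a nonstationary irreducible component containing $f_{[2]}$. By Proposition~\ref{prop:o3-compact-relative-family}, with $a_0$ very general so that all countably many triples $(m,t,b_0)$ lie in the base-change loci, this yields a relative irreducible factor of weight $m\geqslant3$ (Lemma~\ref{lem:o3-low-weight}) and twist $-t$ with $t/m\geqslant T/M>17/66$. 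Corollary~\ref{cor:o3-compact-generic-prime} then gives either algebraic degeneracy, which is the first branch, or a prime $P_K$ in $K[x,y,W]$.

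Next comes the slope trichotomy, organised by $t$. If $t\geqslant4$, Corollary~\ref{cor:o3-compact-relative-differentiation} supplies a regular pole-three field $V$ with $P_a\nmid V(P_a)$. Since $V(P_a)$ is a second invariant differential of twist $-(t-3)<0$, the curve $f_{[2]}$ lies in the intersection of two divisors with no common component along the residual divisor. Its projection is therefore a proper algebraic subset of $X$, or else $f_{[2]}$ lies in a curve of $X_2$ dominating $X$; in the latter case $f_{[1]}$ is a leaf of the multi-foliation it defines. If $t\leqslant3$, the condition $t>17m/66$ forces $m\leqslant11$, and monotonicity in the twist reduces every such pair to the compact clause of Lemma~\ref{lem:finite-kvl-main}. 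Explicitly, $t=1$ gives $m\leqslant3$, $t=2$ gives $m\leqslant7$, and $t=3$ gives $m\leqslant11$, and all of these pairs are covered by the listed ones, since $(m,1)$ with $m\leqslant6$ dominate lower weights at higher twist. The assumed vanishing then contradicts the existence of the factor, because a factor of the very general section spreads to the general surface by the relative family and semicontinuity.

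The main obstacle is the remaining small-slope branch. The factor selection only guarantees $t/m\geqslant T/M$, so the factor itself lies on the large-slope side, and the Demailly--El~Goul zero-locus method is needed precisely when the restriction to the residual divisor $Z$ still carries a negatively twisted section of $\OO(3p)(-2p\Gamma_2)$. I would first handle this by applying Lemma~\ref{lem:paper3-tautological-evaluation} with $D=\varnothing$. That lemma forces the restricted evaluation to vanish, so $f_{[2]}$ is contained in a codimension-two locus and again defines a multi-foliation or degenerates. The delicate verification is checking that the numerical condition $t/m>17/66$ is exactly what makes the Demailly--El~Goul restricted bundle negatively twisted on $Z$ at $d=15$. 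In every branch $f$ is then a parabolic leaf or degenerate, hence constant by Theorems~\ref{thm:McQuillan} and~\ref{thm:Clemens-Xu}.
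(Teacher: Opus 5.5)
Your large-slope analysis matches the paper's. For $t\geqslant4$ you invoke Corollary~\ref{cor:o3-compact-relative-differentiation}. For $t\leqslant3$ you correctly get $m\leqslant3,7,11$ and cover these pairs by Lemma~\ref{lem:finite-kvl-main} plus twist monotonicity. The gap is in where you place the threshold $17/66$.

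First, Riemann--Roch does not give $\omega_0$ with $T/M$ just above $17/66$. At $d=15$, the $\rho$-dependent leading coefficient of $\chi\bigl(E_{2,M}T_X^*\otimes\OO_X(-\rho M)\bigr)$ is a positive multiple of $27\rho^2-264\rho+17$. This is already negative for every $\rho\in[0.07,17/66]$; compare the logarithmic $\rho_{\mathrm{RR}}\approx0.034$ against $13/96$ in the paper. So the maximal-slope factor satisfies only $t/m\geqslant T/M$ with $T/M$ small. Factors with $t/m<17/66$ genuinely occur, and your claims that $t/m\geqslant T/M>17/66$ and that the factor ``lies on the large-slope side'' are unjustified.

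Second, your small-slope branch has the inequality reversed. The zero-locus method needs a nonzero section on $Z$ of $\bigl(\OO_{X_2}(2,1)\otimes\pi_{2,0}^*\OO(-\tau)\bigr)^{\ell}$ for some rational $\tau>0$, that is, restricted bigness. The restricted intersection number is positive for small $\tau>0$ exactly when $t/m<17/66$; compare $Q_{11}(\rho,0)=13-96\rho$ in Proposition~\ref{prop:log11-quantitative-zero-locus}. The condition $t/m>17/66$ does not make this work. In that range it fails, which is why differentiation and Key Vanishing are needed there.

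The case split must be on $t/m$ versus $17/66$. If $t/m<17/66$, restricted bigness supplies the second section on the threefold $Z$. Lemma~\ref{lem:paper3-tautological-evaluation} with $D=\varnothing$ then forces it to vanish along $f_{[2]}$. Otherwise $t/m\geqslant17/66$, and you are in your $t\geqslant4$ or $t\leqslant3$ branches.

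Also fix the dimension count in the multi-foliation step. $X_2$ is a fourfold, so two divisors without common component leave $W$ with $\dim W\leqslant2$, not a curve, and a curve cannot dominate $X$ anyway. What you need is that $\overline{\pi_{2,1}(W)}$ is a proper subvariety of the threefold $X_1$ containing $f_{[1]}(\CC)$. Either it projects to a proper subset of $X$, or it is generically finite over $X$ and defines the multi-foliation to which Theorem~\ref{thm:McQuillan} applies.
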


\begin{proof}
Assume the verification hypothesis in the theorem.  Together with upper
semicontinuity, that hypothesis supplies the finite Key Vanishing open
described in Section~\ref{sec:cpt-generic}.  Choose $X$ in its intersection
with the parameter locus supplied by Corollary
\ref{cor:o3-parameter-quantifiers} and the very-general locus supplied by
Theorem~\ref{thm:Clemens-Xu}.  Let $f\colon\CC\to X$ be an entire curve and
write $f_{[1]}$ and $f_{[2]}$ for its first two Semple lifts.  The
existence, irreducible-factor, and relative-extension steps are those of
\cite[Sections~5--6]{HHMX2026}; Proposition
\ref{prop:o3-compact-relative-family} supplies the prime-divisor interface
needed for differentiation.  It remains to close the numerical gap for an
irreducible factor of weight $m$ and twist $-t$.  By Lemma
\ref{lem:o3-low-weight}, $m\geqslant3$.
Set
\[
 \rho_{15}=\frac{17}{66}.
\]
The positive integer $t$ falls into exactly one of the following branches.
If $t/m<\rho_{15}$, the Demailly--El~Goul zero-locus argument applies;
compare \cite[Proposition~3.4]{Demailly-Elgoul2000} and
\cite[Proposition~6.1]{HHMX2026}.  If $t/m\geqslant\rho_{15}$ and
$t\geqslant4$, Corollary~\ref{cor:o3-compact-relative-differentiation}
produces an independent derivative with negative twist $-(t-3)$.

It remains only to exclude $t/m\geqslant\rho_{15}$ with $1\leqslant t\leqslant3$.
The inequality $17m\leqslant66t$ and integrality give
\[
 t=1\Longrightarrow 3\leqslant m\leqslant3,
 \qquad
 t=2\Longrightarrow 3\leqslant m\leqslant7,
 \qquad
 t=3\Longrightarrow 3\leqslant m\leqslant11.
\]
For $t=1$ the compact clause of Lemma~\ref{lem:finite-kvl-main} covers the
only weight.  For $t=2$, its twist-one vanishings cover the lower weights by
twist monotonicity and its twist-two clause covers $m=7$.  For $t=3$, the
same argument leaves only the twist-three range, also covered by that lemma.
Thus the residual branch is empty.  In either of the first two branches, the
two negatively twisted equations vanish on $f_{[2]}$ and cut properly.  In
the differentiation branch they have no common divisorial component.  In the
zero-locus branch, apply Lemma~\ref{lem:paper3-tautological-evaluation} with
\(D=\varnothing\) to the second section restricted to the irreducible
threefold $Z$.  Its canonical evaluation vanishes by the compact clause of
the lemma.  Since the tautological derivative is nonzero away from a
discrete set, the intrinsic pairing
\eqref{eq:paper3-canonical-evaluation} implies that the restricted section
vanishes along $f_{[2]}$.  Since it does not
vanish identically on $Z$, it cuts a divisor there.  Hence
$f_{[2]}(\CC)$ is contained in an
algebraic subset
\[
 W\subset X_2,
 \qquad \dim W\leqslant2.
\]
Since $\dim X_1=3$, the closed set
\[
 Y=\overline{\pi_{2,1}(W)}\subsetneq X_1
\]
is proper and contains $f_{[1]}(\CC)$.  Thus the first lift is algebraically
degenerate.

If $\pi_{1,0}(Y)$ is proper in $X$, then $f$ is already algebraically
degenerate.  Otherwise an irreducible component of $Y$ dominates $X$; over a
dense open of $X$, its finite collection of tangent directions defines the
standard algebraic multi-foliation associated with the two-jet zero locus,
and $f$ is one of its parabolic leaves.  This is the Demailly--El~Goul
reduction used in \cite[Proposition~3.4]{Demailly-Elgoul2000} and
\cite[Sections~5--6]{HHMX2026}.  Theorem~\ref{thm:McQuillan} therefore makes
$f$ algebraically degenerate in $X$.  The normalization of an algebraic curve
supporting a nonconstant entire curve has genus at most one, whereas
Theorem~\ref{thm:Clemens-Xu} excludes rational and elliptic curves on the
chosen very general surface.  Hence $f$ is constant.
\end{proof}

The theorem above is therefore a conditional implication: successful exact
verification of the compact full-rank assertions would yield the new
degree-$15$ endpoint and the compact assertion of
Theorem~\ref{thm:main-thresholds}.  The public version of~\cite{HHMX2026}
covers $d\geqslant17$, and the public first version of the present symmetry
paper covers $d\geqslant16$ \cite{CuiHouLiuXie2026}.  Those earlier proofs
are not reproduced or used in the degree-$15$ argument.

\subsection{Logarithmic case}
\label{sec:log-proof}

At degree $11$, algebraic degeneracy alone is not enough for the conditional
Second Main Theorem.  The next proposition supplies the quantitative
small-slope estimate; under the verification hypothesis in
Theorem~\ref{thm:log-11-endpoint}, pole-three differentiation and the pending
logarithmic Key Vanishing clause handle the complementary large-slope range.

\begin{prop}[Quantitative restricted zero-locus package]
\label{prop:log11-quantitative-zero-locus}
Let $C\subset\PP^2$ be a smooth curve of degree $11$, and write
\[
 \pi_{2,1}\colon X_2^{\log}\longrightarrow X_1^{\log},
 \qquad
 \pi_{2,0}\colon X_2^{\log}\longrightarrow\PP^2.
\]
Let $Z$ be the reduced nonstationary component cut out by an irreducible
logarithmic two-jet differential of weight $m$ and twist $-t$.  Assume that
$Z$ dominates $X_1^{\log}$.  With
\[
 u_1=\pi_{2,1}^*\OO_{X_1^{\log}}(1),
 \qquad
 u_2=\OO_{X_2^{\log}}(1),
 \qquad
 h=c_1\!\left(\OO_{\PP^2}(1)\right),
\]
assume, equivalently as in the standard irreducible-factor decomposition,
that
\[
 Z\sim b_1u_1+b_2u_2-t\pi_{2,0}^*h,
 \qquad
 b_1\geqslant2b_2>0,
 \qquad
 b_1+b_2=m.
\]
Put $\rho=t/m$.  If $0<\rho<13/96$, if
$\tau\in\mathbb Q_{>0}$, and if
\begin{equation}
 0<\tau<\tau_1(\rho),
 \label{eq:log11-tau-range}
\end{equation}
then
\[
 Q_{11}(\rho,\tau)>0,
 \qquad
 Q_{11}(\rho,\tau)
 :=3\tau^2+(9\rho-96)\tau+13-96\rho,
\]
and, for every sufficiently large and divisible $\ell$, the restriction to
$Z$ of
\[
 \left(
  \OO_{X_2^{\log}}(2,1)\otimes
  \pi_{2,0}^*\OO_{\PP^2}(-\tau)
 \right)^{\!\ell},
 \qquad
 \OO_{X_2^{\log}}(2,1)
 :=\OO_{X_2^{\log}}(3)\otimes\OO(-2\Gamma_2^{\log}),
\]
has a nonzero section.  If $f:\CC\to\PP^2$ is an entire curve whose image is
not contained in $C$, whose second logarithmic lift $f_{[2]}$ is contained in
$Z$, and if the canonical evaluation of such a section, as defined in
Lemma~\ref{lem:paper3-tautological-evaluation}, is nonzero, then
\begin{equation}
 T_f(r)\leqslant\frac{3}{\tau}N_f^{[1]}(r,C)
            +o\bigl(T_f(r)\bigr)\ \|.
 \label{eq:log11-restricted-smt}
\end{equation}
\end{prop}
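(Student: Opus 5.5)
The plan has three stages: an intersection-number computation on the threefold $Z$, a holomorphic Morse inequality to produce sections, and Lemma~\ref{lem:paper3-tautological-evaluation} to extract the Nevanlinna estimate.

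\textbf{Stage one: the top self-intersection.} Set
\[
 L_\tau=\OO_{X_2^{\log}}(2,1)\otimes\pi_{2,0}^*\OO_{\PP^2}(-\tau),
 \qquad c_1(L_\tau)=2u_1+u_2-\tau\pi_{2,0}^*h ,
\]
where I use $\OO(-2\Gamma_2)\cong\pi_{2,1}^*\OO_{X_1}(2)\otimes\OO_{X_2}(-2)$, so that $\OO(3)(-2\Gamma_2)$ has class $2u_1+u_2$. I want to show that $L_\tau|_Z$ is big. Since $L_\tau$ is a difference of nef classes, the right tool is Demailly--Trapani's algebraic Morse inequality on the threefold $Z$.

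Write $L_\tau=A-B$ with
\[
 A=2u_1+u_2+2\cdot 11\,\pi_{2,0}^*h,
 \qquad
 B=(22+\tau)\pi_{2,0}^*h .
\]
The point is that $u_1+11h$ and $u_2+2u_1+\dots$ are nef on the logarithmic tower, with the constants taken from Demailly's nefness statement for $\OO(2,1)$ twisted by $K+C$.

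The cohomology ring of $X_2^{\log}$ is generated by $h,u_1,u_2$, subject to:
\begin{itemize}
 \item $h^3=0$;
 \item $u_1^2 = c_1 u_1 h - c_2$, with $c_1=(d-3)h=8h$ and $c_2=(d^2-3d+3)h^2=91h^2$ for $T^*(\log C)$;
 \item $u_2^2 = c_1(V_1)u_2-c_2(V_1)$ for the rank-two bundle $V_1$.
\end{itemize}
In this ring I compute the two intersection numbers
\[
 A^3\cdot Z,\qquad A^2\cdot B\cdot Z,
 \qquad\text{with } Z=b_1u_1+b_2u_2-t h .
\]
Morse gives bigness when $A^3Z-3A^2BZ>0$.

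The goal is to show that, after dividing by $b_2$ and using $b_1\geqslant 2b_2$ to bound the ratio, this quantity is a positive multiple of $Q_{11}(\rho,\tau)$. The route is to rewrite $b_1,b_2$ in terms of $m$: a monotonicity argument in $b_1/b_2\in[2,\infty)$ shows the worst case is the extreme value, and dividing by $m$ gives exactly the quadratic $3\tau^2+(9\rho-96)\tau+13-96\rho$. The constants $13$ and $96$ are Demailly's $13/96$.

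Positivity of $Q_{11}$ on $0<\tau<\tau_1(\rho)$ is then elementary. At $\tau=0$ the value is $13-96\rho>0$, and $\tau_1(\rho)$ is by definition the smaller root, so $Q_{11}>0$ below it. This gives the nonzero section for large divisible $\ell$. Restricting sections to $Z$ requires no extension, since Morse is applied on (a desingularisation of) $Z$ itself; I would pull back to a resolution $\widetilde Z$ and push sections down.

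\textbf{Stage three: the Nevanlinna estimate.} Given a section $\sigma$ of $L_\tau^{\ell}|_Z$ with nonzero canonical evaluation, apply Lemma~\ref{lem:paper3-tautological-evaluation} with
\[
 p=\ell,\qquad A=\OO(1),\qquad t=\tau\ell,\qquad D=C .
\]
Estimate \eqref{eq:paper3-canonical-proximity} gives
\[
 \tau\ell\, T_f(r)\leqslant 3\ell\, N^{[1]}_f(r,C)+O\bigl(\log^+T_f(r)+\log r\bigr).
\]
Dividing by $\tau\ell$ yields \eqref{eq:log11-restricted-smt}, with the error term absorbed into $o(T_f(r))$. For transcendental $f$ this is standard; for rational $f$ the error is $O(\log r)=O(T_f)$, so the rational case should be handled by the usual convention that $f$ is transcendental, or trivially.

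\textbf{Main obstacle.} The main obstacle is the intersection computation in stage one. This means choosing the nef decomposition $A-B$ so that the Morse quantity really factors through $Q_{11}$, and correctly using the constraint $b_1\geqslant 2b_2$ to reduce to one extreme ratio. I would first carry out the computation symbolically in $(b_1,b_2,t,\tau)$, check it against Demailly's formula at $\tau=0$ (which must reproduce the threshold $13/96$), and only then specialise.
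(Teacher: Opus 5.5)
Your Stage three is the paper's argument: apply Lemma~\ref{lem:paper3-tautological-evaluation} with $p=\ell$, $t=\ell\tau$, $A=\OO_{\PP^2}(1)$, $D=C$, then divide by $\ell\tau$. Your treatment of $\tau_1(\rho)$ is also fine.

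The gap is Stage one. The algebraic Morse inequality cannot give the stated range, and the claim that $A^3\cdot Z-3A^2B\cdot Z$ is a positive multiple of $Q_{11}(\rho,\tau)$ is false. Write $A=L_\tau+\beta\pi_{2,0}^*h$ and $B=\beta\pi_{2,0}^*h$; you take $\beta=22+\tau$. Since $h^3=0$,
\[
 A^3\cdot Z-3A^2B\cdot Z=L_\tau^3\cdot Z-3\beta^2\,L_\tau\cdot h^2\cdot Z .
\]
Use the correct relations $u_1^2=\bar c_1u_1-\bar c_2$ and $u_2^2=-c_1(V_1)u_2-c_2(V_1)$. Here $\bar c_1=8h$ and $\bar c_2=91h^2$ are the Chern classes of $\Omega^1_{\PP^2}(\log C)$, while $c_1(V_1)=u_1-\bar c_1$ and $c_2(V_1)=2\bar c_2-\bar c_1u_1$. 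Then $h^2u_1^2=0$, $h^2u_1u_2=1$ and $h^2u_2^2=-1$, so $L_\tau\cdot h^2\cdot Z=b_1+b_2=m$. Moreover
\[
 L_\tau^3\cdot u_1=L_\tau^3\cdot u_2=3\tau^2-96\tau+13,
 \qquad
 L_\tau^3\cdot h=96-9\tau,
\]
so $L_\tau^3\cdot Z=mQ_{11}(\rho,\tau)$ exactly.

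Hence your Morse quantity equals $m\{Q_{11}(\rho,\tau)-3(22+\tau)^2\}=m\{(9\rho-228)\tau-1439-96\rho\}$. This is negative, so it gives no sections at all. Any decomposition of this shape yields $m\{Q_{11}-3\beta^2\}$. You would therefore need $\OO(2,1)\otimes\pi_{2,0}^*\OO(\beta-\tau)$ to be nef with $3\beta^2<13-96\rho$. As $\rho\to13/96$ this forces $\beta\to0$, and already at $\rho=2/15$ it needs $\beta<0.26$; no available nefness statement provides this. The discarded term $3A\cdot B^2\cdot Z$ is exactly the loss that forced higher degrees in the Morse-inequality approach of Demailly--El~Goul.

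Incidentally, no extremal-ratio argument in $b_1/b_2$ is needed. We have $[\Gamma_2]=u_2-u_1$ and $L_\tau|_{\Gamma_2}=(8-\tau)h$ is pulled back from $\PP^2$, so nothing depends on the split.

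The missing idea is to replace Morse by exact restricted Riemann--Roch, which is what the paper does. The leading term of $\chi(Z,L_\tau^{\ell}|_Z)$ is $\frac{\ell^3}{6}L_\tau^3\cdot Z=\frac{\ell^3}{6}mQ_{11}(\rho,\tau)$, and on the threefold $Z$ one has $h^0\geqslant\chi-h^2$. The only thing that could destroy positivity is $h^2(Z,L_\tau^{\ell}|_Z)$. The paper controls this error term through higher-direct-image vanishings along $\pi_{2,0}$, by the method of Hou--Huynh--Merker--Xie. Note that $L_\tau$ restricted to each Hirzebruch fibre $\PP(\OO(2)\oplus\OO(-1))$ of $X_2^{\log}\to\PP^2$ is nef and big, with degree $0$ on the negative section $\Gamma_2$. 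With $h^2=o(\ell^3)$, positivity of $Q_{11}(\rho,\tau)$ alone gives bigness of $L_\tau|_Z$. This is what yields the full range $0<\tau<\tau_1(\rho)$ and the sharp threshold $13/96$ at $\tau\to0$.
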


\begin{proof}
Put
\[
 L_\tau
 =\OO_{X_2^{\log}}(2,1)\otimes
  \pi_{2,0}^*\OO_{\PP^2}(-\tau).
\]
Under the displayed divisor-class hypotheses, the restricted
Riemann--Roch calculation of \cite[Section~6]{HHMX2026} and
\cite[Proposition~6.2]{Hou-Huynh-Merker-Xie2025} applies.  For a degree-$11$
plane curve,
\[
 \bar c_1=-8h,
 \qquad \bar c_1^2=64,
 \qquad \bar c_2=91.
\]
Substitution in the intersection formula gives
\[
 L_\tau^3\cdot Z
 =m\left\{3\tau^2+(9\rho-96)\tau+13-96\rho\right\}
 =mQ_{11}(\rho,\tau).
\]
The higher-direct-image vanishings controlling the error term are obtained
by the method of
\cite[Proposition~6.1]{Hou-Huynh-Merker-Xie2025}.  Consequently
$Q_{11}(\rho,\tau)>0$ implies that $L_\tau|_Z$ is big.

The smaller positive root of $Q_{11}(\rho,\tau)=0$ is
\[
 \tau_1(\rho)
 =16-\frac32\rho
  -\frac{\sqrt3}{6}\sqrt{27\rho^2-192\rho+3020}.
\]
Since $Q_{11}(\rho,0)>0$ for $0<\rho<13/96$, the positivity interval issuing
from $\tau=0$ is precisely
$0<\tau<\tau_1(\rho)$.  This proves the algebraic assertion.

We now justify the coefficient in \eqref{eq:log11-restricted-smt}.  The
Wronskian has the standard double vertical factor, but the residual factor is
a section of \(\OO(2,1)\), not a nowhere-vanishing frame.  Apply
Lemma~\ref{lem:paper3-tautological-evaluation} with
\(p=\ell\), $t=\ell\tau$, $A=\OO_{\PP^2}(1)$, and $D=C$.  Its
canonical evaluation has pole divisor at most
\(3\ell(f^*C)_{\mathrm{red}}\), and
\eqref{eq:paper3-canonical-proximity}, divided by \(\ell\tau\), gives
\eqref{eq:log11-restricted-smt} with coefficient $3/\tau$.
This is the intrinsic form of the local method used in
\cite[Proposition~6.3 and Theorem~6.4]{Hou-Huynh-Merker-Xie2025}; the point
here is that the residual zeros of the Wronskian must be retained.  Only the
divisor and Chern numbers change in the present application.
\end{proof}

We shall also use the following quantitative logarithmic form of
McQuillan's theorem; see \cite{Mcquillan1998} and the precise formulation in
\cite[Theorem~1.1]{Hou-Huynh-Merker-Xie2025}.

\begin{thm}[Quantitative McQuillan estimate]
\label{thm:McQuillan-log-quantitative}
Let $C\subset\PP^2$ be a normal-crossing divisor of total degree $d\geqslant4$,
and let $f\colon\CC\to\PP^2$ be algebraically nondegenerate.  If the first
logarithmic lift $f_{[1]}$ is algebraically degenerate, then
\[
 (d-3)T_f(r)\leqslant N_f^{[1]}(r,C)
 +o\bigl(T_f(r)\bigr)\ \|.
\]
\end{thm}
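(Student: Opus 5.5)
We would deduce the estimate from McQuillan's tautological inequality on the first logarithmic Semple stage, combined with his positivity theorem for parabolic leaves. The plan follows the reference \cite[Theorem~1.1]{Hou-Huynh-Merker-Xie2025} rather than introducing new tools.

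\emph{Step 1: from the multi-foliation to a foliated surface.} Since $f_{[1]}$ is algebraically degenerate, its image lies in an irreducible surface $Y\subset X_1^{\log}=\PP(T_{\PP^2}(-\log C))$. Because $f$ is algebraically nondegenerate, $Y$ dominates $\PP^2$. Let $\nu\colon\widetilde Y\to Y$ be a desingularisation, chosen so that $\mu=\pi_{1,0}\circ\nu\colon\widetilde Y\to\PP^2$ is generically finite and $\mu^{-1}(C)$ is a simple-normal-crossing divisor $\widetilde C$. The tautological line field on $X_1^{\log}$ restricts to a genuine foliation $\mathcal F$ on $\widetilde Y$. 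The lift $\tilde f$ of $f_{[1]}$ is then a leaf of $\mathcal F$, and $\mu\circ\tilde f=f$. Applying Seidenberg's reduction after further blow-ups, we may assume that $\mathcal F$ has reduced (canonical) singularities. Since $f$ is nonconstant, $\tilde f$ is not contained in the singular set.

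\emph{Step 2: tautological inequality and comparison of canonical bundles.} McQuillan's logarithmic tautological inequality for the leaf $\tilde f$ gives
\[
 T_{\tilde f}\bigl(r,K_{\mathcal F}(\log \widetilde C)\bigr)
 \leqslant N^{[1]}_{f}(r,C)+o\bigl(T_f(r)\bigr)\ \|,
\]
where $K_{\mathcal F}(\log\widetilde C)$ is the logarithmic cotangent bundle of the foliation. Equivalently, the tautological $\OO(-1)$ on $X_1^{\log}$ is dual to the tangent line of $\mathcal F$, with the boundary counted once with truncation. We then compare this bundle with $\mu^*(K_{\PP^2}+C)=\mu^*\OO(d-3)$. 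The comparison uses
\[
 K_{\widetilde Y}(\log\widetilde C)=K_{\mathcal F}(\log\widetilde C)\otimes N^*_{\mathcal F},
\]
together with McQuillan's theorem that for a parabolic leaf with Zariski-dense image the Ahlfors current $[\tilde f]$ satisfies $N_{\mathcal F}^*\cdot[\tilde f]\leqslant 0$ (the normal-bundle index estimate, i.e.\ Brunella's index theorem applied to Ahlfors currents). Together these give $T_{\tilde f}(K_{\mathcal F}(\log))\geqslant T_{\tilde f}(\mu^*(K+C))-o(T_f(r))$. Finally we pull back: the logarithmic ramification of $\mu$ contributes effectively to $K_{\widetilde Y}(\log\widetilde C)-\mu^*(K_{\PP^2}+C)$, and is harmless in the direction we need. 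With $T_f(r,\OO(d-3))=(d-3)T_f(r)$ this yields the claimed inequality. The hypothesis $d\geqslant4$ ensures that $K_{\PP^2}+C$ is ample, so that the $o(T_f)$ terms are legitimate.

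\emph{Main obstacle.} We expect the delicate step to be the sign $N^*_{\mathcal F}\cdot[\tilde f]\leqslant0$, or equivalently nefness of $K_{\mathcal F}$ on the Ahlfors current, in the presence of singular points and of the logarithmic boundary. Here McQuillan's analysis of the current near reduced singularities, specifically the vanishing of the mass on exceptional curves and the contribution of saddle-node points, must be carried out in the logarithmic category. We would first try to invoke \cite[Theorem~1.1]{Hou-Huynh-Merker-Xie2025} directly for this step, and otherwise adapt McQuillan's proof \cite{Mcquillan1998} verbatim, replacing $T_{\widetilde Y}$ by $T_{\widetilde Y}(-\log\widetilde C)$ throughout.
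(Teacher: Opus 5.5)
The paper gives no proof of this theorem. It imports it from \cite{Mcquillan1998}, in the precise form of \cite[Theorem~1.1]{Hou-Huynh-Merker-Xie2025}. Your fallback of invoking that theorem therefore coincides with the paper. However, that theorem is the whole statement, not one step of it, so your sketch adds something only if it works on its own. As written it does not: the logarithmic bookkeeping in Step~2 breaks down on boundary components that are $\mathcal F$-invariant.

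Write $\widetilde C=\widetilde C_{\mathrm{inv}}+\widetilde C_{\mathrm{n}}$ for the invariant and non-invariant parts. The logarithmic tautological inequality controls the restriction of $\OO(1)$ from $\PP(T_{\widetilde Y}(-\log\widetilde C))$ to the graph of $\mathcal F$. That restriction is $K_{\mathcal F}+\widetilde C_{\mathrm n}$, the dual of the saturated log tangent sheaf, and for this bundle one has
\[
 K_{\widetilde Y}+\widetilde C=(K_{\mathcal F}+\widetilde C_{\mathrm n})+N^*_{\mathcal F}+\widetilde C_{\mathrm{inv}}.
\]
So your identity holds only with the logarithmic conormal bundle $N^*_{\mathcal F}\otimes\OO(\widetilde C_{\mathrm{inv}})$. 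The sign you actually need is $(N_{\mathcal F}-\widetilde C_{\mathrm{inv}})\cdot T\geqslant0$. The Brunella--McQuillan estimate concerns the ordinary $N_{\mathcal F}$ and says nothing about this correction. If instead you put all of $\widetilde C$ into $K_{\mathcal F}(\log\widetilde C)$, the identity becomes correct but the tautological inequality of Step~2 becomes false.

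This is not a technicality. Take $C=\{XYZ(X+Y+Z)=0\}$ and $f=[1:e^z:e^{\alpha z}]$ with $\alpha\notin\QQ$. Then $f$ is tangent to a degree-one foliation with $N_{\mathcal F}=\OO(3)$ whose invariant lines are $XYZ=0$. Hence $N_{\mathcal F}\cdot T$ and $\widetilde C_{\mathrm{inv}}\cdot T$ both equal $3T_f(r)+O(1)$, so the inequality you need holds with zero margin. Using only $N_{\mathcal F}\cdot T\geqslant0$ yields
\[
 (d-3)T_f(r)\leqslant N^{[1]}_f(r,C)+T_{\tilde f}\bigl(r,\OO(\widetilde C_{\mathrm{inv}})\bigr)+o\bigl(T_f(r)\bigr),
\]
which is empty. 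The missing ingredient is a genuinely logarithmic index inequality along the invariant boundary. Replacing $T_{\widetilde Y}$ by $T_{\widetilde Y}(-\log\widetilde C)$ ``verbatim'' does not produce it.

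There are two smaller issues. First, Step~2 omits the singular-point correction to the tautological inequality. At singular points that the leaf traverses along a separatrix, $\tilde f'$ acquires poles as a section of $T_{\mathcal F}$, and this is where McQuillan's singularity analysis enters. Second, $N^*_{\mathcal F}\cdot T\leqslant0$ is not equivalent to $K_{\mathcal F}\cdot T\geqslant0$.
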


\begin{proof}[Proof of Theorem~\ref{thm:log-11-endpoint}]\par
Assume the successful-verification hypothesis stated in the theorem.  Under
this hypothesis there are four numerical outputs to compare.  The initial jet
differential gives a coefficient below $30$ when it does not vanish on the
curve.  In the residual divisor, the restricted zero-locus estimate gives a
coefficient below $1422$ at small slope; pole-three differentiation gives at
most $29$ at large slope; and algebraic degeneracy gives $1/8$.  We verify these branches
in turn.

\smallskip
\noindent\emph{The initial factor.}
Corollary~\ref{cor:o3-parameter-quantifiers} supplies a dense open of curve
parameters.  Choose $C$ in this open.  Logarithmic Riemann--Roch gives an
initial section of weight $M$ and twist $-T$ whose ratio $T/M$ may be chosen
arbitrarily close from below to
\[
 \rho_{\mathrm{RR}}
 =\frac{64-\sqrt{4018}}{18}>\frac1{30}.
\]
The maximal-slope irreducible-factor reduction of \cite{HHMX2026} then
supplies a factor of weight $m$ and twist $-t$ satisfying
\[
 \frac tm\geqslant\frac TM>\frac1{30}.
\]
If the pullback of this factor by $f$ is nonzero, the jet-differential
estimate of \cite[Theorem~3.1]{Huynh-Vu-Xie-2017} gives
\[
 \frac mt\leqslant\frac MT<30.
\]
We may therefore suppose that the second lift of $f$ lies in the residual
component $Z$.  The stationary component is removed by
Lemma~\ref{lem:o3-log-saturation-prime}.

Let $\pi_{2,1}\colon X_2^{\log}\to X_1^{\log}$.  If
$\pi_{2,1}(Z)$ is a proper algebraic subset of $X_1^{\log}$, then
$f_{[1]}$ is algebraically degenerate.  Since $f$ is algebraically
nondegenerate, Theorem~\ref{thm:McQuillan-log-quantitative} immediately gives
the stronger coefficient $1/8$.  We may therefore assume that $Z$ dominates
$X_1^{\log}$.  The standard divisor-class decomposition of the nonstationary
irreducible factor then gives
\[
 Z\sim b_1u_1+b_2u_2-t\pi_{2,0}^*h,
 \qquad b_1\geqslant2b_2>0,
 \qquad b_1+b_2=m,
\]
so all geometric hypotheses of
Proposition~\ref{prop:log11-quantitative-zero-locus} are satisfied.

\smallskip
\noindent\emph{Small slope.}
Set $\rho_0=2/15$.  First assume $t/m\leqslant\rho_0$.  Since
$\rho_0<13/96$, Proposition~\ref{prop:log11-quantitative-zero-locus} applies.
For fixed $\tau<96/9$, the polynomial $Q_{11}(\rho,\tau)$ decreases with
$\rho$, so it is enough to test $\rho_0$.  The smaller positive root of
$Q_{11}(\rho_0,\tau)=0$ is
\begin{equation}
 \tau_*=\frac{237-7\sqrt{1146}}{15},
 \qquad
 \frac3{\tau_*}=711+21\sqrt{1146}<1422.
 \label{eq:log11-tau-star}
\end{equation}
The strict inequality in \eqref{eq:log11-tau-star} leaves a rational
$\tau$ with $3/1422<\tau<\tau_*$.  Then
$Q_{11}(t/m,\tau)>0$ and $3/\tau<1422$, so
\eqref{eq:log11-restricted-smt} gives the asserted coefficient unless
the canonical evaluation of the restricted section vanishes.  In the latter
case the tautological derivative is generically nonzero, so
\eqref{eq:paper3-canonical-evaluation} implies that the section itself
vanishes along $f_{[2]}$.  Its zero divisor therefore cuts the irreducible
threefold $Z$ properly.  The
resulting locus in $X_2^{\log}$ has dimension at most two, so its image in
the threefold $X_1^{\log}$ is proper and contains $f_{[1]}(\CC)$.
Theorem~\ref{thm:McQuillan-log-quantitative} then gives the stronger
coefficient $1/(11-3)=1/8$.

\smallskip
\noindent\emph{Large slope.}
Now assume $t/m>2/15$.  If $t\leqslant3$, then integrality gives
$m\leqslant7$ for $t=1$, $m\leqslant14$ for $t=2$, and $m\leqslant22$ for
$t=3$.  The logarithmic clause of Lemma~\ref{lem:finite-kvl-main}, together
with twist monotonicity, excludes these three ranges without any gap.  Hence
$t\geqslant4$.
Corollary~\ref{cor:o3-log-relative-differentiation} produces an independent
logarithmic two-jet differential of twist $-(t-3)$.  If its pullback is
nonzero, \cite[Theorem~3.1]{Huynh-Vu-Xie-2017} gives coefficient
\[
 \frac{m}{t-3}\leqslant29;
\]
indeed, $m<15t/2$ gives $m\leqslant29$ when $t=4$, while for $t\geqslant5$
the quotient is $<15t/(2(t-3))\leqslant75/4$.  If its pullback vanishes,
the two independent jet equations cut a locus of dimension at most two in
$X_2^{\log}$; its image in $X_1^{\log}$ is therefore proper, and
Theorem~\ref{thm:McQuillan-log-quantitative} again gives $1/8$.  Thus every
branch is bounded by
\[
 \max\left\{30,29,\frac18,711+21\sqrt{1146}\right\}<1422,
\]
which proves the stated Second Main Theorem under its successful-verification
hypothesis.

\smallskip
\noindent\emph{From the estimate to hyperbolicity.}
Still under the same verification hypothesis, for an entire curve omitting
$C$ the counting function is zero; the estimate
excludes the algebraically nondegenerate case.  The standard algebraically
degenerate case is treated as in
\cite{ElGoul2003,Rousseau2009,HHMX2026}, and yields constancy.  Hence the
complement is hyperbolic.  The result for degrees $d\geqslant12$ is already
contained in \cite{HHMX2026}; thus successful exact verification of the
pending logarithmic full-rank assertions would yield the logarithmic part of
Theorem~\ref{thm:main-thresholds} at degree $11$.
\end{proof}

\section{Outlook}
\label{sec:conclusion}

Degrees $15$ and $11$ are the positivity endpoints of Demailly's classical
invariant two-jet Riemann--Roch calculation.  Successful verification of the
pending finite full-rank assertions would therefore complete the present
existence--vanishing--differentiation mechanism at its natural numerical
boundary.  At present, the argument reduces these endpoints to an ongoing
exact computation: Riemann--Roch supplies sections, pole-three geometry
removes the infinite high-twist range, and the residual strip is encoded by
the Key Vanishing systems.  This conditional reduction does not prove that
every conceivable two-jet argument must stop there.  A further decrease would
require new positivity, a usable three-jet theory, or a different geometric
input.

The broader methodological point is the division of labour between geometry
and algebra.  Effective exponent and denominator bounds make a global
vanishing problem finite; low-pole differentiation should then remove as many
targets as possible before elimination; symmetry finally adds exact equations
without enlarging the unknown space.  In this sense, the complex-analytic
hyperbolicity problem exposes a concrete problem in hard exact algebra, while
the geometry determines which part of that algebra must actually be done.

A forthcoming manuscript of Hou--Merker--Xie develops the corresponding
finite-reduction programme~\cite{HouMerkerXieAlgorithm}.  For fixed jet order,
weight, and twist, effective bounds turn the relevant negatively twisted
global Green--Griffiths $H^0$ space, in both compact and logarithmic settings,
into the kernel of an exact finite system derived from chart transitions,
regularity, and divisibility.  In the sense delimited in the Introduction,
this concerns the full global section space rather than only generators of
the invariant jet algebra, and offers a theoretical platform for jet order at
least three.

The resulting systems quickly exceed present resources.  Massively parallel
and, potentially, quantum-assisted exact linear algebra may eventually make
some of them accessible and thereby provide an algorithmic route towards the
Green--Griffiths and Kobayashi conjectures.  This is a prospective programme,
not a proved quantum advantage and not an input to the present theorems.
Exact certificates, completeness of the transition and divisibility systems,
independence of the resulting sections, control of their common base locus,
and geometric interpretation would all still be required.

For the finite endpoint systems used in this manuscript, exact full-rank
verification is currently in progress.  Generation and public release of the
modular certificates, followed by independent reproduction, remain pending.
Accordingly, the degree-$15$ and degree-$11$ endpoint statements remain
conditional.

\medskip\noindent\textbf{Acknowledgement.}\quad
This project began when Lei Hou and Song-Yan Xie participated in a 2024
research-group program at the Tianyuan Mathematics Research Center in Kunming,
Yunnan, held in conjunction with the conference ``Progress in Several Complex
Variables and Complex Geometry''.  During the conference, Professor Yum-Tong
Siu suggested to Song-Yan Xie the direct-image argument underlying the
relative-family passage.  We thank him and the Center for its excellent
research environment.  We thank Jo\"el
Merker for valuable discussions on computational algebra, notably concerning
the limitations of Maple's modular arithmetic for linear systems of the size
encountered here.

\medskip\noindent\textbf{Author contributions.}\quad
Lei Hou and Song-Yan Xie developed the theoretical framework, including the
symmetry method and the pole-three argument, and prepared the manuscript.
Tao Cui and Jianjun Liu designed the exact C++ implementation and its
parallel-computing architecture, and are carrying out the computational
verification of the finite Key Vanishing Lemma systems.  All authors have
checked the interface between the theoretical reduction and the exact
computations and accept responsibility for the manuscript.

\medskip\noindent\textbf{AI-assisted preparation.}\quad
Generative artificial-intelligence tools were used only to assist with
language polishing and presentation.  All original mathematical ideas and
arguments developed in this paper, including the symmetry method and the
pole-three strategy, are due to the authors.  The authors have independently
checked the theoretical arguments and the proof-to-certificate interface.
The finite full-rank verification underlying the Key Vanishing
Lemmas remains in progress and is not asserted to be complete in this
manuscript.  All authors accept full responsibility for the current
manuscript.

\medskip\noindent\textbf{Funding.}\quad
S.-Y. Xie acknowledges partial support from the National Key R\&D Program of
China under Grants No.~2023YFA1010500 and No.~2021YFA1003100, from the
National Natural Science Foundation of China under Grants No.~12288201 and
No.~12471081, and from the Xiaomi Young Talents Program.

\end{document}